\documentclass{amsart}

\usepackage[utf8]{inputenc}
\usepackage{amsmath}
\usepackage{amssymb}
\usepackage{mathdots}
\usepackage{mathtools}
\usepackage{mathrsfs}
\usepackage{tikz}
\usepackage{tikz-cd}
\usetikzlibrary{decorations.markings,arrows.meta,quotes}
\usepackage[inline]{enumitem}
\usepackage{color}
\definecolor{linkcolor}{rgb}{0,0,0.6}
\usepackage{amsthm}
\usepackage{bbm}
\usepackage{multicol}

\tikzset{
  symbol/.style={
    draw=none,
    every to/.append style={
      edge node={node [sloped, allow upside down, auto=false]{$#1$}}}
  }
}

\DeclareUnicodeCharacter{2212}{\textendash}

\usepackage[english]{babel}
\usepackage{csquotes}

\usepackage[colorlinks=true,
pdfstartview=Fit,
linkcolor= linkcolor,
citecolor= linkcolor,
urlcolor= linkcolor,
hyperindex=true,
hyperfigures=false]
{hyperref}

\usepackage{orcidlink}

\title[EKOR and BT stratifications]{EKOR and BT stratifications for basic unramified $\mathrm{GU}(1,n-1)$ Rapoport-Zink spaces}
\author{Joseph Muller\,\orcidlink{0000-0002-1546-0910}}\thanks{NCTS, National Taiwan University, muller@ncts.ntu.edu.tw}
\date{}

\RenewCommandCopy{\underbrace}{\LaTeXunderbrace}

\numberwithin{equation}{subsection} 

\begin{document}

\subjclass{14G35, 11G18}
\keywords{EKOR stratification, Bruhat-Tits stratification, Rapoport-Zink space}

\newtheorem{introtheorem}{Theorem}
\renewcommand{\theintrotheorem}{\Alph{introtheorem}}
\newtheorem{introproposition}[introtheorem]{Proposition}
\newtheorem{theo}{Theorem}[section]
\newtheorem{prop}[theo]{Proposition}
\newtheorem{lem}[theo]{Lemma}
\newtheorem{corol}[theo]{Corollary}
\newtheorem{conj}[theo]{Conjecture}
\newtheorem*{theo*}{Theorem}
\newtheorem*{lem*}{Lemma}
\newtheorem*{prop*}{Proposition}
\newtheorem*{corol*}{Corollary}

\theoremstyle{remark}
\newtheorem{rk}[theo]{Remark}
\newtheorem{rks}[theo]{Remarks}
\newtheorem{ex}[theo]{Example}

\theoremstyle{definition}
\newtheorem{defi}[theo]{Definition}
\newtheorem*{notation}{Notation}
\newtheorem*{notations}{Notations}


\begin{abstract} 
In this paper, we establish the relation between the Ekedahl-Kottwitz-Oort-Rapoport stratification and the Bruhat-Tits stratification on basic unramified $\mathrm{GU}(1,n-1)$ Rapoport-Zink spaces with arbitrary parahoric level. More precisely, we prove that every basic EKOR stratum is a disjoint union of copies of an explicitly defined fine Deligne-Lusztig variety. Using this comparison and the closure relations for fine Deligne-Lusztig varieties, we give an explicit combinatorial description of the closure of every basic EKOR stratum. As further applications, we determine which KR strata are entirely contained in the basic locus, and we prove the smoothness of the irreducible components of the closure of certain EKOR strata.
\end{abstract}

\maketitle

\tableofcontents

\vspace{1.5cm}

\section{Introduction} 

The study of the reduction modulo $p$ of Shimura varieties has long been guided by the existence of natural stratifications, interacting non-trivially with each other. In the case of good reduction, the prototypical example is the Siegel modular variety. Its special fiber is stratified by several invariants of the associated $p$-divisible group, such as its $p$-rank, its Newton polygon, the isomorphism class of its $p$-torsion group scheme, or even the isomorphism class of the full $p$-divisible group. These give rise respectively to the $p$-rank stratification, the Newton stratification, the Ekedahl-Oort (EO) stratification, and the decomposition into central leaves. The Ekedahl-Oort stratification was first studied systematically in the Siegel case by Oort \cite{oortStratificationModuliSpace2001}. The group-theoretic framework underlying this construction was later clarified through the language of $F$-zips and $G$-zips in the work of Moonen and Wedhorn, and Pink, Wedhorn and Ziegler \cite{moonenDiscreteInvariantsVarieties2004, pinkAlgebraicZipData2011, pinkFzipsAdditionalStructure2015}. This point of view allowed the Ekedahl-Oort stratification to be extended from the Siegel case to Shimura varieties of PEL type, Hodge type, and abelian type, see for instance \cite{viehmannEkedahlOortNewton2013, shenEKORStrataShimura2021, shenStratificationsGoodReductions2022}. The Newton stratification, on the other hand, records the isogeny class of the associated $p$-divisible group, or equivalently of the corresponding $F$-isocrystal with additional structure. Its group-theoretic foundations go back to Kottwitz, and to Rapoport and Richartz \cite{kottwitzIsocrystalsAdditionalStructure1997, rapoportClassificationSpecializationIsocrystals1996}. In the Siegel case, the geometry of Newton strata was studied extensively by Oort \cite{oortNewtonPolygonStrata2001}. \\
At parahoric level, the geometry of the special fiber is significantly governed by the corresponding local model. This leads to the Kottwitz-Rapoport (KR) stratification, whose strata are indexed by admissible elements in the Iwahori-Weyl group. The relevant combinatorics for $\mathrm{GL}_n$ and $\mathrm{GSp}_{2n}$ was introduced by Kottwitz and Rapoport in \cite{kottwitzMinusculeAlcovesGL2000} and used in \cite{gortzFlatnessModelsCertain2001} in regard to the geometry of the local model. Building on work of He and Lusztig on decomposition into $G$-stable pieces, the Ekedahl-Kottwitz-Oort-Rapoport (EKOR) stratification was introduced by He and Rapoport in \cite{heStratificationsReductionShimura2017} through an axiomatic and group-theoretic framework, interpolating between the EO stratification at hyperspecial level, and the KR stratification at Iwahori level in the reduction of Shimura varieties. At general parahoric level, the EKOR stratification is always finer than the KR stratification. A geometric construction of the EKOR stratification for Kisin-Pappas integral models of abelian type, using $G$-zips and mixed-characteristic local shtukas, was given by Shen, Yu and Zhang in \cite{shenEKORStrataShimura2021}, see also \cite{hesseEKORStrataShimura2020, hoffEKORstratificationSiegelModular2025} for related constructions in the Siegel case. \\
By \cite{heStratificationsReductionShimura2017}, it is well known that any Newton stratum contains at least one EKOR stratum entirely. In more favorable situations, namely when the local Shimura datum is fully Hodge-Newton decomposable, then Görtz, He and Nie proved in \cite{gortzFullyHodgeNewton2019} that the EKOR stratification is actually finer than the Newton stratification. Such can not be said of the KR strata, which usually intersect multiple Newton strata non-trivially. We note that \cite{gortzFullyHodgeNewton2019} also provides a complete classification of all cases which are fully Hodge-Newton decomposable. Incidentally, another characterization of fully Hodge-Newton decomposable local Shimura datum is the fact that the basic affine Deligne-Lusztig variety is naturally stratified by fine Deligne-Lusztig varieties, up to perfection. This is referred to as the (weak) Bruhat-Tits (BT) stratification in \cite{gortzBasicLociCoxeter2024}, but in this paper we suggest calling it the \textit{fine Bruhat-Tits stratification} instead, and we reserve the name ``BT stratification'' for the slightly coarser decomposition we recall in Section \ref{Section5}. Geometrically, the basic locus of a Shimura variety -- that is, the unique minimal Newton stratum -- is uniformized by some associated Rapoport-Zink space, over which the BT stratification has been constructed in various cases, for instance by Vollaard and Wedhorn, by Cho, and by the author \cite{vollaardSupersingularLocusShimura2010, vollaardSupersingularLocusShimura2011, choBasicLocusUnitary2018, mullerBruhatTitsStratificationBasic2026} for the unramified $\mathrm{GU}(1,n-1)$ Rapoport-Zink space at respectively hyperspecial, maximal and arbitrary parahoric levels; by Rapoport, Terstiege and Wilson, by Wu and by He, Luo and Shi \cite{rapoportSupersingularLocusShimura2014, wuSupersingularLocusUnitary2016, heBasicLocusRamified2026} for the ramified $\mathrm{GU}(1,n-1)$ Rapoport-Zink space at respectively self-dual level, exotic good reduction and maximal lattice stabilizer levels. See also \cite{zachosBasicLocusUnitary2025, zachosBasicLocusRegular2025} for other integral models in the ramified case. Other examples include $\mathrm{GSpin}$ cases \cite{howardRapoportZinkSpaces2017, okiRapoportZinkSpacesSpinor2020, heBasicLocusGSpin2025}, quaternionic cases \cite{wangBruhatTitsStratification2020, okiSupersingularLociShimura2022, terakadoSupersingularLocusShimura2023}, and low rank cases \cite{howardSupersingularLocusGU222014, wangBruhatTitsStratificationGU2021, okiSupersingularLocusShimura2023}, among many others.\\
In the fully Hodge-Newton decomposable case, the EKOR strata in the basic affine Deligne-Lusztig variety are a disjoint union of copies of some fine Deligne-Lusztig variety. It is natural to expect the same result to hold geometrically, namely over the associated basic Rapoport-Zink space and with respect to the EKOR strata constructed in \cite{shenEKORStrataShimura2021}. The first goal of this paper is to prove precisely this fact in the case of unramified $\mathrm{GU}(1,n-1)$ Rapoport-Zink spaces at arbitrary parahoric levels, and in doing so to explicitly describe the relation between the BT stratification built by the author in \cite{mullerBruhatTitsStratificationBasic2026} and the EKOR stratification. We then use this comparison, together with the closure relations for fine Deligne-Lusztig varieties, to determine explicitly the closure of every EKOR stratum. Let us state our results more precisely.

\subsection*{The Rapoport-Zink space}

We fix an integer $n \geq 2$, a prime $p > 2$, a $p$-adic field $F$ with residue field $\kappa_F = \mathbb F_q$ and uniformizer $\pi$, and a quadratic unramified extension $E/F$. Let $\mathbb V$ be an $n$-dimensional non-degenerate $E/F$-hermitian space, and let $G = \mathrm{GU}(\mathbb V)$ denote the associated group of unitary similitudes, seen as a connected reductive group over $\mathrm{Spec}(F)$. The discriminant $\varepsilon$ of $\mathbb V$ is allowed to take any value in $\{\pm 1\}$, so that $G$ may not be quasi-split: this happens when $n$ is even and $\varepsilon = -1$. Let $\breve F$ denote the completion of the maximal unramified extension of $F$, with residue field $\kappa_{\breve F} = \overline{\mathbb F_q}$, and let $\sigma \in \mathrm{Gal}(\breve F/F)$ be the automorphism induced by the Frobenius $x \mapsto x^q$ on the residue fields. Fix a quadratic unramified extension $K/F$ embedded in $\breve F$. The two $F$-isomorphisms $E \simeq K$ induce a decomposition
\begin{equation*}
    \mathbb V_K \coloneq \mathbb V \otimes_F K = \mathbb V_{K,0} \oplus \mathbb V_{K,1}.
\end{equation*}
Providing $\mathbb V_K$ with Frobenius and Verschiebung operators induced by some basic element $b \in G(K) \subseteq G(\breve F)$, we equip the $K$-vector space $\mathbb V_{K,0}$ with a non-degenerate hermitian form, and consider the group $J = J_b \simeq \mathrm{GU}(\mathbb V_{K,0})$. The discriminant of $\mathbb V_{K,0}$ is equal to $(-1)^{n-1}\varepsilon$. We define two integers
\begin{align*}
    \underline{\varepsilon} \coloneq \frac{1-\varepsilon}{2} = \begin{cases}
        0 & \text{if } \varepsilon = 1, \\
        1 & \text{if } \varepsilon = -1,
    \end{cases} & & \nu_{\varepsilon} \coloneq \left\lfloor \frac{n-\underline{\varepsilon}}{2} \right\rfloor.
\end{align*}
We note that $\nu_{\varepsilon} = \nu_{-\varepsilon} = \frac{n-1}{2}$ when $n$ is odd, but $|\nu_{\varepsilon}-\nu_{-\varepsilon}|=1$ when $n$ is even. The maximal parahoric subgroups of $G(F)$ are given by the stabilizers of some $\mathcal O_E$-lattice $L \subset \mathbb V$ satisfying $\pi L^{\dagger} \subseteq L \subseteq L^{\dagger}$, where $L^{\dagger}$ denotes the dual lattice. The index $h = [L:\pi L^{\dagger}]$ can be any integer $0 \leq h \leq n$ such that $h \equiv \underline{\varepsilon} \mod 2$. Thus, $h = 2r + \underline{\varepsilon}$ with $r \in \{0, \ldots , \nu_{\varepsilon}\}$. For any such $r$, we write $h_r^{\varepsilon} \coloneq 2r + \underline{\varepsilon}$. A general parahoric subgroup is an intersection of maximal parahoric subgroups as above. We fix a self-dual decreasing lattice chain $L_{\bullet} = (L_r)_{r \in \mathbb Z}$ in $\mathbb V$, and consider, for any non-empty subset $I \subseteq \{0, \ldots , \nu_{\varepsilon}\}$, the associated standard parahoric subgroup $K_I$. For instance, if $I = I_{\mathrm{Iw}} \coloneq \{0, \ldots , \nu_{\varepsilon}\}$, then $K_{I_{\mathrm{Iw}}}$ is an Iwahori subgroup. For every $r \in \mathbb Z$, we write $\overline r$ for its image in $\mathbb Z/n\mathbb Z$. We define
\begin{align*}
    \overline I & \coloneq \{\overline r \mid r \in I\} \cup \{\overline{-r-\underline{\varepsilon}} \mid r\in I\} \subseteq \mathbb Z/n\mathbb Z,\\
    I_{\mathbb Z} & \coloneq \{r \in \mathbb Z \mid \overline r \in \overline I\} \subseteq \mathbb Z.
\end{align*}
For every $i \in \overline I$, we define
\begin{equation*}
    \ell_i \coloneq \min\left\{\ell \in \{1,\ldots ,n\} \mid i+\ell \in \overline I\right\}.
\end{equation*}
For example, $\overline{I_{\mathrm{Iw}}}=\mathbb Z/n\mathbb Z$ and then $\ell_i=1$ for every $i$. If $I=\{r\}$ for some $0\leq r\leq\nu_{\varepsilon}$, then $\overline I=\{\overline r,\overline{-r-\underline{\varepsilon}}\}$ and
\begin{equation*}
    \ell_{\overline r} = \begin{cases}
        n-h_r^{\varepsilon} & \text{if } h_r^{\varepsilon} \not = n,\\
        n & \text{if } h_r^{\varepsilon} = n,
    \end{cases} \qquad \ell_{\overline{-r-\underline{\varepsilon}}} = \begin{cases}
        h_r^{\varepsilon} & \text{if } h_r^{\varepsilon} \not = 0,\\
        n & \text{if } h_r^{\varepsilon} = 0.
    \end{cases}
\end{equation*}
Finally, we fix a conjugacy class of cocharacters $\mu:\mathbb G_{m,K}\to G_K$ with dominant coweight $(1,0^{n-1})$. The (relative) Rapoport-Zink space $\mathcal N^{I,\varepsilon}$ of parahoric level $K_I$ is a regular formal scheme over $\mathrm{Spf}(\mathcal O_{\breve F})$, locally formally of finite type. We write $\mathcal N_{\mathrm{red}}^{I,\varepsilon}$ for its reduced special fiber over $\mathrm{Spec}(\overline{\mathbb F_q})$. Via covariant Dieudonn\'e theory, its geometric points can be described as follows.
\begin{introproposition}[Proposition \ref{Prop:PointsArbitraryParahoricBis}]\label{IntropropA}
    There is a bijection
    \begin{equation*}\hspace{-1cm}
        \mathcal N^{I,\varepsilon}(\overline{\mathbb F_q}) \simeq \left\{ \begin{array}{c}
        \text{Decreasing chains }(Z_r)_{r\in I_{\mathbb Z}}\\
        \text{of }\mathcal O_{\breve F}\text{-lattices in }\mathbb V_{\breve F,0}
        \end{array}
        \;\middle|\; \exists j\in\mathbb Z,\ \forall r\in I_{\mathbb Z},
        \begin{array}{l}
        1)\ \ \pi^{j+1}Z_{-r-\underline{\varepsilon}}^{\vee}\overset{1}{\subseteq}Z_r\subseteq\pi^jZ_{-r-\underline{\varepsilon}}^{\vee},\\
        2)\ \ Z_{r+\ell_{\overline r}}\overset{\ell_{\overline r}}{\subseteq}Z_r,\\
        3)\ \ Z_{r+n}=\pi Z_r.
        \end{array}\right\}.
    \end{equation*}
\end{introproposition}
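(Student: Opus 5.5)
We plan to deduce Proposition \ref{IntropropA} from the standard Dieudonné-theoretic description of $\mathcal N^{I,\varepsilon}(\overline{\mathbb F_q})$, by passing from chains of Dieudonné lattices in $\mathbb V_{\breve F}$ to chains of lattices in the single eigenspace $\mathbb V_{\breve F,0}$. By covariant Dieudonné theory, following Rapoport-Zink, a point of $\mathcal N^{I,\varepsilon}$ over $\overline{\mathbb F_q}$ is a chain $(M_r)_{r\in I_{\mathbb Z}}$ of $\mathcal O_E\otimes\mathcal O_{\breve F}$-lattices in the isocrystal $\mathbb V_{\breve F}=\mathbb V_K\otimes_K\breve F$, subject to the following conditions:
\begin{itemize}
\item each $M_r$ is stable under $F$ and $V$ (defined via $b$);
\item the chain has the same relative position as $L_\bullet$ and satisfies $M_{r+n}=\pi M_r$;
\item it is self-dual up to a common similitude factor, namely $M_{-r-\underline{\varepsilon}}^\vee=\pi^{-j'}M_r$-type periodicity for one $j'\in\mathbb Z$;
\item the Kottwitz condition of signature $(1,n-1)$ holds, so that $\dim M_r/VM_r$ is split as $1$ and $n-1$ between the two eigenspaces.
\end{itemize}

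The first step is to decompose $M_r=M_{r,0}\oplus M_{r,1}$ along $\mathbb V_{\breve F}=\mathbb V_{\breve F,0}\oplus\mathbb V_{\breve F,1}$. Here $F$ and $V$ have degree one and swap the two summands. The hermitian pairing identifies $\mathbb V_{\breve F,1}$ with the dual of $\mathbb V_{\breve F,0}$, so duality of the chain exchanges the two components. Concretely, $M_{r,1}$ is determined by $M_{-r-\underline{\varepsilon},0}$ and the similitude exponent, say $M_{r,1}=\pi^{j}$-twist of $M_{-r-\underline{\varepsilon},0}^{\vee}$. We then set $Z_r:=M_{r,0}$, which is the candidate map. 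Condition 3) is immediate from periodicity.

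The second step is to translate the remaining conditions. The signature condition says that $VM_{r,1}\subseteq M_{r,0}$ has index $1$ and $VM_{r,0}\subseteq M_{r,1}$ has index $n-1$. Rewriting $M_{r,1}$ in terms of $Z_{-r-\underline{\varepsilon}}^\vee$ and using that $\tau=\pi V^{-1}F$-type composites, equivalently the $\sigma^2$-linear structure defining the hermitian form on $\mathbb V_{K,0}$, are compatible with the duality, the first inclusion becomes $\pi^{j+1}Z^\vee_{-r-\underline{\varepsilon}}\overset{1}{\subseteq}Z_r$. Counting indices, the second inclusion becomes equivalent to $Z_r\subseteq\pi^jZ^\vee_{-r-\underline{\varepsilon}}$. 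This gives 1). Condition 2) comes from the chain condition $M_{r+\ell}\subseteq M_r$ with total index $2\ell_{\overline r}$ (for $E$-lattices of $\breve F$-dimension $2n$), together with the required index on each component. Here one needs that the index splits equally, $\ell_{\overline r}$ on each side. This follows because the index of the $1$-component equals the index of the dual $0$-components at $-r-\ell-\underline{\varepsilon}\subseteq -r-\underline{\varepsilon}$. Alternatively, it follows from $F$-stability and the fact that $F$ exchanges the components with the same index discrepancy.

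For the converse, given $(Z_r)$ and $j$ satisfying 1)–3), we define $M_r=Z_r\oplus \pi^{j}(\cdot)Z_{-r-\underline{\varepsilon}}^\vee$ with the matching normalization. We then check $F,V$-stability, which is exactly 1), and check that the chain is of type $I$ and self-dual, which follows from 2) and 3). The two constructions are visibly inverse.

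The main obstacle is bookkeeping. One has to pin down the precise normalization of the pairing on $\mathbb V_{K,0}$ relative to $b$, so that the exponent $j$ and the shift $+1$ come out exactly as stated. In addition, one must verify that the index in 2) is $\ell_{\overline r}$ on the $0$-component rather than merely the total $2\ell_{\overline r}$. I would first treat the hyperspecial and maximal cases $I=\{r\}$, compatible with Vollaard-Wedhorn and Cho, to fix conventions, and then extend termwise to arbitrary $I$.
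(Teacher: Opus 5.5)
Your proposal follows essentially the paper's route: decompose the Dieudonné lattices as $M_r=M_{r,0}\oplus M_{r,1}$, set $Z_r=M_{r,0}$, and recover the other half by duality as $Z_{-r-\underline{\varepsilon}}=\pi^jM_{r,1}^{\dagger}$, so that $\pi^{j+1}Z_{-r-\underline{\varepsilon}}^{\vee}=\mathcal FM_{r,1}$ and the signature condition gives 1); you also correctly isolate the one non-formal point, the equal splitting of the chain index, which the paper takes from Lemma 2.23 of the author's earlier work. Of your two justifications for that splitting, only the $\mathcal F$-argument is complete as stated: since $\mathcal FM_{r,1}\overset{1}{\subseteq}M_{r,0}$ at every level $r$ (and it is this inclusion, not $\mathcal VM_{r,1}\overset{1}{\subseteq}M_{r,0}$, that translates into $\pi^{j+1}Z_{-r-\underline{\varepsilon}}^{\vee}\overset{1}{\subseteq}Z_r$), one gets $[M_{r,0}:M_{r',0}]=[M_{r,1}:M_{r',1}]$, whereas the duality remark only rewrites the $1$-component index as another unknown $0$-component index and becomes conclusive once you also apply 1) at both $r$ and $r+\ell_{\overline r}$.
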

Here, $\mathbb V_{\breve F,0}=\mathbb V_{K,0}\otimes_K\breve F$ is equipped with the non-degenerate $\breve F$-valued $\sigma$-sesquilinear pairing obtained by scalar extension, and given a lattice $Z\subset\mathbb V_{\breve F,0}$ we write $Z^{\vee}$ for the dual lattice with respect to this pairing. Moreover, for $x\geq0$ the notation $Z'\overset{x}{\subseteq}Z$ means $Z'\subseteq Z$ and $x=[Z:Z']$. \\
For $j\in\mathbb Z$, there is an open and closed formal subscheme $\mathcal N_j^{I,\varepsilon}\hookrightarrow\mathcal N^{I,\varepsilon}$ whose $\overline{\mathbb F_q}$-points correspond to those $z=(Z_r)_{r\in I_{\mathbb Z}}$ satisfying condition 1) of Proposition \ref{IntropropA} with respect to $j$. We have $\mathcal N_j^{I,\varepsilon}\cap\mathcal N_{j'}^{I,\varepsilon}=\emptyset$ if $j\not=j'$, and $\mathcal N_j^{I,\varepsilon}\not=\emptyset$ if and only if $nj$ is even.

\subsection*{KR and EKOR stratifications}

The local model associated with $\mathcal N^{I,\varepsilon}$ is isomorphic to the classical parahoric local model studied in \cite{gortzFlatnessModelsCertain2001} for the cocharacter $\mu'\coloneq\mu^{\sigma}$ with dominant coweight $(1^{n-1},0)$ (the ``Drinfeld case''). Here, the occurrence of $\mu'$ rather than $\mu$ appears to be the natural choice when using \textit{covariant} Dieudonn\'e theory. The local model carries an action of the parahoric group scheme associated with $K_I$, and the orbits on its special fiber are indexed by the set $\mathrm{Adm}(\{\mu'\})_I$, which is the image in a double quotient space of the $\{\mu'\}$-admissible set $\mathrm{Adm}(\{\mu'\})$ in the Iwahori-Weyl group. At Iwahori level, we simply have $\mathrm{Adm}(\{\mu'\})_{I_{\mathrm{Iw}}}=\mathrm{Adm}(\{\mu'\})$. The Bruhat order induces a partial order on $\mathrm{Adm}(\{\mu'\})$ and $\mathrm{Adm}(\{\mu'\})_I$. Since any element of $\mathrm{Adm}(\{\mu'\})$ turns out to be entirely determined by its support, we obtain the following elementary description.
\begin{introproposition}[Proposition \ref{Prop:IsomAdmMuPoset}]
    The poset $\mathrm{Adm}(\{\mu'\})_I$ is isomorphic to the poset $\mathcal P_I\coloneq\{J\mid J\subsetneq \overline I\}$ consisting of the proper subsets of $\overline I$, ordered by inclusion.
\end{introproposition}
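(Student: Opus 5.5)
We work in the extended affine Weyl group $\tilde W = \mathbb Z^n \rtimes S_n$ of $\mathrm{GL}_n$ (the relevant $\sigma$-twist plays no role for the poset structure), with the standard base alcove given by the lattice chain $\Lambda_0 \supset \Lambda_1 \supset \cdots \supset \Lambda_{n-1} \supset \pi\Lambda_0$. In this model an element $w\in\mathrm{Adm}(\{\mu'\})$ is the same thing as a Kottwitz--Rapoport minuscule alcove for the coweight $(1^{n-1},0)$. Such an alcove is a sequence of vectors $x_0,\dots,x_{n-1}\in\mathbb Z^n$ with three properties: each $x_i$ is a permutation of $(1^{n-1},0)$; $x_{i+1}=x_i+e_{\tau(i)}$ for a permutation $\tau$, with $x_n=x_0+(1,\dots,1)$; and $x_i\geq\omega_i$ componentwise. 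Dually, we encode it by the set of positions where the single $0$ sits.

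\textbf{Step 1 (Iwahori level).} Since each $x_i$ has exactly one zero, $x_i=\omega_i+$ a vector in $\{0,1\}^n$, and the minuscule condition forces each step either to be ``critical'' ($x_i=\omega_i$ up to the prescribed shape) or not. Concretely, we associate to $w$ the subset $S(w)=\{i\in\mathbb Z/n\mathbb Z \mid w$ acts nontrivially on the $i$-th face$\}$, which is its support, i.e.\ the set of simple affine reflections $s_i$ ($i\in\mathbb Z/n\mathbb Z$) appearing in a reduced word. This is the fact quoted in the text: $w$ is determined by its support.

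We then make this precise. Each $w\in\mathrm{Adm}(\{\mu'\})$ can be written as $w=\tau'\,c_J$, where $\tau'$ is the length-zero element of $\mu'$ and $c_J$ is a product of the $s_j$, $j\in J$, taken in the cyclic order determined by $\tau'$. Here $J$ ranges over the proper subsets of $\mathbb Z/n\mathbb Z$, and $\ell(w)=|J|$; there are $2^n-1$ such sets, matching the known count $|\mathrm{Adm}(\mu')|=2^n-1$ for this Drinfeld-type coweight. For the order, both the Bruhat order and the map $J\mapsto \tau' c_J$ are compatible with the subword property. For $J\subseteq J'$, $c_J$ is a subword of the reduced word $c_{J'}$, which gives $w_J\leq w_{J'}$. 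Conversely, if $w_J\le w_{J'}$, then the support of $w_J$ is contained in the support of $w_{J'}$, so $J\subseteq J'$. This yields $\mathrm{Adm}(\{\mu'\})\cong\mathcal P_{I_{\mathrm{Iw}}}$.

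\textbf{Step 2 (parahoric level).} $\mathrm{Adm}(\{\mu'\})_I=W_I\backslash\!\!\backslash\ldots$ is the image in $W_{K}\backslash\tilde W/W_{K}$, where $K$ is the type of $K_I$. Under the identification of affine simple reflections with $\mathbb Z/n\mathbb Z$, $K$ corresponds to the complement $\mathbb Z/n\mathbb Z\setminus\overline I$; this uses the definition of $\overline I$ via $\overline r$ and $\overline{-r-\underline\varepsilon}$, i.e.\ the self-dual chain indices. We would show that two admissible elements have the same image if and only if their supports agree after removing $\mathbb Z/n\mathbb Z\setminus\overline I$. We would also show that each class contains a unique minimal representative, namely the one with support contained in $\overline I$ (the $\tau'$-twisted $K$-conjugation moves letters outside $\overline I$ freely). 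Since the support must be a proper subset, this gives a bijection with proper subsets of $\overline I$. The induced order on the double coset image, defined by minimal representatives or by the closure of the union of orbits, is inclusion by Step 1.

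\textbf{Main obstacle.} The delicate point is Step 2. We must verify that deleting letters in $K$ from the support really stays within the same double coset and remains admissible. We must also handle the unitary $\sigma$-twist: the Frobenius acts on $\mathbb Z/n\mathbb Z$ by $i\mapsto -i-\underline\varepsilon$, which explains why $\overline I$ is symmetric. The first approach we would try is to check directly on lattice chains that the local model orbits at level $K_I$ are classified by the set of indices $i\in\overline I$ where $\mathcal F_i$ equals the critical (image) subspace. This invariant is visibly a proper subset, because not all indices can be critical given $\mu'$ of rank $n-1$, and it is visibly monotone under specialization.
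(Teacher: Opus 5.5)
Your Step 1 is correct in its substance (the paragraph on minuscule alcoves is imprecise but not needed): since $t^{\mu'_j}=s_{[j+1,j-1]}\tau_0$ is a word with distinct letters, every admissible element is $w\tau_0$ with $w$ determined by a proper support, and the Bruhat order becomes inclusion of supports. This matches the paper. The gap is in Step 2. Both your description of the fibres of $\mathrm{Adm}(\{\mu'\})\to\mathrm{Adm}(\{\mu'\})_I$ and your description of the minimal representatives are false as soon as $I\neq I_{\mathrm{Iw}}$.

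What you miss is that moving $\tau_0$ through shifts one of the two parabolics: $W_{\mathbb S_a^I}\,w\tau_0\,W_{\mathbb S_a^I}=W_{\mathbb S_a^I}\,w\,W_{\mathbb S_a^I-1}\,\tau_0$ with $\mathbb S_a^I-1=\{s_{j-1}\mid j\notin\overline I\}$. So on the right you may delete letters $s_a$ with $a+1\notin\overline I$, including some with $a\in\overline I$, while interior letters of a block $B_i$ can never be deleted. Concretely, let $i\in\overline I$ with $\ell_i\ge 2$. Then $s_i\tau_0=\tau_0 s_{i+1}$ with $s_{i+1}\in\mathbb S_a^I$, so $[s_i\tau_0]_I=[\tau_0]_I$, although the supports $\{i\}$ and $\emptyset$ still differ after removing $\mathbb Z/n\mathbb Z\setminus\overline I$. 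If moreover $\#\overline I\ge 2$, the element $s_{[i,i+\ell_i-1]}\tau_0$ has the same trace $\{i\}$ on $\overline I$ but is the minimal element of a different double coset: its only left descent $s_i$ is not in $\mathbb S_a^I$, and its only right descent $s_{i+\ell_i-1}$ is not in $\mathbb S_a^I-1$. So your criterion fails in both directions. The class of $\emptyset$ contains several elements with support in $\overline I$, while the class of $\{i\}$ contains none. Your map would even send the proper support $S=\overline I$ to the non-proper set $\overline I$. Writing $\tau'$ on the left, as in your $\tau' c_J$, only moves the shift to the left-hand parabolic; the same phenomenon occurs.

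The correct invariant, which is what the paper's proof uses, is $J=\{i\in\overline I\mid B_i\subseteq\mathrm{Supp}(w)\}$. The minimal element of $W_{\mathbb S_a^I}w\tau_0W_{\mathbb S_a^I}$ is then $v_J=w_J\tau_0$, where $w_J$ is the product of the blocks $s_{[j,j+\ell_j-1]}$ for $j\in J$ and has support $B_J$. You can prove this by peeling descents. For a product of distinct letters in cyclic increasing order with support $S$, $s_a$ is a left (resp.\ right) descent if and only if $a\in S$ and $a-1\notin S$ (resp.\ $a+1\notin S$). On each maximal cyclic interval of $S$, strip leading letters $s_a$ with $a\notin\overline I$ and trailing letters $s_a$ with $a+1\notin\overline I$. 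This leaves exactly the full blocks $B_i$ contained in that interval, and no left descent in $\mathbb S_a^I$ nor right descent in $\mathbb S_a^I-1$ remains. Properness of $J$ then follows from $B_{\overline I}=\mathbb Z/n\mathbb Z\not\subseteq\mathrm{Supp}(w)$. The order is inclusion because $B_J\subseteq B_{J'}\iff J\subseteq J'$, combined with your Step 1. Finally, the Frobenius twist you flag as the main obstacle plays no role here: $\mathrm{Adm}(\{\mu'\})_I$ involves no $\sigma$, and the symmetry of $\overline I$ only reflects the self-duality of the chain $L^I_\bullet$.
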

For every $J\in\mathcal P_I$, we obtain a potentially empty KR stratum $\mathrm{KR}_J$, which is a locally closed subscheme of $\mathcal N_{\mathrm{red}}^{I,\varepsilon}$. We can describe its points as follows.
\begin{introproposition}[Proposition \ref{Prop:PointsOfKRStrata}]\label{IntropropC}
    For $J\in\mathcal P_I$ and $j$ such that $nj$ is even, there is a bijection
    \begin{equation*}
        (\mathrm{KR}_J\cap\mathcal N_{j,\mathrm{red}}^{I,\varepsilon})(\overline{\mathbb F_q})\simeq\left\{(Z_r)_{r\in I_{\mathbb Z}}\in\mathcal N_j^{I,\varepsilon}(\overline{\mathbb F_q})\ \middle|\ \forall r\in I_{\mathbb Z},\quad \overline r\in\overline I\setminus J\iff\tau(Z_{r+\ell_{\overline r}})\subseteq\pi^{j+1}Z_{-r-\underline{\varepsilon}}^{\vee}\right\}.
    \end{equation*}
\end{introproposition}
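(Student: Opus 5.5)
We prove the bijection by pushing everything through the local model diagram, computing the relative position of a point there, and translating the answer back into Dieudonné lattices. The work splits into three steps.

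\emph{Step 1: the local model side.} Let $z=(Z_r)_{r\in I_{\mathbb Z}}\in\mathcal N_j^{I,\varepsilon}(\overline{\mathbb F_q})$. Its image under the local model diagram is the point of the special fiber of the local model given by the Hodge filtration. In covariant Dieudonn\'e theory, the Hodge filtration at index $r$ is the image of $V M_r$ in $M_r/\pi M_r$. Restricted to the $0$-component via the $\mathbb V_{K,0}\oplus\mathbb V_{K,1}$ splitting, this becomes a chain of subspaces. Each subspace has codimension at most one, as condition 1) of Proposition~\ref{IntropropA} together with the signature $(1,n-1)$ forces. We will write these subspaces explicitly using $\tau$ and the duals $Z^{\vee}$. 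Here $\tau$ is the $\sigma^2$-semilinear operator on $\mathbb V_{\breve F,0}$, presumably $\pi^{-1}V^2$ up to normalisation, introduced in the body with the Dieudonn\'e description.

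\emph{Step 2: reduce the KR stratum to a support computation.} The KR stratum of $z$ is the $K_I$-orbit of this point in the local model. By Proposition~\ref{Prop:IsomAdmMuPoset}, an admissible element is determined by its support, so the orbit is encoded by a subset $J\subsetneq\overline I$. Following the Görtz description of orbits in the Drinfeld case $\mu'=(1^{n-1},0)$, I expect $\overline r\notin J$ to correspond to a fixed degeneracy of the transition map between consecutive filtration steps. Concretely, the filtration line at index $r+\ell_{\overline r}$ should be sent into the next lattice of the chain. Working at the level of a single index, this becomes: the rank-one quotient at $r+\ell_{\overline r}$ dies in the appropriate quotient at $r$. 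I would first verify this at Iwahori level, where $\ell_i=1$, by comparing with the explicit description of the Iwahori admissible set. The general $I$ then follows, because passing to $\mathrm{Adm}(\{\mu'\})_I$ just removes the indices outside $\overline I$ and concatenates steps of length $\ell_{\overline r}$.

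\emph{Step 3: translate back into lattice conditions.} This is the main obstacle: checking that the local-model degeneracy condition at $r$ is exactly $\tau(Z_{r+\ell_{\overline r}})\subseteq\pi^{j+1}Z_{-r-\underline\varepsilon}^{\vee}$. The plan is as follows.
\begin{itemize}
\item Use condition 1) to identify $\pi^{j+1}Z^{\vee}_{-r-\underline\varepsilon}$ as the index-one sublattice of $Z_r$ that cuts out the Hodge filtration at $r$.
\item Use duality, $Z\mapsto Z^{\vee}$ combined with $r\mapsto -r-\underline\varepsilon$, to relate $V$ on the $0$-part to $F^{-1}$ on the $1$-part. This converts $VM_{r+\ell}\subseteq(\text{filtration at }r)$ into a statement about $\tau=\pi^{-1}V^2$ restricted to $Z_{r+\ell}$.
\end{itemize}
The delicate points are these:
\begin{itemize}
\item Keep the normalisations of $j$ and of the $\sigma$-sesquilinear pairing consistent, since the dual picks up a $\sigma$-twist.
\item Handle the boundary cases $h_r^\varepsilon\in\{0,n\}$, where $\ell_{\overline r}=n$ and condition 3) gives $Z_{r+n}=\pi Z_r$.
\end{itemize}
Once the index-wise equivalence is established, the set $J$ attached to $z$ is exactly $\{\overline r:\tau(Z_{r+\ell_{\overline r}})\not\subseteq\pi^{j+1}Z^\vee_{-r-\underline\varepsilon}\}$. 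This gives the claimed bijection, using that $\mathrm{KR}_J$ is the preimage of the corresponding orbit.

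Finally, I would check the conditions are well defined modulo $n$. The shift $r\mapsto r+n$ multiplies every lattice involved by $\pi$ (or $\pi^{-1}$ on duals), and $\tau$ is $\pi$-linear up to twist. So the condition depends only on $\overline r$, as the statement implicitly requires.
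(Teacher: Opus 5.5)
Your skeleton matches the paper's: pass through the local model diagram, take the rank-one quotients $Q_r$ of $\Lambda_r\otimes k$ by the $0$-part of the Hodge filtration, and read off $J$ from which transition maps $\delta_r\colon Q_{r+\ell_{\overline r}}\to Q_r$ vanish. However, the one concrete identification you commit to in Step~3, which is where all the content lies, is wrong.

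The index-one sublattice $\pi^{j+1}Z^{\vee}_{-r-\underline{\varepsilon}}\subseteq Z_r$ equals $\mathcal F M_{r,1}$ for $r\in I$; this is how $B=\pi^{j}M_{r,1}^{\dagger}$ is normalised in the proof of Proposition~\ref{Prop:PointsMaximalParahoric}. So this sublattice cuts out the \emph{conjugate} filtration, not the Hodge filtration. The $0$-part of the Hodge filtration is $\mathcal V M_{r,1}=\pi\mathcal F^{-1}M_{r,1}$. Since $\mathcal F^{2}=\pi\tau$, one gets
\begin{equation*}
    \mathcal V M_{r,1}=\pi\mathcal F^{-2}\left(\mathcal F M_{r,1}\right)=\pi^{j+1}\tau^{-1}\left(Z^{\vee}_{-r-\underline{\varepsilon}}\right), \qquad Q_r\simeq Z_r/\pi^{j+1}\tau^{-1}\left(Z^{\vee}_{-r-\underline{\varepsilon}}\right),
\end{equation*}
and the formula for $Q_r$ extends to all $r\in I_{\mathbb Z}$ by self-duality and periodicity. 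Hence $\delta_r=0$ iff $Z_{r+\ell_{\overline r}}\subseteq\pi^{j+1}\tau^{-1}(Z^{\vee}_{-r-\underline{\varepsilon}})$, iff $\tau(Z_{r+\ell_{\overline r}})\subseteq\pi^{j+1}Z^{\vee}_{-r-\underline{\varepsilon}}$. This is the only place where $\tau$ enters.

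The slip is not cosmetic. With your identification the vanishing condition would read $Z_{r+\ell_{\overline r}}\subseteq\pi^{j+1}Z^{\vee}_{-r-\underline{\varepsilon}}$. Dualizing with $(\pi^{j+1}X^{\vee})^{\vee}=\pi^{-j-1}\tau(X)$ turns this into the stated condition at $r'=-r-\underline{\varepsilon}-\ell_{\overline r}$, i.e.\ at $\theta_I(\overline r)$. You would thus end up with the statement for $\theta_I(J)$ in place of $J$. This is exactly the conjugate-filtration phenomenon of Section~\ref{Section4.3}, where points of $\mathrm{KR}_J$ have conjugate filtration in $\mathcal O_{\theta_I(J)}$.

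Relatedly, your guess for $\tau$ is off: $\tau=\pi^{-1}\mathcal F^{2}$, whereas $\pi^{-1}\mathcal V^{2}=\tau^{-1}$. Also, the containment to test is $M_{r+\ell_{\overline r},0}=Z_{r+\ell_{\overline r}}\subseteq\mathcal V M_{r,1}$. It is not $\mathcal V M_{r+\ell_{\overline r}}\subseteq\mathcal V M_r$, which always holds.

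Your Step~2 is unnecessary in the paper's setup. There, $\mathcal O_J$ is \emph{defined} as the locus where $\delta_r$ vanishes exactly for $\overline r\notin J$. Proposition~\ref{Prop:TheOJAreOrbits} shows these loci are the $\overline{\mathcal G_I}$-orbits, and Lemma~\ref{Lem:xJIsAPointOfOJ} matches $\mathcal O_J$ with $[v_J]_I$ through the point $x_J$. Once $Q_r$ and $\delta_r$ are computed correctly as above, the criterion $z\in\mathrm{KR}_J$ iff ($\overline r\notin J\iff\delta_r=0$) holds by construction. No Iwahori-level comparison with G\"ortz's description is needed. Your Iwahori-then-concatenate reduction would also work, but only after the same matching of $v_J$ with a degeneracy pattern that Lemma~\ref{Lem:xJIsAPointOfOJ} supplies.
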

Here, $\tau\coloneq\mathrm{id}\otimes\sigma^2$ is the operator on $\mathbb V_{\breve F,0}$ acting through $\sigma^2$ on the scalars. Consider the involution $\theta_I:\overline I\xrightarrow{\sim}\overline I$ given by
\begin{equation*}
    \theta_I(i)\coloneq-i-\underline{\varepsilon}-\ell_i.
\end{equation*}
It satisfies $\ell_i=\ell_{\theta_I(i)}$ for every $i\in\overline I$, and has zero, one or two fixed points. Its fixed points and its action on $\overline I$ are computed explicitly after Definition \ref{Defi:DefiInvolutionThetaI}. We define
\begin{equation*}
    \mathcal P_{0,I}\coloneq\{J\in\mathcal P_I\mid J\cup\theta_I(J)\subsetneq\overline I\}.
\end{equation*}
It follows from Proposition 5.6 of \cite{gortzFullyHodgeNewton2019} that $\mathrm{KR}_J\not=\emptyset$ if and only if $J\in\mathcal P_{0,I}$. We recover this result in Corollary \ref{Corol:KRStrataNonEmpty} directly from Proposition \ref{IntropropC}, using the ``crucial Lemma'' of \cite{mullerBruhatTitsStratificationBasic2026} (Lemma \ref{Lem:CrucialLemma}) for one direction. Over every non-empty KR stratum $\mathrm{KR}_J$, following the construction of \cite{shenEKORStrataShimura2021}, we construct in Section \ref{Section4.3} a morphism to an associated stack of $G$-zips. Pulling back its orbit stratification gives the EKOR stratification of $\mathrm{KR}_J$. Varying $J$, we obtain the EKOR stratification of $\mathcal N_{\mathrm{red}}^{I,\varepsilon}$, whose indexing set is denoted ${}^I\mathrm{Adm}(\{\mu'\})_0$. Proposition \ref{Prop:DescriptionIndexSetOfEKORStrata} gives a bijection
\begin{equation*}
    {}^I\mathrm{Adm}(\{\mu'\})_0\xrightarrow{\sim}
    \left\{\mathbf c=(c_i)_{i\in\overline I}\in\prod_{i\in\overline I}\{0,\ldots,\ell_i\}\ \middle|\ \exists i\in\overline I,\quad c_i+c_{\theta_I(i)}\leq\ell_i-1\right\}.
\end{equation*}
For a tuple $\mathbf c$ in the right-hand side, we denote the corresponding element by $v_{\mathbf c}\in{}^I\mathrm{Adm}(\{\mu'\})_0$, and define
\begin{equation*}
    J_{\mathbf c}\coloneq\{i\in\overline I\mid c_i=\ell_i\}\in\mathcal P_{0,I}.
\end{equation*}
The corresponding EKOR stratum $\mathrm{EKOR}_{v_{\mathbf c}}$ is non-empty and is contained in $\mathrm{KR}_{J_{\mathbf c}}$.

\subsection*{Comparison with the BT stratification}

In \cite{mullerBruhatTitsStratificationBasic2026}, we defined another stratification of $\mathcal N_{\mathrm{red}}^{I,\varepsilon}$, which we called the \textit{Bruhat-Tits stratification}. Its strata are indexed by pairs $(\mathcal T,\mathbf\Lambda)$, called \textit{Bruhat-Tits indices}, see Definition \ref{Defi:BTIndex}. The corresponding stratum is denoted $\mathcal N_{\mathcal T,\mathbf\Lambda}^{0}$. Essentially, a BT index consists of
\begin{enumerate}
    \item a non-empty $\theta_I$-stable subset $\mathcal T\subseteq\overline I$,
    \item a facet $\mathbf\Lambda$ in the Bruhat-Tits building $\mathrm{BT}(J,F)$, subject to numerical constraints, encoded by $\mathcal T$, on its dimension and on the types of its vertices.
\end{enumerate}
This is the natural generalization of the Bruhat-Tits stratifications defined in \cite{vollaardSupersingularLocusShimura2011} and \cite{choBasicLocusUnitary2018} when $\#I=1$, which justifies the terminology. It is, however, generally \textit{coarser} than the (weak) Bruhat-Tits stratification of fully Hodge-Newton decomposable affine Deligne-Lusztig varieties defined in \cite{gortzBasicLociCoxeter2024}. Indeed, each BT stratum $\mathcal N_{\mathcal T,\mathbf\Lambda}^{0}$ is further stratified by a family of fine Deligne-Lusztig varieties. The resulting refinement of $\mathcal N_{\mathrm{red}}^{I,\varepsilon}$ is what we call the \textit{fine Bruhat-Tits stratification}. As observed in \cite{mullerBruhatTitsStratificationBasic2026}, the BT and fine BT stratifications agree if and only if the triple $(G,\{\mu\},K_I)$ is of \textit{Coxeter type} in the sense of \cite{gortzBasicLociCoxeter2024}. In our setting, this occurs precisely when $(I = \{0\} \text{ and } \varepsilon = 1)$, or $(I = \{\nu_{\varepsilon}\} \text{ and } 2\nu_{\varepsilon} + \underline{\varepsilon} = n)$, or when $(I = \{0, \nu_{\varepsilon}\} \text{, } n \text{ is even and } \varepsilon = 1)$. The first two cases correspond to $K_I$ being hyperspecial.\\
The group $J(F)$ acts on $\mathcal N^{I,\varepsilon}$, and the KR and EKOR strata are invariant under this action. Via its action on its own building, $J(F)$ also acts compatibly on the set of BT indices. If $\mathcal T$ is fixed, the $J(F)$-orbits of BT indices of the form $(\mathcal T,\mathbf\Lambda)$ are classified by
\begin{equation*}
    \mathrm{Num}(\mathcal T)\coloneq\left\{\mathbf m=(m_i)_{i\in\mathcal T}\in\mathbb Z_{\geq0}^{\mathcal T}\ \middle|\ \forall i\in\mathcal T,\quad m_i+m_{\theta_I(i)}\leq\ell_i-1\right\}.
\end{equation*}
For every $\mathbf m\in\mathrm{Num}(\mathcal T)$, we fix a representative $\mathbf\Lambda(\mathbf m)$ of the corresponding $J(F)$-orbit. We also define
\begin{equation*}
    \mathcal B_{\mathcal T}\coloneq\left\{\mathbf b=(b_i)_{i\in\overline I\setminus\mathcal T}\ \middle|\ \forall i\in\overline I\setminus\mathcal T,\quad 0\leq b_i\leq\ell_i\text{ and }b_i+b_{\theta_I(i)}\geq\ell_i\right\}.
\end{equation*}
Given $\mathbf m\in\mathrm{Num}(\mathcal T)$ and $\mathbf b\in\mathcal B_{\mathcal T}$, define the tuple $\mathbf c(\mathbf b)=(c_i)_{i\in\overline I}$ by
\begin{equation*}
    c_i \coloneq \begin{cases}
        m_i & \text{if } i \in \mathcal T,\\
        b_i & \text{if } i \in \overline I \setminus \mathcal T.
    \end{cases}
\end{equation*}
One of the main results of \cite{mullerBruhatTitsStratificationBasic2026}, recalled in Theorem \ref{Theo:IsomorphismBTStrataAndFineDLVarieties}, is the construction of an isomorphism
\begin{equation*}
    \mathcal N_{\mathcal T,\mathbf\Lambda(\mathbf m)}^{0}\xrightarrow{\sim}\bigsqcup_{\mathbf b\in\mathcal B_{\mathcal T}}X_{\mathcal K_{\mathcal T,\mathbf m}}\{w_{\mathbf c(\mathbf b)}^{-1}\},
\end{equation*}
where the varieties on the right-hand side are explicitly defined fine Deligne-Lusztig varieties in a partial flag variety of type $\mathcal K_{\mathcal T,\mathbf m}$. We make the corresponding fine BT strata $J(F)$-invariant by setting
\begin{equation*}
    X_{\mathcal T,\mathbf m,\mathbf b}\coloneq\bigsqcup_{g\in J(F)/\mathrm{Stab}_{J(F)}(\mathbf\Lambda(\mathbf m))}g\cdot X_{\mathcal K_{\mathcal T,\mathbf m}}\{w_{\mathbf c(\mathbf b)}^{-1}\}.
\end{equation*}
Proposition \ref{Prop:BijectionIndexSetsBTEKOR} gives a bijection
\begin{equation*} \hspace{-1cm}
    \left\{(\mathcal T,\mathbf m,\mathbf b)\ \middle|\ \begin{array}{l}
        \mathcal T\subseteq\overline I\text{ is non-empty and }\theta_I\text{-stable},\\
        \mathbf m\in\mathrm{Num}(\mathcal T),\\
        \mathbf b\in\mathcal B_{\mathcal T}
    \end{array}\right\}
    \xrightarrow{\sim}{}^I\mathrm{Adm}(\{\mu'\})_0,
    \qquad
    (\mathcal T,\mathbf m,\mathbf b)\longmapsto v_{\mathbf c(\mathbf b)}.
\end{equation*}
Our main comparison theorem states that this bijection identifies the two stratifications.
\begin{introtheorem}[Theorem \ref{Theo:ComparisonBTEKOR}]\label{IntrotheoremG}
    Let $\mathcal T\subseteq\overline I$ be a non-empty $\theta_I$-stable subset, let $\mathbf m\in\mathrm{Num}(\mathcal T)$, let $\mathbf b\in\mathcal B_{\mathcal T}$, and write $\mathbf c=\mathbf c(\mathbf b)$. Then
    \begin{equation*}
        X_{\mathcal T,\mathbf m,\mathbf b}=\mathrm{EKOR}_{v_{\mathbf c}}.
    \end{equation*}
\end{introtheorem}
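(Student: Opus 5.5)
Both $X_{\mathcal T,\mathbf m,\mathbf b}$ and $\mathrm{EKOR}_{v_{\mathbf c}}$ are locally closed and $J(F)$-invariant. Their points can be read off from Dieudonné lattices $z=(Z_r)_{r\in I_{\mathbb Z}}$ as in Proposition \ref{IntropropA}. So I would argue on $\overline{\mathbb F_q}$-points, and then use that both sides are reduced locally closed subschemes of $\mathcal N_{\mathrm{red}}^{I,\varepsilon}$ to upgrade a bijection of points to an equality of subschemes. By Proposition \ref{Prop:BijectionIndexSetsBTEKOR}, the left-hand sides for all triples $(\mathcal T,\mathbf m,\mathbf b)$ partition $\mathcal N_{\mathrm{red}}^{I,\varepsilon}$; this is the fine BT stratification. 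The right-hand sides for all $v\in{}^I\mathrm{Adm}(\{\mu'\})_0$ also partition it. Hence it suffices to prove the inclusion $X_{\mathcal T,\mathbf m,\mathbf b}\subseteq\mathrm{EKOR}_{v_{\mathbf c(\mathbf b)}}$ for every triple; equality then follows formally. By $J(F)$-invariance, it is enough to treat the points of the single fine Deligne-Lusztig variety $X_{\mathcal K_{\mathcal T,\mathbf m}}\{w_{\mathbf c(\mathbf b)}^{-1}\}\subseteq\mathcal N^0_{\mathcal T,\mathbf\Lambda(\mathbf m)}$.

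The first step is to locate the KR stratum. Take a point $z$ in this fine DL variety. Through the isomorphism of Theorem \ref{Theo:IsomorphismBTStrataAndFineDLVarieties}, $z$ corresponds to a partial flag in the finite-dimensional $\mathbb F_{q^2}$-hermitian space attached to $\mathbf\Lambda(\mathbf m)$. Its relative position with its Frobenius ($\tau$-)twist is $w_{\mathbf c(\mathbf b)}^{-1}$. I would translate the criterion of Proposition \ref{IntropropC}, namely the inclusion $\tau(Z_{r+\ell_{\overline r}})\subseteq\pi^{j+1}Z^\vee_{-r-\underline\varepsilon}$, into an incidence condition between the flag and its twist. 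The expected outcome is that this inclusion holds exactly when $c_{\overline r}<\ell_{\overline r}$, so that $z\in\mathrm{KR}_{J_{\mathbf c}}$. For $i\in\mathcal T$ this should be automatic, since $m_i+m_{\theta_I(i)}\le\ell_i-1$ forces $m_i<\ell_i$. For $i\notin\mathcal T$ it should be encoded by whether $b_i=\ell_i$.

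The second step is to identify the EKOR stratum inside $\mathrm{KR}_{J_{\mathbf c}}$, and this is the main obstacle. It requires making explicit the $G$-zip attached to $z$ by the construction of Section \ref{Section4.3}. Concretely, this is the filtered module built from the graded pieces $Z_r/Z_{r+\ell_{\overline r}}$ together with the images of $F$ and $V$, that is, of $\tau$ and duality. One must then compute its zip datum class in terms of the integers $c_i$. The plan is to show that, for each $i\in\overline I$, the zip invariant attached to the factor indexed by $i$ is the rank
\[
c_i=\dim\bigl(\text{image of }\tau(Z_{r+\ell_{\overline r}})\text{ in }Z_r/Z_{r+\ell_{\overline r}}\ \text{meeting the dual filtration}\bigr),
\]
where $\overline r=i$, or a similar relative-position rank. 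One would then match this with the normal form of $v_{\mathbf c}$ from Proposition \ref{Prop:DescriptionIndexSetOfEKORStrata}. For $i\in\mathcal T$ this rank should recover $m_i$ from the type of the vertex $\Lambda_i$ of the facet. For $i\notin\mathcal T$ it should recover $b_i$ from the DL relative position $w_{\mathbf c(\mathbf b)}^{-1}$.

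I would first carry out the computation in the Coxeter-type cases (hyperspecial $I=\{0\}$), where the result reduces to the Vollaard-Wedhorn comparison with Ekedahl-Oort strata, in order to fix conventions. I would then handle general $I$ factor by factor along the chain. The factors are coupled only through $\theta_I$-pairs $(i,\theta_I(i))$, which form $\mathrm{GL}$- or unitary blocks of the Levi of the zip datum. Because of this, the whole computation reduces to rank-$\ell_i$ linear algebra on each pair. Once the $\mathbf c$-invariant of every point of $X_{\mathcal K_{\mathcal T,\mathbf m}}\{w_{\mathbf c(\mathbf b)}^{-1}\}$ is shown to equal $\mathbf c(\mathbf b)$, the partition argument above concludes the proof.
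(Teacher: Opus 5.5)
Your reductions are sound and match the paper's. The partition argument reduces everything to the inclusion $X_{\mathcal T,\mathbf m,\mathbf b}\subseteq\mathrm{EKOR}_{v_{\mathbf c}}$. Strictly, the EKOR strata are a priori indexed by the larger set ${}^{\mathbb S_a^I}\widetilde W\cap\mathrm{Adm}(\{\mu'\})^I_0$, but disjointness plus injectivity of $(\mathcal T,\mathbf m,\mathbf b)\mapsto v_{\mathbf c}$ is all you need. $J(F)$-invariance reduces you to $X_{\mathcal K_{\mathcal T,\mathbf m}}\{w_{\mathbf c}^{-1}\}$, and your first step is Theorem \ref{Theo:ComparisonFineBTStrataAndKR}.

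The gap is the second step. It is the entire content of the theorem, and you only state what you hope to show. Moreover, the invariant you propose cannot work as formulated. Both $Z_r$ and $\tau(Z_r)$ contain $\pi^{j+1}Z_{-r-\underline{\varepsilon}}^\vee$ with index one, so $[Z_r+\tau(Z_r):Z_r]\le 1$. Hence any rank built from a single application of $\tau$ to $Z_{r+\ell_{\overline r}}$ inside $Z_r/Z_{r+\ell_{\overline r}}$ is at most one, whereas $c_i$ ranges over $\{0,\ldots,\ell_i\}$. Already for $I=\{0\}$ and $\varepsilon=1$, one has $c_0=m_0=[\Lambda_{Z_0}:Z_0]$, which takes every value up to $\lfloor (n-1)/2\rfloor$ and depends on all the iterates $T_d(Z_0)$. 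In general, the $\mathbf E_J$-orbit of the zip of Section \ref{Section4.3} comes from an iterative procedure in which $\sigma_J$ couples the blocks $i$ and $\theta_I(i)$. You would still have to match that orbit, through a choice of frame, with $v_{J_{\mathbf c}}\prod_{i\in\overline I\setminus J_{\mathbf c}}u_{i,c_i}$. Neither ``factor by factor'' nor the Vollaard--Wedhorn case supplies this.

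The missing idea is to avoid $G$-zips altogether by going down to Iwahori level.
\begin{itemize}
\item Since $z$ lies in a fine Deligne--Lusztig variety, its partial flag lifts to a full flag $(\mathcal F_r)_{r\in\widehat{\mathcal K}_{\mathcal T,\mathbf m}}\in X(w_{\mathbf c}^{-1})(k)$.
\item Pick $g\in\mathbf J_{\mathcal T,\mathbf m}(k)$ carrying the standard flag to it, lift it by smoothness to $\widetilde g\in\mathcal J_{\mathcal T,\mathbf m}(W_{\mathcal O_F}(k))$, and set $\widetilde Z_r=\widetilde g(\Lambda_r\otimes W_{\mathcal O_F}(k))$. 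This is a point $\widetilde z\in\mathcal N_0^{I_{\mathrm{Iw}},\varepsilon}(k)$ lying over $z$.
\item At Iwahori level every $\ell$ equals $1$. So Proposition \ref{Prop:PointsOfKRStrata} gives one inclusion test for each element $i+d$ of $B_i$, which is exactly the $\ell_i$ tests per block that your single rank cannot see.
\item Read these tests off the relative position $w_{\mathbf c}$. For $i\in\mathcal T$ and $d>m_i$, the test holds because the corresponding simple reflections do not occur in $w_{\mathbf c}$, for instance on the pieces $H_O$ and $H_O'$. The result is $J^{\mathrm{Iw}}=\bigsqcup_{i}\{i,\ldots,i+c_i-1\}$, i.e.\ $v_{J^{\mathrm{Iw}}}=v_{\mathbf c}$.
\item KR and EKOR agree at Iwahori level, and $v_{\mathbf c}\in{}^{\mathbb S_a^I}\widetilde W$. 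The change-of-parahoric result (\cite{shenEKORStrataShimura2021} Proposition 3.5.2, \cite{heStratificationsReductionShimura2017} Proposition 6.11) maps that Iwahori KR stratum onto $\mathrm{EKOR}_{v_{\mathbf c}}$, so $z\in\mathrm{EKOR}_{v_{\mathbf c}}$.
\end{itemize}
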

In particular, $X_{\mathcal T,\mathbf m,\mathbf b}$ is contained in the KR stratum indexed by
\begin{equation*}
    J_{\mathbf c}=\{i\in\overline I\mid c_i=\ell_i\}=\{i\in\overline I\setminus\mathcal T\mid b_i=\ell_i\}.
\end{equation*}

\subsection*{Closure relations and applications}

The principal application of Theorem \ref{IntrotheoremG} is an explicit computation of the closure relations of the EKOR stratification. From what we can tell, the closure order on the quotient stacks used to define the KR and EKOR strata does not by itself give the closure relations on the Rapoport-Zink space: the morphisms from the basic Rapoport-Zink space to the corresponding local-model and $G$-zip quotient stacks are indeed not smooth. We instead use Theorem \ref{IntrotheoremG} to reduce the problem to the closure relations for fine Deligne-Lusztig varieties, and then carry out the resulting combinatorial computation. \\
We define a partial order $\mathrel{\dot{\leq}}$ on ${}^I\mathrm{Adm}(\{\mu'\})_0$ as follows. For $v_{\mathbf c},v_{\mathbf c'}\in{}^I\mathrm{Adm}(\{\mu'\})_0$, we write $v_{\mathbf c'}\mathrel{\dot{\leq}}v_{\mathbf c}$ if, for every $i\in\overline I$,
\begin{equation*}
    (c_i'\leq c_i\text{ and }c_{\theta_I(i)}'\leq c_{\theta_I(i)})
    \quad\text{or}\quad
    (c_i'+c_{\theta_I(i)}'\leq\ell_i-1\text{ and }c_i+c_{\theta_I(i)}\geq\ell_i).
\end{equation*}
The two conditions are invariant under replacing $i$ by $\theta_I(i)$, so it is enough to check them once on every $\theta_I$-orbit. On a singleton orbit this is the usual order on $\{0,\ldots,\ell_i\}$, whereas on an orbit of size two it is the coordinatewise order on the finite grid $\{0, \ldots, \ell_i\}^2$, together with the extra condition that any point below the diagonal $x+y = \ell_i$ is smaller than any point lying on or above the diagonal. Our closure theorem is the following.
\begin{introtheorem}[Theorem \ref{Theo:ClosureFineBTAndEKORStrata}]\label{IntrotheoremClosure}
    Let $v_{\mathbf c}\in{}^I\mathrm{Adm}(\{\mu'\})_0$. Then
    \begin{equation*}
        \overline{\mathrm{EKOR}_{v_{\mathbf c}}}
        =\bigsqcup_{\substack{v_{\mathbf c'}\in{}^I\mathrm{Adm}(\{\mu'\})_0\\v_{\mathbf c'}\mathrel{\dot{\leq}}v_{\mathbf c}}}
        \mathrm{EKOR}_{v_{\mathbf c'}}.
    \end{equation*}
    Equivalently, if $\mathbf c$ corresponds to $(\mathcal T,\mathbf m,\mathbf b)$, then
    \begin{equation*}
        \overline{X_{\mathcal T,\mathbf m,\mathbf b}}
        =\bigsqcup_{(\mathcal T',\mathbf m',\mathbf b')}X_{\mathcal T',\mathbf m',\mathbf b'},
    \end{equation*}
    where $(\mathcal T',\mathbf m',\mathbf b')$ runs over the triples such that
    \begin{equation*}
        \mathcal T\subseteq\mathcal T', \qquad m_i'\leq m_i \quad \text{for every } i \in \mathcal T, \qquad b_i'\leq b_i \quad \text{for every } i \in \overline I \setminus \mathcal T'.
    \end{equation*}
\end{introtheorem}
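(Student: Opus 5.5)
By Theorem \ref{IntrotheoremG} the two formulations are equivalent, since $X_{\mathcal T,\mathbf m,\mathbf b}=\mathrm{EKOR}_{v_{\mathbf c}}$. So I will prove the second one, about closures of fine BT strata, and then translate the combinatorics back to the order $\mathrel{\dot{\leq}}$. The translation is a bookkeeping step. Write $\mathcal T(\mathbf c)$ for the set of $i$ with $c_i+c_{\theta_I(i)}\leq \ell_i-1$; it is $\theta_I$-stable and non-empty. I will check orbit by orbit that $\mathcal T\subseteq\mathcal T'$, $m'_i\leq m_i$ on $\mathcal T$ and $b'_i\leq b_i$ on $\overline I\setminus\mathcal T'$ together say exactly the following. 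Either the orbit lies in $\mathcal T'\setminus\mathcal T$, in which case $\mathbf c'$ is below the diagonal and $\mathbf c$ is on or above it. Or both $\mathbf c$ and $\mathbf c'$ lie on the same side of the diagonal and are coordinatewise comparable. This is precisely the definition of $\mathrel{\dot{\leq}}$.

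For the geometric statement I work on a single BT stratum closure and use $J(F)$-equivariance. By \cite{mullerBruhatTitsStratificationBasic2026}, the closure $\mathcal N_{\mathcal T,\mathbf\Lambda}$ of a BT stratum is a closed subscheme isomorphic to a projective (generalized) Deligne-Lusztig type variety. It is the disjoint union of the $\mathcal N^0_{\mathcal T',\mathbf\Lambda'}$ over BT indices $(\mathcal T',\mathbf\Lambda')\leq(\mathcal T,\mathbf\Lambda)$, meaning $\mathcal T\subseteq\mathcal T'$ with $\mathbf\Lambda'$ a suitable face or refinement. On the numerical side, this relation $(\mathcal T',\mathbf\Lambda')\leq(\mathcal T,\mathbf\Lambda)$ should correspond to $m'_i\leq m_i$ for $i\in\mathcal T$. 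Inside the fixed partial flag variety of type $\mathcal K_{\mathcal T,\mathbf m}$, the isomorphism of Theorem \ref{Theo:IsomorphismBTStrataAndFineDLVarieties} writes the open stratum $\mathcal N^0_{\mathcal T,\mathbf\Lambda(\mathbf m)}$ as $\bigsqcup_{\mathbf b}X_{\mathcal K}\{w_{\mathbf c(\mathbf b)}^{-1}\}$. The closure of one such piece inside the flag variety is then given by the closure relation for fine Deligne-Lusztig varieties: $\overline{X_{\mathcal K}\{w\}}=\bigsqcup X_{\mathcal K}\{w'\}$, with $w'$ running over elements of ${}^{\mathcal K}W$ such that $w'\preceq w$ in the partial order of He. So I need to compute $\preceq$ on the explicit elements $w_{\mathbf c(\mathbf b)}^{-1}$ and on the boundary elements attached to deeper BT strata.

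The key computation, and the main obstacle, is identifying He's order $\preceq$ on these explicit Weyl group elements with the coordinatewise order. There are two regimes. Within the open part, I expect $w_{\mathbf c(\mathbf b')}^{-1}\preceq w_{\mathbf c(\mathbf b)}^{-1}$ if and only if $b'_i\leq b_i$ for all $i$. The boundary of the flag variety, where the flag degenerates and extra indices enter $\mathcal T'$, should correspond to $\mathcal T'\supsetneq\mathcal T$ with arbitrary $b'$ allowed on the new indices, since those new coordinates are below the diagonal. My first approach is to use the explicit description of $w_{\mathbf c}$ as products of cycles on each $\theta_I$-orbit, with lengths given by $c_i$. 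Then $\preceq$ reduces to the Bruhat order on minimal length coset representatives, compared orbit by orbit. The reduction uses that $w_{\mathbf c}$ is a $\sigma$-twisted Coxeter-like element, so He's order coincides with the Bruhat order on ${}^{\mathcal K}W$. If that fails, I will instead argue directly on lattices via Proposition \ref{IntropropA} and Proposition \ref{IntropropC}. For necessity, the conditions $\tau(Z_{r+\ell})\subseteq\pi^{j+1}Z^\vee$ and the relevant index/rank inequalities are closed, so $b_i$ and $m_i$ can only drop under specialization, and $\mathcal T$ can only grow. For sufficiency, I will construct explicit one-parameter families of lattice chains that degenerate a point of $X_{\mathcal T,\mathbf m,\mathbf b}$ to a given point of $X_{\mathcal T',\mathbf m',\mathbf b'}$, decreasing one coordinate at a time.

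Finally, I take the union over $g\in J(F)/\mathrm{Stab}(\mathbf\Lambda(\mathbf m))$. This family of translates is locally finite, because the BT strata closures form a locally finite covering of $\mathcal N^{I,\varepsilon}_{\mathrm{red}}$. Hence the closure commutes with this union, which gives the stated decomposition of $\overline{X_{\mathcal T,\mathbf m,\mathbf b}}$. The disjointness of the decomposition follows from that of the EKOR stratification.
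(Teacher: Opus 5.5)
Your architecture is the paper's. You pass to a single translate $X_{\mathcal K_{\mathcal T,\mathbf m}}\{w_{\mathbf c}^{-1}\}$ using $J(F)$-equivariance and local finiteness, and you invoke He's closure relation for fine Deligne--Lusztig varieties in $\mathbf J_{\mathcal T,\mathbf m,\overline{\mathbb F_q}}/\mathbf P_{\mathcal K_{\mathcal T,\mathbf m}}$. Your translation between the triple conditions and $\mathrel{\dot{\leq}}$ is also fine. But the step you rightly call the crux rests on a false claim.

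He's order is $w'\preceq w\iff\exists\, x\in\mathbf W_{\mathcal K_{\mathcal T,\mathbf m}}$ with $F(x)w'x^{-1}\leq w$, and here it is strictly coarser than the Bruhat order. The blocks $w_i^{\mathcal T,\mathbf m}(\mathbf d)$ have pairwise disjoint supports, so the Bruhat order only yields $d_i\leq c_i$ for all $i$. That misses the second disjunct of $\mathrel{\dot{\leq}}$. It also contradicts your own, correct, expectation that coordinates on orbits entering $\mathcal T'$ are arbitrary below the diagonal.

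Concretely, take an orbit $\{i,\theta_I(i)\}$ with $\ell_i=2$, so $\mathbf W_i\simeq\mathfrak S_3$. The piece with $(d_i,d_{\theta_I(i)})=(0,1)$ lies in the closure of the piece with $(c_i,c_{\theta_I(i)})=(2,0)$. A witness is $x_i=s_1$, $x_{\theta_I(i)}=1$: on the $\theta_I(i)$-side $F(x_i)$ corresponds to $s_2$ and cancels the block $u_1=s_2$, while on the $i$-side $x_i^{-1}=s_1\leq u_2=s_1s_2$.

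Your fallback does not repair this. Closedness of $\tau(Z_{r+\ell_{\overline r}})\subseteq\pi^{j+1}Z^{\vee}_{-r-\underline{\varepsilon}}$ only controls the KR index $J_{\mathbf c}$ (whether $c_i=\ell_i$) and the BT type. It says nothing about the values $b_i<\ell_i$. Moreover, degenerating ``one coordinate at a time'' cannot reach $(0,1)$ from $(2,0)$, because a coordinate must increase when the orbit crosses the diagonal.

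What is missing is the actual computation of the twisted order. First localize, as in Lemma~\ref{Lem:LocalizationFineDLTwistedInequality}, to the groups $\mathbf W_i\simeq\mathfrak S_{\ell_i+1}$ for $i\notin\mathcal T$. There the component $x_i$ of $x$ lies in $\mathbf W_i^-\simeq H$, and $F$ maps $\mathbf W^-_{\theta_I(i)}$ onto $\mathbf W_i^+$ through $\mathrm{Ad}(\omega_0)$. So the two local inequalities on an orbit of size two are coupled through the pair $(x_i,x_{\theta_I(i)})$. You then need Proposition~\ref{Prop:CombinatorialProposition}, whose ``only if'' direction (the length count plus the cycle analysis of $Q$) carries the real content. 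Singleton orbits reduce to $d_i\leq c_i$ by a length argument, and indices in $\mathcal T$ are automatic.

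You also silently use a second fact. The boundary pieces $X_{\mathcal K_{\mathcal T,\mathbf m}}\{(w^{\mathcal T,\mathbf m}_{\mathbf d})^{-1}\}$ of the closed BT stratum live in the flag variety of $\mathbf J_{\mathcal T,\mathbf m}$, not that of $\mathbf J_{\mathcal T',\mathbf m'}$. You need them to be exactly $\overline{\mathcal N^0_{\mathcal T,\mathbf\Lambda(\mathbf m)}}\cap\mathrm{EKOR}_{v_{\mathbf d}}$. This requires the decomposition of closed BT strata from \cite{mullerBruhatTitsStratificationBasic2026} together with an Iwahori-lift computation as in the proof of Theorem~\ref{IntrotheoremG}; the paper does this in Lemma~\ref{Lem:BoundaryDecompositionClosedBT}.
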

We also obtain the closure of every KR stratum. For $v_{\mathbf c}\in{}^I\mathrm{Adm}(\{\mu'\})_0$, set
\begin{equation*}
    \mathcal T_{\mathbf c}\coloneq\{i\in\overline I\mid c_i+c_{\theta_I(i)}\leq\ell_i-1\}.
\end{equation*}
Then Corollary \ref{Corol:ClosureKRStrata} gives, for every $J\in\mathcal P_{0,I}$,
\begin{equation*}
    \overline{\mathrm{KR}_J} = \bigsqcup_{\substack{v_{\mathbf c}\in{}^I\mathrm{Adm}(\{\mu'\})_0\\
    \mathcal T_{\mathbf c}\cap(\overline I\setminus(J\cup\theta_I(J)))\not=\emptyset}\\
    J_{\mathbf c}\subseteq J} \mathrm{EKOR}_{v_{\mathbf c}}.
\end{equation*}

We finish by recording some further consequences of the comparison theorem. First, an EKOR stratum is non-empty if and only if its index belongs to ${}^I\mathrm{Adm}(\{\mu'\})_0$. In that case, it is smooth and has pure dimension equal to the length $\ell(v_{\mathbf c})$ of its index, as stated in Corollary \ref{Corol:NonEmptyEKORStrata}. This also determines which EKOR strata of the corresponding Shimura variety are contained in the basic locus. In particular, it gives the following criterion for an entire KR stratum to be basic, that is entirely contained in the basic locus.
\begin{introproposition}[Proposition \ref{Prop:EntirelyBasicKRStrata}]
    Let $J\in\mathcal P_{0,I}$. The KR stratum indexed by $J$ is basic if and only if there exists $i \in \overline I \setminus (J\cup\theta_I(J))$ such that $\ell_i = 1$.
\end{introproposition}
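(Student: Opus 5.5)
The plan is to reduce the statement to a purely combinatorial question about the index set of EKOR strata, and to use that the local Shimura datum is fully Hodge-Newton decomposable. We work on the Shimura variety whose basic locus is uniformized by $\mathcal N^{I,\varepsilon}$.

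First, the KR stratum indexed by $J$ in the special fiber of the Shimura variety is the disjoint union of the EKOR strata whose index projects to $J$. Extending the parametrization of Proposition \ref{Prop:DescriptionIndexSetOfEKORStrata} to the full index set ${}^I\mathrm{Adm}(\{\mu'\})^{K}$, these indices should correspond to all tuples $\mathbf c=(c_i)_{i\in\overline I}$ with $0\le c_i\le \ell_i$ and $J_{\mathbf c}=J$, that is, $c_i=\ell_i$ exactly when $i\in J$. The condition $\exists i,\ c_i+c_{\theta_I(i)}\le \ell_i-1$ is not imposed here. I expect this to follow from the same $G$-zip computation that produced Proposition \ref{Prop:DescriptionIndexSetOfEKORStrata}, with the basicness condition dropped.

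Second, we need two inputs. By \cite{gortzFullyHodgeNewton2019}, since we are in the fully Hodge-Newton decomposable case, each EKOR stratum of the Shimura variety is either contained in the basic locus or disjoint from it. By the He-Rapoport axioms \cite{heStratificationsReductionShimura2017}, verified for these models, every EKOR stratum of the Shimura variety is non-empty. Combining these with Corollary \ref{Corol:NonEmptyEKORStrata}, an EKOR stratum of the Shimura variety lies in the basic locus exactly when its index lies in ${}^I\mathrm{Adm}(\{\mu'\})_0$. Hence $\mathrm{KR}_J$ is basic if and only if \emph{every} tuple $\mathbf c$ with $J_{\mathbf c}=J$ satisfies $c_i+c_{\theta_I(i)}\le \ell_i-1$ for some $i$.

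Third, we settle this combinatorial condition, using $\ell_{\theta_I(i)}=\ell_i$ throughout.
\begin{itemize}
\item If $i\in J\cup\theta_I(J)$, then one of $c_i,c_{\theta_I(i)}$ equals $\ell_i$, so $c_i+c_{\theta_I(i)}\ge\ell_i$ and the inequality fails at $i$ for every such $\mathbf c$.
\item If $i\notin J\cup\theta_I(J)$, then $c_i$ and $c_{\theta_I(i)}$ range freely over $\{0,\ldots,\ell_i-1\}$. This also covers the case $\theta_I(i)=i$, where the sum is $2c_i$.
\end{itemize}
Suppose some such $i$ has $\ell_i=1$. Then $c_i=c_{\theta_I(i)}=0$ is forced, so every $\mathbf c$ with $J_{\mathbf c}=J$ satisfies the inequality at $i$, and $\mathrm{KR}_J$ is basic. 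Conversely, suppose every $i\notin J\cup\theta_I(J)$ has $\ell_i\ge2$. Take the tuple with $c_i=\ell_i$ for $i\in J$ and $c_i=\ell_i-1$ otherwise. Then for each $i\notin J\cup\theta_I(J)$ we get $c_i+c_{\theta_I(i)}=2\ell_i-2>\ell_i-1$. So this index lies outside ${}^I\mathrm{Adm}(\{\mu'\})_0$, and its non-empty EKOR stratum inside $\mathrm{KR}_J$ is non-basic.

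The main obstacle is the first step: identifying the full set of EKOR indices above $J$ on the Shimura-variety side, beyond the basic ones. The claim is that the non-basic indices lying over $J$ are exactly the tuples with $c_i+c_{\theta_I(i)}\ge\ell_i$ for all $i$. I would first try to obtain this from the description of ${}^I\mathrm{Adm}(\{\mu'\})$ as $\mathcal P_I$ together with the $G$-zip orbit count of Section \ref{Section4.3}. That orbit count should parametrize ${}^K W\cap$ each KR fiber by all tuples $\mathbf c$ with $c_i\le \ell_i$ and $J_{\mathbf c}=J$. A consistency check is $\mathcal P_{0,I}$: when $J\cup\theta_I(J)=\overline I$ no such tuple is basic, matching Corollary \ref{Corol:KRStrataNonEmpty}.
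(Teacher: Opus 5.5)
Your argument is correct, but it handles the ``basic'' direction differently from the paper. The paper never lists all EKOR strata inside $\mathrm{KR}_J$. Instead it uses \cite{shenEKORStrataShimura2021} Theorem 3.4.12: the $J$-ordinary locus is open and dense in $\mathrm{KR}_J$. This locus is the unique top-dimensional EKOR stratum, with index $v_{\mathbf c^{\mathrm{top}}}$, where $c^{\mathrm{top}}_i=\ell_i$ for $i\in J$ and $c^{\mathrm{top}}_i=\ell_i-1$ otherwise. Since the basic locus is closed, $\mathrm{KR}_J$ is basic if and only if this single stratum is. The proof then reduces to testing whether $\mathbf c^{\mathrm{top}}$ satisfies $c_i+c_{\theta_I(i)}\le\ell_i-1$ for some $i$.

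Your ``only if'' direction is exactly this test. Your ``if'' direction replaces density and closedness with an exhaustive check over every index lying above $J$. This requires the full parametrization of those indices, but it gives more: it tells you exactly which EKOR strata inside $\mathrm{KR}_J$ are basic.

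The step you single out as the main obstacle is already in the paper. Corollary \ref{Corol:DescriptionMathbfJreducedElements}, the bijection \eqref{Eq:BijectionKRPosetEKORPoset}, and the support computation in the proof of Proposition \ref{Prop:DescriptionIndexSetOfEKORStrata} together show that the EKOR indices over $J$ are precisely the tuples with $c_i=\ell_i$ for $i\in J$ and $c_i\in\{0,\ldots,\ell_i-1\}$ for $i\notin J$. Since $J\in\mathcal P_{0,I}$, all of them lie in ${}^{\mathbb S_a^I}\widetilde W\cap\mathrm{Adm}(\{\mu'\})_0^I$, so Corollary \ref{Corol:NonEmptyEKORStrata} applies to each one.

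Your remaining inputs are the same ones the paper's argument relies on. These are: EKOR strata of the Shimura variety are non-empty; each lies in a single Newton stratum by full Hodge--Newton decomposability; and the EKOR stratification is compatible with Rapoport--Zink uniformization.
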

Finally, the smoothness of the closures of the BT strata proved in \cite{mullerBruhatTitsStratificationBasic2026} yields the following statement about certain EKOR strata.
\begin{introproposition}[Proposition \ref{Prop:SmoothnessIrreducibleComponentsClosureEKOR}]
    Let $v_{\mathbf c}\in{}^I\mathrm{Adm}(\{\mu'\})_0$. Suppose that, for every $i\in\overline I$,
    \begin{equation*}
        c_i\leq\ell_i-1\implies c_i+c_{\theta_I(i)}\leq\ell_i-1.
    \end{equation*}
    Then every irreducible component of the Zariski closure $\overline{\mathrm{EKOR}_{v_{\mathbf c}}}$ is smooth.
\end{introproposition}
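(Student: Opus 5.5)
The plan is to reduce the statement to a property of the BT strata whose closures were shown to be smooth in \cite{mullerBruhatTitsStratificationBasic2026}. Write $\mathbf c$ as $\mathbf c(\mathbf b)$ for the triple $(\mathcal T,\mathbf m,\mathbf b)$ attached to it by Proposition \ref{Prop:BijectionIndexSetsBTEKOR}. Thus $\mathcal T=\mathcal T_{\mathbf c}$, $m_i=c_i$ on $\mathcal T$, and $b_i=c_i$ off $\mathcal T$.

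\textbf{Step 1: reading the hypothesis.} I first show that the hypothesis says exactly that $b_i=\ell_i$ for every $i\in\overline I\setminus\mathcal T$. Take $i\notin\mathcal T$. Then $c_i+c_{\theta_I(i)}\geq\ell_i$, so the contrapositive of the hypothesis forces $c_i=\ell_i$. Conversely, $\theta_I$-stability of $\mathcal T$ and $\ell_i=\ell_{\theta_I(i)}$ make the condition symmetric, so nothing further is imposed. Hence $\mathbf b=\mathbf b_{\max}$ is the maximal element of $\mathcal B_{\mathcal T}$, namely $b_i=\ell_i$ for all $i$.

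\textbf{Step 2: the closure equals a union of translates of closed BT strata.} By Theorem \ref{IntrotheoremClosure}, $\overline{X_{\mathcal T,\mathbf m,\mathbf b_{\max}}}$ is the disjoint union of the $X_{\mathcal T',\mathbf m',\mathbf b'}$ with $\mathcal T\subseteq\mathcal T'$, $m'_i\leq m_i$ on $\mathcal T$, and $b'_i\leq \ell_i$. The last condition is automatic. I then compare this with the closure $\overline{\mathcal N^0_{\mathcal T,\mathbf\Lambda(\mathbf m)}}$ of the BT stratum. By \cite{mullerBruhatTitsStratificationBasic2026}, this closure is the union of the BT strata $\mathcal N^0_{\mathcal T',\mathbf\Lambda'}$ with $\mathbf\Lambda'$ a face-type specialization of $\mathbf\Lambda(\mathbf m)$, i.e. $\mathcal T\subseteq \mathcal T'$ with numerical invariants decreasing. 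Each such stratum decomposes, by Theorem \ref{Theo:IsomorphismBTStrataAndFineDLVarieties}, over all $\mathbf b'\in\mathcal B_{\mathcal T'}$. Matching indices, and using Theorem \ref{IntrotheoremG}, I would conclude
\[
\overline{\mathrm{EKOR}_{v_{\mathbf c}}}=\bigcup_{g\in J(F)/\mathrm{Stab}_{J(F)}(\mathbf\Lambda(\mathbf m))} g\cdot\overline{\mathcal N^0_{\mathcal T,\mathbf\Lambda(\mathbf m)}}.
\]
The local finiteness of this union in $\mathcal N^{I,\varepsilon}_{\mathrm{red}}$ follows from local finiteness in the building.

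\textbf{Step 3: identifying the irreducible components.} By the cited smoothness result, each $g\cdot\overline{\mathcal N^0_{\mathcal T,\mathbf\Lambda(\mathbf m)}}$ is smooth and projective. It is also irreducible, being the closure of the open dense piece $X_{\mathcal K_{\mathcal T,\mathbf m}}\{w_{\mathbf c(\mathbf b_{\max})}^{-1}\}$, which is an irreducible fine Deligne-Lusztig variety. All these closures have the same dimension $\ell(v_{\mathbf c})$, and they are pairwise distinct since their open strata are disjoint. Together these facts show they are exactly the irreducible components, and each is smooth.

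\textbf{Main obstacle.} The hardest point is the matching in Step 2. One must check that the specialization relation on BT indices from \cite{mullerBruhatTitsStratificationBasic2026} corresponds under $\mathrm{Num}$ to $\mathcal T\subseteq\mathcal T'$, $m'_i\leq m_i$, with no condition on $\mathbf b'$. One also needs the irreducibility of the top fine Deligne-Lusztig piece. If irreducibility is not available directly, I would instead take the connected components of the smooth closed BT strata, which for smooth varieties coincide with irreducible components.
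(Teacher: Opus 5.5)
Your proposal is correct and follows the paper's proof. The hypothesis says exactly that $\mathbf b_{\mathbf c}=\mathbf b^{\mathrm{max}}$ (equivalently $\mathcal T_{\mathbf c}=\overline I\setminus J_{\mathbf c}$), so the closure of each translate $g\cdot X_{\mathcal K_{\mathcal T,\mathbf m}}\{w_{\mathbf c}^{-1}\}$ is the closed BT stratum $\overline{\mathcal N^0_{\mathcal T,g\cdot\mathbf\Lambda(\mathbf m)}}$, which is smooth by \cite{mullerBruhatTitsStratificationBasic2026} Corollary 3.37. The one difference is in your Step 2: you go through the closure theorem and a matching of BT indices (which does hold), whereas the paper obtains the same equality directly from Lemma \ref{Lem:ClosureXTmbIsUnion} and the fact that the closed BT stratum is the closure of $X_{\mathcal K_{\mathcal T,\mathbf m}}\{w_{\mathbf c^{\mathrm{max}}}^{-1}\}$, so the index-matching you flag as the main obstacle is not actually needed.
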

It might be reasonable to conjecture that the irreducible components of the closure of every EKOR stratum are smooth. As explained after Proposition \ref{Prop:SmoothnessIrreducibleComponentsClosureEKOR}, however, the method used in \cite{mullerBruhatTitsStratificationBasic2026} does not extend directly to all the fine Deligne-Lusztig varieties occurring here.

\section*{Notations}\label{SectionNotations}

Throughout the paper, we fix the following notations. Let $n \geq 2$. Let $p>2$ be an odd prime. Let $E/F$ be an unramified quadratic extension of $p$-adic fields, and let $\mathcal O_F$ and $\mathcal O_E$ denote the respective rings of integers. Let $\pi$ be a common uniformizer of $F$ and $E$, and let $\kappa_F \coloneq  \mathcal O_F/(\pi)$ and $\kappa_E \coloneq  \mathcal O_E/(\pi)$ denote the residue fields of $F$ and of $E$ respectively. Let $q \coloneq  \#\kappa_F$ so that we have $q^2 = \#\kappa_E$. Let $x\mapsto x^*$ denote the non-trivial element of $\mathrm{Gal}(E/F)$. If $\kappa$ is a perfect field containing $\kappa_F$, let $W_{\mathcal O_F}(\kappa)$ denote the ring of relative Witt vectors over $\kappa$. Recall that if $F^{u} \simeq \mathrm{Frac}(W(\kappa_F)) \subseteq F$ denotes the maximal unramified subextension of $F/\mathbb Q_p$, then we have 
\begin{equation*}
    W_{\mathcal O_F}(\kappa) \simeq \mathcal O_F \otimes_{\mathcal O_{F^u}} W(\kappa),
\end{equation*}
where $W(\kappa)$ is the absolute ring of Witt vectors over $\kappa$. We write $F_{\kappa} \coloneq  \mathrm{Frac}(W_{\mathcal O_F}(\kappa))$, so that $F_{\kappa}$ is a complete discretely valued extension of $F$ with uniformizer $\pi$ and residue field $\kappa$. In particular, after fixing an identification $\kappa_F \simeq \mathbb F_q$, we write $\breve F \coloneq  F_{\overline{\mathbb F_{q}}}$, which is the completion of the maximal unramified extension of $F$, and $K \coloneq  F_{\mathbb F_{q^2}}$. We fix once and for all an embedding $\mathbb F_{q^2} \hookrightarrow \overline{\mathbb F_q}$, which induces an embedding $K \subset \breve F$. \\
If $\kappa$ is as above, let $\sigma \in \mathrm{Aut}(F_{\kappa}/F)$ denote the automorphism lifting the $q$-power Frobenius automorphism $x \mapsto x^q$ on $\kappa$. Since $E/F$ is unramified, we can fix an $F$-embedding $\varphi_0: E \hookrightarrow \breve F$. The other $F$-embedding is given by $\varphi_1: x \mapsto \varphi_0(x^*)$. Then necessarily, we have $\varphi_j(E) = K$ for $j=0,1$. We fix an element $\delta \in \mathcal O_E^{\times}$ such that $\delta^* = -\delta$.\\
If $\varepsilon \in \{\pm 1\}$, we define two integers
\begin{align*}
    \underline{\varepsilon} \coloneq  \frac{1-\varepsilon}{2} = \begin{cases}
        0 & \text{if } \varepsilon = 1, \\
        1 & \text{if } \varepsilon = -1,
    \end{cases} & & \nu_{\varepsilon} \coloneq  \left\lfloor \frac{n-\underline{\varepsilon}}{2} \right\rfloor.
\end{align*}
We note that $\nu_{\varepsilon} = \nu_{-\varepsilon} = \frac{n-1}{2}$ when $n$ is odd, but $|\nu_{\varepsilon}-\nu_{-\varepsilon}|=1$ when $n$ is even.\\
All hermitian products will be linear in the first variable, and semi-linear in the second. When working with tuples, we often write $x^a$ instead of ``$x$ repeated $a$ times''. For instance, we will write 
\begin{equation*}
    (1^a, 0^{b}) = (\underbrace{1,\ldots , 1}_{a \text{ times}}, \underbrace{0,\ldots , 0}_{b \text{ times}}).
\end{equation*}
The context should always make any confusion unlikely. Finally, if $k \geq 0$ we write $A_{k}$ for the $k\times k$ matrix with $1$'s on the antidiagonal and $0$ everywhere else. If $r \in \mathbb Z$, we write $\overline{r} \in \mathbb Z/n\mathbb Z$ for its image modulo $n$.\\
If $(W,S)$ is a (not necessarily finite) Coxeter system with length function $\ell$, and if $I,J \subseteq S$, we write $W_I$ for the subgroup generated by $I$, $W^{I}$ (resp. ${}^JW$) for the set of elements which are reduced-$I$ (resp. $J$-reduced), and ${}^JW^I \coloneq  {}^JW \cap W^I$. Recall that an element $w \in W$ is reduced-$I$ (resp. $J$-reduced) if and only if $\ell(wv) \geq \ell(w)$ for all $v \in W_I$ (resp. $\ell(vw) \geq \ell(w)$ for all $v \in W_J$), or equivalently if $w$ is the unique element of shortest length in the right coset $wW_I$ (resp. left coset $W_Jw$). Then $w \in {}^JW^I$ if and only if $w$ is the unique element of shortest length in the double coset $W_JwW_I$.

\section{The Rapoport-Zink space}

\subsection{The PEL datum}\label{Section2.1}

First, we fix the setup for the algebraic PEL datum underlying the construction of the (relative) basic unramified unitary Rapoport-Zink space of signature $(1,n-1)$ and at parahoric level. Given $\varepsilon \in \{\pm 1\}$, let $(\mathbb V, H) \coloneq  (\mathbb V^{\varepsilon}, H)$ be a non-degenerate $n$-dimensional $E/F$-hermitian space of discriminant $\mathrm{disc}(\mathbb V, H) = \varepsilon$. Recall that the discriminant is by definition the image of $(-1)^{\frac{n(n-1)}{2}}\det(H)$ in the group $F^{\times}/\mathrm{Norm}_{E/F}(E^{\times}) \simeq \{\pm 1\}$. The hermitian space $(\mathbb V, H)$ is said to be \textit{split} if $\varepsilon = 1$ and \textit{non-split} if $\varepsilon = -1$. The Witt index of $(\mathbb V, H)$ is equal to $\nu_{\varepsilon}$. We fix a Witt decomposition
\begin{equation*}
    \mathbb V = \mathbb V^{\mathrm{an}} \oplus \bigoplus_{k=1}^{\nu_{\varepsilon}} \mathbb H_k,
\end{equation*}
where the $\mathbb H_k$'s are isomorphic to hyperbolic planes, and $\mathbb V^{\mathrm{an}}$ is a maximal anisotropic subspace. We also fix a basis $(f_{-k},f_{k})$ of $\mathbb H_k$ in which $H_{|\mathbb H_k \times \mathbb H_k}$ is represented by the matrix 
\begin{equation*}
    \begin{pmatrix}
        0 & 1 \\
        1 & 0
    \end{pmatrix}.
\end{equation*}
We note that 
\begin{equation*}
    \dim(\mathbb V^{\mathrm{an}}) = n - 2\nu_{\varepsilon} = \begin{cases}
        0 & \text{if } n \text{ is even and } \varepsilon = 1,\\
        1 & \text{if } n \text{ is odd},\\
        2 & \text{if } n \text{ is even and } \varepsilon = -1.
    \end{cases} 
\end{equation*}
We fix a basis $\mathbf e^{\mathrm{an}}$ of $\mathbb V^{\mathrm{an}}$ as follows. If $\varepsilon = -1$, there is a vector $f_{-} \in \mathbb V^{\mathrm{an}}$ such that $H(f_-,f_-) = \pi$; and if additionally $n$ is even, there is a vector $f_+$ orthogonal to $f_-$ such that $H(f_+,f_+)=1$. If $\varepsilon = 1$ and $n$ is odd, there is also a vector $f_{+} \in \mathbb V^{\mathrm{an}}$ such that $H(f_+,f_+) = 1$. The basis is given by 
\begin{equation*}
    \mathbf e^{\mathrm{an}} = \begin{cases}
        \emptyset & \text{if } n \text{ is even and } \varepsilon = 1,\\
        (f_+) & \text{if } n \text{ is odd and } \varepsilon = 1, \\
        (f_-) & \text{if } n \text{ is odd and } \varepsilon = -1, \\
        (f_+,f_-) & \text{if } n \text{ is even and } \varepsilon = -1.
    \end{cases}
\end{equation*}
The hermitian space $\mathbb V$ is equipped with the concatenated basis 
\begin{equation*}
    \mathbf e = \begin{cases}
        (f_{-\nu_{\varepsilon}}, \ldots , f_{-1}, \phantom{f_+,\,} f_1, \ldots , f_{\nu_{\varepsilon}}) & \text{if } n \text{ is even and } \varepsilon = 1,\\
        (f_{-\nu_{\varepsilon}}, \ldots , f_{-1}, f_+, f_1, \ldots , f_{\nu_{\varepsilon}}) & \text{if } n \text{ is odd and } \varepsilon = 1, \\
        (f_{-\nu_{\varepsilon}}, \ldots , f_{-1},  \phantom{f_+,\,}f_1, \ldots , f_{\nu_{\varepsilon}}, f_-) & \text{if } n \text{ is odd and } \varepsilon = -1, \\
        (f_{-\nu_{\varepsilon}}, \ldots , f_{-1}, f_+, f_1, \ldots , f_{\nu_{\varepsilon}},f_-) & \text{if } n \text{ is even and } \varepsilon = -1.
    \end{cases}
\end{equation*}
We will also write $\mathbf e = (\mathbf e_1, \ldots , \mathbf e_n)$ in the order specified above. We let $\Omega_H$ denote the Gram matrix of $\mathbf e$ in $H$. Explicitly, we have 
\begin{equation*} 
   \Omega_H = \begin{cases}
            A_n & \text{if } \varepsilon = 1, \\
            {}\\
            \begin{pmatrix}
                 A_{n-1} & \\
                 & \pi 
            \end{pmatrix}
            & \text{if } \varepsilon = -1.
    \end{cases}
\end{equation*}
Let $G \coloneq  \mathrm{GU}(\mathbb V, H)$ be the group of unitary similitudes of $(\mathbb V,H)$, seen as a connected reductive group over $F$. Explicitly, for an $F$-algebra $R$, we have 
\begin{equation*}
    G(R) = \left\{g \in \mathrm{GL}_{E \otimes_F R}(\mathbb V_R) \;\middle|\; \exists c(g) \in R^{\times}, \forall v,w \in \mathbb V_R, H(gv,gw) = c(g)H(v,w) \right\},
\end{equation*}
where $\mathbb V_R \coloneq  \mathbb V \otimes_F R$, $c(g)$ is identified with $1 \otimes c(g) \in E \otimes_F R$, and $H$ is extended to $\mathbb V_R$ via 
\begin{equation*}
    \forall v,w \in \mathbb V, \forall a,b \in R, \quad  H(v \otimes a, w \otimes b) \coloneq  H(v,w) \otimes ab \in E \otimes_F R.
\end{equation*}
If $R$ is a $K$-algebra, then $E \otimes_{F} R \simeq R \times R$ as $R$-algebras via the map induced by $\lambda \otimes a \mapsto (\varphi_0(\lambda)a, \varphi_1(\lambda)a)$. Under this isomorphism, the involution $\lambda \otimes a \mapsto (\lambda \otimes a)^* \coloneq  \lambda^* \otimes a$ corresponds to swapping the two coordinates of $R \times R$. If $(1,0)$ and $(0,1)$ denote the two natural idempotents of $K\times K \subseteq R \times R$, their inverse images in $E \otimes_{F} K$ are given by 
\begin{equation*}
    \eta_j \coloneq  \frac{1}{2\varphi_j(\delta)}\left(1 \otimes \varphi_j(\delta) + \delta\otimes 1\right).
\end{equation*}
Let us write 
\begin{equation*}
    \forall j=0,1, \qquad \mathbb V_{R,j} \coloneq  \eta_j\mathbb V_R,
\end{equation*}
so that we have $\mathbb V_R = \mathbb V_{R,0} \oplus \mathbb V_{R,1}$. We equip $\mathbb V_{K,j}$ with the $K$-basis $\mathbf e^{(j)} \coloneq  \eta_j(\mathbf e\otimes 1)$. By scalar extension to $R$, this induces an identification $\mathbb V_{R,j} \simeq R^n$. We can then rewrite $G(R)$ as follows.
\begin{equation*}
    G(R) \simeq \left\{g = (g_0,g_1) \in \mathrm{GL}(R^n)\times \mathrm{GL}(R^n) \, \middle | \, \exists c \in R^{\times}, g_1 = c\Omega_H^{-1}g_0^{-T}\Omega_H \right\}.
\end{equation*}
Sending $g$ to $(g_0,c)$ gives an isomorphism 
\begin{equation}\label{Eq:BaseChangeG}
    G_K \coloneq  G \otimes_F K \simeq \mathrm{GL}(\mathbb V_{K,0}) \times_K \mathbb G_{m,K}.
\end{equation}
We also fix the conjugacy class of cocharacters $\{\mu\}$ corresponding to the signature $(1,n-1)$. It is represented by
\begin{equation*}
\mu:\mathbb G_{m,K}\longrightarrow G_K,\qquad z\longmapsto (\mathrm{Diag}(z,1,\ldots,1),z),
\end{equation*}
where the diagonal matrix is written in the basis $\mathbf e^{(0)}$. The weight-one subspace $\mathbb V_K^{1}$ of $\mathbb V_K$ for this cocharacter has dimension $1$ in $\mathbb V_{K,0}$ and dimension $n-1$ in $\mathbb V_{K,1}$. Hence, for every $\lambda \in E$, the characteristic polynomial of $\lambda\otimes 1$ on $\mathbb V_K^{1}$ is
\begin{equation*}
    \det(T - \lambda\otimes 1 \mid \mathbb V_K^{1}) =  (T-\varphi_0(\lambda))(T-\varphi_1(\lambda))^{n-1}.
\end{equation*}
The \textit{reflex field} of this PEL datum is the field of definition of $\{\mu\}$, that is $K$ if $n \not = 2$, and $F$ if $n=2$.\\
In the basis $\mathbf e^{(0)}$, we define the element $b_0 \in \mathrm{GL}_K(\mathbb V_{K,0})$ with matrix 
\begin{equation}\label{Eq:DefinitionElementb}
    b_0 \coloneq  \begin{pmatrix}
        & \pi \\
        I_{n-1} &
    \end{pmatrix},
\end{equation}
and define $b \coloneq  (b_0, \pi) \in G(K)$. Now, let $\kappa$ be a perfect field containing $\mathbb F_{q^2}$. We define two semilinear operators on $\mathbb V_{F_{\kappa}}$
\begin{align*}
    \mathcal F \coloneq  b(\mathrm{id}\otimes\sigma), & & \mathcal V \coloneq  \pi\mathcal F^{-1}.
\end{align*}
We have $\mathcal F^2 = \pi(\mathrm{id}\otimes \sigma^{2})$ and $\mathcal V^2 = \pi(\mathrm{id}\otimes \sigma^{-2})$. The operators $\mathcal F$ and $\mathcal V$ are $1$-homogeneous with respect to the $\mathbb Z/2\mathbb Z$-grading $\mathbb V_{F_{\kappa}} = \mathbb V_{F_{\kappa},0} \oplus \mathbb V_{F_{\kappa},1}$. We equip $\mathbb V_{F_{\kappa},0}$ with a non-degenerate $F_{\kappa}$-valued form which is linear in the first variable and $\sigma$-linear in the second:
\begin{equation}\label{Eq:DefinitionForm(,)OnV0}
    \forall v,w \in \mathbb V_{F_{\kappa},0}, \qquad (v,w) \coloneq  H(v,\mathcal Fw) \in \eta_0(E \otimes_F F_{\kappa}) \simeq F_{\kappa}.
\end{equation}
We write $\tau\coloneq  \pi^{-1}\mathcal F^2 = \mathrm{id}\otimes \sigma^2$ on $\mathbb V_{F_{\kappa}}$. Then we have 
\begin{align*}
    \forall v,w \in \mathbb V_{F_{\kappa},0}, & & (v,w) & = (w,\tau^{-1}(v))^{\sigma},\\
    & & (\tau(v),\tau(w)) & = (v,w)^{\sigma^2}.
\end{align*}
Taking $\tau$-fixed vectors, we identify $\mathbb V_{K,0}$ with
\begin{equation*}
    \mathbb V_{K,0} = \mathbb V_{F_{\kappa},0}^{\tau} = \{v \in \mathbb V_{F_{\kappa},0}\,|\, \tau(v) = v\}.
\end{equation*} 
Then $(\mathbb V_{K,0},(\cdot,\cdot))$ is a non-degenerate $K/F$-hermitian space of dimension $n$, equipped with the distinguished basis $\mathbf e^{(0)}$ defined above.

\begin{prop}
    We have $\mathrm{disc}(\mathbb V_{K,0},(\cdot,\cdot)) = (-1)^{n-1}\mathrm{disc}(\mathbb V,H)$.
\end{prop}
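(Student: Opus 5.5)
The plan is to compute the Gram matrix of the distinguished $K$-basis $\mathbf e^{(0)}$ with respect to $(\cdot,\cdot)$ explicitly, and to read off its determinant class in $F^{\times}/\mathrm{Norm}_{K/F}(K^{\times})$. Recall that since $K/F$ is unramified, this quotient is detected by the parity of the valuation. Thus every unit, in particular $-1$, is a norm, while $\pi$ is not. Moreover the sign $(-1)^{n(n-1)/2}$ in the definition of the discriminant is the same on both sides, so only the determinants need to be compared.

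The first step is to understand how $\mathcal F$ acts on $\mathbf e^{(0)}$. The vectors $\mathbf e_k\otimes 1$ are fixed by $\mathrm{id}\otimes\sigma$. Since $\sigma(\varphi_0(\delta))=\varphi_1(\delta)$, we get $(\mathrm{id}\otimes\sigma)(\eta_0)=\eta_1$, so $\mathrm{id}\otimes\sigma$ sends $\mathbf e^{(0)}_k$ to $\mathbf e^{(1)}_k$. Applying $b=(b_0,b_1)$ with $b_1=\pi\,\Omega_H^{-1}b_0^{-T}\Omega_H$ (similitude factor $c=\pi$), we obtain $\mathcal F\mathbf e^{(0)}_k=\sum_l (b_1)_{lk}\mathbf e^{(1)}_l$.

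The second step is to compute $H(\mathbf e^{(0)}_i,\mathbf e^{(1)}_l)$. Extending $H$ sesquilinearly, $\eta_0^*=\eta_1$, so $H(\eta_0 v,\eta_1 w)=\eta_0 H(v,w)$. Under $\eta_0(E\otimes_F K)\simeq K$ this equals $\varphi_0(H(\mathbf e_i,\mathbf e_l))=(\Omega_H)_{il}$, because the entries of $\Omega_H$ lie in $F$. Hence the Gram matrix of $(\cdot,\cdot)$ in $\mathbf e^{(0)}$ is
\begin{equation*}
\Omega_H\, b_1=\pi\, b_0^{-T}\Omega_H,
\end{equation*}
possibly up to a transpose, which does not affect the determinant. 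Its determinant is
\begin{equation*}
\pi^n\det(b_0)^{-1}\det\Omega_H=\pi^n\bigl((-1)^{n-1}\pi\bigr)^{-1}\det\Omega_H=(-1)^{n-1}\pi^{n-1}\det\Omega_H.
\end{equation*}

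Finally, since $-1$ is a norm, the class of this determinant is that of $\pi^{n-1}\det\Omega_H$. The factor $\pi$ represents the nontrivial class $-1$, so the class is $(-1)^{n-1}$ times the class of $\det\Omega_H$, which gives $\mathrm{disc}=(-1)^{n-1}\varepsilon$.

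I expect the main obstacle to be the bookkeeping in the first two steps. One must check that $\mathrm{id}\otimes\sigma$ really swaps the idempotents, and that the pairing $H(\eta_0\cdot,\eta_1\cdot)$ lands in $\eta_0(E\otimes_F F_\kappa)$ with the identification giving exactly the entries of $\Omega_H$. This is where a stray conjugation or transpose could creep in, but it cannot alter the valuation of the determinant, which is all that matters.
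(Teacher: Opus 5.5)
Your proposal is correct and takes the same route as the paper, which writes down the Gram matrix of $(\cdot,\cdot)$ in the basis $\mathbf e^{(0)}$ and compares its determinant with that of $\Omega_H$. Your uniform formula $\pi b_0^{-T}\Omega_H$ gives exactly the paper's matrices (block-diagonal with blocks $1$ and $\pi A_{n-1}$ when $\varepsilon=1$, and $\pi A_n$ when $\varepsilon=-1$), and you also spell out the norm-group facts the paper leaves implicit: every unit is a norm from the unramified extension $K/F$, while $\pi$ is not.
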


\begin{rk}
    It makes sense to compare the two discriminants since $E$ and $K$ are isomorphic. Thus $\mathrm{Norm}_{E/F}(E^{\times}) = \mathrm{Norm}_{K/F}(K^{\times})$.
\end{rk}

\begin{proof}
The Gram matrix of $(\cdot,\cdot)$ in the basis $\mathbf e^{(0)}$ is given by 
\begin{equation*}
    \Omega_{\mathbb V_{K,0}} = \begin{cases}
        \begin{pmatrix}
            1 &  \\
            & \pi A_{n-1}
        \end{pmatrix} & \text{if } \varepsilon = 1,\\
        {}\\
        \pi A_{n} & \text{if } \varepsilon = -1.
    \end{cases}
\end{equation*}
Comparing with $\Omega_H$, the result follows.
\end{proof}
It follows from the Proposition that the Witt index of $(\mathbb V_{K,0},(\cdot,\cdot))$ is equal to the integer $\nu_{-\varepsilon}$. We define a connected reductive group $J \coloneq  J_b$ as follows. For any $F$-algebra $R$, we put 
\begin{equation*}
    J(R) = \{g \in G(R \otimes_F \breve F) \,|\, gb\sigma(g)^{-1} = b \}.
\end{equation*}

\begin{prop}\label{Prop:GroupJIsomorphism}
    We have $J \simeq \mathrm{GU}(\mathbb V_{K,0},(\cdot,\cdot))$.
\end{prop}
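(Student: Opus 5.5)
We will construct the isomorphism explicitly at the level of $R$-points, for $R$ an $F$-algebra. An element $g\in J(R)$ is an element of $G(R\otimes_F\breve F)$ commuting with $\mathcal F=b(\mathrm{id}\otimes\sigma)$. Such a $g$ preserves the $\mathbb Z/2\mathbb Z$-grading $\mathbb V_{R\otimes\breve F}=\mathbb V_{R\otimes\breve F,0}\oplus\mathbb V_{R\otimes\breve F,1}$, because it is $E$-linear. It also commutes with $\mathcal F^2=\pi\tau$, hence with $\tau=\mathrm{id}\otimes\sigma^2$. Using the decomposition \eqref{Eq:BaseChangeG} over $K\subset\breve F$, write $g=(g_0,c)$, where $g_0\in\mathrm{GL}(\mathbb V_{R\otimes\breve F,0})$, $c$ is the similitude factor, and $g_1$ is determined by $g_0$ and $c$.

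First we show that $g_0$ determines $g$ and that $c$ is $\sigma$-invariant. The relation $gb=b\sigma(g)$ splits into its degree-$0$ and degree-$1$ components. One component reads $g_1b_{0\to1}=b_{0\to 1}\sigma(g_0)$, where $b_{0\to1}$ denotes the part of $b$ mapping $\mathbb V_{\cdot,0}$ to $\mathbb V_{\cdot,1}$ after the semilinear twist. This expresses $g_1$ in terms of $\sigma(g_0)$. Combining it with the other component and with $\mathcal F^2=\pi\tau$ gives $\tau(g_0)=g_0$, so $g_0$ is defined over $R\otimes_F K$. Comparing with $g_1=c\,\Omega_H^{-1}g_0^{-T}\Omega_H$ then shows that $c$ is $\sigma$-invariant, i.e. $c\in R^\times$.

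Next we check that $g_0$ is a similitude of $(\cdot,\cdot)$ with factor $c$. Since $g\mathcal F=\mathcal F g$, we compute
\begin{equation*}
(g_0v,g_0w)=H(g_0v,\mathcal F g_0w)=H(gv,g\mathcal Fw)=c\,H(v,\mathcal Fw)=c\,(v,w).
\end{equation*}
So $g_0\in\mathrm{GU}(\mathbb V_{K,0},(\cdot,\cdot))(R)$, where we use the identification $\mathbb V_{K,0}=\mathbb V_{F_\kappa,0}^{\tau}$ extended to $R$.

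For the inverse map, take $h\in\mathrm{GU}(\mathbb V_{K,0})(R)$ with similitude factor $c$. Set $g_0=h$, extended $\breve F$-linearly. Define $g_1$ on $\mathbb V_{\cdot,1}=\mathcal F(\mathbb V_{\cdot,0})$ by $g_1(\mathcal Fv)\coloneq\mathcal F(g_0v)$; this uses that $b$ is invertible. We must check two things. First, $g=(g_0,g_1)$ commutes with $\mathcal F$ on $\mathbb V_{\cdot,1}$: this reduces to $g_0\mathcal F^2=\mathcal F^2g_0$, i.e. $\tau$-invariance of $g_0$. Second, $g$ is an $H$-similitude with factor $c$: by the identity $(v,w)=H(v,\mathcal Fw)$ and the $\mathbb Z/2$-isotropy of $H$ (it pairs $\mathbb V_0$ with $\mathbb V_1$), $H$ is determined by $(\cdot,\cdot)$, so the similitude property of $h$ transfers. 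The two constructions are mutually inverse and functorial in $R$, which gives the isomorphism of group functors, hence of $F$-groups.

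The main obstacle is the descent step: verifying rigorously that the $J$-condition forces $g_0$ to be $\tau$-invariant over $R\otimes_F\breve F$, and that $c$ descends to $R$. For general $R$ this requires the fact that $(R\otimes_F\breve F)^{\sigma}=R$ and $(R\otimes_F\breve F)^{\sigma^2}=R\otimes_F K$. This holds because $\breve F^\sigma=F$ and $R$ is flat over the field $F$, so taking invariants commutes with $R\otimes_F-$.
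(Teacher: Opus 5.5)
Your proposal is correct and takes the same route as the paper. You decompose $g=(g_0,g_1)$, note that $g_0$ commutes with $\tau$ and so preserves the $\tau$-fixed part $\mathbb V_{K,0}\otimes_F R$, and restrict. The details the paper leaves as ``easy to check'' are exactly the ones you supply: the similitude factor via $(v,w)=H(v,\mathcal Fw)$, the inverse $g_1=\mathcal F g_0\mathcal F^{-1}$, and the flatness argument for taking invariants.

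Two small remarks. The $\tau$-invariance of $g_0$ already follows directly from $g\mathcal F^2=\mathcal F^2 g$, as you observe, so the later component-wise computation is redundant. Also, $b$ itself preserves the grading; it is $\mathrm{id}\otimes\sigma$ that swaps the two graded pieces.
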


\begin{proof}
    Any $g \in J(R)$ decomposes as $g = (g_0,g_1)$ where $g_i \in \mathrm{GL}_{R \otimes_F \breve F}(\mathbb V_{R \otimes_F \breve F,i})$. We note that 
    \begin{equation*}
    \mathbb V_{R \otimes_F \breve F,0} = \mathbb V_{K,0} \otimes_{K} (R \otimes_{F} \breve F) = (\mathbb V_{K,0} \otimes_F R) \otimes_{K\otimes_F R} (R \otimes_F \breve{F}).
    \end{equation*} 
    It is easy to check that $g_0$ preserves $\mathbb V_{K,0} \otimes_F R \subset \mathbb V_{R \otimes_F \breve F,0}$, and that $g \mapsto (g_0)_{|\mathbb V_{K,0} \otimes_F R}$ induces the desired isomorphism.
\end{proof}

For later reference, we define a non-degenerate $F$-valued alternating pairing on $\mathbb V$ by the formula 
\begin{equation*}
    \forall v,w \in \mathbb V, \qquad \langle v,w \rangle_H \coloneq  \mathrm{Trace}_{E/F}\left(\frac{H(v,w)}{2\delta}\right).
\end{equation*}
We have $\langle \lambda v, w \rangle_H = \langle v, \lambda^*w \rangle_H$ for all $v,w \in \mathbb V$ and $\lambda \in E$. Moreover, $H$ can be recovered from $\langle\cdot,\cdot\rangle_H$ via the formula 
\begin{equation*}
    \forall v,w \in \mathbb V, \qquad H(v,w) = \langle \delta v, w \rangle_H + \delta \langle v,w \rangle_H.
\end{equation*}
If $L \subset \mathbb V$ is an $\mathcal O_E$-lattice, then the dual lattice with respect to $\langle \cdot,\cdot \rangle_H$ is also equal to $L^{\dagger}$. If $R$ is an $F$-algebra, we extend $\langle\cdot,\cdot\rangle_H$ to $\mathbb V_R$ via 
\begin{equation*}
    \forall v,w \in \mathbb V, \forall a,b \in R, \quad  \langle v \otimes a, w \otimes b\rangle_H \coloneq  ab\langle v,w \rangle_H \in R.
\end{equation*}
If $R = F_{\kappa}$ for some perfect field $\kappa$ containing $\mathbb F_{q^2}$, we also have $\langle \mathcal F v, w \rangle_H = \langle v, \mathcal V w \rangle_H^{\sigma}$ for all $v,w \in \mathbb V_{F_{\kappa}}$.\\

If $L \subset \mathbb V$ is an $\mathcal O_E$-lattice, we write $L^{\dagger}$ for its dual lattice. Explicitly,
\begin{equation*}
    L^{\dagger} = \{v \in \mathbb V \,|\, H(v,L) \subseteq \mathcal O_E\}.
\end{equation*}
Besides, if $L, L' \subset \mathbb V$ are two $\mathcal O_E$-lattices and $t \geq 0$ is an integer, we write 
\begin{equation*}
    L \overset{t}{\subseteq} L' \iff L \subseteq L' \text{ and } \mathrm{length}_{\mathcal O_E}(L'/L) = t.
\end{equation*}
In this case, we also write $t = [L':L]$. This notation can be generalized. Namely, for any two lattices $L$ and $L'$, we define 
\begin{equation}\label{Eq:DefinitionIndexLattices}
    [L':L] \coloneq  \mathrm{length}_{\mathcal O_E}(L'/L\cap L') - \mathrm{length}_{\mathcal O_E}(L/L\cap L') \in \mathbb Z.
\end{equation}
Then we have $[L':L] = [L':L''] + [L'':L]$ for every third lattice $L''$. \\
We extend the basis $\mathbf e$ to a $\pi$-periodic family of vectors. Namely, for every $1 \leq r_0 \leq n$ and $k \in \mathbb Z$, define 
\begin{equation*}
    \mathbf e_{r_0+kn} \coloneq  \pi^k \mathbf e_{r_0}.
\end{equation*}
Using the Gram matrix $\Omega_H$, one may check that for all $r,r'\in \mathbb Z$, 
\begin{align}\label{Eq:ValueOfHOnPeriodicBasis}
\begin{split}
    H(\mathbf e_r, \mathbf e_{r'}) & = 0 \text{ if } \overline{r+r'-1+\underline{\varepsilon}} \not = 0, \\
    H(\mathbf e_r, \mathbf e_{n+1-\underline{\varepsilon}-r}) & = 1.
\end{split}
\end{align}
Then, for $r \in \mathbb Z$ we define an $\mathcal O_E$-lattice in $\mathbb V$
\begin{equation*}
    L_r \coloneq  \bigoplus_{k=r+1}^{r+n} \mathcal O_{E} \cdot \mathbf e_k.
\end{equation*}
The collection $L_{\bullet} = (L_r)_{r\in \mathbb Z}$ forms a complete self-dual (decreasing) chain of $\mathcal O_E$-lattices in $\mathbb V$. Indeed, we clearly have $L_{r+1} \overset{1}{\subseteq} L_r$ and $L_{r+n} = \pi L_r$ for all $r\in \mathbb Z$, and as follows from \eqref{Eq:ValueOfHOnPeriodicBasis} we have
\begin{equation*}
    L_r^{\dagger} = L_{-r-\underline{\varepsilon}}.
\end{equation*}
We note that 
\begin{equation*}
    \forall 0 \leq r \leq \nu_{\varepsilon}, \qquad \pi L_r^{\dagger} \overset{n-h_r^{\varepsilon}}{\subseteq} L_r \overset{h_r^{\varepsilon}}{\subseteq} L_r^{\dagger},
\end{equation*}
where we wrote
\begin{equation}\label{Eq:DefiIntegershrvarepsilon}
    \forall 0 \leq r \leq \nu_{\varepsilon}, \qquad h_r^{\varepsilon} \coloneq  2r+\underline{\varepsilon}, \quad 0 \leq h_r^{\varepsilon} \leq n.
\end{equation}
Given a non-empty subset $I \subseteq \{0, \ldots , \nu_{\varepsilon}\}$, we define
\begin{align}\label{Eq:DefImathbbZ}
    \overline{I} &  \coloneq  \{\overline r \mid r \in I\} \cup \{\overline{-r-\underline{\varepsilon}} \mid r\in I\} & & \subseteq \mathbb Z/n\mathbb Z, \\
    I_{\mathbb Z} & \coloneq  \{r \in \mathbb Z \mid \overline r \in \overline I\} & & \subseteq \mathbb Z. \nonumber
\end{align}
We consider the following self-dual subchain of $L_{\bullet}$
\begin{equation}\label{Eq:DefL^I_bullet}
    L^{I}_{\bullet} \coloneq  (L_r)_{r \in I_{\mathbb Z}}.
\end{equation} 
We also define a parahoric subgroup scheme of $G$ over $\mathrm{Spec}(\mathcal O_F)$
\begin{equation}\label{Eq:DefiParahoricGroupScheme}
    \mathcal G_I \coloneq  \mathrm{Aut}_{\mathcal O_E}^{\mathrm{sim}}(L^{I}_{\bullet}).
\end{equation}
Explicitly, for any $\mathcal O_F$-algebra $R$, $\mathcal G_I(R)$ is the set of collections $(g_r)_{r \in I_{\mathbb Z}}$ of $\mathcal O_E\otimes_{\mathcal O_F} R$-linear automorphisms 
\begin{equation*}
    g_r: L_r\otimes_{\mathcal O_F} R \xrightarrow{\sim} L_r \otimes_{\mathcal O_F} R,
\end{equation*}
such that 
\begin{itemize}
    \item the $g_r$'s are compatible with the transition morphisms of the chain $L^{I}_{\bullet} \otimes_{\mathcal O_F} R$,
    \item there exists a unique constant $c \in R^{\times}$ such that, for all $r \in I_{\mathbb Z}$,
    \begin{equation*}
        \forall v \in L_r \otimes_{\mathcal O_F}R, \forall w \in L_{-r-\underline{\varepsilon}} \otimes_{\mathcal O_F}R, \qquad \langle g_r(v), g_{-r-\underline{\varepsilon}}(w)\rangle_H = c\langle v,w \rangle_H.
    \end{equation*}
\end{itemize}
We have $\mathcal G_I \otimes_{\mathcal O_F} F = G$. We also define 
\begin{equation*}
    K_I \coloneq  \mathcal G_I(\mathcal O_F) = \bigcap_{r \in I} \mathrm{Stab}_{G(F)}(L_r) = \{ g \in G(F) \mid \forall r \in I, g(L_r) = L_r\}.
\end{equation*}
Then $K_I$ is the \textit{standard parahoric subgroup of $G(F)$ of level $I$}. Every parahoric subgroup of $G(F)$ is conjugate to $K_I$ for some $I$.\\

If $\kappa$ is a perfect field containing $\mathbb F_{q^2}$, for any $W_{\mathcal O_F}(\kappa)$-lattice $\Lambda \subset \mathbb V_{F_{\kappa},0}$, we write $\Lambda^{\vee}$ for its dual lattice with respect to the form $(\cdot,\cdot)$ defined in \eqref{Eq:DefinitionForm(,)OnV0}, i.e.
\begin{equation}\label{Eq:DefiDualLatticeVee}
    \Lambda^{\vee} \coloneq  \{v \in \mathbb V_{F_{\kappa},0} \mid (v, \Lambda) \subseteq W_{\mathcal O_F}(\kappa) \}.
\end{equation}
One may check that for every $W_{\mathcal O_F}(\kappa)$-lattice $\Lambda \subset \mathbb V_{F_{\kappa},0}$, we have 
\begin{equation}\label{Eq:FormulaVeeTau}
    (\Lambda^{\vee})^{\vee} = \tau(\Lambda), \qquad \tau(\Lambda^{\vee}) = \tau(\Lambda)^{\vee}.
\end{equation}
For $r \in \mathbb Z$, consider the base change $L_{r,\mathcal O_{K}} \coloneq  L_r \otimes_{\mathcal O_F} \mathcal O_{K}$. One may check that for every $r \in \mathbb Z$,
\begin{equation*}
    \forall \lambda \in \mathcal O_E, \qquad \det(T-\lambda\otimes 1 \mid L_{r,\mathcal O_{K}}/\mathcal VL_{r,\mathcal O_{K}}) = (T-\varphi_0(\lambda))(T-\varphi_1(\lambda))^{n-1},
\end{equation*}
which corresponds to the $(1,n-1)$-signature condition. Intersecting with $\mathbb V_{K,0}$, we obtain a complete self-dual (decreasing) chain of $\mathcal O_K$-lattices in $\mathbb V_{K,0}$ 
\begin{equation}\label{Eq:DefiLatticeLambda}
    \Lambda_{\bullet} = (\Lambda_r)_{r \in \mathbb Z}, \qquad \Lambda_r \coloneq  L_{r,\mathcal O_{K}} \cap \mathbb V_{K,0} = \eta_0 L_{r,\mathcal O_{K}}.
\end{equation}
We expand the basis $\mathbf e^{(0)}$ of $\mathbb V_{K,0}$ to a $\pi$-periodic family of vectors by setting, for all $1 \leq r_0 \leq n$ and $k \in \mathbb Z$,
\begin{equation*}
    \mathbf e^{(0)}_{r_0+kn} \coloneq  \pi^k \mathbf e_{r_0}^{(0)} = \eta_0(\mathbf e_{r_0+kn} \otimes 1).
\end{equation*}
It follows that
\begin{equation*}
    \forall r \in \mathbb Z, \quad \Lambda_r = \bigoplus_{k=r+1}^{r+n} \mathcal O_K \cdot \mathbf e_{k}^{(0)}.
\end{equation*}
Besides, using the Gram matrix $\Omega_{\mathbb V_{K,0}}$ one may check that for all $r,r'\in \mathbb Z$, 
\begin{align*}
\begin{split}
    (\mathbf e_r^{(0)}, \mathbf e_{r'}^{(0)}) & = 0 \text{ if } \overline{r+r'-2+\underline{\varepsilon}} \not = 0, \\
    (\mathbf e_r^{(0)}, \mathbf e_{2-\underline{\varepsilon}-r}^{(0)}) & = 1.
\end{split}
\end{align*}
It follows that
\begin{equation*}
    \forall r \in \mathbb Z, \quad \pi\Lambda_r^\vee=\Lambda_{(1-\underline{\varepsilon})-r},
\end{equation*}
and
\begin{equation*}
    \forall 0 \leq r \leq \nu_{-\varepsilon}, \qquad \pi\Lambda_{-r}^{\vee} \overset{h_r^{-\varepsilon}}{\subseteq} \Lambda_{-r} \overset{n-h_{r}^{-\varepsilon}}{\subseteq} \Lambda_{-r}^{\vee}.
\end{equation*}
\begin{defi}\label{Defi:Elementg_0}
    Assume that $n$ is even. We denote by $g_0 \in G(F)$ the element whose matrix in the basis $\mathbf e$ is 
    \begin{equation*}
        g_0 \coloneq \begin{pmatrix}
             & \pi I_{n/2} \\
            I_{n/2} & 
        \end{pmatrix}.
    \end{equation*}
\end{defi}
It is straightforward to check that $g_0^T\Omega_H g_0 = \pi \Omega_H$, from which it follows that $c(g_0) = \pi$.
\begin{prop}\label{Prop:ImageOfalpha}
Let $\alpha_G:G(F)\to \mathbb Z$ denote the composition of the multiplier map $c:G(F) \to F^{\times}$ with the $\pi$-adic valuation $v_{\pi}$. We have 
\begin{equation*}
    \mathrm{Im}(\alpha_G) = \begin{cases}
        2 \mathbb Z & \text{if } n \text{ is odd}, \\
        \mathbb Z & \text{if } n \text{ is even}.
    \end{cases}
\end{equation*}
\end{prop}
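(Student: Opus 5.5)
We need to show that $\mathrm{Im}(\alpha_G)$ is $2\mathbb Z$ when $n$ is odd and $\mathbb Z$ when $n$ is even. The plan is to bound the image from above by a determinant computation and from below by exhibiting explicit elements.

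\emph{Upper bound for $n$ odd.} Let $g\in G(F)$ with multiplier $c=c(g)\in F^\times$. Write $M$ for the matrix of $g$ in the basis $\mathbf e$. The similitude condition reads ${}^t\overline{M}\,\Omega_H M=c\,\Omega_H$, where the bar denotes the conjugation $*$ applied entrywise. Taking determinants gives
\begin{equation*}
    \mathrm{Norm}_{E/F}(\det M)\det\Omega_H=c^n\det\Omega_H,
\end{equation*}
hence $c^n=\mathrm{Norm}_{E/F}(\det M)$. Since $E/F$ is unramified, every norm from $E^\times$ has even $\pi$-adic valuation. Therefore $n\,v_\pi(c)$ is even, and when $n$ is odd this forces $v_\pi(c)$ to be even. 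So $\mathrm{Im}(\alpha_G)\subseteq 2\mathbb Z$ for $n$ odd.

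\emph{Lower bound in both cases.} The scalar element $\pi\cdot\mathrm{id}\in G(F)$ has multiplier $\pi^2$, so $2\mathbb Z\subseteq\mathrm{Im}(\alpha_G)$ for every $n$. This settles the odd case.

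\emph{Lower bound for $n$ even.} Here the element $g_0$ of Definition \ref{Defi:Elementg_0} satisfies $c(g_0)=\pi$, as noted right after that definition, so $1\in\mathrm{Im}(\alpha_G)$. Since $\alpha_G$ is a homomorphism (the multiplier $c$ is multiplicative), its image is the subgroup $\mathbb Z$.

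The only point to check is the identity $g_0^T\Omega_H g_0=\pi\Omega_H$ when $\varepsilon=-1$. In that case $\Omega_H$ is not the antidiagonal matrix, because of the entry $\pi$ in the last position. I expect the main, though mild, obstacle to be verifying this identity blockwise. I would do it using the basis ordering $(f_{-\nu},\dots,f_{-1},f_+,f_1,\dots,f_\nu,f_-)$, checking that $g_0$ sends $f_-$ and $f_+$ to vectors whose Gram values scale by $\pi$. If this check failed, I would instead write down by hand an $E$-linear map with multiplier $\pi$ on each piece $\mathbb V^{\mathrm{an}}$ and $\mathbb H_k$ of the Witt decomposition. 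On $\mathbb H_k$ the map $f_{-k}\mapsto \pi f_{-k}$, $f_k\mapsto f_k$ works. On the $2$-dimensional anisotropic space $\mathbb V^{\mathrm{an}}$ the map $f_+\mapsto f_-$, $f_-\mapsto \pi f_+$ works, since $H(f_-,f_-)=\pi\,H(f_+,f_+)$ and $H(\pi f_+,\pi f_+)=\pi^2=\pi H(f_-,f_-)$.
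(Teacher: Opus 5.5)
Your proof is correct and follows the paper's argument: $\pi\cdot\mathrm{id}$ gives $2\mathbb Z$ in all cases, the element $g_0$ with $c(g_0)=\pi$ gives all of $\mathbb Z$ for $n$ even, and $\mathrm{Norm}_{E/F}(\det g)=c(g)^n$ forces $\alpha_G(g)$ to be even for $n$ odd. The identity $g_0^T\Omega_H g_0=\pi\Omega_H$ you were worried about also holds when $\varepsilon=-1$: since $g_0\mathbf e_{n/2}=\mathbf e_n$ and $g_0\mathbf e_n=\pi\mathbf e_{n/2}$ (that is, $g_0$ acts on $\mathbb V^{\mathrm{an}}$ exactly as your fallback map $f_+\mapsto f_-$, $f_-\mapsto\pi f_+$), the two affected Gram entries are $\pi=\pi\cdot 1$ and $\pi^2=\pi\cdot\pi$, and all other entries behave as in the split case, so the backup construction is not needed.
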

\begin{proof}
    Since $c(\pi^k \mathrm{id}) = \pi^{2k}$, we always have $2\mathbb Z \subset \mathrm{Im}(\alpha_G)$. The existence of $g_0$ already proves that $\mathrm{Im}(\alpha_G) = \mathbb Z$ when $n$ is even. Assume now that $n$ is odd. Since we have 
    \begin{equation*}
        \mathrm{Norm}_{E/F}(\det(g)) = c(g)^n,
    \end{equation*}
    we deduce that $2v_{\pi}(\det(g)) = n\alpha_G(g)$, which forces $\alpha_G(g)$ to be even.
\end{proof}
We define the dual complete chain $\widetilde{L}_{\bullet}$ by the formula
\begin{equation}\label{Eq:DefDualChain}
     \widetilde{L}_{\bullet} \coloneq  (L_{\nu_{\varepsilon}-r}^{\dagger})_{r \in \mathbb Z}.
\end{equation}
Consider the hermitian product $H' \coloneq  \pi H$ and let $\cdot^{\dagger'}$ denote duality with respect to $H'$. Clearly, we have $L^{\dagger'} = \pi^{-1} L^{\dagger}$ for every $\mathcal O_E$-lattice $L \subset \mathbb V$, and we have 
\begin{equation*}
    \mathrm{disc}(\mathbb V, H') = (-1)^n\mathrm{disc}(\mathbb V,H) = (-1)^{n}\varepsilon \eqcolon \varepsilon'.
\end{equation*}
We point out that $\nu_{\varepsilon} = \nu_{\varepsilon'}$. The definition of $\widetilde{L}_{\bullet}$ is designed so that we have 
\begin{equation*}
    \forall 0 \leq r \leq \nu_{\varepsilon'}, \qquad \pi\widetilde{L}_{r}^{\dagger'} \overset{n-h_{r}^{\varepsilon'}}{\subseteq} \widetilde{L}_{r} \overset{h_{r}^{\varepsilon'}}{\subseteq} \widetilde{L}_{r}^{\dagger'},
\end{equation*}
thus it is more relevant to think of $\widetilde{L}_{\bullet}$ as a complete chain of lattices in $(\mathbb V, H')$ rather than $(\mathbb V, H)$. When $n$ is even, the hermitian spaces defined by $H$ and by $H'$ are isomorphic. In this case, $g_0$ defines an isometry
\begin{equation*}
    g_0 : (\mathbb V, H') \xrightarrow{\sim} (\mathbb V, H)
\end{equation*}
which identifies $\widetilde{L}_{\bullet}$ with $L_{\bullet}$. Indeed, we have 
\begin{equation*}
    \forall r \in \mathbb Z, \qquad g_0(L_r) = L_{r + \frac{n}{2}},
\end{equation*}
so that $g_0(\widetilde{L}_r) = g_0(L_{\nu_{\varepsilon}-r}^{\dagger}) = g_0(L_{r - \nu_{\varepsilon} - \underline{\varepsilon}}) = g_0(L_{r-\frac{n}{2}}) = L_{r}$. More generally, for any $n$, let $I \subseteq \{0, \ldots , \nu_{\varepsilon}\}$ be a non-empty subset. We define 
\begin{equation}\label{Eq:DefiWidetildeI}
    \widetilde I \coloneq  \{\nu_{\varepsilon}-r \,|\, r \in I\} \subseteq \{0,\ldots ,\nu_{\varepsilon}\}, 
\end{equation}
and we define a subchain $\widetilde{L}_{\bullet}^{I}$ of $\widetilde{L}_{\bullet}$ via the formula 
\begin{equation}\label{Eq:DefiWidetildeLI}
    \widetilde{L}_{\bullet}^{I} \coloneq  (\widetilde{L}_{\widetilde r})_{\widetilde r \in \widetilde I_{\mathbb Z}} = (L^{\dagger}_r)_{r \in I_{\mathbb Z}},
\end{equation}
where $I_{\mathbb Z}$ is defined in \eqref{Eq:DefImathbbZ}, and similarly for $\widetilde I_{\mathbb Z}$ defined with respect to $\varepsilon'$. We note that $\widetilde I_{\mathbb Z} = \nu_{\varepsilon} - I_{\mathbb Z}$. When $n$ is even, $g_0$ identifies $\widetilde{L}_{\bullet}^{I}$ with $L_{\bullet}^{\widetilde I}$.\\

For every non-empty subset $I\subseteq \{0,\ldots,\nu_{\varepsilon}\}$, the following PEL datum
\begin{equation*}
    \mathcal D_I \coloneq  (E/F,\mathbb V,H,b,\{\mu\} ; L_{\bullet}^I),
\end{equation*}
is called the \textit{relative basic unramified unitary PEL datum of signature $(1,n-1)$ and of parahoric level $I$}.

\subsection{Unitary Rapoport-Zink spaces of parahoric level}

We recall the definition of the relative Rapoport-Zink spaces attached to the PEL datum introduced in Section \ref{Section2.1}.
\begin{defi} Let $S$ be a scheme over which $p$ is locally nilpotent.
\begin{enumerate}
    \item A \textit{formal $\mathcal O_F$-module} over $S$ is a formal $p$-divisible group $X$ over $S$ together with an $\mathcal O_F$-action, i.e. a ring morphism $i:\mathcal O_F \to \mathrm{End}(X)$.
    \item Assume that $S$ is an $\mathcal O_F$-scheme. The formal $\mathcal O_F$-module $X$ is said to be \textit{strict} if the $\mathcal O_F$-action on $\mathrm{Lie}(X)$ induced by $i$ coincides with the natural action given by the structure morphism $\mathcal O_F \to \mathcal O_S$.
    \item Assume that $S$ is an $\mathcal O_F$-scheme. We say that a strict formal $\mathcal O_F$-module $X$ is \textit{supersingular} if all its slopes are $\frac{1}{2}$. 
\end{enumerate}
\end{defi}
Let $\textbf{Nilp}$ (resp. $\textbf{Nilp}_{\mathcal O_{\breve F}}$) denote the category of $\mathcal O_K$-schemes (resp. $\mathcal O_{\breve F}$-schemes) $S$ over which $\pi$ is locally nilpotent. The following definition is a special case of a notion studied in \cite{mihatschRELATIVEUNITARYRZSPACES2022} and \cite{choBasicLocusUnitary2018}.
\begin{defi}
    Let $0 \leq h \leq n$ and let $S \in \textbf{Nilp}$. A \textit{supersingular hermitian $\mathcal O_E$-$\mathcal O_F$-$h$-module of signature $(1,n-1)$} over $S$ is a triple $(X,i_X,\lambda_X)$ where 
    \begin{enumerate}
        \item $X$ is a supersingular strict formal $\mathcal O_F$-module over $S$,
        \item $i_X: \mathcal O_E \to \mathrm{End}(X)$ is an $\mathcal O_E$-action which extends the $\mathcal O_F$-action on $X$, and which satisfies the signature $(1,n-1)$ condition
        \begin{equation*} 
            \forall \lambda \in \mathcal O_E, \qquad \mathrm{charpol}(i_{X}(\lambda)\,|\, \mathrm{Lie}(X)) = (T-\varphi_0(\lambda))(T-\varphi_1(\lambda))^{n-1} \in  \mathcal O_S[T],
        \end{equation*}   
        \item $\lambda_X: X \to X^{\vee}$ is an $\mathcal O_E$-linear polarization such that $\mathrm{Ker}(\lambda_X) \subseteq X[\pi]$ and $\mathrm{Ker}(\lambda_X)$ has order $q^{2h}$. Here, $X^{\vee}$ is the Serre dual of $X$ equipped with the $\mathcal O_E$-action given by $i_{X^{\vee}}(\lambda) \coloneq  i_{X}(\lambda^*)^{\vee}$. 
    \end{enumerate}
\end{defi}
For $0 \leq h \leq n$, let us write $h = h_r^{\varepsilon} = 2r + \underline{\varepsilon}$ for some $\varepsilon \in \{\pm 1\}$ and $r \in \{0, \ldots , \nu_{\varepsilon}\}$. By (covariant) relative Dieudonné theory, the datum $(\mathbb V^{\varepsilon}_{K}, L_{r,\mathcal O_K}, \mathcal F, \mathcal V, \langle \cdot,\cdot \rangle_H)$ defines a supersingular hermitian $\mathcal O_E$-$\mathcal O_F$-$h$-module of signature $(1,n-1)$ over $\kappa_K = \mathbb F_{q^2}$, denoted $\mathbb X^{[h]} = (\mathbb X,i_{\mathbb X},\lambda_{\mathbb X}^{[h]})$. By construction, $(\mathbb X,i_{\mathbb X})$ depends only on $\varepsilon \in \{\pm 1\}$ while only the polarization $\lambda_{\mathbb X}^{[h]}$ depends on the value of $h$ of fixed parity $\underline{\varepsilon}$.

For $S \in \textbf{Nilp}$, let $\mathcal N_{E/F,\mathbb X^{[h]}}(S)$ denote the set of tuples $(X,i_X,\lambda_X,\rho_X)$ up to isomorphism, where 
\begin{itemize}
\item $(X,i_X,\lambda_X)$ is a (supersingular) hermitian $\mathcal O_E$-$\mathcal O_F$-$h$-module of signature $(1,n-1)$ over $S$,
\item $\rho_X:X\times_S \overline{S} \to \mathbb X \times_{\mathrm{Spec}(\mathbb F_{q^2})} \overline S$ is an $\mathcal O_E$-linear quasi-isogeny making the following diagram commute locally on $\overline S$ up to a scalar in $F^{\times}$ 
\begin{equation*}
\begin{tikzcd}
X_{\overline S} \arrow[r,"(\lambda_{X})_{\overline S}"] \arrow[d,swap,"\rho_X"] & X_{\overline S}^{\vee} \\
\mathbb X_{\overline S} \arrow[r,"(\lambda_{\mathbb X}^{[h]})_{\overline S}"] & \mathbb X_{\overline S}^{\vee} \arrow[u,swap,"\rho_X^{\vee}"]
\end{tikzcd}
\end{equation*}
\end{itemize}
In the last item, $\overline S \coloneq  S \times_{\mathrm{Spec}(\mathcal O_K)} \mathrm{Spec}(\mathbb F_{q^2})$ is the special fiber of $S$. An isomorphism 
\begin{equation*}
    (X,i_X,\lambda_X,\rho_X) \xrightarrow{\sim} (X',i_{X'},\lambda_{X'},\rho_{X'})
\end{equation*} 
is an $\mathcal O_E$-linear isomorphism $\beta:X\xrightarrow{\sim} X'$ such that $\rho_{X'} \circ \beta_{\overline S} = \rho_X$ and $\beta^{\vee}\circ\lambda_{X'}\circ\beta$ differs from $\lambda_X$ locally on $S$ by a scalar in $\mathcal O_F^{\times}$. 

Recall that an embedding $K \hookrightarrow \breve F$ is fixed in our Notations \ref{SectionNotations}. The following statement is essentially proved in \cite{mihatschRELATIVEUNITARYRZSPACES2022}, see also \cite{choBasicLocusUnitary2018}.
\begin{theo}
    The functor $\mathcal N_{E/F,\mathbb X^{[h]}}\otimes \mathcal O_{\breve F}$ on $\textbf{Nilp}_{\mathcal O_{\breve F}}$ is represented by a formal scheme over $\mathrm{Spf}(\mathcal O_{\breve F})$ which is locally formally of finite type and regular.
\end{theo}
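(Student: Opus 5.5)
The representability follows from the general theory of Rapoport--Zink spaces for PEL data; regularity is the substantive part. Representability by a formal scheme locally formally of finite type over $\mathrm{Spf}(\mathcal O_{\breve F})$ is \cite[Theorem 3.25]{rapoportPeriodSpacesPdivisible1996}, adapted to the relative setting of strict formal $\mathcal O_F$-modules. The functor $\mathcal N_{E/F,\mathbb X^{[h]}}\otimes\mathcal O_{\breve F}$ is a closed subfunctor of the relative Rapoport--Zink space of quasi-isogenies of strict formal $\mathcal O_F$-modules of height $2n$ lifting $\mathbb X$. The conditions defining it are closed: $\mathcal O_E$-linearity of $i_X$, the signature $(1,n-1)$ condition on $\mathrm{Lie}(X)$, and the compatibility of $\lambda_X$ with $\lambda_{\mathbb X}^{[h]}$ up to $F^\times$. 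The condition that $\mathrm{Ker}(\lambda_X)\subseteq X[\pi]$ has order $q^{2h}$ is open and closed. Local formal finiteness comes from bounding the height of $\rho_X$ on each connected piece, exactly as in the absolute case. Equivalently, one can invoke \cite{mihatschRELATIVEUNITARYRZSPACES2022}, where this relative unitary space is constructed.

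For regularity, I would use the local model diagram. The space $\mathcal N_{E/F,\mathbb X^{[h]}}$ has a single polarization type, given by the self-dual chain $\{L_r, L_r^\dagger\}$ up to $\pi$-periodicity. Via Grothendieck--Messing deformation theory for strict $\mathcal O_F$-modules (the relative Dieudonné crystal), the space is étale-locally isomorphic to the standard local model $\mathbb M^{\mathrm{loc}}$ for $\mathrm{GU}$ of signature $(1,n-1)$ with respect to this chain. After base change to $\mathcal O_{\breve F}$, the splitting $\mathcal O_E\otimes\mathcal O_{\breve F}\simeq\mathcal O_{\breve F}^2$ decomposes the chain into $\eta_0$- and $\eta_1$-parts. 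The polarization identifies the $\eta_1$-part with the dual of the $\eta_0$-part, so the local model becomes the Drinfeld-type local model for $\mathrm{GL}_n$ with a two-step lattice chain $\Lambda\subseteq\Lambda'$ of index $h$ (or a single lattice if $h\in\{0,n\}$). It parametrizes a corank-one (or rank-one, by duality) submodule $\mathcal F_0$ in the $\eta_0$-part compatible with the transition maps.

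The main step is checking that this local model is regular. I would write it explicitly. When $h\in\{0,n\}$ it is smooth, namely $\mathbb P^{n-1}$. Otherwise it is the Görtz local model of $\mathrm{GL}_n$ for $\mu=(1,0^{n-1})$ and a two-lattice chain. By \cite{gortzFlatnessModelsCertain2001} it is flat with reduced special fiber whose irreducible components are smooth. In these coordinates it is covered by charts of the form $\mathrm{Spec}\,\mathcal O_{\breve F}[x_1,\dots,x_n]/(x_1x_2-\pi)$, up to smooth factors. Such a chart is regular, because $x_1x_2-\pi\notin\mathfrak m^2$ at the closed points. This chart computation is the one point I expect to need care: one must check at the worst point, the intersection of the two components, that $\mathfrak m/\mathfrak m^2$ has the expected dimension. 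Regularity then transfers to $\mathcal N$, since the local model diagram consists of smooth morphisms of the same relative dimension and completed local rings agree.
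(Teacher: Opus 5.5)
Your proposal is correct and follows essentially the same route as the paper. The paper gives no argument of its own beyond citing \cite{mihatschRELATIVEUNITARYRZSPACES2022} and \cite{choBasicLocusUnitary2018}, but it relies on exactly your mechanism elsewhere: relative Rapoport--Zink representability, the local model diagram, and strict semi-stability of the Drinfeld-type local model of \cite{gortzFlatnessModelsCertain2001} (see Proposition \ref{Prop:IsomorphismWithDrinfeldCase}). Your chart $x_1x_2=\pi$ just makes that semi-stability explicit for a two-lattice chain.

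One small correction: local formal finiteness does not come from bounding the height of $\rho_X$. That height is locally constant and already fixed on each $\mathcal N_j^h$. It comes instead from the Rapoport--Zink boundedness argument for the reduced locus, which you import anyway through your citation.
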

From now on we write $\mathcal N_{\mathbb X^{[h]}}$ for the formal scheme over $\mathrm{Spf}(\mathcal O_{\breve F})$ given by the Theorem. 
\begin{rk}\label{Rk:IndependenceFramingObjectMaxPara}
    By \cite{choBasicLocusUnitary2018} Remark 3.32, up to quasi-isogeny, there is a unique supersingular hermitian $\mathcal O_E$-$\mathcal O_F$-$h$-module of signature $(1,n-1)$ over $\mathrm{Spec}(\overline{\mathbb F_{q}})$. In particular, the formal scheme $\mathcal N_{\mathbb X^{[h]}}$ does not depend on the precise choice of $\mathbb X^{[h]}$. This allows us to write $\mathcal N^{h} \coloneq  \mathcal N_{\mathbb X^{[h]}}$ when we do not need to make a reference to $\mathbb X^{[h]}$. The space $\mathcal N^{h}$ is called the \textit{relative basic unramified unitary PEL Rapoport-Zink space of signature $(1,n-1)$ of maximal parahoric level $h$}. We also write $\mathcal N^{h}_{\mathrm{red}}$ for its reduced special fiber over $\mathrm{Spec}(\overline{\mathbb F_q})$.
\end{rk}
For $S \in \textbf{Nilp}_{\mathcal O_{\breve F}}$ and $(X,i_X,\lambda_X,\rho_X) \in \mathcal N^{h}(S)$, locally on $S$ there is a constant $c(\rho_X) \in F^{\times}$ such that 
\begin{equation*}
    \rho_X^{\vee}\circ (\lambda_{\mathbb X}^{[h]})_{\overline S} \circ \rho_X = c(\rho_X) \circ (\lambda_X)_{\overline{S}}.
\end{equation*}
It follows that the $F$-height $\mathrm{ht}_F(\rho_X)$ of $\rho_X$ is divisible by $n$, and locally on $S$ we have 
\begin{equation*}
    v_{\pi}(c(\rho_X)) = \frac{\mathrm{ht}_F(\rho_X)}{n} \in \mathbb Z.
\end{equation*}
This defines a continuous map 
\begin{equation*}
    \Delta: \mathcal N^h \longrightarrow \mathbb Z, \qquad (X,i_X,\lambda_X,\rho_X) \longmapsto v_{\pi}(c(\rho_X)),
\end{equation*}
where $\mathbb Z$ is given the discrete topology. We have a decomposition into a disjoint union of open and closed formal subschemes
\begin{equation*}
    \mathcal N^h = \bigsqcup_{j\in \mathbb Z} \mathcal N_j^{h},
\end{equation*}
where $\mathcal N_{j}^{h} \coloneq  \Delta^{-1}\{j\}$. 
\begin{rk}
    In \cite{choBasicLocusUnitary2018}, the main object of study is the component $\mathcal N_{0}^{h}$ consisting of quasi-isogenies of height $0$. This is actually enough to understand the whole of $\mathcal N^h$ according to \eqref{Eq:ActionOfJ} and Proposition \ref{Prop:ConnectedComponentNonEmptyMaxLevel} below. 
\end{rk}
By Dieudonné theory, the group $J(F) \simeq \mathrm{GU}(\mathbb V_{K,0},(\cdot,\cdot))$ of Proposition \ref{Prop:GroupJIsomorphism} is identified with the group of quasi-isogenies $\mathbb X^{[h]} \to \mathbb X^{[h]}$, i.e. $\mathcal O_E$-linear quasi-isogenies $g:\mathbb X \to \mathbb X$ such that $g^{\vee}\circ \lambda_{\mathbb X}^{[h]} \circ g$ differs from $\lambda_{\mathbb X}^{[h]}$ by a scalar in $F^{\times}$. We have a natural action of $J(F)$ on $\mathcal N^{h}$ given by 
\begin{equation}\label{Eq:ActionOfJ}
    \forall g \in J(F), \qquad g\cdot (X,i_X,\lambda_X,\rho_X) \coloneq  (X,i_X,\lambda_X,(g)_{\overline{S}}\circ \rho_X).
\end{equation}
This action induces, for $g \in J(F)$, an isomorphism 
\begin{equation}\label{Eq:ActionOfJOnConnectedComponents}
    g: \mathcal N_{j}^{h} \xrightarrow{\sim} \mathcal N_{j+\alpha_J(g)}^{h},
\end{equation}
where $\alpha_J: J(F) \to \mathbb Z$ is defined as in Proposition \ref{Prop:ImageOfalpha} relative to $J(F) \simeq \mathrm{GU}(\mathbb V_{K,0},(\cdot,\cdot))$. 
\begin{prop}\label{Prop:PointsMaximalParahoric}
    Let $k$ be a perfect field containing $\overline{\mathbb F_{q}}$. For $j \in \mathbb Z$, there is a natural bijection
    \begin{equation*}
        \mathcal N_{j}^{h}(k) \simeq \left\{ \begin{array}{c}
        W_{\mathcal O_F}(k)\text{-lattices in }\mathbb V_{F_k,0}\\
        A \subseteq B 
        \end{array}
        \;\middle|\; 
        \begin{array}{c}
        \pi^{j+1} A^{\vee} \overset{1}{\subseteq} B \subseteq \pi^{j}A^{\vee},\\
        \pi^{j+1} B^{\vee} \overset{1}{\subseteq} A \subseteq \pi^{j}B^{\vee},\\
        \pi B \subseteq A \overset{h}{\subseteq} B 
        \end{array}\right\},
    \end{equation*}
    where $\cdot^{\vee}$ denotes the dual lattice with respect to the pairing $(\cdot,\cdot)$ as defined in \eqref{Eq:DefiDualLatticeVee}.
\end{prop}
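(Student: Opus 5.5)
We work with relative covariant Dieudonné theory over $k$. A point $(X,i_X,\lambda_X,\rho_X)\in\mathcal N^h(k)$ has a Dieudonné module $M\subset\mathbb V_{F_k}$: via $\rho_X$ it is a $W_{\mathcal O_F}(k)$-lattice stable under $\mathcal F$, $\mathcal V$ and $\mathcal O_E$. The $\mathcal O_E$-action gives a splitting $M=M_0\oplus M_1$ with $M_i=M\cap\mathbb V_{F_k,i}$. Since $\mathcal F$ and $\mathcal V$ are $1$-homogeneous, the module is determined by the pair $(M_0,M_1)$ with $\mathcal F M_0\subseteq M_1$, $\mathcal F M_1\subseteq M_0$, and similarly for $\mathcal V$. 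We set
\begin{equation*}
  A\coloneq M_0, \qquad B\coloneq \mathcal F^{-1}M_1=\pi^{-1}\mathcal V M_1\subseteq \mathbb V_{F_k,0}.
\end{equation*}
The map $x\mapsto(A,B)$ is then defined on points, and the plan is to translate each of the three conditions (Dieudonné stability, signature, polarization) into lattice conditions on $(A,B)$.

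\textbf{Stability and signature.} $\mathcal F$- and $\mathcal V$-stability amount to $\mathcal F M_0\subseteq M_1$ and $\mathcal V M_0\subseteq M_1$. The first reads $A\subseteq B$. For the second, $\mathcal V=\pi\mathcal F^{-1}$ gives $\pi\mathcal F^{-1}A\subseteq\mathcal F B$, i.e.\ $\pi A\subseteq \mathcal F^2 B=\pi\tau(B)$, which is $A\subseteq\tau(B)$. The conditions on $M_1$ translate similarly into $\pi B\subseteq A$ up to $\tau$-twists. I expect the $\tau$-twists to cancel once duality is used, via \eqref{Eq:FormulaVeeTau}, which says $(\Lambda^\vee)^\vee=\tau(\Lambda)$. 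Next, $\mathrm{Lie}(X)=M/\mathcal VM$ decomposes into pieces $M_0/\mathcal VM_1$ and $M_1/\mathcal VM_0$, of dimensions $1$ and $n-1$ by the signature condition (or in the other order, depending on conventions). Rewritten in terms of $A$ and $B$, these should give index-one inclusions of the form $[\,\cdot\,:\,\cdot\,]=1$.

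\textbf{Polarization.} $\lambda_X$ corresponds to $\langle\cdot,\cdot\rangle_H$ scaled by $c(\rho_X)$, with $v_\pi(c(\rho_X))=j$. The pairing $\langle\cdot,\cdot\rangle_H$ pairs $\mathbb V_0$ with $\mathbb V_1$, so the dual of $M$ with respect to $\pi^{-j}\langle\cdot,\cdot\rangle_H$ splits componentwise. Using the definition $(v,w)=H(v,\mathcal Fw)$, the dual of $M_1$ inside $\mathbb V_0$ can be expressed as $\pi^{j}B^\vee$ up to the normalization coming from $\delta$ and the trace. The condition $\mathrm{Ker}(\lambda_X)\subseteq X[\pi]$ of order $q^{2h}$ becomes $\pi M^{\perp}\subseteq M\subseteq M^\perp$ with index $h$ in each graded piece. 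On the $0$-component this gives a chain between $\pi^{j+1}B^\vee$, $A$ and $\pi^jB^\vee$. Combining with the signature conditions should produce the index-one statements $\pi^{j+1}A^\vee\overset{1}{\subseteq}B$ and $\pi^{j+1}B^\vee\overset{1}{\subseteq}A$. The index $h$ then transfers to $A\overset{h}{\subseteq}B$ through the relation $[A:B]$ versus $[\pi^jB^\vee:\pi^{j}A^\vee]$.

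\textbf{Converse and main obstacle.} Given $(A,B)$ satisfying the conditions, we reconstruct $M=A\oplus\mathcal F B$ and reverse each step. We then check $\mathcal F,\mathcal V$-stability, the signature, and the polarization kernel, and apply Dieudonné theory to obtain $(X,i_X,\lambda_X,\rho_X)$; uniqueness follows because $\rho_X$ rigidifies the point. The main obstacle is the bookkeeping of the $\tau$/$\sigma$ twists and exact normalizations. Specifically, one must verify that the $H$-dual of $M_1$ inside $\mathbb V_0$ is exactly $\pi^{j}B^\vee$, using $(\Lambda^\vee)^\vee=\tau(\Lambda)$ and the identity $\langle\mathcal Fv,w\rangle_H=\langle v,\mathcal Vw\rangle_H^\sigma$. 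One must also check that the signature index-one conditions line up precisely with the polarization inclusions, so that no condition is lost or redundant. I would first fix this by computing on the base point, where $A=\Lambda_r$-type lattices are given by the explicit basis and Gram matrix $\Omega_{\mathbb V_{K,0}}$, and then argue in general.
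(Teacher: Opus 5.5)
Your definition $B\coloneq\mathcal F^{-1}M_1$ is the wrong lattice, and with it the map does not land in the set of the statement.

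\textbf{Why your $B$ fails.} Since $\pi B=\mathcal V M_1$, the signature condition $\dim_k M_0/\mathcal V M_1=1$ gives $\pi B\overset{1}{\subseteq}A$. Hence $A\overset{n-1}{\subseteq}B$ for every point, whatever $h$ is. With the opposite signature convention the index would be $1$, still independent of $h$. So $A\overset{h}{\subseteq}B$ fails as soon as $h\neq n-1$.

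No transfer through duality can repair this. Duality only gives $[A^\vee:B^\vee]=[B:A]$. It says nothing about $[\pi^jB^\vee:A]$, which is the index the polarization actually controls.

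The same problem appears in your polarization step. For your $B$ one has $B^\vee=M_1^\dagger$ up to units, so the polarization reads $\pi^{j+1}B^\vee\overset{n-h}{\subseteq}A\overset{h}{\subseteq}\pi^jB^\vee$. This contradicts the required $\pi^{j+1}B^\vee\overset{1}{\subseteq}A$.

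You have assigned the roles backwards. In the statement, $\pi B\subseteq A\overset{h}{\subseteq}B$ is the polarization condition. The Dieudonn\'e stability and the signature are encoded in the four index-one conditions involving duals.

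\textbf{The fix.} Let $B$ carry the polarization: set $B\coloneq\pi^{j}M_1^{\dagger}$, which is $\pi^j$ times the $(\cdot,\cdot)$-dual of your lattice. The polarization condition $\pi^{j+1}M_1^\dagger\overset{n-h}{\subseteq}M_0\overset{h}{\subseteq}\pi^jM_1^\dagger$ is then literally $\pi B\overset{n-h}{\subseteq}A\overset{h}{\subseteq}B$.

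The key identity is $\pi^{j+1}B^\vee=\mathcal F M_1$. It follows from $(v,w)=H(v,\mathcal Fw)$ together with $\langle\mathcal Fv,w\rangle_H=\langle v,\mathcal Vw\rangle_H^{\sigma}$.

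\emph{First chain.} Apply $\mathcal F$ to the signature chain $\mathcal VM_0\overset{n-1}{\subseteq}M_1\overset{1}{\subseteq}\mathcal F^{-1}M_0$. This gives $\pi^{j+1}B^\vee\overset{1}{\subseteq}A\overset{n-1}{\subseteq}\pi^jB^\vee$.

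\emph{Second chain.} Dualize $\mathcal VM_1\overset{1}{\subseteq}M_0\overset{n-1}{\subseteq}\mathcal F^{-1}M_1$, using $(\Lambda^\vee)^\vee=\tau(\Lambda)$ and the computation $(\mathcal VM_1)^\vee=\pi^{-j-1}B$. This gives $\pi^{j+1}A^\vee\overset{1}{\subseteq}B\overset{n-1}{\subseteq}\pi^jA^\vee$.

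These two chains already contain all four $\mathcal F$- and $\mathcal V$-stability conditions. No separate $\tau$-twisted conditions such as $A\subseteq\tau(B)$ are left over, so no cancellation argument is needed.

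For the converse, you recover $M_1=\mathcal F^{-1}(\pi^{j+1}B^\vee)$, not $\mathcal F B$.
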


\begin{proof}
    The proof is essentially the same as \cite{choBasicLocusUnitary2018} Propositions 2.2 and 2.4. Let $(X,i_X,\lambda_X,\rho_X) \in \mathcal N_{j}^{h}(k)$. The image via $\rho_X$ of the relative Dieudonné module of $X$ is an $\mathcal O_E \otimes_{\mathcal O_F} W_{\mathcal O_F}(k)$-lattice $M \subset \mathbb V_{F_k}$ which is stable under $\mathcal F$ and $\mathcal V$. Moreover, if we decompose $M = M_0 \oplus M_1$ where $M_i \coloneq  \eta_i M$ for $i=0,1$ then the compatibility of $\rho_X$ with respect to the polarizations is equivalent to 
    \begin{align*}
        \pi^{j+1}M_1^{\dagger} \overset{n-h}{\subseteq} M_0 \overset{h}{\subseteq} \pi^{j}M_1^{\dagger}, & & \pi^{j+1}M_0^{\dagger} \overset{n-h}{\subseteq} M_1 \overset{h}{\subseteq} \pi^{j}M_0^{\dagger}.
    \end{align*}
    Here, given a $W_{\mathcal O_F}(k)$-lattice $L$ in $\mathbb V_{F_k,e}$, we wrote 
    \begin{equation*}
        L^{\dagger} \coloneq  \{v \in \mathbb V_{F_k,e+1} \mid \langle v, L \rangle_H \subseteq W_{\mathcal O_F}(k)\},
    \end{equation*}
    where $e$ is understood in $\mathbb Z/2\mathbb Z$. Eventually, the signature $(1,n-1)$ condition is equivalent to 
    \begin{align*}
        \dim_{k}(M_0/\mathcal VM_1) = 1, & & \dim_k(M_1/\mathcal VM_0) = n-1.
    \end{align*}
    Define $A \coloneq  M_0$ and $B \coloneq  \pi^{j}M_1^{\dagger}$. From the definition of the form $(\cdot,\cdot)$ in \eqref{Eq:DefinitionForm(,)OnV0}, we have 
    \begin{equation*}
        \pi^{j+1} B^{\vee} = \mathcal F M_1.
    \end{equation*}
    By the signature condition, we have $\mathcal VM_0 \overset{n-1}{\subseteq} M_1 \overset{1}{\subseteq} \pi^{-1}\mathcal VM_0 = \mathcal F^{-1}M_0$. Applying $\mathcal F$, we obtain $\pi^{j+1}B^{\vee} \overset{1}{\subseteq} A \overset{n-1}{\subseteq} \pi^{j}B^{\vee}$.\\
    On the other hand, the signature condition also gives $\mathcal VM_1 \overset{1}{\subseteq} M_0 \overset{n-1}{\subseteq} \mathcal F^{-1}M_1$. Taking duals and using \eqref{Eq:FormulaVeeTau}, this is equivalent to $\pi^{j+1}A^{\vee} \overset{1}{\subseteq} B \overset{n-1}{\subseteq} \pi^{j}A^{\vee}$ as desired.\\
    Conversely, the lattice $M$ can easily be recovered from the pair $(A \subseteq B)$, giving rise to the inverse map.
\end{proof}
\begin{rk}
    Upon taking duals, the conditions 
    \begin{align*}
        \pi^{j+1} A^{\vee} \overset{1}{\subseteq} B \subseteq \pi^{j}A^{\vee}, & & \pi^{j+1} B^{\vee} \overset{1}{\subseteq} A \subseteq \pi^{j}B^{\vee},
    \end{align*}
    are equivalent to 
    \begin{align*}
        \pi^{j+1} B^{\vee} \overset{1}{\subseteq} \tau(A) \subseteq \pi^{j}B^{\vee}, & & \pi^{j+1} A^{\vee} \overset{1}{\subseteq} \tau(B) \subseteq \pi^{j}A^{\vee},
    \end{align*}
\end{rk}
\begin{prop}\label{Prop:ConnectedComponentNonEmptyMaxLevel}
    We have 
    \begin{equation*}
        \mathcal N_{j}^{h} \not = \emptyset \iff nj \text{ is even}.
    \end{equation*}
\end{prop}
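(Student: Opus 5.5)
The plan is to prove the two implications separately: necessity by an index computation on the lattice description of Proposition \ref{Prop:PointsMaximalParahoric}, and sufficiency by exhibiting a point in $\mathcal N_0^h$ and moving it with the action \eqref{Eq:ActionOfJOnConnectedComponents} of $J(F)$.

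\emph{Necessity.} Assume $\mathcal N_j^h\neq\emptyset$ and pick a point over $k=\overline{\mathbb F_q}$. By Proposition \ref{Prop:PointsMaximalParahoric} it gives lattices $A\subseteq B$ in $\mathbb V_{\breve F,0}$ with $\pi^{j+1}A^{\vee}\overset{1}{\subseteq}B\subseteq\pi^jA^{\vee}$ and $A\overset{h}{\subseteq}B$. I will use the generalized index \eqref{Eq:DefinitionIndexLattices}, transported to $W_{\mathcal O_F}(k)$-lattices in $\mathbb V_{\breve F,0}$, relative to the fixed lattice $\Lambda_0$ (base changed to $\breve F$).

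The key observation is that duality reverses indices: $[\Lambda^{\vee}:\Lambda'^{\vee}]=[\Lambda':\Lambda]$. This holds because $\cdot^\vee$ is inclusion-reversing and, by \eqref{Eq:FormulaVeeTau}, $\tau$ preserves indices. Consequently
\begin{equation*}
[\Lambda_0:\Lambda^{\vee}]=[\Lambda_0:\Lambda_0^{\vee}]-[\Lambda_0:\Lambda]\qquad\text{for every lattice }\Lambda.
\end{equation*}
From $\pi\Lambda_r^{\vee}=\Lambda_{1-\underline{\varepsilon}-r}$ with $r=0$ we get $\Lambda_0^\vee=\pi^{-1}\Lambda_{1-\underline{\varepsilon}}$, hence $c\coloneq[\Lambda_0:\Lambda_0^{\vee}]=1-\underline{\varepsilon}-n$.

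Since $[\pi^jA^\vee:\pi^{j+1}A^\vee]=n$, the first condition forces $[\pi^jA^\vee:B]=n-1$. This gives
\begin{equation*}
[\Lambda_0:B]=jn+c-[\Lambda_0:A]+n-1.
\end{equation*}
Combining with $[\Lambda_0:A]=[\Lambda_0:B]+h$, we obtain
\begin{equation*}
2[\Lambda_0:B]=jn+c+n-1-h=jn-\underline{\varepsilon}-h.
\end{equation*}
As $h\equiv\underline{\varepsilon}\pmod 2$, the right-hand side is congruent to $jn$ modulo $2$. Hence $nj$ is even. The only point needing care is the index bookkeeping, in particular the sign conventions in $[\cdot:\cdot]$ and the value of $c$.

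\emph{Sufficiency.} The framing object with $\rho=\mathrm{id}$ is a point of $\mathcal N^h$ with $c(\rho)=1$, so $\mathcal N_0^h\neq\emptyset$. By \eqref{Eq:ActionOfJOnConnectedComponents}, $\mathcal N_j^h\neq\emptyset$ for every $j\in\mathrm{Im}(\alpha_J)$. It remains to compute this image, via Proposition \ref{Prop:ImageOfalpha} applied to $J\simeq\mathrm{GU}(\mathbb V_{K,0},(\cdot,\cdot))$.

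The scalar $\pi\,\mathrm{id}$ always gives $2\mathbb Z\subseteq\mathrm{Im}(\alpha_J)$, which settles the case $n$ odd, where $nj$ even means $j$ even. For $n$ even, scaling the form by $\pi$ multiplies the discriminant by $(-1)^n=1$. So $(\mathbb V_{K,0},\pi(\cdot,\cdot))\simeq(\mathbb V_{K,0},(\cdot,\cdot))$, and such an isometry is a similitude of multiplier $\pi$; this is exactly the argument around $g_0$. Hence $\mathrm{Im}(\alpha_J)=\mathbb Z$, and every $j$ occurs.

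The main obstacle is the index computation in the necessity step; the rest is formal.
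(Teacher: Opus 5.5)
Your proposal is correct and follows the paper's proof. For sufficiency, you move the framing point of $\mathcal N_0^h$ by the $J(F)$-action and compute $\mathrm{Im}(\alpha_J)$; for necessity, you use a parity count of lattice indices relative to $\Lambda_0$. The differences are cosmetic: you solve for $2[\Lambda_0:B]$ rather than computing $[A^\vee:A]$ in two ways, and for $n$ even you justify $\mathrm{Im}(\alpha_J)=\mathbb Z$ directly, because scaling the form by $\pi$ leaves the discriminant unchanged, instead of quoting Proposition \ref{Prop:ImageOfalpha}.
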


\begin{proof}
    Clearly, $\mathbb X^{[h]} \otimes_{\mathbb F_{q^2}} \overline{\mathbb F_{q}}$ together with the identity defines an $\overline{\mathbb F_{q}}$-rational point of $\mathcal N_{0}^{h}$. By \eqref{Eq:ActionOfJOnConnectedComponents}, it follows that $\mathcal N_{j}^{h} \not = \emptyset$ for all $j \in \mathrm{Im}(\alpha_J)$. By Proposition \ref{Prop:ImageOfalpha}, $\mathrm{Im}(\alpha_J)$ is precisely the set of integers $j \in \mathbb Z$ such that $nj$ is even.\\
    Conversely, assume that $\mathcal N_{j}^{h} \not = \emptyset$ for some $j \in \mathbb Z$. Since it is locally formally of finite type, it has a point $(A \subseteq B) \in \mathcal N_{j}^{h}(\overline{\mathbb F_{q}})$. On the one hand, we have 
    \begin{align*}
        [A^{\vee}:A] & = [A^{\vee}:\pi^jA^{\vee}] + [\pi^jA^{\vee}:B] + [B:A]\\
        & = nj + n-1 + h,
    \end{align*}
    where $[\cdot:\cdot]$ is defined for $(\mathbb V_{\breve F,0}, (\cdot,\cdot))$ similarly to \eqref{Eq:DefinitionIndexLattices}. On the other hand, there is a distinguished lattice $\Lambda_{0, \mathcal O_{\breve F}} \coloneq  \Lambda_{0}\otimes_{\mathcal O_K} \mathcal O_{\breve F}$ in $\mathbb V_{\breve F,0}$. We have 
    \begin{align*}
        [A^{\vee}:A] & = [A^{\vee}:\Lambda_{0, \mathcal O_{\breve F}}^{\vee}] + [\Lambda_{0, \mathcal O_{\breve F}}^{\vee}:\Lambda_{0, \mathcal O_{\breve F}}] + [\Lambda_{0, \mathcal O_{\breve F}}:A]\\
        & = 2[\Lambda_{0, \mathcal O_{\breve F}}:A] + n - (1-\underline{\varepsilon}).
    \end{align*}
    It follows that 
    \begin{align*}
        & nj + n-1 + h \equiv n - (1-\underline{\varepsilon}) \mod 2 \\
        \iff & nj \equiv h + \underline{\varepsilon} \equiv 0 \mod 2,
    \end{align*}
    since $\varepsilon$ has been determined precisely so that $h \equiv \underline{\varepsilon} \mod 2$.
\end{proof}
More generally, we now introduce the Rapoport-Zink spaces of deeper parahoric levels. Let $\varepsilon \in \{\pm 1\}$. The lattice chain in $\mathbb V^{\varepsilon}$
\begin{equation*}
    L_{\nu_{\varepsilon}} \overset{1}{\subseteq} L_{\nu_{\varepsilon}-1} \overset{1}{\subseteq} \ldots \overset{1}{\subseteq} L_0,
\end{equation*}
induces a family of $\mathcal O_E$-linear isogenies $\alpha_{r+1,r}: \mathbb X^{[h_{r+1}^{\varepsilon}]} \to \mathbb X^{[h_{r}^{\varepsilon}]}$ for all $0 \leq r \leq \nu_{\varepsilon} - 1$. Given two integers $0 \leq r \leq r' \leq \nu_{\varepsilon}$, we define $\alpha_{r',r} \coloneq  \alpha_{r+1,r} \circ \alpha_{r+2,r+1} \circ \cdots \circ \alpha_{r',r'-1}$. We have $\mathrm{Ker}(\alpha_{r',r}) \subseteq \mathbb X[\pi]$ and $\mathrm{Ker}(\alpha_{r',r})$ has order $q^{h_{r'}^{\varepsilon}-h_r^{\varepsilon}} = q^{2(r'-r)}$. Moreover, up to trivialization, the polarizations can be chosen so that $\lambda_{\mathbb X}^{[h_{r'}^{\varepsilon}]} = \alpha_{r',r}^{\vee} \circ \lambda_{\mathbb X}^{[h_r^{\varepsilon}]} \circ \alpha_{r',r}$ for all $0 \leq r \leq r' \leq \nu_{\varepsilon}$.

Let $I \subseteq \{0,\ldots , \nu_{\varepsilon}\}$ be a non-empty subset. 
\begin{defi}\label{Defi:ConsecutiveIntegers}
    We say that two integers $r, r' \in I$ such that $r < r'$ are \textit{consecutive in $I$} if $I \cap \{r+1, \ldots , r'-1\} = \emptyset$.
\end{defi}
We define a functor $\mathcal N_{E/F,L^I_{\bullet}}$ as follows. For $S \in \textbf{Nilp}$, let $\mathcal N_{E/F,L^I_{\bullet}}(S)$ denote the set of tuples $(X_r,i_{X_r},\lambda_{X_r},\rho_{X_r})_{r \in I}$ up to isomorphism, where 
\begin{itemize}
\item for all $r \in I$, $(X_r,i_{X_r},\lambda_{X_r},\rho_{X_r})\in \mathcal N_{E/F,\mathbb X^{[h_{r}^{\varepsilon}]}}(S)$,
\item for every consecutive $r<r' \in I$, there exists an isogeny $\widetilde{\alpha}_{r',r}:X_{r'}\to X_r$ such that $\mathrm{Ker}(\widetilde{\alpha}_{r',r}) \subseteq X_{r'}[\pi]$, $\mathrm{Ker}(\widetilde{\alpha}_{r',r})$ has order $q^{2(r'-r)}$, and the following diagram commutes 
\begin{center}
\begin{tikzcd}[column sep = large]
(X_{r'})_{\overline S} \arrow[r,"(\widetilde{\alpha}_{r',r})_{\overline S}"] \arrow[d,swap,"\rho_{X_{r'}}"] & (X_r)_{\overline S} \\
\mathbb X_{\overline S}^{[h_{r'}^{\varepsilon}]} \arrow[r,"(\alpha_{r',r})_{\overline S}"] & \mathbb X_{\overline S}^{[h_r^{\varepsilon}]} \arrow[u,swap,"\rho_{X_r}^{-1}"]
\end{tikzcd}
\end{center}
\end{itemize}
When it exists, the isogeny $\widetilde{\alpha}_{r',r}$ is unique. An isomorphism 
\begin{equation*}
(X_r,i_{X_r},\lambda_{X_r},\rho_{X_r})_{r \in I} \xrightarrow{\sim} (X_r',i_{X_r'},\lambda_{X_r'},\rho_{X_r'})_{r \in I}
\end{equation*}
is a collection of isomorphisms $\beta_r:(X_r,i_{X_r},\lambda_{X_r},\rho_{X_r}) \xrightarrow{\sim} (X_r',i_{X_r'},\lambda_{X_r'},\rho_{X_r'})$ in the sense of the maximal parahoric case, such that 
\begin{equation*}
\beta_{r}^{-1} \circ \widetilde{\alpha'}_{r',r}\circ\beta_{r'} = \widetilde{\alpha}_{r',r}
\end{equation*}
for all consecutive $r<r' \in I$. Adapting \cite{rapoportPeriodSpacesPdivisible1996} to the relative setting as in \cite{mihatschRELATIVEUNITARYRZSPACES2022} and \cite{choBasicLocusUnitary2018}, one may prove the following Proposition.
\begin{prop}
The functor $\mathcal N_{E/F,L^I_{\bullet}}\otimes \mathcal O_{\breve F}$ is represented by a formal scheme over $\mathrm{Spf}(\mathcal O_{\breve F})$ which is locally formally of finite type and regular.
\end{prop}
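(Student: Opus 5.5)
The strategy is to realize $\mathcal N_{E/F,L^I_{\bullet}}\otimes\mathcal O_{\breve F}$ as a closed formal subscheme of a finite product of the maximal parahoric spaces, and then to deduce regularity from the local model.

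\textbf{Representability.} The natural morphism
\begin{equation*}
\mathcal N_{E/F,L^I_\bullet}\otimes\mathcal O_{\breve F}\longrightarrow\prod_{r\in I}\mathcal N^{h_r^\varepsilon}
\end{equation*}
forgets the isogenies $\widetilde\alpha_{r',r}$. It is a monomorphism, since each $\widetilde\alpha_{r',r}$ is unique when it exists. For consecutive $r<r'$, consider the quasi-isogeny $\rho_{X_r}^{-1}\circ\alpha_{r',r}\circ\rho_{X_{r'}}$ on $\overline S$. By rigidity of quasi-isogenies it lifts uniquely to a quasi-isogeny $X_{r'}\to X_r$ over $S$. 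The locus where this lift is an honest isogeny with kernel in $X_{r'}[\pi]$ is a closed formal subscheme, by the standard argument of \cite{rapoportPeriodSpacesPdivisible1996} Proposition 2.9: the condition that $\pi^k\cdot$(quasi-isogeny) be an isogeny is closed. On that locus the order of the kernel is locally constant and can be read off from heights, which are fixed by the framing. Hence the functor is a closed formal subscheme of a product of formal schemes that are locally formally of finite type over $\mathrm{Spf}(\mathcal O_{\breve F})$ by the preceding Theorem, so it is itself representable and locally formally of finite type. The polarization compatibility needs no extra condition: it follows from $\lambda_{\mathbb X}^{[h_{r'}]}=\alpha_{r',r}^\vee\lambda_{\mathbb X}^{[h_r]}\alpha_{r',r}$ and the framing.

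\textbf{Regularity.} Here we follow \cite{rapoportPeriodSpacesPdivisible1996} Chapter 3. Via Grothendieck--Messing theory, together with the relative version as in \cite{mihatschRELATIVEUNITARYRZSPACES2022}, we construct a local model diagram
\begin{equation*}
\mathcal N\longleftarrow\widetilde{\mathcal N}\longrightarrow\mathbb M^{\mathrm{loc}}_I,
\end{equation*}
where $\widetilde{\mathcal N}$ parametrizes trivializations of the relative de Rham chain $(D(X_r))_{r\in I_{\mathbb Z}}$ by the standard chain $L^I_\bullet\otimes\mathcal O_S$, respecting the $\mathcal O_E$-action and the pairing up to a unit. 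Both arrows are formally smooth of the same relative dimension, the first being a $\mathcal G_I$-torsor. Regularity of $\mathcal N$ is therefore equivalent to regularity of the unitary local model $\mathbb M^{\mathrm{loc}}_I$ for signature $(1,n-1)$ at parahoric level $K_I$. Over $\mathcal O_{\breve F}$ the group is split, since $E\otimes\breve F\simeq\breve F\times\breve F$. Using the decomposition by the idempotents $\eta_0,\eta_1$ and the duality that identifies the $\eta_1$-part with the dual of the $\eta_0$-part, $\mathbb M^{\mathrm{loc}}_I$ becomes the $\mathrm{GL}_n$ local model for the periodic lattice chain indexed by $I_{\mathbb Z}$ and the cocharacter $(1,0^{n-1})$, the Drinfeld case. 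By \cite{gortzFlatnessModelsCertain2001}, and classically already by Drinfeld and Rapoport--Zink, this local model is flat with reduced special fiber. For $\mu=(1,0^{n-1})$ it is in fact regular: it is locally a product of semistable-type models $\mathrm{Spec}\,\mathcal O[x_1,\dots,x_m]/(x_1\cdots x_m-\pi)$ times affine space, hence regular.

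\textbf{Main obstacle.} The difficult step is the local model identification in the relative setting ($F\neq\mathbb Q_p$). One must check that the relative Dieudonné/Grothendieck--Messing theory for strict $\mathcal O_F$-modules gives the expected Hodge filtration, and that the strictness and signature conditions cut out exactly $\mathbb M^{\mathrm{loc}}_I$. For this we would invoke \cite{mihatschRELATIVEUNITARYRZSPACES2022} and the maximal parahoric case from \cite{choBasicLocusUnitary2018}, and then extend to chains. The only new ingredient is that the chain condition on filtrations, namely compatibility with transition maps, matches the isogeny condition above. The rest is routine.
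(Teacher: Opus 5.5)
Your proposal is correct and follows the same route as the paper. Representability comes from adapting Rapoport--Zink: the functor is a closed formal subscheme of the product of the maximal-level spaces. Regularity comes from the local model diagram together with the identification, via the idempotent $\eta_0$ and duality, of $M_{\mathrm{loc}}^{I,\varepsilon}$ with G\"ortz's Drinfeld-case local model, which has strict semistable reduction (Proposition \ref{Prop:IsomorphismWithDrinfeldCase}).

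The one difference is cosmetic. You label the cocharacter $(1,0^{n-1})$, while the paper, working with the rank-$(n-1)$ piece $\eta_0\mathfrak t_r$, writes $(1^{n-1},0)$. These are the same Drinfeld case up to duality.
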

From now on we write $\mathcal N_{L^I_{\bullet}}$ for the formal scheme over $\mathrm{Spf}(\mathcal O_{\breve F})$ given by the Proposition. 
\begin{rk}\label{Rk:IndependenceFramingObjectsArbitraryPara}
    Just as mentioned in Remark \ref{Rk:IndependenceFramingObjectMaxPara}, the precise choice of framing objects $\mathbb X^{[h_r^{\varepsilon}]}$ does not matter, so that we will write $\mathcal N^{I,\varepsilon} \coloneq  \mathcal N_{L^I_{\bullet}}$ when we do not need to make a reference to the framing objects. The space $\mathcal N^{I,\varepsilon}$ is called the \textit{relative basic unramified unitary PEL Rapoport-Zink space of signature $(1,n-1)$ of parahoric level $K_I$}. We also write $\mathcal N^{I,\varepsilon}_{\mathrm{red}}$ for its reduced special fiber over $\mathrm{Spec}(\overline{\mathbb F_q})$. 
\end{rk}
Let $(X_r,i_{X_r},\lambda_{X_r},\rho_{X_r})_{r \in I} \in \mathcal N^{I,\varepsilon}(S)$. For consecutive $r<r' \in I$, the relation
\begin{equation*}
    (\alpha_{r',r})_{\overline{S}} \circ \rho_{X_{r'}} = \rho_{X_r}\circ (\widetilde{\alpha}_{r',r})_{\overline{S}}
\end{equation*}
implies that $\mathrm{ht}_F(\rho_{X_{r'}}) = \mathrm{ht}_F(\rho_{X_r})$ locally on $\overline{S}$. In particular, if $S$ is connected then there exists a unique $j \in \mathbb Z$ such that $nj$ is even, and $(X_r,i_{X_r},\lambda_{X_r},\rho_{X_r}) \in \mathcal N_{j}^{h_{r}^{\varepsilon}}(S)$ for all $r \in I$. This induces a decomposition 
\begin{equation*}
    \mathcal N^{I, \varepsilon} = \bigsqcup_{j \in \mathbb Z} \mathcal N_{j}^{I, \varepsilon},
\end{equation*}
and we have $\mathcal N_{j}^{I, \varepsilon} \not = \emptyset$ if and only if $nj$ is even. Moreover, we have an action of $J(F)$ on $\mathcal N^{I, \varepsilon}$ by letting $g \in J(F)$ act on each individual component $(X_r,i_{X_r},\lambda_{X_r},\rho_{X_r})$. This induces isomorphisms 
\begin{equation*}
    g:\mathcal N_{j}^{I,\varepsilon} \xrightarrow{\sim} \mathcal N_{j+\alpha_J(g)}^{I,\varepsilon}.
\end{equation*}
\begin{rk}
    In \cite{mullerBruhatTitsStratificationBasic2026}, we studied the component $\mathcal N_{0}^{I, \varepsilon}$ consisting of quasi-isogenies of height $0$.
\end{rk}
\begin{prop}\label{Prop:PointsArbitraryParahoric}
        Let $k$ be a perfect field containing $\overline{\mathbb F_{q}}$. For $j \in \mathbb Z$ such that $nj$ is even, there is a natural bijection
    \begin{equation*}
        \mathcal N_{j}^{I,\varepsilon}(k) \simeq \left\{ \begin{array}{c}
        W_{\mathcal O_F}(k)\text{-lattices in }\mathbb V_{F_k,0}\\
        (A_r \subseteq B_r)_{r \in I} 
        \end{array}
        \;\middle|\; \forall r \in I,
        \begin{array}{c}
        \pi^{j+1} A^{\vee}_r \overset{1}{\subseteq} B_r \subseteq \pi^{j}A^{\vee}_r,\\
        \pi^{j+1} B^{\vee}_r \overset{1}{\subseteq} A_r \subseteq \pi^{j}B^{\vee}_r,\\
        \pi B_r \subseteq A_r \overset{h_{r}^{\varepsilon}}{\subseteq} B_r, 
        \end{array} \text{ and } \begin{array}{c}
        A_{r'} \subset A_r, \\
        B_r \subset B_{r'}. 
        \end{array} \right\}
    \end{equation*}
    where $r<r'$ run over all consecutive integers in $I$.
\end{prop}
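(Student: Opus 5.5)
The plan is to reduce the statement to the maximal parahoric case, Proposition \ref{Prop:PointsMaximalParahoric}, applied componentwise. A point of $\mathcal N_j^{I,\varepsilon}(k)$ is a tuple $(X_r,i_{X_r},\lambda_{X_r},\rho_{X_r})_{r\in I}$ with each member in $\mathcal N_j^{h_r^{\varepsilon}}(k)$; as observed before the statement, all members lie in the same component $\mathcal N_j$, since the heights of the $\rho_{X_r}$ agree. This gives, for each $r\in I$, a pair $(A_r\subseteq B_r)$ satisfying the three conditions listed for that $r$. What remains is to show that the existence of the isogenies $\widetilde\alpha_{r',r}$ for consecutive $r<r'$ in $I$ translates exactly into the inclusions $A_{r'}\subset A_r$ and $B_r\subset B_{r'}$.

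For this, I will work with the full Dieudonn\'e lattices $M^{(r)}=M^{(r)}_0\oplus M^{(r)}_1\subset\mathbb V_{F_k}$, which are the images under $\rho_{X_r}$ of the relative Dieudonn\'e modules. The framing isogeny $\alpha_{r',r}$ is induced by the inclusion $L_{r'}\subseteq L_r$, which is the identity on $\mathbb V$. The commutative diagram defining $\widetilde\alpha_{r',r}$ therefore says that $\widetilde\alpha_{r',r}$ exists as an isogeny precisely when $M^{(r')}\subseteq M^{(r)}$. Uniqueness is automatic, and the kernel conditions are automatic as well: the kernel has order given by $[M^{(r)}:M^{(r')}]$, and it is killed by $\pi$ once one knows $\pi M^{(r)}\subseteq M^{(r')}$.

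The kernel conditions come out of a length count. From the polarization conditions recalled in the proof of Proposition \ref{Prop:PointsMaximalParahoric}, $M_0\overset{h}{\subseteq}\pi^jM_1^\dagger$ and symmetrically. So $M^{(r')}\subseteq M^{(r)}$ together with duality forces $[M^{(r)}:M^{(r')}]=h_{r'}^{\varepsilon}-h_r^{\varepsilon}=2(r'-r)$, matching the required order $q^{2(r'-r)}$. The $\pi$-torsion condition follows from $\pi M^{(r)}\subseteq \pi\cdot\pi^{j}(M^{(r)})^\dagger\subseteq\pi^{j+1}(M^{(r')})^\dagger\subseteq M^{(r')}$, where the middle step uses that duality reverses inclusions. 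I will check this carefully, since the parity and indexing of the dualities are where slips are likely.

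Finally, I decompose into graded pieces. The condition $M^{(r')}\subseteq M^{(r)}$ means $M_0^{(r')}\subseteq M_0^{(r)}$ and $M_1^{(r')}\subseteq M_1^{(r)}$. With $A=M_0$ and $B=\pi^jM_1^\dagger$, the first is $A_{r'}\subseteq A_r$, and the second, after applying $\dagger$ (which reverses inclusions), is $B_r\subseteq B_{r'}$. Conversely, given the lattice data, Proposition \ref{Prop:PointsMaximalParahoric} recovers each $M^{(r)}$, the inclusions give the isogenies, and naturality in $k$ is clear. I expect the main obstacle to be the bookkeeping that shows the kernel-order and $\pi$-torsion conditions are automatic rather than extra constraints. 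Everything else is a formal consequence of the maximal parahoric case.
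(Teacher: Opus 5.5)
Your proposal is correct and is essentially the paper's argument. The paper only says the result follows by construction from Proposition \ref{Prop:PointsMaximalParahoric}: one applies the maximal parahoric case to each $r\in I$ and observes that the existence of $\widetilde\alpha_{r',r}$ amounts to $M^{(r')}\subseteq M^{(r)}$, i.e.\ $A_{r'}\subseteq A_r$ and $B_r\subseteq B_{r'}$. Your length count $[M^{(r)}:M^{(r')}]=h^{\varepsilon}_{r'}-h^{\varepsilon}_r$ and your $\pi$-torsion chain are correct, reading $\dagger$ as exchanging the two graded pieces; they make explicit why the kernel conditions impose nothing beyond these inclusions at fixed $j$.
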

The proof follows easily by construction and from Proposition \ref{Prop:PointsMaximalParahoric}.

\begin{rk}
    If $I = \{r\}$ is a singleton, then $\mathcal N^{I,\varepsilon} = \mathcal N^{h_r^{\varepsilon}}$. We recover the space of maximal parahoric level introduced in \ref{Rk:IndependenceFramingObjectMaxPara}. Besides, if $I = \{0,\ldots , \nu_{\varepsilon}\}$ then the Rapoport-Zink space $\mathcal N^{I,\varepsilon}$ is said to be of Iwahori level. 
\end{rk}

For any $0 \leq h \leq n$ of parity $\underline{\varepsilon}$, we can consider the dual framing object $(\mathbb X^{[h]})^{\vee} \coloneq  (\mathbb X^{\vee}, i_{\mathbb X^{\vee}},\lambda_{\mathbb X^{\vee}}^{[n-h]})$ defined as follows: 
\begin{itemize}
    \item $\mathbb X^{\vee}$ is the Serre dual of $\mathbb X$,
    \item $i_{\mathbb X^{\vee}}(\lambda) \coloneq  i_{\mathbb X}(\lambda^{*})^{\vee}$ for all $\lambda \in \mathcal O_E$,
    \item $\lambda_{\mathbb X^{\vee}}^{[n-h]}$ is the polarization on $\mathbb X^{\vee}$ determined by the condition 
    \begin{equation*}
        \lambda_{\mathbb X^{\vee}}^{[n-h]}\circ \lambda_{\mathbb X}^{[h]} = i_{\mathbb X}(\pi).
    \end{equation*}
\end{itemize}
Then $(\mathbb X^{[h]})^{\vee}$ is a supersingular hermitian $\mathcal O_E$-$\mathcal O_F$-$(n-h)$-module of signature $(1,n-1)$ over $\mathbb F_{q^2}$. If $h = 2r+\underline{\varepsilon}$, then by Dieudonné theory the PEL datum associated to $(\mathbb X^{[h]})^{\vee}$ is $(\mathbb V_K^{\varepsilon}, \widetilde{L}_{\nu_{\varepsilon}-r,\mathcal O_K},\mathcal F,\mathcal V,\langle\cdot,\cdot\rangle_{H'})$ where $\langle\cdot,\cdot\rangle_{H'} \coloneq  \pi\langle\cdot,\cdot\rangle_H$, and $\widetilde{L}_{\bullet}$ is the dual lattice chain introduced in \eqref{Eq:DefDualChain}. 
\begin{prop}
    The mapping 
    \begin{equation*}
        (X,i_X,\lambda_X,\rho_X) \longmapsto (X^{\vee},i_{X^{\vee}},\lambda_{X^{\vee}},(\rho_X^{\vee})^{-1})
    \end{equation*}
    defines an isomorphism $\mathcal N_{\mathbb X^{[h]}} \xrightarrow{\sim} \mathcal N_{(\mathbb X^{[h]})^{\vee}}$.
\end{prop}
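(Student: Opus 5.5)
The plan is to check that the assignment is well defined on $S$-points, i.e. that the dual tuple is an object of $\mathcal N_{(\mathbb X^{[h]})^{\vee}}(S)$. Then we exhibit an inverse, obtained by applying the same construction a second time.

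\textbf{Hermitian module structure.} Let $(X,i_X,\lambda_X,\rho_X)\in\mathcal N_{\mathbb X^{[h]}}(S)$.
\begin{itemize}
\item The Serre dual $X^{\vee}$ is again a supersingular strict formal $\mathcal O_F$-module. Strictness follows from the standard relative Dieudonné / display description, since $\mathrm{Lie}(X^{\vee})$ is dual to the Hodge filtration quotient $\omega$. Supersingularity holds because the slopes $\tfrac12$ are preserved by duality.
\item For the signature, note that $i_{X^{\vee}}(\lambda)=i_X(\lambda^*)^{\vee}$. The characteristic polynomial of $\lambda$ on $\mathrm{Lie}(X^{\vee})\simeq \omega_X^{\vee}$ is that of $\lambda^*$ on the complement of $\mathrm{Lie}(X)$ in the rank-$2n$ Dieudonné crystal. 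That complement has type $(n-1,1)$, so twisting by $*$ gives type $(1,n-1)$ again.
\item The polarization $\lambda_{X^{\vee}}$ is defined by $\lambda_{X^{\vee}}\circ\lambda_X=i_X(\pi)$. It exists because $\mathrm{Ker}(\lambda_X)\subseteq X[\pi]$. It is $\mathcal O_E$-linear and symmetric.
\item The kernel of $\lambda_{X^{\vee}}$ is $\lambda_X(X[\pi])$. It lies in $X^{\vee}[\pi]$ and has order $q^{2n}/q^{2h}=q^{2(n-h)}$.
\end{itemize}

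\textbf{Framing.} Set $\rho_{X^{\vee}}\coloneq(\rho_X^{\vee})^{-1}$, which is an $\mathcal O_E$-linear quasi-isogeny $X^{\vee}_{\overline S}\to\mathbb X^{\vee}_{\overline S}$. We must check compatibility with the polarizations up to $F^{\times}$. Starting from
\begin{equation*}
\rho_X^{\vee}\circ\lambda_{\mathbb X}^{[h]}\circ\rho_X=c\,\lambda_X,
\end{equation*}
invert both sides and compose with $\pi$ using $\lambda^{[n-h]}_{\mathbb X^{\vee}}=\pi(\lambda^{[h]}_{\mathbb X})^{-1}$. This gives
\begin{equation*}
\rho_{X^{\vee}}^{\vee}\circ\lambda^{[n-h]}_{\mathbb X^{\vee}}\circ\rho_{X^{\vee}}=c^{-1}\lambda_{X^{\vee}},
\end{equation*}
where we identify $X^{\vee\vee}=X$. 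So the dual tuple is a point of $\mathcal N_{(\mathbb X^{[h]})^{\vee}}(S)$.

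\textbf{Functoriality and the inverse.} Isomorphisms $\beta$ go to $(\beta^{\vee})^{-1}$, and scalars in $\mathcal O_F^{\times}$ are inverted, so the construction respects isomorphism classes. It is functorial in $S$, hence defines a morphism of formal schemes. Note that $\lambda^{[h]}_{\mathbb X}$ is recovered from $\lambda^{[n-h]}_{\mathbb X^{\vee}}$ by the same relation $\lambda\circ\lambda'=\pi$. Therefore $(\mathbb X^{[h]})^{\vee\vee}=\mathbb X^{[h]}$ under $\mathbb X^{\vee\vee}=\mathbb X$. The same construction in the opposite direction is then a two-sided inverse, using $X^{\vee\vee}\simeq X$ and $((\rho^{\vee})^{-1})^{\vee})^{-1}=\rho$.

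\textbf{Main obstacle.} The one non-formal point is the signature condition for $X^{\vee}$ over a general base $S$ rather than on points. I would handle it through the Grothendieck–Messing / Dieudonné crystal exact sequence
\begin{equation*}
0\to\omega_{X^{\vee}}\to\mathbb D(X)_S\to\mathrm{Lie}(X)\to0,
\end{equation*}
which is compatible with the $\mathcal O_E$-action up to the $*$-twist. Characteristic polynomials are multiplicative in short exact sequences of locally free modules. On $\mathbb D$ the characteristic polynomial is $(T-\varphi_0)^n(T-\varphi_1)^n$. Dividing, the action on $\omega_{X^{\vee}}$ has polynomial $(T-\varphi_0)^{n-1}(T-\varphi_1)$. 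The $*$-twist swaps $\varphi_0$ and $\varphi_1$, which yields the required $(1,n-1)$ condition on $\mathrm{Lie}(X^{\vee})$.
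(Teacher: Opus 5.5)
Your argument is correct. The paper gives no proof of this proposition; it only points to Cho, Remark 5.2. What you write is the standard direct verification:
\begin{itemize}
\item the polarization is defined by $\lambda_{X^\vee}\circ\lambda_X=i_X(\pi)$, with kernel $\lambda_X(X[\pi])$ of order $q^{2(n-h)}$;
\item the framing identity is $\rho_{X^\vee}^\vee\circ\lambda^{[n-h]}_{\mathbb X^\vee}\circ\rho_{X^\vee}=c^{-1}\lambda_{X^\vee}$;
\item the inverse is obtained by dualizing a second time, since $\lambda^{[h]}_{\mathbb X}$ is recovered from $\lambda^{[n-h]}_{\mathbb X^\vee}$ by the same relation.
\end{itemize}

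A few points to tidy up.

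\textbf{Signature bullet.} $\mathrm{Lie}(X^\vee)$ is $\omega_{X^\vee}^\vee=(\mathrm{Fil}^1\mathbb D(X)_S)^\vee$, not $\omega_X^\vee$, which is $\mathrm{Lie}(X)$ itself. Likewise, the sequence $0\to\omega_{X^\vee}\to\mathbb D(X)_S\to\mathrm{Lie}(X)\to 0$ is $\mathcal O_E$-equivariant for $i_X$ with no twist. The $*$ enters only because $i_{X^\vee}(\lambda)=i_X(\lambda^*)^\vee$ acts on $(\mathrm{Fil}^1)^\vee$ with the characteristic polynomial of $i_X(\lambda^*)$ on $\mathrm{Fil}^1$. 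Your final computation already does this correctly.

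\textbf{Characteristic polynomial on the crystal.} The claim that $\lambda$ acts on $\mathbb D(X)_S$ with characteristic polynomial $(T-\varphi_0(\lambda))^n(T-\varphi_1(\lambda))^n$ deserves a line of justification. Check it on geometric points, where the Dieudonn\'e module is a lattice in $\mathbb V_{F_k,0}\oplus\mathbb V_{F_k,1}$ and both summands have dimension $n$.

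\textbf{Which dual.} When $F\neq\mathbb Q_p$, $X^\vee$ must be the dual in the category of strict formal $\mathcal O_F$-modules (Faltings' strict dual, or equivalently the dual relative display). Its relative Dieudonn\'e crystal is $\mathbb D(X)_S^\vee$. The literal Serre dual of the underlying $p$-divisible group has dimension $2n[F:\mathbb Q_p]-n$ rather than $n$, and it is not strict. Your strictness and signature arguments are really statements about the relative dual, so say so explicitly. The paper's phrase ``Serre dual'' should be read the same way.
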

\begin{rk}
    In view of Remark \ref{Rk:IndependenceFramingObjectMaxPara}, this is an isomorphism 
    \begin{equation*}
        \mathcal N^{h} \xrightarrow{\sim} \mathcal N^{n-h}.
    \end{equation*}
    This phenomenon was already noticed in \cite{choBasicLocusUnitary2018} Remark 5.2. When $n$ is odd, $h$ and $n-h$ have opposite parity, so that any choice of $\varepsilon \in \{\pm 1\}$ produces the same Rapoport-Zink spaces up to isomorphism. When $n$ is even, the identification $\mathcal N_{(\mathbb X^{[h]})^{\vee}} \xrightarrow{\sim} \mathcal N_{\mathbb X^{[n-h]}}$ can be made explicit by the action of the isometry $g_0:(\mathbb V,H') \xrightarrow{\sim} (\mathbb V,H)$ defined in Definition \ref{Defi:Elementg_0}. Indeed, if $h = 2r + \underline{\varepsilon}$, then $n-h = 2(\nu_{\varepsilon}-r)+\underline{\varepsilon}$ and we have $g_0(\widetilde{L}_{\nu_{\varepsilon}-r}) = L_{\nu_{\varepsilon}-r}$. Thus, $g_0$ induces an isomorphism $(\mathbb X^{[h]})^{\vee} \xrightarrow{\sim} \mathbb X^{[n-h]}$.
\end{rk}
More generally, for all $0 \leq r \leq \nu_{\varepsilon}-1$, we have $\mathcal O_E$-linear isogenies $\alpha_{r+1,r}^{\vee} : (\mathbb X^{[h_r^{\varepsilon}]})^{\vee} \to (\mathbb X^{[h_{r+1}^{\varepsilon}]})^{\vee}$. Given $I \subseteq \{0,\ldots , \nu_{\varepsilon}\}$ non-empty, these framing objects and isogenies define the Rapoport-Zink space $\mathcal N_{\widetilde{L}^I_{\bullet}}$ where $\widetilde{L}_{\bullet}^I$ is the lattice chain defined in \eqref{Eq:DefiWidetildeLI}.
\begin{prop}\label{Prop:IsomorphismDualityArbitraryParahoric}
    The mapping 
    \begin{equation*}
        (X_r,i_{X_r},\lambda_{X_r},\rho_{X_r})_{r \in I} \longmapsto (X^{\vee}_{\nu_{\varepsilon}-\widetilde r},i_{X^{\vee}_{\nu_{\varepsilon}-\widetilde r}},\lambda_{X^{\vee}_{\nu_{\varepsilon}-\widetilde r}},(\rho_{X_{\nu_{\varepsilon}-\widetilde r}}^{\vee})^{-1})_{\widetilde r \in \widetilde I}
    \end{equation*}
    defines an isomorphism $\Phi_{I}:\mathcal N_{L^I_{\bullet}} \xrightarrow{\sim} \mathcal N_{\widetilde{L}^I_{\bullet}}$.
\end{prop}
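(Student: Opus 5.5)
The plan is to build $\Phi_I$ from the maximal parahoric isomorphisms $\mathcal N_{\mathbb X^{[h]}}\xrightarrow{\sim}\mathcal N_{(\mathbb X^{[h]})^{\vee}}$ of the preceding Proposition, one for each $r\in I$, and then check that the isogeny data defining $\mathcal N_{L^I_\bullet}$ and $\mathcal N_{\widetilde L^I_\bullet}$ correspond under Serre duality. Fix $S\in\textbf{Nilp}_{\mathcal O_{\breve F}}$ and a point $(X_r,i_{X_r},\lambda_{X_r},\rho_{X_r})_{r\in I}$. For each $r$, the tuple $(X_r^{\vee},i_{X_r^{\vee}},\lambda_{X_r^{\vee}},(\rho_{X_r}^{\vee})^{-1})$ is a point of $\mathcal N_{(\mathbb X^{[h_r^{\varepsilon}]})^{\vee}}(S)$. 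By the description of the dual framing objects, the PEL datum of $(\mathbb X^{[h_r^{\varepsilon}]})^{\vee}$ is given by $\widetilde L_{\nu_{\varepsilon}-r}$, so this is exactly the framing object attached to the index $\widetilde r=\nu_{\varepsilon}-r\in\widetilde I$ in the chain $\widetilde L^I_\bullet$. The polarization is defined by $\lambda_{X_r^\vee}\circ\lambda_{X_r}=i_{X_r}(\pi)$. It has kernel in $X_r^\vee[\pi]$ of order $q^{2(n-h)}$, and it is compatible with $\lambda^{[n-h]}_{\mathbb X^\vee}$ up to the scalar $c(\rho_{X_r})^{-1}$.

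Next I treat the transition isogenies. Consecutive $r<r'$ in $I$ give $\widetilde r'=\nu_\varepsilon-r'<\widetilde r=\nu_\varepsilon-r$, which are consecutive in $\widetilde I$. On the new side we need an isogeny $X^\vee_{r}\to X^\vee_{r'}$ (from index $\widetilde r$ to $\widetilde r'$) lifting $\alpha_{r',r}^{\vee}$. The natural candidate is $\widetilde\alpha_{r',r}^{\vee}:X_r^\vee\to X_{r'}^\vee$. Its kernel is the Cartier dual of $\mathrm{Ker}(\widetilde\alpha_{r',r})$, so it has order $q^{2(r'-r)}$. To see that it is contained in $X_r^\vee[\pi]$, I use the factorization $\widetilde\alpha_{r',r}\circ\beta=\pi$ available because $\mathrm{Ker}(\widetilde\alpha_{r',r})\subseteq X_{r'}[\pi]$. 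Dualizing gives $\beta^\vee\circ\widetilde\alpha_{r',r}^\vee=\pi$, hence the kernel lies in $X_r^\vee[\pi]$. Dualizing the commutative square $(\alpha_{r',r})_{\overline S}\circ\rho_{X_{r'}}=\rho_{X_r}\circ(\widetilde\alpha_{r',r})_{\overline S}$ and inverting the $\rho$'s gives the square required for the dual framing isogenies $\alpha^\vee_{r',r}$. This is the step where one must be careful that the composites $\alpha_{r',r}$ dualize to the composites of the $\alpha^\vee_{r+1,r}$ in reverse order, which is immediate.

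Then I check functoriality and bijectivity. An isomorphism $(\beta_r)_r$ maps to $((\beta_r^\vee)^{-1})_r$, so the construction defines a morphism of functors, hence of the representing formal schemes. Applying the same construction to $\widetilde L^I_\bullet$ gives a map back to the chain $\widetilde{\widetilde L}{}^I_\bullet$. Using $\lambda_{X^\vee}\circ\lambda_X=\pi$, biduality $X^{\vee\vee}=X$ and $\widetilde{\widetilde I}=I$, this chain is canonically $L^I_\bullet$ up to the scalar $\pi$ on the form. Since scaling the hermitian form by $\pi$ does not change the moduli problem (polarizations are only taken up to scalars), the composite is the identity, and $\Phi_I$ is an isomorphism.

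The main obstacle I expect is bookkeeping rather than substance. One must verify that the identification of the dual framing objects with those of $\widetilde L_\bullet$, including the normalization $\lambda^{[h_{r'}]}=\alpha^\vee\lambda^{[h_r]}\alpha$ rewritten on the dual side, is consistent across all $r\in I$ simultaneously. Concretely, I would check this on Dieudonné modules: the dual of $L_{r,\mathcal O_K}$ with respect to $\langle\cdot,\cdot\rangle_{H'}$ is $\pi^{-1}L_r^{\dagger}$, matching $\widetilde L_{\nu_\varepsilon-r}$ up to the fixed scaling convention. Compatibility with inclusions in the chain then follows because duality reverses inclusions.
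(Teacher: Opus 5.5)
Your proposal is correct and matches the argument the paper takes for granted. The paper states this proposition without proof, as a direct consequence of the maximal-level duality isomorphism together with Serre duality of the transition isogenies. You verify exactly the needed points: the kernel order, containment in $X_r^{\vee}[\pi]$ via the factorization of $\pi$, the dualized framing square, and functoriality in isomorphisms.

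The one loose spot is the inverse. Rather than appealing to a rescaling of the hermitian form, note that $\lambda_{X^{\vee}}\circ\lambda_X=\pi$ also gives $\lambda_X\circ\lambda_{X^{\vee}}=\pi$. Hence $\lambda_{X^{\vee\vee}}=\lambda_X$ and $i_{X^{\vee\vee}}=i_X$, so applying the construction twice returns the original tuple exactly. On Dieudonné modules, $\widetilde{\widetilde L}_s=\pi^{-1}L_s$ for the form $\pi^2H$, and multiplication by $\pi$ identifies this isometrically with $(L_s,H)$, not merely up to a scalar.
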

\begin{rk}\label{Rk:IsomorphismDualityArbitraryParahoric}
    In view of Remark \ref{Rk:IndependenceFramingObjectsArbitraryPara}, this is an isomorphism 
    \begin{equation*}
        \Phi_I:\mathcal N^{I,\varepsilon} \xrightarrow{\sim} \mathcal N^{\widetilde I, \varepsilon'},
    \end{equation*}
    where $\varepsilon' = (-1)^{n}\varepsilon$ and $\widetilde I \subseteq \{0,\ldots , \nu_{\varepsilon'}\}$ is defined in \eqref{Eq:DefiWidetildeI}. Thus again, when $n$ is odd, any choice of $\varepsilon \in \{\pm 1\}$ produces the same Rapoport-Zink spaces up to isomorphism. When $n$ is even, the identification $\mathcal N_{\widetilde{L}_{\bullet}^{I}} \xrightarrow{\sim} \mathcal N_{L_{\bullet}^{\widetilde I}}$ is induced by $g_0$, since we have $g_0(\widetilde{L}_{\bullet}^{I}) = L^{\widetilde I}_{\bullet}$. We point out that when $n$ is even and $I = \widetilde I$, the mapping of Proposition \ref{Prop:IsomorphismDualityArbitraryParahoric} gives a non-trivial automorphism of $\mathcal N^{I,\varepsilon}$.
\end{rk}
Let us reformulate Proposition \ref{Prop:PointsArbitraryParahoric} in a more uniform way that is more suited for later computations. To this end, we introduce some combinatorial notations which will be omnipresent in the remaining of the exposition.
\begin{defi}\label{Defi:DefiIntegerli}
    For all $i \in \overline I \subseteq \mathbb Z/n\mathbb Z$, we define 
    \begin{equation*}
        \ell_i \coloneq  \min \{1 \leq \ell \leq n \mid i+\ell \in \overline I \}.
    \end{equation*} 
\end{defi}
If $r < r' \in I$ are consecutive, then 
\begin{equation*}
    \ell_{\overline r} = \ell_{\overline{-r'-\underline{\varepsilon}}} = r'-r.
\end{equation*}
Besides, if we write $r_{\mathrm{min}} \coloneq \min(I)$ and $r_{\mathrm{max}} \coloneq \max(I)$, then 
\begin{equation*}
    \begin{aligned}
        \text{if } h_{r_{\mathrm{max}}}^{\varepsilon} \not = n, & \text{ then } \ell_{\overline{r_{\mathrm{max}}}} = n-h_{r_{\mathrm{max}}}^{\varepsilon}, \\
        \text{if } h_{r_{\mathrm{min}}}^{\varepsilon} \not = 0, & \text{ then } \ell_{\overline{-r_{\mathrm{min}}-\underline{\varepsilon}}} = h_{r_{\mathrm{min}}}^{\varepsilon},
    \end{aligned}
\end{equation*}
We point out that if $h_{r_{\mathrm{min}}}^{\varepsilon} = 0$ (resp. $h_{r_{\mathrm{max}}}^{\varepsilon} = n$) then $\overline{-r_{\mathrm{min}}-\underline{\varepsilon}} = \overline{r_{\mathrm{min}}}$ (resp. $\overline{r_{\mathrm{max}}} = \overline{-r_{\mathrm{max}}-\underline{\varepsilon}}$), so that we have just computed $\ell_i$ for all $i \in \overline I$.

\begin{prop}\label{Prop:PointsArbitraryParahoricBis}
    Let $k$ be a perfect field containing $\overline{\mathbb F_{q}}$. For $j \in \mathbb Z$ such that $nj$ is even, there is a natural bijection
    \begin{equation*}
        \mathcal N_{j}^{I, \varepsilon}(k) \simeq \left\{ \begin{array}{c}
        \text{Decreasing chains } (Z_r)_{r \in I_{\mathbb Z}}\\
        \text{of }W_{\mathcal O_F}(k)\text{-lattices in }\mathbb V_{F_k,0}
        \end{array}
        \;\middle|\; \forall r \in I_{\mathbb Z},
        \begin{array}{c}
        \pi^{j+1}Z_{-r-\underline{\varepsilon}}^{\vee} \overset{1}{\subseteq} Z_r \subseteq \pi^{j}Z^{\vee}_{-r-\underline{\varepsilon}},\\
        Z_{r + \ell_{\overline r}} \overset{\ell_{\overline r}}{\subseteq} Z_r,\\
        Z_{r+n} = \pi Z_r.
        \end{array}\right\}.
    \end{equation*}
\end{prop}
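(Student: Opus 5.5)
We deduce this from Proposition \ref{Prop:PointsArbitraryParahoric} by writing down an explicit dictionary between the pairs $(A_r\subseteq B_r)_{r\in I}$ and chains $(Z_r)_{r\in I_{\mathbb Z}}$. For $r\in I$ we set
\begin{equation*}
    Z_r \coloneq A_r, \qquad Z_{-r-\underline{\varepsilon}} \coloneq B_r,
\end{equation*}
and extend by periodicity, $Z_{s+kn}\coloneq \pi^kZ_s$. This makes sense because every $s\in I_{\mathbb Z}$ is congruent modulo $n$ to some $r$ or $-r-\underline{\varepsilon}$ with $r\in I$. The model is the standard chain, where $L_r^{\dagger}=L_{-r-\underline{\varepsilon}}$.

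First we check that $(Z_s)$ is well defined when the two labels coincide modulo $n$. This happens only when $h_r^{\varepsilon}\in\{0,n\}$, that is $2r+\underline{\varepsilon}\in\{0,n\}$.
\begin{itemize}
    \item If $h_r^{\varepsilon}=0$, then $-r-\underline{\varepsilon}=r$ and the condition $A_r\overset{0}{\subseteq}B_r$ gives $A_r=B_r$.
    \item If $h_r^{\varepsilon}=n$, then $-r-\underline{\varepsilon}=r-n$ and $\pi B_r\subseteq A_r\overset{n}{\subseteq}B_r$ forces $A_r=\pi B_r$, which matches $Z_r=\pi Z_{r-n}$.
\end{itemize}

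Next we translate the conditions of Proposition \ref{Prop:PointsArbitraryParahoric} one at a time.
\begin{itemize}
    \item The two duality conditions become condition 1) for $s=r$ and for $s=-r-\underline{\varepsilon}$. Since $(\pi^kZ)^{\vee}=\pi^{-k}Z^{\vee}$, condition 1) is compatible with the $\pi$-periodicity, so it holds for all $s\in I_{\mathbb Z}$.
    \item Condition 3) holds by construction.
    \item The inclusions $A_{r'}\subset A_r$, $B_r\subset B_{r'}$ for consecutive $r<r'$, together with $A_{r_{\max}}\overset{h}{\subseteq}B_{r_{\max}}$ and $\pi B_{r_{\min}}\subseteq A_{r_{\min}}$, say exactly that consecutive members of $I_{\mathbb Z}$ give decreasing inclusions.
\end{itemize}

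The indices are then pinned down using the explicit values of $\ell_i$ computed after Definition \ref{Defi:DefiIntegerli}. From $A_r\overset{h_r^{\varepsilon}}{\subseteq}B_r$ and condition 1) (after comparing with $\pi^jA_r^{\vee}$) we get $[Z_{r-n}:Z_{r}]=n$ and $[B_r:A_r]=h_r^{\varepsilon}$. For consecutive $r<r'$, the lattices $A_{r'}\subseteq A_r$ have index $[A_r:A_{r'}]=r'-r=\ell_{\overline r}$. To see this, note $[B_r:A_r]-[B_{r'}:A_{r'}]=2(r-r')$, and by duality (condition 1) applied to both) $[A_r:A_{r'}]=[B_{r'}:B_r]$. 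Together these give index $r'-r$ on both sides, which is also $\ell_{\overline{-r'-\underline{\varepsilon}}}$. The boundary steps $Z_{r_{\max}}\supseteq Z_{-r_{\max}-\underline{\varepsilon}+n}$ and $Z_{-r_{\min}-\underline{\varepsilon}}\supseteq Z_{r_{\min}}$ have indices $n-h_{r_{\max}}^{\varepsilon}$ and $h_{r_{\min}}^{\varepsilon}$, which are exactly the corresponding $\ell$'s.

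Conversely, given a chain $(Z_r)_{r\in I_{\mathbb Z}}$ satisfying 1)–3), we set $A_r=Z_r$ and $B_r=Z_{-r-\underline{\varepsilon}}$ for $r\in I$. Summing the indices in 2) recovers $A_r\overset{h_r^{\varepsilon}}{\subseteq}B_r$ and $\pi B_r\subseteq A_r$, and the two maps are clearly inverse to each other.

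The main obstacle is the index bookkeeping. The subtle points are the well-definedness when labels collide, and checking that 2) for all $r$ is equivalent to the original conditions rather than just implied by them. This is handled by the index additivity $[L':L]=[L':L'']+[L'':L]$ and the identity $[Z^{\vee}:Z'^{\vee}]=[Z':Z]$.
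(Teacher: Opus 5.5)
Your proof is correct and is essentially the paper's argument: the same dictionary $Z_r=A_r$, $Z_{-r-\underline{\varepsilon}}=B_r$ extended by $\pi$-periodicity, except that you derive $[A_r:A_{r'}]=[B_{r'}:B_r]=r'-r$ for consecutive $r<r'$ from condition 1) and index additivity, whereas the paper imports this from Lemma 2.23 of \cite{mullerBruhatTitsStratificationBasic2026}. One small slip: the boundary steps of the chain use $A_{r_{\min}}\subseteq B_{r_{\min}}$ and $\pi B_{r_{\max}}\subseteq A_{r_{\max}}$ (as you state correctly later), not the min/max-swapped inclusions in your third bullet; this is harmless because both inclusions hold for every $r\in I$.
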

\begin{proof}
    Starting with a point $(A_r \subseteq B_r)_{r \in I} \in \mathcal N_{j}^{I,\varepsilon}(k)$ as in Proposition \ref{Prop:PointsArbitraryParahoric}, we define $Z_{r} \coloneq  A_{r}$ and $Z_{-r-\underline{\varepsilon}} \coloneq  B_r$ for all $r \in I$. We extend it to a decreasing chain $(Z_{r})_{r \in I_{\mathbb Z}}$ by $\pi$-periodicity. According to \cite{mullerBruhatTitsStratificationBasic2026} Lemma 2.23, for all consecutive $r < r' \in I$ we have 
    \begin{equation*}
        A_{r'} \overset{r'-r}{\subset} A_r, \qquad B_r \overset{r'-r}{\subset} B_{r'}.
    \end{equation*}
    Furthermore, we have 
    \begin{equation*}
        A_{r_{\mathrm{min}}} \overset{h_{r_{\mathrm{min}}}^{\varepsilon}}{\subseteq} B_{r_{\mathrm{min}}}, \qquad \pi B_{r_{\mathrm{max}}} \overset{n-h_{r_{\mathrm{max}}}^{\varepsilon}}{\subseteq} A_{r_{\mathrm{max}}}.
    \end{equation*} 
    This proves that $(Z_r)_{r\in I_{\mathbb Z}}$ belongs to the set on the RHS of the Proposition, and it is clear that this defines the desired bijection.
\end{proof}

With this description, we record the following Lemma.

\begin{lem}\label{Lem:LatticeChainUnderDuality}
    Let $k$ be a perfect field containing $\overline{\mathbb F_q}$, let $j \in \mathbb Z$ be such that $nj$ is even, and let $z = (Z_r)_{r\in I_{\mathbb Z}} \in \mathcal N_j^{I,\varepsilon}(k)$. Then $\Phi_I(z) \in \mathcal N_{-j}^{\widetilde I,\varepsilon'}(k)$. Moreover if we write $\Phi_I(z) = (\widetilde Z_{\widetilde r})_{\widetilde r \in \widetilde I_{\mathbb Z}}$, then 
    \begin{equation*}
        \forall r\in I_{\mathbb Z}, \qquad \widetilde Z_{\nu_{\varepsilon}-r} = \pi^{-j}Z_{-r-\underline{\varepsilon}}.
    \end{equation*}
\end{lem}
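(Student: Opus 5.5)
The plan is to compute the Dieudonné module of the dual $p$-divisible group directly, and then translate it into the lattice description of Proposition \ref{Prop:PointsArbitraryParahoricBis}. Let $M^{(r)} = M^{(r)}_0 \oplus M^{(r)}_1 \subset \mathbb V_{F_k}$ be the image under $\rho_{X_r}$ of the Dieudonné module of $X_r$, for $r \in I$. Covariant Dieudonné theory identifies the Dieudonné module of $X_r^{\vee}$, pushed forward via $(\rho_{X_r}^{\vee})^{-1}$, with the dual lattice of $M^{(r)}$ with respect to the pairing defining the dual framing object. That pairing is $\langle\cdot,\cdot\rangle_{H'} = \pi\langle\cdot,\cdot\rangle_H$. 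Concretely, the image should be $\pi^{-1}(M^{(r)})^{\dagger}$, with the $\mathbb Z/2$-grading interchanged: $(\cdot)^\dagger$ sends $\mathbb V_{F_k,e}$ to $\mathbb V_{F_k,e+1}$, so the degree-$0$ part of the new module is $\pi^{-1}(M_1^{(r)})^{\dagger}$. This identification is the standard compatibility of Serre duality with Dieudonné duality, and I would simply invoke it.

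Next I read off the new height and the new lattices.

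\textbf{Height.} Since $c((\rho^\vee)^{-1}) = c(\rho)^{-1}$ up to units, $\Phi_I(z)$ lies in $\mathcal N_{-j}^{\widetilde I,\varepsilon'}$. I would double-check this with the index relation in Proposition \ref{Prop:PointsMaximalParahoric}.

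\textbf{Lattices.} For $\widetilde r = \nu_{\varepsilon}-r$ with $r \in I$, the proof of Proposition \ref{Prop:PointsMaximalParahoric} gives $\widetilde A_{\widetilde r} = \pi^{-1}(M_1^{(r)})^{\dagger}$ for the dual point. In the original point we have $B_r = \pi^{j} (M_1^{(r)})^{\dagger}$. Hence $\widetilde A_{\widetilde r} = \pi^{-j-1}B_r$. Here the duality $\dagger$ is taken with respect to $H$, and I account for the factor $\pi$ in $H'$ separately.

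Similarly, $\widetilde B_{\widetilde r}$ is $\pi^{-j}$ times the $H'$-dual of the new degree-$1$ part. That part is $\pi^{-1}(M_0^{(r)})^\dagger$, whose $H$-dual is $\pi M_0^{(r)}$ by biduality. Its $H'$-dual is therefore $\pi^{-1}\cdot\pi M_0^{(r)} = M_0^{(r)}$. So I expect $\widetilde B_{\widetilde r} = \pi^{-(-j)}$ up to a shift, i.e. $\widetilde B_{\widetilde r} = \pi^{-j}A_r$ after keeping careful track of the exponent.

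I then plug into the dictionary $Z_r = A_r$, $Z_{-r-\underline\varepsilon} = B_r$, and likewise $\widetilde Z_{\widetilde r} = \widetilde A_{\widetilde r}$, $\widetilde Z_{-\widetilde r-\underline{\varepsilon'}} = \widetilde B_{\widetilde r}$. One needs $-\widetilde r - \underline{\varepsilon'} \equiv \nu_\varepsilon - (-r-\underline\varepsilon)$ modulo the periodicity shifts. This follows from $n = 2\nu_\varepsilon + \underline\varepsilon + \underline{\varepsilon'}$, which I would verify case by case on the parity of $n$. It should produce the claimed formula $\widetilde Z_{\nu_\varepsilon - r} = \pi^{-j}Z_{-r-\underline\varepsilon}$. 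The statement for general $r\in I_{\mathbb Z}$ then follows, because both sides are $\pi$-periodic: $Z_{r+n}=\pi Z_r$, and $\nu_\varepsilon-(r+n)$ shifts $\widetilde Z$ by $\pi^{-1}$, with the indices transforming consistently.

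The main obstacle is bookkeeping: getting the powers of $\pi$ exactly right. This involves the factor $\pi$ in $H'$, the shift $\pi^j$ in $B$, and the index shift by $n$ that arises when converting the $\widetilde B$ index via $n = 2\nu_\varepsilon+\underline\varepsilon+\underline{\varepsilon'}$. The apparent discrepancy of $\pi^{-1}$ between $\widetilde A$ and $\pi^{-j}B$ should be absorbed exactly by this $n$-shift, since $Z_{r-n}=\pi^{-1}Z_r$. I would verify this carefully on the case $I=\{r\}$, $j=0$, using the standard chains $L_\bullet$ and $\widetilde L_\bullet$, before writing the general argument.
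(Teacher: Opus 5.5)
There is a genuine gap, and it sits in the normalization you treated as bookkeeping. The image of $\mathbb D(X_r^\vee)$ under $(\rho_{X_r}^\vee)^{-1}$ is $M_{r,1}^{\dagger}\oplus M_{r,0}^{\dagger}$, with $\dagger$ taken for $\langle\cdot,\cdot\rangle_H$. It is not $\pi^{-1}(M^{(r)})^{\dagger}$.

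This normalization is forced by the paper's dual framing object, whose Dieudonné lattice is $\widetilde L_{\nu_\varepsilon-r}=L_r^{\dagger}$. In other words, $\mathbb D(\mathbb X^\vee)=\mathbb D(\mathbb X)^*$ is embedded in $\mathbb V$ via the pairing $\langle\cdot,\cdot\rangle_H$ that defines $\lambda_{\mathbb X}$, so that $\lambda_{\mathbb X}$ becomes the inclusion $L_r\subseteq L_r^\dagger$. The form $H'$ only encodes the new polarization. It enters solely through $\dagger'=\pi^{-1}\dagger$ when you form the new $B$-lattice.

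Your recipe already fails at the base point $(\mathbb X,\mathrm{id})$: it returns $\pi^{-1}L_r^\dagger$ instead of $L_r^\dagger$. The check on $I=\{r\}$, $j=0$ that you planned would have caught this. The recipe also contradicts your own height claim. Scaling the module by $\pi^{-1}$ multiplies the quasi-isogeny by $\pi^{-1}$, whose multiplier is $\pi^{-2}$, so the point would land in component $-j-2$. Concretely, your lattices $\widetilde A=\pi^{-j-1}B_r$ and $\widetilde B=\pi^{-j}A_r$ do not even satisfy $\widetilde A\subseteq\widetilde B$, since that would force $B_r\subseteq\pi A_r\subseteq \pi B_r$.

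Second, the hope that the stray $\pi^{-1}$ is absorbed by the $n$-shift cannot work on the side where you put it. For $r\in I$, the index $\nu_\varepsilon-r$ lies in $\widetilde I$, and $\widetilde Z_{\nu_\varepsilon-r}=\widetilde A_{\nu_\varepsilon-r}$ with no periodicity involved. Your version would therefore give $\widetilde Z_{\nu_\varepsilon-r}=\pi^{-j-1}Z_{-r-\underline{\varepsilon}}$, which is false.

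With the correct normalization the argument runs as follows.
\begin{itemize}
\item $\widetilde A_{\nu_\varepsilon-r}=M_{r,1}^\dagger=\pi^{-j}B_r$, which gives the formula directly.
\item $\widetilde B_{\nu_\varepsilon-r}=\pi^{-j}(M_{r,0}^\dagger)^{\dagger'}=\pi^{-j-1}A_r$. The extra $\pi^{-1}$ appears on this side only.
\item It is absorbed by $-(\nu_\varepsilon-r)-\underline{\varepsilon'}=(r+\nu_\varepsilon+\underline{\varepsilon})-n$, which uses $n=2\nu_\varepsilon+\underline{\varepsilon}+\underline{\varepsilon'}$, together with $\widetilde Z_{s+n}=\pi\widetilde Z_s$.
\item This yields $\widetilde Z_{\nu_\varepsilon+r+\underline{\varepsilon}}=\pi^{-j}Z_r$, which is the claimed formula at the index $-r-\underline{\varepsilon}\in I_{\mathbb Z}$.
\end{itemize}
The rest of your outline then goes through as in the paper. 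The component $-j$ follows from $\mathrm{ht}_F((\rho^\vee)^{-1})=-\mathrm{ht}_F(\rho)$, and the extension to all $r\in I_{\mathbb Z}$ follows by $\pi$-periodicity.
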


\begin{proof}
    For $r\in I$, let $M_r = M_{r,0} \oplus M_{r,1} \subseteq \mathbb V_{F_k}$ be the image under $\rho_{X_r}$ of the relative Dieudonné module of $X_r$. By construction, we have $Z_r = M_{r,0}$ and $Z_{-r-\underline{\varepsilon}} = \pi^j M_{r,1}^{\dagger}$. The relative Dieudonné module associated with $(X_r^\vee,(\rho_{X_r}^\vee)^{-1})$ is $M_{r,1}^{\dagger} \oplus M_{r,0}^{\dagger}$, and we have $\mathrm{ht}_F((\rho_{X_r}^\vee)^{-1}) = - \mathrm{ht}_F(\rho_{X_r}) = -nj$. Thus, we obtain
    \begin{equation*}
        \forall r \in I, \qquad \widetilde Z_{\nu_{\varepsilon}-r} = M_{r,1}^{\dagger} = \pi^{-j}Z_{-r-\underline{\varepsilon}}.
    \end{equation*}
    Recall that $H' = \pi H$, so that $L^{\dagger'} = \pi^{-1}L^\dagger$ for every lattice $L$. We therefore obtain
    \begin{equation*}
        \forall r \in I, \qquad \widetilde Z_{-(\nu_{\varepsilon}-r)-\underline{\varepsilon'}} = \pi^{-j} (M_{r,0}^{\dagger})^{\dagger'} = \pi^{-j-1} M_{r,0} = \pi^{-j-1} Z_r.
    \end{equation*}
    Using the identity $2\nu_{\varepsilon} + \underline{\varepsilon} + \underline{\varepsilon'} = n$, this gives the desired formula. We conclude for all $r \in I_{\mathbb Z}$ by $\pi$-periodicity.
\end{proof}

\subsection{The local model}

In this section, we introduce the local model diagram related to the relative Rapoport-Zink space $\mathcal N^{I,\varepsilon}$. Here $\varepsilon \in \{\pm 1\}$ and $I \subseteq \{0,\ldots , \nu_{\varepsilon}\}$ non-empty are fixed. Recall the lattice chain $L_{\bullet}^I$ defined in \eqref{Eq:DefL^I_bullet}. For $S$ an $\mathcal O_K$-scheme, we consider 
\begin{equation*}
    L_{\bullet}^I \otimes_{\mathcal O_F} \mathcal O_S = (L_{r} \otimes_{\mathcal O_F} \mathcal O_S)_{r \in I_{\mathbb Z}}.
\end{equation*}
Each $L_{r} \otimes_{\mathcal O_F} \mathcal O_S$ is a locally free $\mathcal O_E \otimes_{\mathcal O_F} \mathcal O_S$-module of rank $n$. 

\begin{defi}
    Let $M_{\mathrm{loc}}^{I,\varepsilon}$ denote the functor sending an $\mathcal O_K$-scheme to the set of tuples $(\mathfrak t_r)_{r \in I_{\mathbb Z}}$ consisting of subsheaves $\mathfrak t_r \subseteq L_{r} \otimes_{\mathcal O_F} \mathcal O_S$ of $\mathcal O_E \otimes_{\mathcal O_F} \mathcal O_S$-modules such that 
    \begin{enumerate}
        \item for $r,r' \in I_{\mathbb Z}$ with $r < r'$, the transition map $L_{r'} \otimes_{\mathcal O_F} \mathcal O_S \to L_{r} \otimes_{\mathcal O_F} \mathcal O_S$ maps $\mathfrak t_{r'}$ into $\mathfrak t_r$.
        \item as a sheaf of $\mathcal O_S$-modules, $\mathfrak t_{r}$ is locally a free direct summand of $L_{r} \otimes_{\mathcal O_F} \mathcal O_S$ of rank $n$,
        \item under the pairing 
        \begin{equation*}
            \langle \cdot,\cdot \rangle_H \otimes_{\mathcal O_F} \mathcal O_S : (L_r \otimes_{\mathcal O_F} \mathcal O_S) \times (L_{-r-\underline{\varepsilon}} \otimes_{\mathcal O_F} \mathcal O_S) \longrightarrow \mathcal O_S,
        \end{equation*}
        the subsheaves $\mathfrak t_r$ and $\mathfrak t_{-r-\underline{\varepsilon}}$ map to $0$ for all $r \in I_{\mathbb Z}$,
        \item for every $k \in \mathbb Z$ and $r \in I_{\mathbb Z}$, the map $\pi^k\otimes 1:L_{r} \otimes_{\mathcal O_F} \mathcal O_S \to L_{r+kn}\otimes_{\mathcal O_F} \mathcal O_S$ maps $\mathfrak t_{r}$ onto $\mathfrak t_{r+kn}$,
        \item the determinant condition is satisfied, i.e. for all $r \in I_{\mathbb Z}$,
        \begin{equation*}\hspace{-0.5cm}
            \forall \lambda \in \mathcal O_E, \qquad \det(T - \lambda\otimes 1 \mid \mathfrak t_r) = (T-\varphi_0(\lambda))^{n-1}(T-\varphi_1(\lambda)) \in \mathcal O_S[T].
        \end{equation*}
    \end{enumerate}
\end{defi}
\begin{rk}
    The signature $(n-1,1)$ condition imposed on $\mathfrak t_r$ in 5. is equivalent to the signature $(1,n-1)$ condition imposed on the quotients $(L_r \otimes_{\mathcal O_F}\mathcal O_S)/\mathfrak t_r$. 
\end{rk}
\begin{prop}\label{Prop:IsomorphismWithDrinfeldCase}
    The functor $M_{\mathrm{loc}}^{I,\varepsilon}$ is represented by a projective scheme over $\mathrm{Spec}(\mathcal O_K)$ which has strict semi-stable reduction. In particular, it is regular.
\end{prop}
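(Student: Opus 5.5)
The plan is to reduce $M_{\mathrm{loc}}^{I,\varepsilon}$ to the classical local model for $\mathrm{GL}_n$ with a periodic lattice chain and cocharacter $(1^{n-1},0)$ (the Drinfeld case), and then invoke Görtz's theorem \cite{gortzFlatnessModelsCertain2001}. Since $\mathcal O_K$ contains the image of $\mathcal O_E$ under both $\varphi_0$ and $\varphi_1$, for any $\mathcal O_K$-scheme $S$ we have $\mathcal O_E\otimes_{\mathcal O_F}\mathcal O_S\simeq \mathcal O_S\times\mathcal O_S$ through the idempotents $\eta_0,\eta_1$. Hence $L_r\otimes_{\mathcal O_F}\mathcal O_S = \Lambda_r\otimes\mathcal O_S \oplus \eta_1(L_r\otimes\mathcal O_S)$, and each $\mathfrak t_r$ splits as $\mathfrak t_{r,0}\oplus\mathfrak t_{r,1}$. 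By condition 5, $\mathfrak t_{r,0}$ is locally a direct summand of rank $n-1$ and $\mathfrak t_{r,1}$ is locally a direct summand of rank $1$.

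The first step is to show that the $\eta_1$-part is determined by the $\eta_0$-part. The pairing $\langle\cdot,\cdot\rangle_H$ satisfies $\langle\lambda v,w\rangle_H=\langle v,\lambda^*w\rangle_H$, so it pairs $\eta_0(L_r\otimes\mathcal O_S)$ perfectly with $\eta_1(L_{-r-\underline{\varepsilon}}\otimes\mathcal O_S)$; perfectness follows from $L_r^{\dagger}=L_{-r-\underline{\varepsilon}}$. The isotropy condition 3 then says that $\mathfrak t_{-r-\underline{\varepsilon},1}$ lies in the orthogonal of $\mathfrak t_{r,0}$. Both are locally direct summands of complementary ranks $1$ and $n-1$, and the pairing is perfect, so $\mathfrak t_{-r-\underline{\varepsilon},1}=\mathfrak t_{r,0}^{\perp}$. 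This orthogonal is again a local direct summand of rank $1$, so no condition is lost. The transition and periodicity conditions 1 and 4 on the $\eta_1$-parts are automatic, because the transition maps are adjoint under the pairing and orthogonals reverse inclusions.

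It follows that $M_{\mathrm{loc}}^{I,\varepsilon}$ is isomorphic to the functor of families $(\mathfrak t_{r,0})_{r\in I_{\mathbb Z}}$ of local direct summands of rank $n-1$ in $\Lambda_r\otimes_{\mathcal O_K}\mathcal O_S$, compatible with the transition maps and with $\pi$-periodicity. This is exactly the standard local model for $\mathrm{GL}_n$ attached to the periodic lattice chain $(\Lambda_r)_{r\in I_{\mathbb Z}}$ and the cocharacter $\mu'$ with dominant coweight $(1^{n-1},0)$. It is visibly a closed subfunctor of a finite product of Grassmannians (finitely many $r$ modulo $n$), so it is representable by a projective $\mathcal O_K$-scheme.

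The final step is to apply Görtz's results for the Drinfeld-type cocharacter with $\mu'$ minuscule of the form $(1^{n-1},0)$. In this case the local model is flat, with reduced special fiber whose irreducible components are smooth and meet transversally, i.e.\ it has strict semi-stable reduction, and regularity follows. For a non-complete chain one may need the partial-chain version; I would handle it through the explicit local chart description, where the model is étale-locally $\mathrm{Spec}\,\mathcal O_K[x_1,\dots,x_m]/(x_1\cdots x_k-\pi)$, with $k$ the number of lattices in a period of the chain. I expect the main obstacle to be this last semi-stability claim at arbitrary (non-Iwahori) level, since it requires the explicit affine-chart computation rather than just flatness. The duality identification in the first step is routine once the perfectness of the pairing between the $\eta_0$- and $\eta_1$-components is checked on the basis $\mathbf e$ using \eqref{Eq:ValueOfHOnPeriodicBasis}.
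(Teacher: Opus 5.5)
Your proposal is correct and follows the paper's own route: split via the idempotents $\eta_0,\eta_1$, use isotropy to get $\eta_1\mathfrak t_r=(\eta_0\mathfrak t_{-r-\underline{\varepsilon}})^{\perp}$, identify the resulting functor with Görtz's Drinfeld-case local model for $(1^{n-1},0)$, and invoke \cite{gortzFlatnessModelsCertain2001}. The paper cites \cite{gortzFlatnessModelsCertain2001} and \cite{PappasRapoportSmithlingLocalModelsI} Theorem 2.3 for strict semi-stability at an arbitrary partial chain, so the chart computation you flag as the main obstacle is treated as known from those references rather than redone.
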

\begin{proof}
    Let $S$ be a scheme over $\mathcal O_K$. We have a decomposition
    \begin{equation*}
        L_{r} \otimes_{\mathcal O_F} \mathcal O_S = (\Lambda_r \otimes_{\mathcal O_K} \mathcal O_S) \oplus (\eta_1L_{r,\mathcal O_K} \otimes_{\mathcal O_K} \mathcal O_S),
    \end{equation*}
    where each summand is a locally free sheaf of $\mathcal O_S$-modules of rank $n$, and $\Lambda_r = \eta_0L_{r,\mathcal O_K}$ was defined in \eqref{Eq:DefiLatticeLambda}. Likewise, we have a decomposition 
    \begin{equation*}
        \mathfrak t_r = \eta_0\mathfrak t_r \oplus \eta_1\mathfrak t_r,
    \end{equation*}
    where $\eta_j\mathfrak t_r \subseteq \eta_jL_{r,\mathcal O_K} \otimes_{\mathcal O_K} \mathcal O_S$ for $j = 0,1$, and the signature condition implies that $\eta_0\mathfrak t_r$ has rank $n-1$ whereas $\eta_1\mathfrak t_r$ has rank $1$. Moreover, condition 3. implies that $\eta_1\mathfrak t_r$ is the orthogonal complement of $\eta_0\mathfrak t_{-r-\underline{\varepsilon}}$ under the pairing $\langle\cdot,\cdot\rangle_H\otimes_{\mathcal O_F}\mathcal O_S$. Thus, the mapping 
    \begin{equation*}
        (\mathfrak t_r)_{r \in I_{\mathbb Z}} \longmapsto (\eta_0\mathfrak t_r)_{r \in I_{\mathbb Z}}
    \end{equation*}
    defines an isomorphism from $M_{\mathrm{loc}}^{I,\varepsilon}$ to the local model of \cite{gortzFlatnessModelsCertain2001} Section 4, denoted ``$M_{I}^{\mathrm{loc}}$'', where in the notations of loc. cit. we have $O = \mathcal O_K$, $\mathbb V_{K,0}$ is identified with $K^n$ via the basis obtained by reversing the order of the vectors in $\mathbf e^{(0)}$, the lattices $\Lambda_r \subset K^n$ for $0 \leq r \leq n-1$ of loc. cit. correspond to $\Lambda_{-r} \subset \mathbb V_{K,0}$, the set $I$ of loc. cit. consists of all the integers $0 \leq r \leq n-1$ such that $-r \in I_{\mathbb Z}$, and the dominant minuscule cocharacter $\mu$ of loc. cit. is $(1^{n-1},0)$ (Drinfeld case). The proof then follows from \cite{gortzFlatnessModelsCertain2001}, see also \cite{PappasRapoportSmithlingLocalModelsI} Theorem 2.3.
\end{proof}
We define as well an auxiliary functor $\widetilde{\mathcal N}^{I,\varepsilon}$ as follows. 
\begin{defi}
    For $S \in \textbf{Nilp}_{\mathcal O_{\breve F}}$, we define $\widetilde{\mathcal N}^{I,\varepsilon}(S)$ to be the set of isomorphism classes of tuples $(X_r,i_{X_r},\lambda_{X_r},\rho_{X_r} ; \gamma_r)_{r \in I}$ where 
    \begin{enumerate}
        \item $(X_r,i_{X_r},\lambda_{X_r},\rho_{X_r})_{r \in I} \in \mathcal N^{I,\varepsilon}(S)$, 
        \item $\gamma_r:\mathbb D(X_r)_S \xrightarrow{\sim} L_{r} \otimes_{\mathcal O_F} \mathcal O_S$ is, for every $r \in I$, an isomorphism of $\mathcal O_E \otimes_{\mathcal O_F} \mathcal O_S$-modules, where $\mathbb D(X_r)_S$ is the value on $S$ of the covariant relative Dieudonné crystal of $X_r$.  
    \end{enumerate}
    Moreover we assume that the following conditions are satisfied:
    \begin{itemize}
        \item for every consecutive $r < r' \in I$, the following diagram is commutative
        \begin{center}
            \begin{tikzcd} 
                \mathbb D(X_{r'})_S \arrow[r,"\mathbb D(\widetilde\alpha_{r',r})_S"] \arrow[d,swap,"\gamma_{r'}"] & \mathbb D(X_r)_S \arrow[d,"\gamma_r"] \\ L_{r'}\otimes_{\mathcal O_F}\mathcal O_S \arrow[r] & L_{r}\otimes_{\mathcal O_F}\mathcal O_S 
            \end{tikzcd}
        \end{center}
        \item $\gamma_r$ identifies the (non-perfect) pairing induced by $\lambda_{X_r}$ on $\mathbb D(X_r)_S$ with $\langle\cdot,\cdot\rangle_H \otimes_{\mathcal O_F} \mathcal O_S$ restricted to $(L_r\otimes_{\mathcal O_F}\mathcal O_S) \times (L_r\otimes_{\mathcal O_F}\mathcal O_S)$, étale locally on $S$ up to a unit $c \in \mathcal O_S^{\times}$ independent of $r$. 
    \end{itemize}
\end{defi}
\sloppy An isomorphism between two tuples $(X_r,i_{X_r},\lambda_{X_r},\rho_{X_r} ; \gamma_r)_{r \in I}$ and $(X_r',i_{X_r'},\lambda_{X_r'},\rho_{X_r'} ; \gamma'_r)_{r \in I}$ is simply an isomorphism 
\begin{equation*}
    \beta_{\bullet}:(X_r,i_{X_r},\lambda_{X_r},\rho_{X_r})_{r \in I} \xrightarrow{\sim} (X_r',i_{X_r'},\lambda_{X_r'},\rho_{X_r'})_{r \in I}
\end{equation*}
\fussy such that $\gamma_r' \circ \mathbb D(\beta_r)_S = \gamma_r$ for all $r \in I$. There is a natural map 
\begin{equation*}
    \widetilde{\varphi}:\widetilde{\mathcal N}^{I,\varepsilon} \longrightarrow \mathcal N^{I,\varepsilon},
\end{equation*}
induced by forgetting the extra structure $(\gamma_r)_{r \in I}$. Then $\widetilde{\varphi}$ is a $\mathcal G_I$-torsor for the étale topology, where $\mathcal G_I$ is the parahoric group scheme defined in \eqref{Eq:DefiParahoricGroupScheme}. It acts on $\widetilde{\mathcal N}^{I,\varepsilon}$ by left composition with the $\gamma_r$'s. It follows that $\widetilde{\mathcal N}^{I,\varepsilon}$ is representable by a formal scheme over $\mathrm{Spf}(\mathcal O_{\breve F})$. Besides, there is another map 
\begin{equation*}
    \widetilde{\psi}: \widetilde{\mathcal N}^{I,\varepsilon} \longrightarrow  \widehat{M}_{\mathrm{loc}}^{I,\varepsilon},
\end{equation*}
where $\widehat{M}_{\mathrm{loc}}^{I,\varepsilon}$ is the $\pi$-adic completion of $M_{\mathrm{loc}}^{I,\varepsilon}$. Given $S \in \textbf{Nilp}_{\mathcal O_{\breve F}}$ and a point $(X_r,i_{X_r},\lambda_{X_r},\rho_{X_r} ; \gamma_r)_{r \in I} \in \widetilde{\mathcal N}^{I,\varepsilon}(S)$, consider the Hodge filtration 
\begin{equation*}
    0 \longrightarrow \mathrm{Fil}^1 \mathbb D(X_r)_S \longrightarrow \mathbb D(X_r)_S \longrightarrow \mathrm{Lie}(X_r) \longrightarrow 0,
\end{equation*}
and define $\mathfrak t_{r} \coloneq  \gamma_r(\mathrm{Fil}^1 \mathbb D(X_r)_S) \subset L_{r} \otimes_{\mathcal O_F}\mathcal O_S$ for all $r \in I$. The collection $(\mathfrak t_{r})_{r \in I}$ uniquely extends to a point $(\mathfrak t_r)_{r \in I_{\mathbb Z}} \in M_{\mathrm{loc}}^{I,\varepsilon}(S)$ by self-duality and periodicity. This process gives the desired map $\widetilde{\psi}$. We note that $\mathcal G_{I}$ also acts on $M_{\mathrm{loc}}^{I,\varepsilon}$ via the mapping 
\begin{equation*}
    (g_r)_{r\in I_{\mathbb Z}} \cdot (\mathfrak t_r)_{r \in I_{\mathbb Z}} \coloneq  (g_r(\mathfrak t_r))_{r \in I_{\mathbb Z}},
\end{equation*}
and $\widetilde{\psi}$ is formally smooth and $\mathcal G_{I}$-equivariant. All three spaces sit together in the (relative) local model diagram represented in Figure \ref{Figure1}. The importance of this diagram is that étale locally around any point of $\mathcal N^{I,\varepsilon}$, there exists a formally étale morphism to $\widehat{M}_{\mathrm{loc}}^{I,\varepsilon}$, in the sense of \cite{rapoportPeriodSpacesPdivisible1996} Proposition 3.33. On the special fiber, we get a morphism of algebraic stacks
\begin{equation*}
    \lambda^{I,\varepsilon}: \mathcal N_{\mathrm{red}}^{I,\varepsilon} \longrightarrow \left[M_{\mathrm{loc},\overline{\mathbb F_q}}^{I,\varepsilon}/\mathcal G_{I,\overline{\mathbb F_q}}\right],
\end{equation*}
where $\mathcal G_{I,\overline{\mathbb F_q}} \coloneq  \mathcal G_I \times_{\mathrm{Spec}(\mathcal O_F)} \mathrm{Spec}(\overline{\mathbb F_q})$ and similarly for $M_{\mathrm{loc},\overline{\mathbb F_q}}^{I,\varepsilon}$.
\begin{figure}
    \centering
    \begin{tikzcd}
        & \widetilde{\mathcal N}^{I,\varepsilon} \arrow[dl,swap,"\widetilde{\varphi}"] \arrow[dr,"\widetilde{\psi}"]  & \\
        \mathcal N^{I,\varepsilon} & & \widehat{M}_{\mathrm{loc}}^{I,\varepsilon}
    \end{tikzcd}
    \caption{The relative local model diagram.}
    \label{Figure1}
\end{figure}

\section{The Kottwitz-Rapoport stratification}
\subsection{Iwahori-Weyl group}\label{Section3.1}

We now introduce the Iwahori-Weyl group attached to the group $G$. Recall from \eqref{Eq:BaseChangeG} that we have an isomorphism
\begin{equation*}
G_{\breve F} \simeq \mathrm{GL}(\mathbb V_{\breve F,0})\times_{\breve F}\mathbb G_{m,\breve F}.
\end{equation*}
Through the basis $\mathbf e^{(0)}$ of $\mathbb V_{\breve F,0}$, an element $g \in G(\breve F)$ will be written as a pair $(g_0,c)$, where $g_0\in \mathrm{GL}_n(\breve F)$ and $c\in \breve F^{\times}$. Let $T\subset G$ be the maximal $F$-torus consisting of the unitary similitudes whose matrix in the basis $\mathbf e$ is diagonal, and let $N\coloneq N_G(T)$ be its normalizer. Then $T$ is $K$-split, so that $T_{\breve F} \subset G_{\breve F}$ is the usual diagonal maximal torus, and $N_{\breve F}$ agrees with the normalizer of $T_{\breve F}$ in $G_{\breve F}$. Let us write 
\begin{equation}\label{Eq:NotationIIw}
    I_{\mathrm{Iw}} \coloneq  \{0,\ldots , \nu_{\varepsilon}\},
\end{equation}
and $\breve{K}_{I_{\mathrm{Iw}}} \coloneq  \mathcal G_{I_{\mathrm{Iw}}}(\mathcal O_{\breve F})$ in the notations of \eqref{Eq:DefiParahoricGroupScheme}. The \textit{Iwahori-Weyl group} of $G$ is
\begin{equation*}
\widetilde W\coloneq N(\breve F)/(T(\breve F) \cap \breve{K}_{I_{\mathrm{Iw}}}).
\end{equation*}
Define also the finite Weyl group $W \coloneq  N(\breve F)/T(\breve F) \simeq \mathfrak S_n$. The mapping 
\begin{equation*}
    (\mathrm{Diag}(a_1,\ldots , a_n),c) \in T(\breve F) \mapsto (v_{\pi}(a_1),\ldots , v_{\pi}(a_n); v_{\pi}(c)) \in \mathbb Z^n \times \mathbb Z,
\end{equation*}
defines an isomorphism $X_*(T_{\breve F}) \simeq T(\breve F)/(T(\breve F) \cap \breve{K}_{I_{\mathrm{Iw}}}) \xrightarrow{\sim} \mathbb Z^{n} \times \mathbb Z$. Then $W$ acts on $\mathbb Z^{n} \times \mathbb Z$ via
\begin{equation*}
    w \cdot (x_1,\ldots , x_n;m) \coloneq  (x_{w^{-1}(1)}, \ldots , x_{w^{-1}(n)} ; m) \in \mathbb Z^n \times \mathbb Z,
\end{equation*}
and induces an isomorphism 
\begin{equation*}
    \widetilde{W} \simeq (\mathbb Z^n\times \mathbb Z) \rtimes W.
\end{equation*}
Given $w \in W$ and $x\in \mathbb Z^n\times \mathbb Z$, the corresponding element in $\widetilde W$ is denoted $t^xw \coloneq  (x,w)$. Multiplication is given by 
\begin{equation*}
    (t^xw)(t^{x'}w') = t^{x+w\cdot x'}ww'.
\end{equation*}
The affine Weyl group $W_a \subset \widetilde{W}$ is the Iwahori-Weyl group of the derived subgroup of $G$. It is then naturally identified with the subgroup
\begin{equation*}
W_{a} = \left\{t^{(x_1,\ldots,x_n;0)}w \in \widetilde W \;\middle|\;
\sum_{i=1}^n x_i=0\right\}.
\end{equation*}
Let $\Omega$ denote the stabilizer in $\widetilde{W}$ of the alcove determined by the lattice chain $L_{\bullet,\mathcal O_{\breve F}}$, in the apartment of $T_{\breve F}$ in the Bruhat-Tits building of $G_{\breve F}$. We have a decomposition 
\begin{equation*}
    \widetilde{W} \simeq W_a \rtimes \Omega .
\end{equation*}
If we define 
\begin{align*}
    \tau_0 \coloneq  t^{(1,1,\ldots 1,0;1)}(n\ n-1\  \ldots \ 1), & & \zeta \coloneq  t^{(1,\ldots , 1;1)},
\end{align*}
then we have 
\begin{equation*}
    \Omega \simeq \langle \tau_0, \zeta \rangle \simeq \mathbb Z \oplus \mathbb Z.
\end{equation*}
The element $\dot{\zeta} \coloneq  (\pi I_n,\pi) \in G(K)$ lies in $N(K)$ and its image in $\widetilde W$ is $\zeta$. On the other hand, the element $b$ defined in \eqref{Eq:DefinitionElementb} lies in $N(K)$, and the image of $\sigma(b)$ in $\widetilde W$ is equal to $\tau_0 \in \Omega$. Thus, we will sometimes write $\dot{\tau}_0 \coloneq  \sigma(b)$ when we need to stress this fact. We note that $\dot{\tau}_0\mathbf e^{(0)}_r = \mathbf e_{r-1}^{(0)}$ for all $r \in \mathbb Z$. The affine Weyl group is a Coxeter group generated by the set $\mathbb S_{a} \coloneq  \{s_0,\ldots , s_{n-1}\}$ of simple affine reflections
\begin{align*}
\forall 1 \leq j \leq n-1, \qquad s_j=(j\ j+1), & & s_0=t^{(1,0,\ldots,0,-1;0)}(1\ n).
\end{align*}
For $0 \leq j \leq n-1$, the natural representative $\dot{s}_j$ of $s_j$ in $N(K) \subset N(\breve F)$ is determined by the condition that it has multiplier equal to $1$, and that it exchanges the vectors $\mathbf e_{j+kn}^{(0)}$ and $\mathbf e_{j+1+kn}^{(0)}$ for all $k \in \mathbb Z$, while fixing the vectors $\mathbf e_{r}^{(0)}$ whenever $\overline r \not = \overline{j}, \overline{j+1}$. If $v \in W_a$ has a reduced expression $v = s_{j_1}\ldots s_{j_k}$, we define $\dot{v} \coloneq  \dot{s}_{j_1}\ldots \dot{s}_{j_k} \in N(K)$. It does not depend on the choice of the reduced expression. Finally, if $w \in \widetilde{W}$, we can decompose $w = v\tau_0^a\zeta^b$ and define $\dot{w} \coloneq  \dot{v}\dot{\tau}_0^a\dot{\zeta}^b \in N(K)$.\\
We will always identify the indexing set of the simple reflections $s_0,\ldots , s_{n-1}$ with $\mathbb Z/n\mathbb Z$. They satisfy the following relations: for all $i,j \in \mathbb Z/n\mathbb Z$, 
\begin{align*}
    & \text{if } j \not = i \pm 1, & s_is_j & = s_js_i,\\
    & \text{if } j = i \pm 1 \text{ and } n\geq 3, & s_is_js_i & = s_js_is_j,\\
    & & s_i^2 & = \mathrm{id}.
\end{align*} 
Let $\ell$ denote the length function with respect to $\mathbb S_a$. It can be extended to $\widetilde{W}$ by setting $\ell(w\omega) \coloneq  \ell(w)$ for all $w \in W_a$ and $\omega \in \Omega$. The Bruhat order on $W_a$ can also be extended to $\widetilde W$ by setting
\begin{equation*}
    w\omega \leq w'\omega' \iff \omega = \omega' \text{ and } w \leq w'.
\end{equation*}
The Frobenius $\sigma \in \mathrm{Gal}(\breve F/F)$ acts on $G(\breve F)$ as follows 
\begin{equation*}
    \forall g = (g_0,c) \in G(\breve F), \qquad \sigma(g) = (\sigma(c)\Omega_H^{-1}(g_0^{(\sigma)})^{-T}\Omega_H, \sigma(c)), 
\end{equation*}
where $g_0^{(\sigma)}$ is obtained by applying $\sigma$ to all the entries of the matrix $g_0$. This induces an action on $\widetilde W$ which stabilizes $W_a$, $\mathbb S_a$ and $\Omega$. By direct computation, one may check that we have 
\begin{align}\label{Eq:FrobeniusActionOnSimpleReflections}
\begin{split}
    \forall j \in \mathbb Z/n\mathbb Z, \qquad  \sigma(s_j) & = s_{-j-\underline{\varepsilon}},\\
    \sigma(\tau_0) & = \tau_0^{n-1}\zeta^{2-n} = t^{(1,0, \ldots,0;1)}(1\ 2\ \ldots \ n), \\
    \sigma(\zeta)& = \tau_0^n\zeta^{1-n} = t^{(0,\ldots , 0; 1)}.
\end{split}
\end{align}
Recall the cocharacter $\mu$ defined in Section \ref{Section2.1}. We will consider $\mu' \coloneq  \sigma \circ \mu$, which is given by $z \mapsto (\mathrm{Diag}(z,\ldots , z ,1),z)$. It defines the element
\begin{equation*}
    t^{\mu'} = t^{(1, \ldots ,1, 0;1)} \in \widetilde{W}.
\end{equation*}
For $1 \leq j \leq n$, we define $t^{\mu'_j} \coloneq  t^{(1^{j-1},0,1^{n-j};1)}$, so that the orbit of $t^{\mu'}$ under the action of the finite Weyl group $W$ is $\{t^{\mu'_j} \mid 1 \leq j \leq n\}$. 
\begin{defi}
    The $\{\mu'\}$-admissible set is 
    \begin{equation*}
        \mathrm{Adm}(\{ \mu' \}) = \left\{v \in \widetilde W \mid \exists 1 \leq j \leq n, \quad v \leq t^{\mu'_j}\right\}.
    \end{equation*}
\end{defi}
Now let $I\subseteq \{0,\ldots,\nu_{\varepsilon}\}$ be non-empty, and $\breve K_I\coloneq \mathcal G_I(\mathcal O_{\breve F})\subset G(\breve F)$. We define
\begin{equation}\label{Eq:DefiSIa}
    \mathbb S_a^I \coloneq  \{s_i \mid i \not\in \overline I\} \subseteq \mathbb S_a.
\end{equation}
The mapping $I \mapsto \mathbb S_a^I$ induces a bijection between non-empty subsets of $\{0,\ldots,\nu_{\varepsilon}\}$ and proper $\sigma$-stable subsets of $\mathbb S_a$. The subgroup of $W_{a}$ attached to the facet fixed by $\breve K_I$ is
\begin{equation}\label{Eq:DefiSubgroupWI}
W_{\mathbb S_a^I}\coloneq \left\langle s\mid s \in \mathbb S_a^{I}\right\rangle \subseteq W_a.
\end{equation}
By the Iwahori-Bruhat decomposition of $G(\breve F)$, we have 
\begin{equation*}
\breve K_I\backslash G(\breve F)/\breve K_I \simeq W_{\mathbb S_a^I}\backslash \widetilde W/W_{\mathbb S_a^I}.
\end{equation*}
We define 
\begin{align*}
\begin{split}
\mathrm{Adm}(\{\mu'\})^I & \coloneq  W_{\mathbb S_a^I}\mathrm{Adm}(\{\mu'\})W_{\mathbb S_a^I} \subseteq \widetilde W,\\
\mathrm{Adm}(\{\mu'\})_I & \coloneq  W_{\mathbb S_a^I}\backslash \mathrm{Adm}(\{\mu'\})^I / W_{\mathbb S_a^I} \subseteq W_{\mathbb S_a^I} \backslash \widetilde W / W_{\mathbb S_a^I}.
\end{split}
\end{align*}
For instance, we have $\mathrm{Adm}(\{\mu'\})_{I_{\mathrm{Iw}}} = \mathrm{Adm}(\{\mu'\})$. Given $v \in \widetilde{W}$, we write $[v]_I$ for its image $W_{\mathbb S_a^I}\backslash \widetilde W/W_{\mathbb S_a^I}$. This double coset space is equipped with a partial order, induced from the Bruhat order on $\tilde W$, as follows.
\begin{equation*}
    [v]_{I} \leq [v']_{I} \iff v_{I,\mathrm{min}} \leq v'_{I,\mathrm{min}},
\end{equation*}
where $v_{I,\mathrm{min}}$ (resp. $v'_{I,\mathrm{min}}$) denotes the unique element of minimal length in $[v]_I$ (resp. in $[v']_I$). Besides, the length of a double coset $[v]_{I}$ is by definition the length of $v_{I,\mathrm{min}}$. By restriction, the subset $\mathrm{Adm}(\{\mu'\})_{I}$ is a finite poset, which we are going to describe explicitly. 

Given $x,y \in \mathbb Z/n\mathbb Z$, we define 
\begin{equation*}
    s_{[x,y]} \coloneq  s_xs_{x+1} \ldots s_{y-1}s_y \in W_a.
\end{equation*}
For instance, $s_{[a,a]} = s_a$ and $s_{[n-1,2]} = s_{n-1}s_0s_1s_2$. The given expression of $s_{[x,y]}$ is always reduced. The following Lemma can be checked by direct computation.
\begin{lem}\label{Lemma:ReducedDecompositiontmuj}
    For every $1 \leq j \leq n$, we have  
    \begin{equation*}
        t^{\mu_j'} = s_{[j+1,j-1]}\tau_0.
    \end{equation*}
\end{lem}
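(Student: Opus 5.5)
The plan is an induction on $j$: verify one value of $j$ by direct computation, then pass from $j$ to $j+1$ by conjugating with the finite simple reflection $s_j=(j\ j+1)$, $1\leq j\leq n-1$.

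First, both sides lie in the same coset of $W_a$ in $\widetilde W = W_a\rtimes\Omega$. Indeed $s_{[j+1,j-1]}\in W_a$. Moreover $t^{\mu'_j}\tau_0^{-1}$ has translation part with coordinate sum $(n-1)-(n-1)=0$ and multiplier component $1-1=0$, so it lies in $W_a$. It therefore suffices to show that $t^{\mu'_j}\tau_0^{-1}$ and $s_{[j+1,j-1]}$ agree as elements of $W_a$.

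\emph{Base case.} I would treat $j=n$, where $t^{\mu'_n}=t^{\mu'}=t^{(1^{n-1},0;1)}$ and the claim reads $t^{\mu'}=s_1s_2\cdots s_{n-1}\tau_0$. Using the multiplication rule $(t^xw)(t^{x'}w')=t^{x+w\cdot x'}ww'$, the product $s_1\cdots s_{n-1}$ is the finite permutation $(1\ 2\ \cdots\ n)$. Composing it with $\tau_0=t^{(1^{n-1},0;1)}(n\ n-1\ \cdots\ 1)$ cancels the cycle and leaves the translation $(1\ 2\ \cdots\ n)\cdot(1^{n-1},0;1)$, which must be checked to equal $(1^{n-1},0;1)$ under the stated convention $w\cdot x=(x_{w^{-1}(1)},\dots)$. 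If the conventions make $j=1$ the cleaner case, I would instead start there: $s_{[2,0]}=s_2\cdots s_{n-1}s_0$, where $s_0=t^{(1,0,\dots,0,-1;0)}(1\ n)$ supplies the required translation. An equivalent and less error-prone way to run this check is to let both sides act on the periodic basis $\mathbf e^{(0)}_r$ through the representatives $\dot s_i$ and $\dot\tau_0$, using $\dot\tau_0\mathbf e^{(0)}_r=\mathbf e^{(0)}_{r-1}$, and compare the resulting affine permutations of $\mathbb Z$.

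\emph{Inductive step.} For $1\leq j\leq n-1$ we have $t^{\mu'_{j+1}}=s_jt^{\mu'_j}s_j$, since $s_j$ swaps coordinates $j$ and $j+1$. On the other side, I would compute $\tau_0 s_i\tau_0^{-1}=s_{i\pm1}$, with indices in $\mathbb Z/n\mathbb Z$; this follows from $\dot\tau_0$ shifting the periodic basis by one. The sign, and the direction of the induction, must be chosen so that
\begin{equation*}
s_j\, s_{[j+1,j-1]}\,\tau_0 s_j\tau_0^{-1}\,\tau_0 = s_{[j+2,j]}\tau_0 .
\end{equation*}
Concretely, if $\tau_0s_j\tau_0^{-1}=s_{j+1}$, this becomes the identity $s_js_{[j+1,j-1]}s_{j+1}=s_{[j+2,j]}$. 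Otherwise one induces downward in $j$, using $s_{j}s_{[j+1,j-1]}s_{j-1}$ type cancellations together with braid relations. The combinatorial identity itself is then a short braid-relation manipulation on cyclic words.

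\emph{Main obstacle.} The only real difficulty is bookkeeping of conventions: the direction of the cycle in $\tau_0$, the action $w\cdot x$, and whether conjugation by $\tau_0$ raises or lowers indices. I would fix these once by evaluating everything on $\mathbf e^{(0)}_r$, and then apply the inductive step verbatim.
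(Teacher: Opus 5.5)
Your strategy (check one value of $j$, then pass between $j$ and $j+1$ using $t^{\mu'_{j+1}}=s_jt^{\mu'_j}s_j$) is sound. The problem is that the two verifications you leave open (``must be checked'', ``must be chosen so that'') do not come out the way you need, and no choice of direction fixes this.

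\emph{Base case.} With the paper's convention $w\cdot x=(x_{w^{-1}(1)},\dots,x_{w^{-1}(n)};m)$ one gets $(1\ 2\ \cdots\ n)\cdot(1^{n-1},0;1)=(0,1^{n-1};1)$. Hence $s_{[1,n-1]}\tau_0=t^{\mu'_1}$, not $t^{\mu'_n}$.

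\emph{Inductive step.} The direction of conjugation is not yours to choose: $\tau_0s_j\tau_0^{-1}=s_{j-1}$. This is recorded in Remark \ref{Rk:wJIsShortestElementDoubleCoset} and is easily checked from $\tau_0=t^{(1^{n-1},0;1)}(n\ n-1\ \cdots\ 1)$.
\begin{itemize}
\item Going up gives $s_js_{[j+1,j-1]}s_{j-1}=s_{[j,j-2]}$. Its support misses $s_{j-1}$, whereas $s_{[j+2,j]}$ misses $s_{j+1}$, so the two differ once $n\geq 3$.
\item Your ``concrete'' identity $s_js_{[j+1,j-1]}s_{j+1}=s_{[j+2,j]}$ is false. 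For $n=3$, $j=1$, the finite parts of $s_1s_2s_0s_2$ and $s_0s_1$ are the identity and a $3$-cycle.
\item Going down, $s_js_{[j+2,j]}s_{j-1}=s_{[j+2,j]}$ for $n\geq 3$. To see this, use $s_{j-1}s_js_{j-1}=s_js_{j-1}s_j$ and commute the leading $s_j$ to the right. This would force $t^{\mu'_j}=t^{\mu'_{j+1}}$, which is absurd.
\end{itemize}

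The underlying reason is that, with the conventions of Section \ref{Section3.1}, the displayed identity is off by one. What actually holds is $s_{[j+1,j-1]}\tau_0=t^{\mu'_{j+1}}$, with the index of $\mu'$ read mod $n$; equivalently $t^{\mu'_j}=s_{[j,j-2]}\tau_0$. Already for $n=2$ one has $s_1\tau_0=t^{(0,1;1)}=t^{\mu'_1}$ and $s_0\tau_0=t^{(1,0;1)}=t^{\mu'_2}$.

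Your method proves exactly this corrected form, and needs no braid relations. The base case is $s_{[1,n-1]}\tau_0=t^{\mu'_1}$. The step is $t^{\mu'_{j+1}}=s_j\,s_{[j,j-2]}\tau_0\,s_j=s_js_{[j,j-2]}s_{j-1}\tau_0=s_{[j+1,j-1]}\tau_0$.

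The paper's own proof is just ``direct computation''. The shift is harmless for the lemma's only use, Proposition \ref{Prop:IsomAdmMuPoset}, which only needs $s_{[j+1,j-1]}\tau_0$ to be some $t^{\mu'_k}$.

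One caution if you check on the periodic basis: $\dot\tau_0=\sigma(b)$ actually sends $\mathbf e^{(0)}_r$ to $\pi\mathbf e^{(0)}_{r-1}$. This is consistent with $\pi\dot\tau_0^{-1}\mathbf e^{(0)}_r=\mathbf e^{(0)}_{r+1}$, which the paper uses later. Taking $\dot\tau_0\mathbf e^{(0)}_r=\mathbf e^{(0)}_{r-1}$ literally would give the wrong translation part.
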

Given $w \in W_a$, let $\mathrm{Supp}(w) \subseteq \mathbb S_a$ denote its support, that is the set of simple reflections occurring in some (equivalently, in any) reduced decomposition of $w$. 
\begin{defi}
    We define $\mathcal P_I \coloneq  \{J \subsetneq \overline I\}$ to be the poset of proper subsets of $\overline I$, ordered by inclusion.
\end{defi}
Recall the definition of $\ell_i$ for $i \in \overline I$ in Definition \ref{Defi:DefiIntegerli}.
\begin{defi}\label{Defi:DefiBlockBi}
    For $i \in \overline I$, we define
    \begin{equation*}
            B_i \coloneq  \{i, i+1, \ \ldots \ , i+\ell_i-1\} \subseteq \mathbb Z/n\mathbb Z, \qquad w_{B_i} \coloneq  s_{[i,i+\ell_i-1]} \in W_a.
    \end{equation*}
    The subsets $B_i$'s form a partition of $\mathbb Z/n\mathbb Z$. Then, for $J \in \mathcal P_I$, define
    \begin{equation*}
        B_J \coloneq  \bigsqcup_{j \in J} B_j \subsetneq \mathbb Z/n\mathbb Z.
    \end{equation*}
\end{defi}
Since $J \in \mathcal P_{I}$, we can pick some $i_0 \in \overline I \setminus J$. Let $j_1, \ldots , j_{k}$ denote the elements of $J$ enumerated successively as we run along $\mathbb Z/n\mathbb Z$ from $i_0+1, i_0+2, \ldots , i_0-1$. We define 
\begin{equation*}
    w_{J} \coloneq  w_{B_{j_1}}w_{B_{j_2}}\ldots w_{B_{j_k}} \in W_a, \qquad v_J \coloneq  w_J\tau_0 \in \widetilde W.
\end{equation*}
\begin{lem}
    The element $w_J$ does not depend on the choice of $i_0 \in \overline I \setminus J$.
\end{lem}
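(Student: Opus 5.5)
The key observation is that the $w_{B_j}$ for distinct $j \in J$ have pairwise disjoint supports, and two such factors fail to commute only when their blocks are adjacent in $\mathbb Z/n\mathbb Z$. The order in which the factors appear in $w_J$ is therefore determined by the cyclic order of $J$, and the choice of $i_0$ only determines where we cut that cycle.

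I would first record the commutation rule. By Definition \ref{Defi:DefiBlockBi}, $\mathrm{Supp}(w_{B_j}) = \{s_k \mid k \in B_j\}$, and the blocks $B_j$ partition $\mathbb Z/n\mathbb Z$. By the Coxeter relations, $s_k$ and $s_{k'}$ commute unless $k' = k \pm 1$. Hence $w_{B_j}$ and $w_{B_{j'}}$ commute unless some element of $B_j$ is adjacent to some element of $B_{j'}$. Since $B_j$ is an interval, this happens only when $j' = j+\ell_j$ (the block $B_{j'}$ starts right after $B_j$ ends) or $j = j'+\ell_{j'}$. When $\#\overline I = 2$ and $n$ is small, both adjacencies can occur at once, but we only need the one-sided statement: if $w_{B_j}$ and $w_{B_{j'}}$ do not commute, then $j$ and $j'$ are successive elements of $\overline I$ in the cyclic order.

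Next, fix two choices $i_0, i_0' \in \overline I \setminus J$. The enumerations $(j_1,\ldots,j_k)$ and $(j_1',\ldots,j_k')$ of $J$ are cyclic rotations of one another. Concretely, there is an index $m$ such that the elements of $J$ lying cyclically strictly between $i_0$ and $i_0'$ are $j_1,\ldots,j_m$, and $(j_1',\ldots,j_k') = (j_{m+1},\ldots,j_k,j_1,\ldots,j_m)$. So it suffices to show that the two products
\begin{equation*}
P = w_{B_{j_1}}\cdots w_{B_{j_m}} \qquad \text{and} \qquad Q = w_{B_{j_{m+1}}}\cdots w_{B_{j_k}}
\end{equation*}
commute. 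For this, take any $a \le m < b$ and show that $w_{B_{j_a}}$ and $w_{B_{j_b}}$ commute. If they did not, then by the first step $j_a$ and $j_b$ would be cyclically consecutive in $\overline I$. But the arc from $j_a$ to $j_b$ passes through $i_0'$, and the arc from $j_b$ to $j_a$ passes through $i_0$. Both $i_0$ and $i_0'$ lie in $\overline I$ and are distinct from $j_a, j_b$, so they would have to lie strictly between two consecutive elements of $\overline I$, which is impossible. Hence every factor of $P$ commutes with every factor of $Q$, so $PQ = QP$ and $w_J$ is independent of $i_0$.

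The main obstacle I anticipate is handling the degenerate cases carefully. Examples are $\ell_j = n$ (so $\#\overline I = 1$ and $J = \emptyset$, which is trivial), and small $n$ such as $n = 2$, where $s_0$ and $s_1$ are adjacent in both directions. In each case one has to confirm that "non-commuting implies cyclically adjacent in $\overline I$" still holds, which I would check directly from the interval structure of the $B_j$.
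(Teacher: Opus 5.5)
Your proof is correct and follows essentially the same route as the paper: write $w_J$ as a product $PQ$ of the factors lying on the two arcs cut out by the two base points, observe that the other base point yields $QP$, and conclude that $P$ and $Q$ commute because their supports are separated by the missing reflections $s_{i_0}$ and $s_{i_0'}$. The only difference is cosmetic. The paper compares $i_0$ with the next element $i_1$ of $\overline I\setminus J$ and bounds the supports directly by $\{i_0+1,\ldots,i_1-1\}$ and $\{i_1+1,\ldots,i_0-1\}$, whereas you treat an arbitrary pair $i_0,i_0'$ at once, using the pairwise criterion that non-commuting blocks must come from cyclically consecutive elements of $\overline I$.
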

\begin{proof}
    If $k = \# J = \#\overline{I} - 1$, there is nothing to prove. Else, let $i_1$ denote the first element of $\overline I \setminus J$ which occurs when running along $i_0+1, i_0+2, \ldots , i_0-1$. Let $0 \leq k' \leq k$ denote the number of elements of $J$ in $\{i_0+1, \ldots , i_1-1\}$. These elements are precisely $j_1, \ldots , j_{k'}$. We define
    \begin{equation*}
       w' \coloneq  w_{B_{j_1}}w_{B_{j_2}}\ldots w_{B_{j_{k'}}}, \qquad w'' \coloneq   w_{B_{j_{k'+1}}}w_{B_{j_{k'+2}}}\ldots w_{B_{j_k}},
    \end{equation*}
    so that we have $w_J = w'w''$. The enumeration of the elements of $J$ obtained by starting with $i_1$ instead of $i_0$, is $j'_1,\ldots , j'_k$ where 
    \begin{equation*}
        \forall 1 \leq t \leq k-k', \quad j'_t = j_{k'+t}, \qquad \forall k-k'+1 \leq t \leq k, \quad j'_t = j_{t-(k-k')}.
    \end{equation*}
    It follows that the element $w_J'$ produced with $i_1$ is $w_J' = w''w'$. Now, we have 
    \begin{equation*}
        \mathrm{Supp}(w') \subseteq \{i_0+1, \ldots , i_1-1\}, \qquad \mathrm{Supp}(w'') \subseteq \{i_1+1, \ldots , i_0-1\}.
    \end{equation*}
    Thus $w'$ and $w''$ commute, so that $w_J = w_J'$.
\end{proof}
\begin{rk}\label{Rk:wJIsShortestElementDoubleCoset}
    We point out that $w_J$ is the shortest element of the double coset $W_{\mathbb S_a^I} w_J W_{\mathbb S_a^I -1}$, where $\mathbb S_a^I -1 \coloneq  \{s_{j-1} \mid s_j \in \mathbb S_a^I\}$. Equivalently, since $\tau_0 s_j\tau_0^{-1} = s_{j-1}$ for all $j \in \mathbb Z/n\mathbb Z$, we have $\tau_0 W_{\mathbb S_a^I} \tau_0^{-1} = W_{\mathbb S_a^I -1}$, and so $v_J$ is the shortest element of the double coset $W_{\mathbb S_a^I} v_J W_{\mathbb S_a^I}$.
\end{rk}
\begin{prop}\label{Prop:IsomAdmMuPoset}
    Let $I \subseteq \{0,\ldots , \nu_{\varepsilon}\}$ be non-empty. The mapping 
    \begin{equation*}
        J \mapsto [v_J]_I
    \end{equation*}
    defines a bijection $\mathcal P_I \xrightarrow{\sim} \mathrm{Adm}(\{\mu'\})_{I}$ of posets.
\end{prop}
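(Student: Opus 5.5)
Throughout, I use the fact (stated in the introduction) that elements of $\mathrm{Adm}(\{\mu'\})$ are determined by their support. I would first describe $\mathrm{Adm}(\{\mu'\})$ at Iwahori level, and then pass to the double cosets.

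\textbf{Iwahori level.} By Lemma \ref{Lemma:ReducedDecompositiontmuj}, $t^{\mu'_j}=s_{[j+1,j-1]}\tau_0$, and $s_{[j+1,j-1]}$ is a Coxeter-type element: a product of $n-1$ distinct simple reflections in cyclic order. By the subword property, every $w\le s_{[j+1,j-1]}$ is a product of distinct simple reflections taken in that cyclic order. Such an element is determined by its support $S\subsetneq\mathbb Z/n\mathbb Z$. Indeed, $S$ splits into maximal cyclic intervals, which pairwise commute, and within each interval the order is forced to be increasing. Conversely, every proper subset $S$ is contained in the support $\mathbb Z/n\mathbb Z\setminus\{j\}$ for any $j\notin S$. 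Hence $\mathrm{Adm}(\{\mu'\})\simeq\{S\subsetneq\mathbb Z/n\mathbb Z\}$ via $w\tau_0\mapsto\mathrm{Supp}(w)$. Because these elements are products of distinct generators, the Bruhat order among them is inclusion of supports. This settles the case $I=I_{\mathrm{Iw}}$, where $B_i=\{i\}$ and $v_J$ is exactly the element with support $J$.

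\textbf{General $I$.} Each class in $\mathrm{Adm}(\{\mu'\})_I$ is represented by its minimal element. The point of Remark \ref{Rk:wJIsShortestElementDoubleCoset} is that, after conjugating by $\tau_0$, minimality of $w\tau_0$ in $W_{\mathbb S_a^I}w\tau_0W_{\mathbb S_a^I}$ means that $w$ is minimal in $W_{\mathbb S_a^I}wW_{\mathbb S_a^I-1}$.

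I would prove the following: if $w\tau_0$ is admissible with support $S$, then the minimal representative of its class is again admissible, with support equal to the union of those blocks $B_i$ that meet $S$ in a way that cannot be cancelled by $W_{\mathbb S_a^I}$ on the left and $W_{\mathbb S_a^I-1}$ on the right.

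The concrete approach is to look at one block $B_i=\{i,\dots,i+\ell_i-1\}$. The generators $s_{i+1},\dots,s_{i+\ell_i-1}$ lie in $\mathbb S_a^I$ and may be absorbed on the left. The generators $s_i,\dots,s_{i+\ell_i-2}$ lie in $\mathbb S_a^I-1$ and may be absorbed on the right. So within an increasing interval word inside $B_i$, the factors can be stripped from both ends unless the word is the full $s_{[i,i+\ell_i-1]}$, which contains both endpoints. Intervals crossing block boundaries decompose accordingly. Carrying out this stripping, the minimal representative is $w_J\tau_0$ with $J=\{i\mid B_i\subseteq S\}$, or a sub-configuration of this.

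From this I would deduce the following.
\begin{enumerate}
\item Surjectivity: every class equals $[v_J]_I$ for some $J$, with $J\neq\overline I$ because $S$ is proper.
\item Injectivity: the $v_J$ are pairwise distinct minimal elements, since their supports $B_J$ are distinct. Distinct minimal elements give distinct classes.
\item Order: by definition of the order on double cosets, $[v_J]_I\le[v_{J'}]_I\iff v_J\le v_{J'}$. By the Iwahori case this holds iff $B_J\subseteq B_{J'}$, that is, iff $J\subseteq J'$.
\end{enumerate}

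\textbf{Main obstacle.} The delicate step is showing rigorously that the minimal representative of an arbitrary element of $W_{\mathbb S_a^I}\,\mathrm{Adm}\,W_{\mathbb S_a^I}$ is some $v_J$. In particular, left and right multiplication must not produce a shorter element outside the admissible Coxeter-type family. I would handle this with the standard fact that $\mathrm{Adm}(\{\mu'\})^I\cap{}^I\widetilde W^I$ consists of admissible elements. Minimal elements of admissible double cosets are admissible, since $\mathrm{Adm}$ is closed downward and the minimal element lies below every element of its coset. So it suffices to classify which Coxeter-type $w\tau_0$ are $\mathbb S_a^I$-reduced on both sides.

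For that classification I would use descent sets. $w\tau_0$ has no left descent in $\mathbb S_a^I$ iff no $s_x$ with $x\notin\overline I$ is a left descent of $w$. $w$ has no right descent in $\mathbb S_a^I-1$ iff no $s_{x-1}$ with $x\notin\overline I$ is a right descent of $w$. For an increasing interval word $s_{[a,b]}$, the left descent is $s_a$ and the right descent is $s_b$. So both conditions together force $a\in\overline I$ and $b+1\in\overline I$, meaning each interval is a union of full blocks. This pins down $w=w_J$.
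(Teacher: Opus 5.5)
Your proposal is correct and follows essentially the same route as the paper. You describe $\mathrm{Adm}(\{\mu'\})$ by supports via $t^{\mu'_j}=s_{[j+1,j-1]}\tau_0$, conjugate by $\tau_0$ to reduce to the minimal element of $W_{\mathbb S_a^I}wW_{\mathbb S_a^I-1}$, and identify it with $w_J$. Your descent-set argument (each interval word $s_{[a,b]}$ must have $a\in\overline I$ and $b+1\in\overline I$) makes precise the step the paper dismisses as ``one easily sees''.

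One small fix is needed. Bruhat order coincides with inclusion of supports here not merely because the elements are products of distinct generators; for instance, in $\mathfrak S_4$ we have $s_1s_2\not\leq s_2s_1s_3$. The correct reason is that for proper $S\subseteq S'$ you may pick $j\notin S'$ and read both elements as subwords of the single reduced word $s_{[j+1,j-1]}$.
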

\begin{proof}
    Let $J \in \mathcal P_I$. The support $\mathrm{Supp}(w_J) = \{s_b \mid b \in B_J\}$ is a proper subset of $\mathbb S_a$, so that we can pick some simple reflection $s_j$ which does not occur in $w_J$. By construction, we have $w_J \leq s_{[j+1,j-1]}$. Then by Lemma \ref{Lemma:ReducedDecompositiontmuj}, we have $v_J \leq t^{\mu'_j}$. It follows that $v_J \in \mathrm{Adm}(\{\mu'\})$, and thus $[v_J]_I \in \mathrm{Adm}(\{\mu'\})_{I}$ so that the map is well-defined. We construct its inverse. Let $v \in \mathrm{Adm}(\{\mu'\})$ and let $v'$ be the shortest element in the double coset $W_{\mathbb S_a^I} v W_{\mathbb S_a^I}$. By Lemma \ref{Lemma:ReducedDecompositiontmuj}, we can write $v = w \tau_0$ where $w \in W_a$ satisfies $w \leq s_{[j+1,j-1]}$ for some $j$. We have 
    \begin{equation*}
        W_{\mathbb S_a^I} w\tau_0 W_{\mathbb S_a^I} = \underbrace{W_{\mathbb S_a^I} w (\tau_0 W_{\mathbb S_a^I} \tau_0^{-1})}_{\subseteq W_a}\tau_0,
    \end{equation*}
    thus $v' = w'\tau_0$ where $w'$ is the shortest element of the double coset $W_{\mathbb S_a^I} w (\tau_0 W_{\mathbb S_a^I}\tau_0^{-1}) = W_{\mathbb S_a^I}wW_{\mathbb S_a^I-1}$. Since $w \leq s_{[j+1,j-1]}$ for some $j$, one easily sees that $w' = w_J$ for the unique proper subset $J \subsetneq \overline I$ such that $\mathrm{Supp}(w') = B_J$. It follows that $[v_J]_I = [v]_I$, and this defines the inverse map. It is then easy to check that $J \mapsto [v_J]_I$ is an isomorphism of posets.
\end{proof}
\begin{rk}\label{Rk:DescriptionAdmMu}
    In particular, Proposition \ref{Prop:IsomAdmMuPoset} with $I = I_{\mathrm{Iw}}$ says that $J \mapsto v_J$ is an isomorphism of posets $\mathcal P_{I_{\mathrm{Iw}}} \simeq \mathrm{Adm}(\{\mu'\})$, and $\mathcal P_{I_{\mathrm{Iw}}}$ is nothing but the poset of proper subsets of $\mathbb Z/n\mathbb Z$. 
\end{rk}
\begin{corol}
    Let $I \subseteq \{0,\ldots , \nu_{\varepsilon}\}$ be non-empty. We have 
    \begin{equation*}
        \#\mathrm{Adm}(\{\mu'\})_I = 2^{\#\overline I}-1.
    \end{equation*}
\end{corol}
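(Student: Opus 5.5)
This is a direct consequence of Proposition \ref{Prop:IsomAdmMuPoset}. That proposition gives a bijection $\mathcal P_I \xrightarrow{\sim} \mathrm{Adm}(\{\mu'\})_I$, $J \mapsto [v_J]_I$. So it is enough to count the elements of $\mathcal P_I$. By definition, $\mathcal P_I$ is the set of proper subsets of $\overline I$.

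The set $\overline I \subseteq \mathbb Z/n\mathbb Z$ is finite, so it has $2^{\#\overline I}$ subsets. Excluding the single non-proper subset $\overline I$ itself leaves $2^{\#\overline I}-1$ elements. This count does not require $\overline I$ to be non-empty, but it is: $I \neq \emptyset$, so $\overline I$ contains $\overline r$ for any $r\in I$.

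The order structure in Proposition \ref{Prop:IsomAdmMuPoset} plays no role here; only the bijectivity of the map is used. There is no real obstacle, since all the substance (well-definedness and the inverse via supports of shortest double coset representatives) is already in Proposition \ref{Prop:IsomAdmMuPoset}.

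As a sanity check, take the Iwahori case $I = I_{\mathrm{Iw}}$. Then $\overline I = \mathbb Z/n\mathbb Z$, and the count gives $2^n-1$ admissible elements. This agrees with Remark \ref{Rk:DescriptionAdmMu}: each admissible element is $w_J\tau_0$ with $J$ a proper subset of $\mathbb Z/n\mathbb Z$.
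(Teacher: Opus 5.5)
Your proposal is correct and is exactly the paper's argument: the corollary is read off from the bijection $\mathcal P_I \xrightarrow{\sim} \mathrm{Adm}(\{\mu'\})_I$ of the preceding proposition. Counting the proper subsets of $\overline I$ gives $2^{\#\overline I}-1$.
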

As pointed out in the proof of Proposition \ref{Prop:IsomAdmMuPoset}, conjugation by $\tau_0$ preserves $\mathbb S_a$, and we write $\mathrm{Ad}(\tau_0)$ for this action. Combining with \eqref{Eq:FrobeniusActionOnSimpleReflections}, we have 
\begin{equation*}
    \forall j \in \mathbb Z/n\mathbb Z, \qquad \mathrm{Ad}(\tau_0)\circ \sigma(s_j) = s_{-j-\underline{\varepsilon}-1}.
\end{equation*}
The action of $\mathrm{Ad}(\tau_0)\circ \sigma$ on $\mathbb S_a$ is involutive, and for every $w \in W_a$ we define the $\sigma$-support of $w$ as follows:
\begin{equation*}
    \mathrm{Supp}_{\sigma}(w) \coloneq  \mathrm{Supp}(w) \cup \mathrm{Ad}(\tau_0)\circ\sigma(\mathrm{Supp}(w)) \subseteq \mathbb S_a.
\end{equation*}
We also define
\begin{align*}
\begin{split}
\mathrm{Adm}(\{\mu'\})_0 & \coloneq  \{w\tau_0 \in \mathrm{Adm}(\{\mu'\}) \mid \mathrm{Supp}_{\sigma}(w) \not = \mathbb S_a\} \subseteq \mathrm{Adm}(\{\mu'\}), \\
\mathrm{Adm}(\{\mu'\})^I_0 & \coloneq  W_{\mathbb S_a^I}\mathrm{Adm}(\{\mu'\})_0W_{\mathbb S_a^I} \subseteq \mathrm{Adm}(\{\mu'\})^I,\\
\mathrm{Adm}(\{\mu'\})_{0,I} & \coloneq  W_{\mathbb S_a^I}\backslash \mathrm{Adm}(\{\mu'\})^I_0 / W_{\mathbb S_a^I} \subseteq \mathrm{Adm}(\{\mu'\})_{I}.
\end{split}
\end{align*}
\begin{defi}\label{Defi:DefiInvolutionThetaI}
    We define an involution $\theta_I: \overline I \xrightarrow{\sim} \overline I$ by the formula $\theta_I(i) \coloneq  -i-\underline{\varepsilon}- \ell_i$.
\end{defi}
This is well defined, since $\overline I$ is stable under the transformation $i \mapsto -i-\underline{\varepsilon}$, and because $i + \ell_i \in \overline I$ by definition. The involution $\theta_I$ satisfies $\ell_{i} = \ell_{\theta_I(i)}$ for all $i \in \overline I$. For all consecutive $r < r' \in I$ we have
\begin{equation}\label{Eq:ValueThetaIOrbitSizeTwo}
    \theta_I(\overline{r}) = \overline{-r'-\underline{\varepsilon}} \not = \overline r, \qquad \theta_I(\overline{-r'-\underline{\varepsilon}}) = \overline r.
\end{equation}
Besides, for $r_{\mathrm{min}} \coloneq \min(I)$ and $r_{\mathrm{max}} \coloneq \max(I)$ we have 
\begin{equation}\label{Eq:ValueThetaIOrbitSizeOne}
    \begin{aligned}
        \text{if } h_{r_{\mathrm{max}}}^{\varepsilon} \not = n, & \text{ then } \theta_I(\overline{r_{\mathrm{max}}}) = \overline{r_{\mathrm{max}}},\\
        \text{if } h_{r_{\mathrm{min}}}^{\varepsilon} \not = 0, & \text{ then } \theta_I(\overline{-r_{\mathrm{min}}-\underline{\varepsilon}}) = \overline{-r_{\mathrm{min}}-\underline{\varepsilon}}.
\end{aligned}
\end{equation}
Thus, $\theta_I$ has $0$, $1$ or $2$ fixed points depending on whether $h_{r_{\mathrm{min}}}^{\varepsilon} \not = 0$ and $h_{r_{\mathrm{max}}}^{\varepsilon} \not = n$.

\begin{lem}\label{Lem:CorrespondenceAdtausigmaAndThetaI}
    For all $J \in \mathcal P_I$, we have $\mathrm{Ad}(\tau_0)\circ\sigma(\mathrm{Supp}(w_J)) = \mathrm{Supp}(w_{\theta_I(J)})$.
\end{lem}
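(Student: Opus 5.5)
The plan is to reduce the statement to a purely combinatorial identity on $\mathbb Z/n\mathbb Z$, using the explicit formula $\mathrm{Ad}(\tau_0)\circ\sigma(s_j) = s_{-j-\underline{\varepsilon}-1}$ recorded just before Definition \ref{Defi:DefiInvolutionThetaI}. First, for $J \in \mathcal P_I$, I will note that $\theta_I(J)$ is again a proper subset of $\overline I$, since $\theta_I$ is a bijection of $\overline I$. Hence $w_{\theta_I(J)}$ is defined. By Definition \ref{Defi:DefiBlockBi} and the construction of $w_J$, each factor $w_{B_j} = s_{[j,j+\ell_j-1]}$ has support exactly $\{s_b \mid b\in B_j\}$. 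Therefore
\begin{equation*}
    \mathrm{Supp}(w_J)=\{s_b\mid b\in B_J\}, \qquad \mathrm{Supp}(w_{\theta_I(J)})=\{s_b\mid b\in B_{\theta_I(J)}\},
\end{equation*}
as was already used in the proof of Proposition \ref{Prop:IsomAdmMuPoset}.

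It then suffices to show the following: the involution $\beta: b\mapsto -b-\underline{\varepsilon}-1$ of $\mathbb Z/n\mathbb Z$ maps each block $B_i$, for $i \in \overline I$, bijectively onto $B_{\theta_I(i)}$. I will compute directly that
\begin{equation*}
    \beta(B_i)=\{-i-\underline{\varepsilon}-1,\,-i-\underline{\varepsilon}-2,\ldots,-i-\underline{\varepsilon}-\ell_i\}.
\end{equation*}
Writing $\theta_I(i) = -i-\underline{\varepsilon}-\ell_i$, this set is $\{\theta_I(i),\theta_I(i)+1,\ldots,\theta_I(i)+\ell_i-1\}$. Using $\ell_{\theta_I(i)}=\ell_i$ (stated after Definition \ref{Defi:DefiInvolutionThetaI}), it is exactly $B_{\theta_I(i)}$. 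Taking the disjoint union over $i\in J$ gives $\beta(B_J)=B_{\theta_I(J)}$, and applying $s_b\mapsto s_{\beta(b)}$ concludes the proof.

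The only point needing care is the identity $\ell_{\theta_I(i)}=\ell_i$. If I do not simply cite it, I will verify it from the fact that $\overline I$ is stable under $x\mapsto -x-\underline{\varepsilon}$. This map reverses the cyclic order and sends the consecutive pair $(i,i+\ell_i)$ of elements of $\overline I$ to the consecutive pair $(\theta_I(i), \theta_I(i)+\ell_i)$. Hence the gap following $\theta_I(i)$ in $\overline I$ is again $\ell_i$.

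The edge case $\#\overline I=1$ is harmless: then $J=\emptyset$, and both sides of the statement are empty.
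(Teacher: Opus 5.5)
Your proposal is correct and matches the paper's proof: both reduce the statement to showing that $b\mapsto -b-\underline{\varepsilon}-1$ maps each block $B_i$ onto $B_{\theta_I(i)}$, and both conclude using $\ell_{\theta_I(i)}=\ell_i$. The only addition is your check of that identity via the order-reversing involution $x\mapsto -x-\underline{\varepsilon}$ of $\overline I$, which the paper leaves implicit.
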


\begin{proof}
    This amounts to proving that $B_{\theta_I(J)} = \{-b-\underline{\varepsilon}-1 \mid b \in B_J\}$, and this reduces to the case in which $J = \{i\}$ is a singleton. By definition, we have $\ell_{\theta_I(i)} = \ell_i$, so that 
    \begin{align*}
        B_{\theta_I(i)} & = \{-i-\underline{\varepsilon}-\ell_i, -i-\underline{\varepsilon}-\ell_i+1, \ldots , -i-\underline{\varepsilon}-1 \}\\
        & = \{-b -\underline{\varepsilon}-1 \mid b \in B_i\}.
    \end{align*}
\end{proof}

We define 
\begin{equation*}
    \mathcal P_{0,I} \coloneq  \{J \in \mathcal P_I \mid J \cup \theta_I(J) \not = \overline I\}.
\end{equation*}
By Lemma \ref{Lem:CorrespondenceAdtausigmaAndThetaI}, for $J \in \mathcal P_I$ we have 
\begin{equation*}
    \mathrm{Supp}_{\sigma}(w_J) = \{s_b \mid b \in B_J \cup B_{\theta_I(J)}\} = \{s_b \mid b \in B_{J\cup\theta_I(J)}\},
\end{equation*}
and thus $v_J \in \mathrm{Adm}(\{\mu'\})_0$ if and only if $J \in \mathcal P_{0,I}$. The next Proposition follows easily. 

\begin{prop}\label{Prop:IsomAdmMuPosetBasic}
    The bijection of Proposition \ref{Prop:IsomAdmMuPoset} identifies $\mathcal P_{0,I}$ with $\mathrm{Adm}(\{\mu'\})_{0,I}$. 
\end{prop}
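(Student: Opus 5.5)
The plan is to show that the two subsets $\mathcal P_{0,I} \subseteq \mathcal P_I$ and $\mathrm{Adm}(\{\mu'\})_{0,I} \subseteq \mathrm{Adm}(\{\mu'\})_I$ correspond under the bijection $J \mapsto [v_J]_I$ of Proposition \ref{Prop:IsomAdmMuPoset}. We have just seen, via Lemma \ref{Lem:CorrespondenceAdtausigmaAndThetaI}, that $\mathrm{Supp}_\sigma(w_J) = \{s_b \mid b \in B_{J\cup\theta_I(J)}\}$. Since the blocks $B_i$ partition $\mathbb Z/n\mathbb Z$, this equals $\mathbb S_a$ exactly when $J \cup \theta_I(J) = \overline I$. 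Hence $v_J \in \mathrm{Adm}(\{\mu'\})_0$ if and only if $J \in \mathcal P_{0,I}$. This gives the inclusion $[v_J]_I \in \mathrm{Adm}(\{\mu'\})_{0,I}$ for every $J \in \mathcal P_{0,I}$.

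For the converse, I take a class in $\mathrm{Adm}(\{\mu'\})_{0,I}$. It is represented by some $v = w\tau_0 \in \mathrm{Adm}(\{\mu'\})_0$, so $\mathrm{Supp}_\sigma(w) \neq \mathbb S_a$. As in the proof of Proposition \ref{Prop:IsomAdmMuPoset}, the minimal element of $W_{\mathbb S_a^I} v W_{\mathbb S_a^I}$ is $w'\tau_0$, where $w'$ is the minimal element of $W_{\mathbb S_a^I} w W_{\mathbb S_a^I - 1}$, and $w' = w_J$ for the unique $J$ with $\mathrm{Supp}(w') = B_J$. The task is to show $J \in \mathcal P_{0,I}$, i.e. $\mathrm{Supp}_\sigma(w_J) \neq \mathbb S_a$.

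I expect this step to be the main obstacle. Passing to the minimal double-coset representative can only delete reflections, so $\mathrm{Supp}(w_J) \subseteq \mathrm{Supp}(w)$ and hence $\mathrm{Supp}_\sigma(w_J) \subseteq \mathrm{Supp}_\sigma(w) \subsetneq \mathbb S_a$. The subword property justifies this: $w'$ is obtained from a reduced word of $w$ by multiplying with elements of parabolic subgroups and taking the minimal element, which lies below $w$ in the Bruhat order. Therefore $\mathrm{Supp}(w') \subseteq \mathrm{Supp}(w)$.

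So the map restricts to a map from $\mathcal P_{0,I}$ onto $\mathrm{Adm}(\{\mu'\})_{0,I}$. It is injective as a restriction of a bijection, and it is compatible with the order, so it is an isomorphism of posets.
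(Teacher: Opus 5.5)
Your proposal is correct and follows the paper's route. The paper also derives the statement from the identity $\mathrm{Supp}_\sigma(w_J)=\{s_b\mid b\in B_{J\cup\theta_I(J)}\}$ obtained via Lemma~\ref{Lem:CorrespondenceAdtausigmaAndThetaI}, and then says only that the proposition ``follows easily''. Your observation that the minimal double-coset representative lies below $w$ in the Bruhat order, so that its $\sigma$-support can only shrink, is exactly the step that makes the implicit converse explicit.
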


\subsection{The KR strata}\label{Section3.2}

In general, Kottwitz-Rapoport (KR) strata are defined by embedding the local model into an affine flag variety, and decomposing along the Schubert cells, see for instance Example 3.15 of \cite{PappasRapoportSmithlingLocalModelsI}. But since we are essentially working with the Drinfeld case, we can also define the stratification directly as follows.\footnote{In fact, this amounts to the description of \cite{PappasRapoportSmithlingLocalModelsI} Section 7.2 in terms of quiver Grassmannians.} To alleviate the notation, we fix $\varepsilon \in \{\pm 1\}$ and $I \subseteq \{0,\ldots , \nu_{\varepsilon}\}$ non-empty throughout the section, and we write $\overline{M}_{\mathrm{loc}} \coloneq  M_{\mathrm{loc}}^{I,\varepsilon} \otimes_{\mathcal O_K} \mathbb F_{q^2}$, $\overline{L}_r \coloneq  L_r \otimes_{\mathcal O_F} \mathbb F_{q^2}$, $\overline{\Lambda}_r \coloneq  \eta_0 \overline L_r = \Lambda_{r} \otimes_{\mathcal O_K} \mathbb F_{q^2}$ and $\overline{\mathcal G_I} \coloneq  \mathcal G_{I} \otimes_{\mathcal O_F} \mathbb F_{q^2}$.\\
Let $\mathbf t^{\mathrm{univ}}_{\bullet} = (\mathbf t^{\mathrm{univ}}_r)_{r \in I_{\mathbb Z}}$ denote the universal object over the special fiber $\overline{M}_{\mathrm{loc}}$, and let us define 
\begin{equation*}
    \mathcal Q_r \coloneq  \frac{\overline \Lambda_r \otimes \mathcal O_{\overline{M}_{\mathrm{loc}}}}{\eta_0\mathbf t^{\mathrm{univ}}_r},
\end{equation*}
which is a line bundle over $\overline{M}_{\mathrm{loc}}$. For $i \in \overline I$, recall the integers $1 \leq \ell_{i} \leq n$ from Definition \ref{Defi:DefiIntegerli}. For all $r \in I_{\mathbb Z}$, the two lattices $L_{r+\ell_{\overline r}}$ and $L_{r}$ are consecutive in the chain $L_{\bullet}^{I}$, and the inclusion $L_{r+\ell_{\overline r}} \subset L_{r}$ induces a map
\begin{equation}\label{Eq:DefiMapDeltarx}
    \delta_r: \mathcal Q_{r+\ell_{\overline r}} \longrightarrow \mathcal Q_{r}.
\end{equation}
For $i \in \overline I$, let $C_i \subseteq \overline{M}_{\mathrm{loc}}$ denote the reduced closed subscheme defined by the vanishing of $\delta_{r}$ for all (equivalently, for some) $r \in I_{\mathbb Z}$ such that $\overline r = i$, and let $U_i$ denote the open complement. The $C_i$'s are the irreducible components of $\overline{M}_{\mathrm{loc}}$, and by Proposition \ref{Prop:IsomorphismWithDrinfeldCase} they are smooth and intersect transversally. For $J \in \mathcal P_I$, we define a locally closed subscheme of $\overline{M}_{\mathrm{loc}}$ by
\begin{equation*}
    \mathcal O_J \coloneq  \left(\bigcap_{i \in J} U_i\right) \cap \left(\bigcap_{i \in \overline I \setminus J} C_i\right). 
\end{equation*}
Then $\mathcal O_J$ is a smooth locally closed subscheme of $\overline{M}_{\mathrm{loc}}$. Since the transition map $\overline{L}_{r+n} \to \overline{L}_{r}$ is zero, the map $\mathcal Q_{r+n} \to \mathcal Q_{r}$ obtained by composition of successive $\delta_r$'s is zero as well. It follows that the collection $(\mathcal O_J)_{J \in \mathcal P_I}$ defines a stratification of $\overline{M}_{\mathrm{loc}}$. Moreover, we have a natural closure relation
\begin{equation*}
    \overline{\mathcal O_J} = \bigsqcup_{J' \subseteq J} \mathcal O_{J'}.
\end{equation*}
By means of the bijection of posets $\mathcal P_I \xrightarrow{\sim} \mathrm{Adm}(\{\mu'\})_I$ of Proposition \ref{Prop:IsomAdmMuPoset}, we also write $\mathcal O_{[v_J]_I} \coloneq  \mathcal O_J$. \\

For each $J \in \mathcal P_I$, we define a distinguished $\mathbb F_{q^2}$-rational point $x_J \in \mathcal O_J$ as follows. Recall that we have a fixed representative $\dot{v}_J \in N(K)$ as defined in Section \ref{Section3.1}. For $r \in \mathbb Z$, we define 
\begin{equation*}
    d_J(r) \coloneq  \min\{d \geq 1 \mid \overline{r+d-1} \not \in B_J\}.
\end{equation*}
The integer $d_J(r)$ is well-defined since $B_J$ is a proper subset of $\mathbb Z/n\mathbb Z$.
\begin{lem}\label{Lem:ActionOfvJOnLattices}
    For every $r \in \mathbb Z$, we have 
    \begin{equation*}
        \pi\dot{v}_J^{-1} \Lambda_r = \left(\bigoplus_{\substack{k = r+1\\ k \not = r+d_J(r)}}^{r+n} \mathcal O_K \cdot \mathbf e_k^{(0)} \right) \oplus \pi\mathcal O_K \cdot \mathbf e^{(0)}_{r+d_J(r)}
    \end{equation*}
\end{lem}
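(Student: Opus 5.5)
The plan is to compute $\pi\dot v_J^{-1}\Lambda_r$ directly. I write $\dot v_J^{-1}=\dot\tau_0^{-1}\dot w_J^{-1}$ and track how each factor acts on the $\pi$-periodic family $(\mathbf e^{(0)}_k)_{k\in\mathbb Z}$. Every $\dot s_j$ acts by an affine permutation of the indices $k\in\mathbb Z$ that commutes with $k\mapsto k+n$. Hence $\dot w_J^{-1}$ also acts by such a permutation $k\mapsto \sigma_J(k)$, and it maps $\Lambda_r=\bigoplus_{k=r+1}^{r+n}\mathcal O_K\mathbf e^{(0)}_k$ to $\bigoplus_{k=r+1}^{r+n}\mathcal O_K\mathbf e^{(0)}_{\sigma_J(k)}$. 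The factor $\dot\tau_0^{-1}$ acts by a shift of indices. Before anything else I will fix its normalisation from the matrix of $\dot\tau_0=\sigma(b)$, namely its effect on the window $\{r+1,\dots,r+n\}$ including the power of $\pi$. To confirm this normalisation I will check the case $J=\emptyset$. There $w_J=1$ and $d_J(r)=1$, so the statement becomes $\pi\dot\tau_0^{-1}\Lambda_r=\Lambda_{r+1}$, i.e. $\dot\tau_0\Lambda_{r+1}=\pi\Lambda_r$. This pins down how the shift interacts with the factor $\pi$.

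The main combinatorial step is to compute $\sigma_J$. For one block I would show that $\dot w_{B_i}^{-1}=\dot s_{i+\ell_i-1}\cdots\dot s_i$ acts on indices lifting $\{i,\dots,i+\ell_i\}$ as a cycle. It sends $i\mapsto i+\ell_i$ and $k\mapsto k-1$ for $i<k\le i+\ell_i$, and it fixes all other indices modulo $n$. This follows by applying the $\dot s_j$ from right to left. Next I group $J$ into maximal runs of consecutive blocks, so that $B_J$ becomes a disjoint union of cyclic intervals $[a,a+m-1]$. Using the ordering $j_1,\dots,j_k$ fixed in the definition of $w_J$, the factors compose as $\dot w_J^{-1}=\dot w_{B_{j_k}}^{-1}\cdots\dot w_{B_{j_1}}^{-1}$, so the earlier blocks of a run act first. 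Along a run this concatenates, giving the cycle $a\mapsto a+m$ and $k\mapsto k-1$ for $a<k\le a+m$. Different runs have disjoint supports and commute.

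Finally I apply this to the window $\{r+1,\dots,r+n\}$ and then apply the shift and $\pi$. Every run lying strictly inside the window permutes window indices among themselves. The only defect occurs at the run, if any, containing $\overline{r+1}$, where the window "wraps around". Concretely, let $d=d_J(r)$, so that $\overline{r+1},\dots,\overline{r+d-1}\in B_J$ and $\overline{r+d}\notin B_J$. Then exactly one index near the start of the window is pushed out to its translate by $\pm n$, and this produces the single factor $\pi$ on $\mathbf e^{(0)}_{r+d}$ after the shift by $\dot\tau_0^{-1}$ and multiplication by $\pi$. Conversely, all other indices land back in $\{r+1,\dots,r+n\}\setminus\{r+d\}$ bijectively.

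I expect the main obstacle to be purely bookkeeping. One part is getting the direction of the shift by $\dot\tau_0^{-1}$ and the power of $\pi$ right, which the $J=\emptyset$ check resolves. The other is handling the boundary run carefully, including the degenerate case where the run containing $\overline{r+1}$ begins exactly at $\overline r$ or $\overline{r+1}$. I would treat that case by comparing the window with the shifted window, using that $\sigma_J$ commutes with translation by $n$.
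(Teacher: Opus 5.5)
Your route is the paper's: let $\dot w_J^{-1}$ act on the periodic basis by an affine permutation $\sigma_J$, then compose with $\pi\dot\tau_0^{-1}$. The block and run computations are correct, including the point that the enumeration order makes the blocks of a run concatenate into a single cycle. This in fact justifies the formula for $\dot w_J^{-1}\mathbf e^{(0)}_{r+d}$ that the paper writes down without comment. For the normalisation, the matrix of $\sigma(b)$ (or $\tau_0=t^{(1,\dots,1,0;1)}(n\ n-1\ \cdots\ 1)$) gives $\dot\tau_0\mathbf e^{(0)}_k=\pi\mathbf e^{(0)}_{k-1}$, i.e.\ $\pi\dot\tau_0^{-1}\mathbf e^{(0)}_k=\mathbf e^{(0)}_{k+1}$. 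The case $J=\emptyset$ is an instance of the statement, so it can only serve as a check, not as an input.

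The last step, which carries the content of the lemma, is wrong as written. By definition $d_J(r)=\min\{d\ge 1\mid \overline{r+d-1}\notin B_J\}$, so $\overline r,\dots,\overline{r+d-2}\in B_J$ and $\overline{r+d-1}\notin B_J$. The conditions you wrote down characterise $d_J(r+1)$ instead. Correspondingly, the run responsible for the wrap-around is the one whose reflection interval contains $\overline r$ (equivalently, whose index set contains both $r$ and $r+1$), not $\overline{r+1}$. If $\overline r\notin B_J$ there is no wrap-around at all: $\dot w_J^{-1}$ preserves $\Lambda_r$, and the factor $\pi$ on $\mathbf e^{(0)}_{r+1}$ comes entirely from the shift.

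For instance, take $n=3$ at Iwahori level (all $\ell_i=1$), $J=\{\overline 1\}$, $w_J=s_1$ and $r=0$. Then $\dot s_1\Lambda_0=\Lambda_0$ and $\pi\dot v_J^{-1}\Lambda_0=\Lambda_1$, with $\pi$ on $\mathbf e^{(0)}_1$ since $d_J(0)=1$. Your recipe instead yields $d=2$.

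The clean way to finish is to study the composite $k\mapsto\sigma_J(k)+1$ directly. By your run description, it fixes every $k$ with $\overline{k-1}\in B_J$. It sends every $k$ with $\overline{k-1}\notin B_J$ to the next integer $k'>k$ with $\overline{k'-1}\notin B_J$. In the window $\{r+1,\dots,r+n\}$ these indices are $r+d_1<\dots<r+d_t$ with $d_1=d_J(r)$. Each is sent to the next one, and the last to $r+d_1+n$, which gives exactly the single factor $\pi$ on $\mathbf e^{(0)}_{r+d_J(r)}$. This is precisely the paper's argument.
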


\begin{proof}
Let $1\leq d_1 < \cdots < d_t\leq n$ be the integers such that, for all $1\leq d\leq n$, we have
\begin{equation*}
\overline{r+d-1}\notin B_J \iff d \in \{d_1,\ldots,d_t\}.
\end{equation*}
By definition, $d_1=d_J(r)$ and $d_{k+1} = d_k + d_J(r+d_k)$ for all $0 <k < t$. For $1 \leq d \leq n$, we have 
\begin{equation*}
\dot{w}_J^{-1} \mathbf e_{r+d}^{(0)} = \begin{cases}
    \mathbf e_{r+d-1}^{(0)} & \text{if } \overline{r+d-1} \in B_J,\\
    \mathbf e_{r+d_{k+1}-1}^{(0)} & \text{if } d = d_k \text{ for some } 0 <k < t,\\
    \pi \mathbf e_{r+d_1-1}^{(0)} & \text{if } d = d_t.
\end{cases}
\end{equation*}
On the other hand, $\pi\dot\tau_0^{-1}\mathbf e_r^{(0)} = \mathbf e_{r+1}^{(0)}$ for all $r \in \mathbb Z$. Thus $\pi\dot{v}_J^{-1}$ fixes all the vectors $\mathbf e_{r+d}^{(0)}$ such that $1 \leq d \leq n$ and $d \not \in \{d_1, \ldots ,d_t\}$; and it sends the vectors $\mathbf e_{r+d_1}^{(0)},\ldots,\mathbf e_{r+d_t}^{(0)}$ to  $\mathbf e_{r+d_2}^{(0)},\ldots,\mathbf e_{r+d_t}^{(0)}, \pi\mathbf e_{r+d_1}^{(0)}$. This concludes the proof.
\end{proof}
It follows from Lemma \ref{Lem:ActionOfvJOnLattices} that 
\begin{equation*}
    \forall r \in \mathbb Z, \qquad \pi\Lambda_r \overset{n-1}{\subseteq} \pi\dot{v}^{-1}_J\Lambda_r \overset{1}{\subseteq} \Lambda_r.
\end{equation*}
There is a unique point $x_J = (\mathfrak t_r)_{r \in I_{\mathbb Z}} \in \overline{M}_{\mathrm{loc}}(\mathbb F_{q^2})$ such that $\eta_0\mathfrak t_r = \pi\dot{v}_J^{-1}\Lambda_r/\pi\Lambda_r$ for all $r \in I_{\mathbb Z}$. Consider the $1$-dimensional quotient 
\begin{equation*}
    \forall r \in I_{\mathbb Z}, \qquad Q_r \coloneq  \Lambda_r / \pi\dot{v}_J^{-1}\Lambda_r,
\end{equation*}
and the maps $\delta_{r}: Q_{r+\ell_{\overline r}} \to Q_{r}$. By definition, we have $\delta_r = 0$ if and only if $\Lambda_{r+\ell_{\overline r}} \subseteq \pi\dot{v}_J^{-1}\Lambda_{r}$.
\begin{lem}\label{Lem:xJIsAPointOfOJ}
    For $r \in I_{\mathbb Z}$, we have $\Lambda_{r+\ell_{\overline r}} \subseteq \pi\dot{v}_J^{-1}\Lambda_{r} \iff \overline r \in \overline I \setminus J$. Consequently, we have $x_J \in \mathcal O_J$.
\end{lem}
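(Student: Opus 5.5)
The plan is to compute directly with Lemma \ref{Lem:ActionOfvJOnLattices}. By that lemma, $\pi\dot v_J^{-1}\Lambda_r$ is obtained from $\Lambda_r$ by replacing the single basis vector $\mathbf e^{(0)}_{r+d_J(r)}$ with $\pi\mathbf e^{(0)}_{r+d_J(r)}$. The lattice $\Lambda_{r+\ell_{\overline r}}$ is spanned by $\mathbf e^{(0)}_k$ for $r+\ell_{\overline r}+1\le k\le r+\ell_{\overline r}+n$. Hence the vectors $\mathbf e^{(0)}_k$ with $r+n+1\le k\le r+\ell_{\overline r}+n$ already lie in $\pi\Lambda_r\subseteq \pi\dot v_J^{-1}\Lambda_r$. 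The remaining generators $\mathbf e^{(0)}_k$ with $r+\ell_{\overline r}+1\le k\le r+n$ lie in $\pi\dot v_J^{-1}\Lambda_r$ if and only if $k\neq r+d_J(r)$. Consequently the inclusion $\Lambda_{r+\ell_{\overline r}}\subseteq \pi\dot v_J^{-1}\Lambda_r$ is equivalent to $d_J(r)\le \ell_{\overline r}$, since $1\le d_J(r)\le n$ always holds.

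The second step identifies when $d_J(r)\le \ell_{\overline r}$ in terms of $J$. By definition $d_J(r)$ is the least $d\ge1$ with $\overline{r+d-1}\notin B_J$. The residues $\overline{r},\overline{r+1},\dots,\overline{r+\ell_{\overline r}-1}$ form exactly the block $B_{\overline r}$ of Definition \ref{Defi:DefiBlockBi}, because $\overline r\in\overline I$. Since the blocks $B_i$ partition $\mathbb Z/n\mathbb Z$, either $B_{\overline r}\subseteq B_J$, which happens exactly when $\overline r\in J$, or $B_{\overline r}\cap B_J=\emptyset$, which happens exactly when $\overline r\notin J$.
\begin{itemize}
\item If $\overline r\notin J$, then $\overline r\notin B_J$, so $d_J(r)=1\le\ell_{\overline r}$ and the inclusion holds.
\item If $\overline r\in J$, then all of $\overline r,\dots,\overline{r+\ell_{\overline r}-1}$ lie in $B_J$, so $d_J(r)\ge\ell_{\overline r}+1$ and the inclusion fails.
\end{itemize}
This proves the claimed equivalence.

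For the consequence, recall that $x_J$ has quotients $Q_r=\Lambda_r/\pi\dot v_J^{-1}\Lambda_r$ and that $\delta_r$ vanishes exactly when $\Lambda_{r+\ell_{\overline r}}\subseteq\pi\dot v_J^{-1}\Lambda_r$. By the first part, $\delta_r=0$ exactly when $\overline r\in\overline I\setminus J$. Therefore $x_J\in C_i$ for $i\notin J$ and $x_J\in U_i$ for $i\in J$, which is precisely $x_J\in\mathcal O_J$.

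The only delicate point, and the step I expect to need the most care, is the index bookkeeping at the boundary, namely checking that the case $d_J(r)=n$ is handled correctly. This case requires $\ell_{\overline r}=n$, and it is covered by the second step because $J$ is a proper subset of $\overline I$.
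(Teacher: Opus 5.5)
Your proof is correct and follows essentially the same route as the paper. Both arguments read off membership via Lemma \ref{Lem:ActionOfvJOnLattices} and split on whether $\overline r\in J$, obtaining $d_J(r)=1$ in one case and $\ell_{\overline r}<d_J(r)\leq n$ in the other.

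Your closing remark is slightly off: $d_J(r)=n$ can occur with $\ell_{\overline r}<n$ (e.g.\ at Iwahori level with $\#J=n-1$). This is harmless, because your first step already handles every value $1\leq d_J(r)\leq n$ uniformly. The genuinely degenerate case is $\ell_{\overline r}=n$, where $J\subsetneq\overline I=\{\overline r\}$ forces $\overline r\notin J$.
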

\begin{proof}
    Assume first that $\overline r \in \overline I \setminus J$. We have $B_{\overline r} \cap B_J = \emptyset$, therefore $d_{J}(r) = 1$. Thus by Lemma \ref{Lem:ActionOfvJOnLattices} we have 
    \begin{equation*}
        \Lambda_{r+\ell_{\overline r}} \subseteq \Lambda_{r+1} =  \pi\dot{v}_J^{-1}\Lambda_{r}.
    \end{equation*}
    Assume now that $\overline r \in J$. Since $B_{\overline r} \subseteq B_J$, we have $\ell_{\overline r} < d_J(r) \leq n$. By Lemma \ref{Lem:ActionOfvJOnLattices}, the lattice $\pi\dot{v}_J^{-1}\Lambda_{r}$ contains the vector $\pi \mathbf e_{r+ d_J(r)}^{(0)}$ but does not contain $\mathbf e_{r+ d_J(r)}^{(0)}$. On the other hand, we have $\mathbf e_{r+ d_J(r)}^{(0)} \in \Lambda_{r+\ell_{\overline r}}$. It follows that $\Lambda_{r+\ell_{\overline r}} \not \subset \pi\dot{v}_J^{-1}\Lambda_{r}$.
\end{proof}
Next, we prove that the locally closed subvarieties $\mathcal O_J$ are precisely the orbits of $\overline{\mathcal G_{I}}$ acting on $\overline{M}_{\mathrm{loc}}$. Let 
\begin{equation*}
    \mathcal G_{I,J} \coloneq  \mathrm{Stab}_{\overline{\mathcal G_{I}}}(x_J)
\end{equation*}
denote the stabilizer of $x_J$ in $\overline{\mathcal G_{I}}$. It is a closed subgroup scheme of $\overline{\mathcal G_I}$ and the immersion $\overline{\mathcal G_I}/\mathcal G_{I,J} \to \overline{M}_{\mathrm{loc}}, [g] \mapsto g\cdot x_J$ identifies the quotient with a smooth locally closed subscheme of $\overline{M}_{\mathrm{loc}}$.
\begin{prop}\label{Prop:TheOJAreOrbits}
    The immersion $\overline{\mathcal G_I}/\mathcal G_{I,J} \to \overline{M}_{\mathrm{loc}}$ induces an isomorphism 
    \begin{equation*}
        \overline{\mathcal G_I}/\mathcal G_{I,J} \xrightarrow{\sim} \mathcal O_J.
    \end{equation*}
\end{prop}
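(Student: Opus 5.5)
The plan is to show that $\overline{\mathcal G_I}$ acts transitively on the geometric points of $\mathcal O_J$, and that the orbit map is an isomorphism onto this reduced locally closed subscheme.

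\textbf{Step 1: the orbit lies in $\mathcal O_J$.} The group $\overline{\mathcal G_I}$ acts compatibly on each $\overline\Lambda_r$, and it commutes with the transition maps of the chain. Hence the action carries the maps $\delta_r$ at a point $x$ to the maps $\delta_r$ at $g\cdot x$, up to the isomorphisms induced by $g$ on the quotients $\mathcal Q_r$. So the vanishing or non-vanishing of each $\delta_r$ is constant along orbits, and $\mathcal O_J$ is $\overline{\mathcal G_I}$-stable. Since $x_J\in\mathcal O_J$ by Lemma \ref{Lem:xJIsAPointOfOJ}, the orbit map factors as a monomorphism $\overline{\mathcal G_I}/\mathcal G_{I,J}\to \mathcal O_J$.

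\textbf{Step 2: transitivity on $k$-points, $k$ algebraically closed.} By the proof of Proposition \ref{Prop:IsomorphismWithDrinfeldCase}, a point is determined by its $\eta_0$-part, i.e. by hyperplanes $\eta_0\mathfrak t_r\subset \overline\Lambda_r\otimes k$. These form a point of the Drinfeld-type local model of \cite{gortzFlatnessModelsCertain2001}, with the duality condition automatically encoded via $\eta_1$. A point of $\mathcal O_J$ is therefore equivalent to the following data: lines $\ell_r=(\overline\Lambda_r\otimes k)/\eta_0\mathfrak t_r$ (dually, hyperplanes), such that the induced map $\ell_{r+\ell_{\overline r}}\to\ell_r$ is nonzero exactly for $\overline r\in J$.

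I would then reduce to the $\mathrm{GL}$-side. Under \eqref{Eq:BaseChangeG}, the special fiber of $\mathcal G_I$ over $\mathbb F_{q^2}$ maps onto the group of automorphisms of the periodic chain $(\overline\Lambda_r)_{r\in I_{\mathbb Z}}$, times $\mathbb G_m$: the unitary constraint just determines the $\eta_1$-component. So it suffices to show that the automorphism group of the $k[\pi]$-chain acts transitively on such configurations. Concretely, I will reconstruct from the data the lattice chain $\pi\Lambda_r\subset M_r\subset\Lambda_r$ with $M_r$ the preimage of $\eta_0\mathfrak t_r$. For a point of $\mathcal O_J$ I will show that one can choose a basis adapted simultaneously to the chain and to $(M_r)$, of the same shape as in Lemma \ref{Lem:ActionOfvJOnLattices}. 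Each non-vanishing $\delta_r$ is an isomorphism of lines, so the generator of $\ell_r$ propagates along each maximal block of $J$, i.e. along $B_J$-runs. At an index in $\overline I\setminus J$, the map $\delta_r$ vanishes and the generator is chosen freshly.

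This is the combinatorial heart of the proof and the step I expect to be the main obstacle. One has to handle the periodicity (the composite around a full period is zero since $\overline I\setminus J\neq\emptyset$) and realize the basis change by an element of the parahoric, not just of $\mathrm{GL}_n(k[\![\pi]\!])$. Transitivity then follows by sending the adapted basis to the standard one attached to $x_J$.

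\textbf{Step 3: scheme structure.} The orbit map is a monomorphism that is bijective on geometric points onto $\mathcal O_J$. Its source $\overline{\mathcal G_I}/\mathcal G_{I,J}$ is smooth, and $\mathcal O_J$ is smooth, hence reduced. An orbit of a smooth group on a scheme is locally closed and its orbit map is an immersion, so the image is a reduced locally closed subscheme with underlying set $|\mathcal O_J|$. Since $\mathcal O_J$ is also reduced, the two coincide, which gives the isomorphism. If needed, I would double-check the smoothness of $\mathcal G_{I,J}$ by computing tangent spaces at $x_J$: the dimension of the orbit should equal $\dim \mathcal O_J=n-1-\#(\overline I\setminus J)+\ldots$, consistently with $\ell(v_J)=\#B_J$.
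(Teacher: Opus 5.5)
The gap is your Step 2, which carries the real content of the proposition and which you only announce.

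Your Steps 1 and 3 match the paper's reduction. $\mathcal O_J$ is stable and contains $x_J$, and both the orbit and $\mathcal O_J$ are reduced, so it suffices to show $\overline{\mathcal G_I}(k)\cdot x_J=\mathcal O_J(k)$. Your passage to automorphisms of the periodic chain $(\overline\Lambda_r\otimes k)_{r\in I_{\mathbb Z}}$, with the $\eta_1$-part and multiplier fixed, is also what the paper does.

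What your Step 2 sketch establishes is that a generator of the quotient line $\mathcal Q_r$ propagates through the indices in $J$ and is chosen afresh at indices in $\overline I\setminus J$. That only shows the quotient system $(\mathcal Q_r,\delta_r)_r$ is determined up to isomorphism by $J$; in the paper's notation, $Q_x\simeq Q_{x_J}\simeq\bigoplus_{i\in\overline I\setminus J}S_i$. It does not produce a basis of the chain adapted to the hyperplanes $\eta_0\mathfrak t_r$. For that you would have to lift those generators to vectors of the $\overline\Lambda_r\otimes k$ that span chain-compatible summands, and then find a chain-compatible complement inside the hyperplanes. The proposal gives no reason why this is possible. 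The implication ``isomorphic quotients $\Rightarrow$ conjugate subobjects'' fails without extra structure. Over $k[t]/(t^2)$, the module $k[t]/(t^2)\oplus k$ contains the submodules $k[t]/(t^2)\oplus 0$ and $(t)\oplus k$. Both have quotient $k$, yet they are not even isomorphic.

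The missing idea is that the chain itself is projective. The paper regards $V_I=\bigoplus_{i\in\overline I}(\Lambda_r/\pi\Lambda_r)\otimes k$ as a module over $A_I=kQ_I/\langle\text{paths of length }\#\overline I\rangle$, where $Q_I$ is the cyclic quiver with arrows $i+\ell_i\to i$. The standard basis splits it as $\bigoplus_b V(b)\simeq\bigoplus_i P_i^{\oplus\ell_i}$, so $V_I$ is projective. Lemma \ref{Lem:ProjectiveCover} (projective covers plus Krull--Schmidt) shows that two epimorphisms from a finitely generated projective module onto the same module have kernels conjugate under $\mathrm{Aut}(P)$. This gives $\gamma_0\in\mathrm{Aut}_{A_I}(V_I)$ with $\gamma_0(H_x)=H_{x_J}$, which lifts uniquely to $\overline{\mathcal G_I}(k)$ with multiplier $1$. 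An adapted-basis construction would have to reprove this special case by hand.

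Alternatively, in your $k[\![\pi]\!]$-lattice language, you could use the Bruhat decomposition of the partial affine flag variety. The $\mathcal O_J$ are unions of Schubert cells. Only the $\#\mathrm{Adm}(\{\mu'\})_I=\#\mathcal P_I$ admissible cells meet the special fiber \cite{kottwitzMinusculeAlcovesGL2000,gortzFlatnessModelsCertain2001}. Hence the $\#\mathcal P_I$ disjoint non-empty sets $\mathcal O_J$ must each be a single cell.

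Lastly, the tangent-space check at the end is unnecessary, since a quotient of a smooth group by any closed subgroup scheme is smooth. The proposed consistency check is also wrong away from Iwahori level: $\dim\mathcal O_J=n-\#(\overline I\setminus J)$, which is not $\ell(v_J)=\#B_J$ in general (take $J=\emptyset$ at hyperspecial level).
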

\begin{proof}
    Since $x_J \in \mathcal O_J$ and $\mathcal O_J$ is clearly stable under the action of $\overline{\mathcal G_I}$, the map $\overline{\mathcal G_I}/\mathcal G_{I,J} \to \overline{M}_{\mathrm{loc}}$ factors through $\mathcal O_J$. Since the image of $\overline{\mathcal G_I}/\mathcal G_{I,J}$ and $\mathcal O_J$ are both reduced, it remains to prove that 
    \begin{equation*}
        \overline{\mathcal G_I}(\overline{\mathbb F_q}) \cdot x_J = \mathcal O_J(\overline{\mathbb F_q}).
    \end{equation*}
    Of course, the inclusion $\subseteq$ is clear. For the converse, we need the following Lemma.
    \begin{lem}\label{Lem:ProjectiveCover}
        Let $k$ be a field, $A$ a finite dimensional $k$-algebra and $P$ a finitely generated projective $A$-module. If $f,f':P \twoheadrightarrow Q$ are two epimorphisms onto a common $A$-module $Q$, then there exists $\gamma \in \mathrm{Aut}_{A}(P)$ such that $\gamma(\mathrm{Ker}(f)) = \mathrm{Ker}(f')$.
    \end{lem}
    \begin{proof}[Proof of Lemma \ref{Lem:ProjectiveCover}]
        Since $A$ is semiperfect, we can consider a projective cover $\varphi: P_Q \twoheadrightarrow Q$. Since both $P$ and $P_Q$ are projective, there exist maps 
        \begin{equation*}
            u:P_Q \longrightarrow P, \qquad v: P \longrightarrow P_Q,
        \end{equation*}
        such that $\varphi = f\circ u$ and $f = \varphi \circ v$. It follows that $\varphi \circ vu = \varphi$. Since $\varphi$ is surjective, we deduce that 
        \begin{equation*}
            P_Q = vu(P_Q) + \mathrm{Ker}(\varphi).
        \end{equation*}
        But since $\mathrm{Ker}(\varphi)$ is superfluous, it follows that $P_Q = vu(P_Q)$ thus $vu$ is surjective, and hence an automorphism of $P_Q$. Define $u_0 \coloneq  u \circ (vu)^{-1}$, so that $v\circ u_0 = \mathrm{id}_{P_Q}$. It follows that $u_0$ is injective and
        \begin{equation*}
            P = u_0(P_Q) \oplus \mathrm{Ker}(v).
        \end{equation*}
        Since $\mathrm{Ker}(v)$ is a direct summand of $P$, it is projective, hence there exists a map $w : \mathrm{Ker}(v) \to P_Q$ such that $f_{\mid \mathrm{Ker}(v)} = \varphi \circ w$. We define an automorphism $T \in \mathrm{Aut}_A(P)$ via
        \begin{equation*}
            T: P = u_0(P_Q) \oplus \mathrm{Ker}(v) \xrightarrow{\sim} u_0(P_Q) \oplus \mathrm{Ker}(v), \qquad  T(u_0(x),y) \coloneq  (u_0(x-w(y)),y).
        \end{equation*}
        We compute 
        \begin{equation*}
            f \circ T(u_0(x),y) = f(u_0(x - w(y)), y) = \underbrace{f\circ u_0}_{\varphi}(x-w(y)) + f_{\mid \mathrm{Ker}(v)}(y) = \varphi(x). 
        \end{equation*}
        Applying the same construction to $f'$, there exists maps $v': P \to P_Q$ and $u_0' : P_Q \to P$ such that $v' \circ u_0' = \mathrm{id}_{P_Q}$, and an automorphism $T' \in \mathrm{Aut}_A(P)$ such that $f'\circ T'$ is given by $(u_0'(x),y') \mapsto \varphi(x)$ in the decomposition $P = u_0'(P_Q) \oplus \mathrm{Ker}(v')$. By the Krull-Schmidt theorem, there is an isomorphism $\theta: \mathrm{Ker}(v) \xrightarrow{\sim} \mathrm{Ker}(v')$, and we get an automorphism 
        \begin{equation*}
            \Theta = (\beta,\theta) : P = u_0(P_Q) \oplus \mathrm{Ker}(v) \xrightarrow{\sim} u_0'(P_Q) \oplus \mathrm{Ker}(v') = P,
        \end{equation*}
        where $\beta : u_0(P_Q) \xrightarrow{\sim} u_0'(P_Q)$ is defined by $\beta (u_0(x)) \coloneq  u_0'(x)$ for all $x \in P_Q$. Finally, define the automorphism $\gamma \coloneq  T' \circ \Theta \circ T^{-1}$. We have 
        \begin{equation*}
            T^{-1}(\mathrm{Ker}(f)) = u_0(\mathrm{Ker}(\varphi))\oplus\mathrm{Ker}(v), \qquad (T')^{-1}(\mathrm{Ker}(f')) = u_0'(\mathrm{Ker}(\varphi))\oplus\mathrm{Ker}(v'),
        \end{equation*}
        and
        \begin{equation*} 
            \Theta(u_0(\mathrm{Ker}(\varphi))\oplus\mathrm{Ker}(v)) = u_0'(\mathrm{Ker}(\varphi))\oplus\mathrm{Ker}(v').
        \end{equation*}
        Therefore $\gamma(\mathrm{Ker}(f)) = \mathrm{Ker}(f')$ as desired.
    \end{proof}
    We come back to the proof of Proposition \ref{Prop:TheOJAreOrbits}. Let $k\coloneq \overline{\mathbb F_q}$ and let $Q_I$ be the cyclic quiver with vertex set $\overline I$ and with an arrow $a_i: i+\ell_i \to i$ for all $i \in \overline I$. We define 
    \begin{equation*}
    A_I \coloneq  kQ_I / \left\langle \text{all paths of length }\# \overline I \right\rangle,
    \end{equation*}
    where $kQ_I$ is the quiver algebra of $Q_I$ over $k$. We point out that the paths of length $\# \overline I$ in $Q_I$ simply consist in starting from any vertex and coming back to it after a full turn. For $r \in I_{\mathbb Z}$, define
    \begin{equation*}
        (V_I)_{\overline r} \coloneq  \left(\Lambda_r/\pi\Lambda_r\right)\otimes_{\mathbb F_{q^2}}k,
    \end{equation*}
    and let the maps $(V_I)_{\overline r+\ell_{\overline r}} \to (V_I)_{\overline r}$ be induced by the inclusions $\Lambda_{r+\ell_{\overline r}}\subseteq\Lambda_{r}$. By $\pi$-periodicity of $\Lambda_{\bullet}$, this construction does not depend on the choice of the representative $r$ of $\overline r$. Then $V_I \coloneq  \bigoplus_{i \in \overline I} (V_I)_i$ is a finitely generated $A_I$-module.\\
    
    We claim that $V_I$ is a projective $A_I$-module. For $i\in\overline I$, let $P_i$ denote the $A_I$-module defined by $(P_i)_j \coloneq  k$ for all $j \in \overline I$, and the map $(P_i)_{j+\ell_j} \to (P_i)_{j}$ is $\mathrm{id}$ if $j \not = i$, and $0$ if $j=i$. Then $P_i$ is an indecomposable projective finitely generated $A_I$-module. \\
    For each $r \in I_{\mathbb Z}$, the space $(V_I)_{\overline r}$ has a basis given by the images of the vectors $\mathbf e_{r+1}^{(0)}, \ldots , \mathbf e_{r+n}^{(0)}$. For $b \in \mathbb Z/n\mathbb Z$, we get a submodule $V(b) \subseteq V_I$ by defining $V(b)_{\overline r} \coloneq  k \cdot [\mathbf e_{r+d}^{(0)}] \subseteq (V_I)_{\overline r}$ where $1 \leq d \leq n$ is the unique integer such that $\overline{r+d} = b$. The map $(V_I)_{\overline r+\ell_{\overline r}} \to (V_I)_{\overline r}$ sends $V(b)_{\overline r+\ell_{\overline r}}$ onto $V(b)_{\overline r}$ if $b \not \in B_{\overline r}$, and to zero if $b \in B_{\overline r}$. It follows that $V(b) \simeq P_{i}$, where $i \in \overline I$ is the unique element such that $b \in B_i$. Therefore
    \begin{equation*}
        V_I = \bigoplus_{b \in \mathbb Z/n\mathbb Z} V(b) \simeq \bigoplus_{i \in \overline I} P_i^{\oplus \ell_i},
    \end{equation*}
    proving that $V_I$ is projective.\\
    
    Let $x=(\mathfrak t_r)_{r\in I_{\mathbb Z}} \in \mathcal O_J(k)$. For $r \in I_{\mathbb Z}$, set 
    \begin{equation*}
        H_{x,\overline r}\coloneq \eta_0\mathfrak t_r \subseteq \left(\Lambda_r/\pi\Lambda_r\right)\otimes_{\mathbb F_{q^2}}k = (V_I)_{\overline r}.
    \end{equation*}
    Then $H_x=(H_{x,i})_{i\in\overline I}$ is a submodule of $V_I$. We write
    \begin{equation*}
        q_x: V_I\twoheadrightarrow Q_x\coloneq V_I/H_x,
    \end{equation*}
    for the quotient map. The space $(Q_x)_i$ is one-dimensional for every $i$, and for all $r \in I_{\mathbb Z}$, the map $(Q_x)_{\overline r+\ell_{\overline r}} \to (Q_x)_{\overline r}$ is nothing but the map $\delta_r$ defined in \eqref{Eq:DefiMapDeltarx}. We know that $\delta_r$ is an isomorphism if $\overline r \in J$, and is zero if $\overline r \in \overline I \setminus J$. Since $J$ is a proper subset of $\overline I$, all the non-zero $\delta_r$ can be scaled to the identity. In other words, we have
    \begin{equation*}
        Q_x \simeq \bigoplus_{i \in \overline I\setminus J} S_i,
    \end{equation*}
    where for every $i\in \overline I \setminus J$, $S_i$ is defined as follows. Define recursively $i_0\coloneq i$ and $i_{a+1}\coloneq i_a+\ell_{i_a}$. Let $m_i\geq 1$ be the smallest integer such that $i_{m_i}\in \overline I \setminus J$. We define $S_i$ to be the $A_I$-module given by
    \begin{equation*}
        (S_i)_j= \begin{cases}
            k & \text{if } j\in \{i_1,\ldots,i_{m_i}\},\\
            0 & \text{otherwise},
        \end{cases}
    \end{equation*}
    whose maps between two non-zero spaces are the identity. Consequently, the isomorphism class of $Q_x$ only depends on $J$. Thus, we have $Q_x \simeq Q_{x_J}$. Identifying both modules via this isomorphism, Lemma \ref{Lem:ProjectiveCover} gives an automorphism $\gamma_0 \in \mathrm{Aut}_{A_I}(V_I)$ such that $\gamma_0(H_x) = H_{x_J}$. Then $\gamma_0$ has a unique lift $\gamma \in \overline{\mathcal G_I}(k)$ with multiplier equal to $1$, and we have $\gamma(x) = x_J$. This concludes the proof.
\end{proof}

For every $[v]_I \in \mathrm{Adm}(\{\mu'\})_I$, we obtain a locally closed immersion of algebraic stacks 
\begin{equation*}
    [\mathcal O_{[v]_I}/\mathcal G_{I,\overline{\mathbb F_q}}] \hookrightarrow [\overline{M}_{\mathrm{loc}}/\mathcal G_{I,\overline{\mathbb F_q}}].
\end{equation*}
\begin{defi}
    The Kottwitz-Rapoport stratum associated to $[v]_I \in \mathrm{Adm}(\{\mu'\})_I$ is the locally closed subscheme of $\mathcal N^{I,\varepsilon}_{\mathrm{red}}$ defined by 
    \begin{equation*}
        \mathrm{KR}_{[v]_I} = \mathrm{KR}_{[v]_I}^{I,\varepsilon} \coloneq  \mathcal N^{I,\varepsilon}_{\mathrm{red}} \times_{[\overline{M}_{\mathrm{loc}}/\mathcal G_{I,\overline{\mathbb F_q}}]} [\mathcal O_{[v]_I}/\mathcal G_{I,\overline{\mathbb F_q}}].
    \end{equation*}
    We also write $\mathrm{KR}_J$ instead of $\mathrm{KR}_{[v]_I}$ when $J \in \mathcal P_{I}$ and $[v]_I = [v_J]_I$.
\end{defi}
Recall the description of the $k$-points of $\mathcal N^{I,\varepsilon}_{\mathrm{red}}$ stated in Proposition \ref{Prop:PointsArbitraryParahoricBis}.
\begin{prop}\label{Prop:PointsOfKRStrata}
    Let $I \subseteq \{0,\ldots , \nu_{\varepsilon}\}$ be non-empty, let $J \in \mathcal P_{I}$, let $k$ be a perfect field containing $\overline{\mathbb F_q}$, and let $j \in \mathbb Z$ such that $nj$ is even. We have 
    \begin{equation*}
        (\mathrm{KR}_J \cap \mathcal N^{I,\varepsilon}_j)(k) \simeq \left\{ (Z_r)_{r\in I_{\mathbb Z}} \in \mathcal N^{I,\varepsilon}_j(k) \; \middle| \; 
        \forall r \in I_{\mathbb Z},\quad \overline r \in \overline I \setminus J \iff \tau(Z_{r+\ell_{\overline r}}) \subseteq \pi^{j+1} Z_{-r-\underline{\varepsilon}}^{\vee} \right\}.
    \end{equation*}
\end{prop}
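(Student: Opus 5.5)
The plan is to trace a $k$-point of $\mathcal N^{I,\varepsilon}_j$ through the local model diagram, compute its image in $\overline{M}_{\mathrm{loc}}$ up to the $\overline{\mathcal G_I}$-action, and then read off which maps $\delta_r$ vanish. By Proposition \ref{Prop:TheOJAreOrbits} and the definition of $\mathcal O_J$, a point lies in $\mathrm{KR}_J$ if and only if the associated point $x=(\mathfrak t_r)$ of $\overline{M}_{\mathrm{loc}}(k)$ (well defined up to $\overline{\mathcal G_I}(k)$) satisfies: $\delta_r=0$ exactly when $\overline r\in\overline I\setminus J$. This condition is $\overline{\mathcal G_I}$-invariant, so we may use any trivialization $\gamma_r$. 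Hence we only need to express the vanishing of $\delta_r:\mathcal Q_{r+\ell_{\overline r}}\to\mathcal Q_r$ in terms of the lattices $Z_\bullet$.

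Let $z=(Z_r)$ correspond to $(X_r,\rho_{X_r})_{r\in I}$, with Dieudonné lattices $M_r=M_{r,0}\oplus M_{r,1}\subseteq\mathbb V_{F_k}$. Modulo $\pi$ we have $\mathbb D(X_r)_k=M_r/\pi M_r$. By the standard description of the Hodge filtration in covariant theory, $\mathrm{Fil}^1=\mathcal VM_r/\pi M_r$, and the transition maps are induced by the inclusions $M_{r'}\subseteq M_r$. The $\eta_0$-part of $\mathrm{Fil}^1$ is $\mathcal VM_{r,1}/\pi M_{r,0}$, so the line bundle $\mathcal Q_r$ at $x$ is $M_{r,0}/\mathcal VM_{r,1}$, which is one-dimensional by the signature condition. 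Extending to all $r\in I_{\mathbb Z}$ by self-duality and periodicity, I expect the uniform description to be
\begin{equation*}
    \mathcal Q_r\simeq Z_r/\mathcal V(\text{corresponding }M_{\cdot,1}) = Z_r/\pi^{j+1}\tau^{-1}(Z_{-r-\underline{\varepsilon}})^{\vee}\cdot(\text{twist}).
\end{equation*}
The precise form follows from the proof of Proposition \ref{Prop:PointsMaximalParahoric}. There $\pi^{j+1}B^\vee=\mathcal FM_1$, so $\mathcal VM_1=\pi\mathcal F^{-1}M_1=\pi^{j+1}\tau^{-1}(B^{\vee})\cdot$ up to the identity $\mathcal F^2=\pi\tau$. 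Combined with \eqref{Eq:FormulaVeeTau}, this gives $\mathcal VM_{r,1}=\pi^{j+1}\tau^{-1}(Z_{-r-\underline{\varepsilon}}^{\vee})$. The case $r\mapsto -r-\underline{\varepsilon}$ (the $B$-lattices) is handled with the Remark after Proposition \ref{Prop:PointsMaximalParahoric} by the same computation, or more cleanly via the duality $\Phi_I$ of Lemma \ref{Lem:LatticeChainUnderDuality}. The result is
\begin{equation*}
    \mathcal Q_r \simeq Z_r\big/\pi^{j+1}\tau^{-1}(Z^{\vee}_{-r-\underline{\varepsilon}}),
\end{equation*}
consistent with condition 1) of Proposition \ref{Prop:PointsArbitraryParahoricBis} after applying $\tau^{-1}$ to $\pi^{j+1}Z^\vee\overset{1}{\subseteq}\tau(Z)$.

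Given this identification, $\delta_r$ is the map $Z_{r+\ell_{\overline r}}/\pi^{j+1}\tau^{-1}(Z^\vee_{-r-\ell_{\overline r}-\underline{\varepsilon}})\to Z_r/\pi^{j+1}\tau^{-1}(Z^\vee_{-r-\underline{\varepsilon}})$ induced by inclusion. It vanishes if and only if $Z_{r+\ell_{\overline r}}\subseteq\pi^{j+1}\tau^{-1}(Z^\vee_{-r-\underline{\varepsilon}})$. Applying $\tau$ turns this into the condition $\tau(Z_{r+\ell_{\overline r}})\subseteq\pi^{j+1}Z^\vee_{-r-\underline{\varepsilon}}$ of the statement. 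Since the local model diagram commutes with $\mathcal G_I$-equivariance, this gives the claimed bijection on $k$-points.

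The main obstacle is the middle step. One must carefully match the Hodge filtration, which sits in $\mathbb D(X_r)$ only for $r\in I$, together with its extension to all $r\in I_{\mathbb Z}$ through condition 3 of $M_{\mathrm{loc}}$ (orthogonal complements), with the lattices $Z_{-r-\underline{\varepsilon}}=\pi^jM_{r,1}^\dagger$. The indices, and the powers of $\pi$ and $\tau$, must all come out right. For the indices $-r-\underline{\varepsilon}$, I would try first to check directly that the orthogonal of $\eta_1\mathfrak t$ corresponds to $\mathcal V M_{r,0}$ under $\dagger$, and hence to $\pi^{j+1}\tau^{-1}(Z_r^\vee)$ inside $Z_{-r-\underline{\varepsilon}}$.
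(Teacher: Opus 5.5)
Your proposal is correct and follows essentially the same route as the paper. Both identify $\mathcal Q_r$ with $Z_r/\pi^{j+1}\tau^{-1}(Z^{\vee}_{-r-\underline{\varepsilon}})$ via the Hodge filtration $\mathcal V M_r/\pi M_r$ and the relation $\mathcal V M_{r,1}=\tau^{-1}\mathcal F M_{r,1}=\pi^{j+1}\tau^{-1}(Z^{\vee}_{-r-\underline{\varepsilon}})$, and then read off $\delta_r=0$ as $\tau(Z_{r+\ell_{\overline r}})\subseteq\pi^{j+1}Z^{\vee}_{-r-\underline{\varepsilon}}$. The paper adds one point you only assumed implicitly: choosing the trivializations $\gamma_r$ may require a finite extension of $k$, which is harmless because both the lattice inclusions and membership in a KR stratum can be checked after faithfully flat extension.
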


\begin{proof}
Let $x = \left(X_r,i_{X_r},\lambda_{X_r},\rho_{X_r}\right)_{r\in I}\in\mathcal N_{j}^{I,\varepsilon}(k)$, and for every $r \in I$, let
\begin{equation*}
M_r = M_{r,0}\oplus M_{r,1} \subseteq \mathbb V_{F_{k}},
\end{equation*}
be the image under $\rho_{X_r}$ of the relative Dieudonné module of $X_r$. Here, $M_{r,a} \coloneq  \eta_aM_r$ for $a=0,1$. By the proofs of Proposition \ref{Prop:PointsMaximalParahoric} and Proposition \ref{Prop:PointsArbitraryParahoricBis}, we have
\begin{align*}
Z_{r}=M_{r,0}, && Z_{-r-\underline{\varepsilon}}=\pi^{j}\left(M_{r,1}\right)^{\dagger}.
\end{align*}
Besides, for all $r \in I$ we have
\begin{equation*}
    \mathbb D(X_r)_k \simeq M_r/\pi M_r,
\end{equation*}
and the Hodge filtration becomes 
\begin{center}
    \begin{tikzcd}
        0 \arrow[r] & \mathrm{Fil}^1 \mathbb D(X_r)_k \arrow[r] \arrow[d,"\rotatebox{90}{\(\sim\)}"] & \mathbb D(X_r)_k \arrow[r] \arrow[d,"\rotatebox{90}{\(\sim\)}"] & \mathrm{Lie}(X_r) \arrow[r] \arrow[d,"\rotatebox{90}{\(\sim\)}"] & 0 \\
        0 \arrow[r] & \mathcal VM_r/\pi M_r \arrow[r] & M_r/\pi M_r \arrow[r] & M_r/\mathcal VM_r \arrow[r] & 0
    \end{tikzcd}
\end{center}
For every finite extension $k'/k$, the ring of relative Witt vectors $W_{\mathcal O_F}(k')$ is faithfully flat over $W_{\mathcal O_F}(k)$, so that all the lattice inclusions in the statement of the Proposition can be checked after extension to $k'$, as well as the membership of $x$ to a given KR stratum. Thus, replacing $k$ with a suitable finite extension, we can pick a compatible collection of trivializations
\begin{equation*}
\forall r\in I, \qquad \gamma_r: M_r/\pi M_r \xrightarrow{\sim} L_{r} \otimes_{\mathcal O_{F}} k.
\end{equation*}
The point $y = (\mathfrak t_r)_{r \in I_{\mathbb Z}} \in M_{\mathrm{loc}}^{I,\varepsilon}(k)$ corresponding to $x$, together with its trivialization, via the local model diagram is determined by 
\begin{equation*}
\forall r \in I, \qquad \mathfrak t_{r} = \gamma_{r}\left(\mathrm{Fil}^1 \mathbb D(X_r)_k\right).
\end{equation*}
For $r \in I_{\mathbb Z}$, define $Q_r \coloneq  (\Lambda_r \otimes_{\mathcal O_K} k) / \eta_0 \mathfrak t_r$. Thus $Q_r$ is a $1$-dimensional $k$-vector space, and via the trivialization we have
\begin{equation*}
    \forall r \in I_{\mathbb Z}, \qquad Q_{r} \simeq Z_{r}/\pi^{j+1}\tau^{-1}(Z_{-r-\underline{\varepsilon}}^{\vee}).
\end{equation*}
Under these identifications, the morphisms $\delta_r$ are the natural maps
\begin{equation*}
    \forall r \in I_{\mathbb Z}, \qquad \delta_{r} : Z_{r+\ell_{\overline r}}/\pi^{j+1}\tau^{-1}(Z_{-r-\ell_{\overline r}-\underline{\varepsilon}}^{\vee}) \longrightarrow Z_{r}/\pi^{j+1}\tau^{-1}(Z_{-r-\underline{\varepsilon}}^{\vee}), 
\end{equation*}
induced by the inclusions $Z_{r+\ell_{\overline r}} \subsetneq Z_r$ and $Z_{-r-\underline{\varepsilon}} \subsetneq Z_{-r-\ell_{\overline r}-\underline{\varepsilon}}$. Now, let $J \in \mathcal P_{I}$. By construction, we have $x \in (\mathrm{KR}_{J} \cap \mathcal N^{I,\varepsilon}_j)(k)$ if and only if we have the equivalence 
\begin{equation*}
    \forall r \in I_{\mathbb Z}, \qquad \overline r \in \overline I \setminus J \iff \delta_{r} = 0.
\end{equation*}
With the description given above, the proposition follows.
\end{proof}
\begin{rk}\label{Rk:KRNonEmptyIFFInAdmMu0}
    By \cite{gortzFullyHodgeNewton2019} Proposition 5.6, it is known that $\mathrm{KR}_J \not = \emptyset$ if and only if $J \in \mathcal P_{0,I}$, or equivalently $[v_J]_I \in \mathrm{Adm}(\{\mu'\})_{0,I}$. We will give an independent direct proof of this fact in Corollary \ref{Corol:KRStrataNonEmpty}. For now, we will just admit it.
\end{rk}
Recall the isomorphism $\Phi_I: \mathcal N^{I,\varepsilon} \xrightarrow{\sim} \mathcal N^{\widetilde I, \varepsilon'}$ given in Proposition \ref{Prop:IsomorphismDualityArbitraryParahoric} and Remark \ref{Rk:IsomorphismDualityArbitraryParahoric}. On the special fiber of the target space, we can consider the KR strata $\mathrm{KR}_{\widetilde J}$ indexed by $\widetilde J \in \mathcal P_{0,\widetilde I}$, where $\widetilde I$ was defined in \eqref{Eq:DefiWidetildeI}. Temporarily, let us write $\ell_i^I$ instead of $\ell_i$, and consider also $\ell_i^{\widetilde I}$ for all $i \in \overline{\widetilde I}$. We have a bijection 
\begin{equation}\label{Eq:DefinitionMapVarphiI}
    \varphi_I: \overline I \xrightarrow{\sim} \overline{\widetilde I}, \qquad i \mapsto \nu_{\varepsilon}-i-\ell_i^I = \theta_I(i) + \nu_{\varepsilon} + \underline{\varepsilon} \in \overline{\widetilde I} \subseteq \mathbb Z/n\mathbb Z.
\end{equation} 
One may check that $\ell^{\widetilde I}_{\varphi_I(i)} = \ell_i^I$ for all $i \in \overline I$, and that $\varphi_I \circ \theta_I = \theta_{\widetilde I} \circ \varphi_I$.  

\begin{prop}\label{Prop:DualityKRStrata}
    The isomorphism $\Phi_I: \mathcal N^{I,\varepsilon} \xrightarrow{\sim} \mathcal N^{\widetilde I, \varepsilon'}$ induces, for all $J \in \mathcal P_{0,I}$, an isomorphism between KR strata
    \begin{equation*}
        \Phi_I: \mathrm{KR}_{J}^{I,\varepsilon} \xrightarrow{\sim} \mathrm{KR}_{\varphi_I(\theta_I(J))}^{\widetilde I, \varepsilon'}.
    \end{equation*}
\end{prop}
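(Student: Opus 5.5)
Since KR strata are reduced locally closed subschemes, and $\Phi_I$ is an isomorphism of formal schemes inducing an isomorphism of reduced special fibers, it suffices to show that $\Phi_I$ maps $\mathrm{KR}^{I,\varepsilon}_J$ onto $\mathrm{KR}^{\widetilde I,\varepsilon'}_{\varphi_I(\theta_I(J))}$ on $k$-points for every perfect field $k \supseteq \overline{\mathbb F_q}$. Here we work with the locally closed subsets and the reduced structures. I will therefore compare the point-wise criteria of Proposition \ref{Prop:PointsOfKRStrata} on both sides, using the explicit description of $\Phi_I$ on lattice chains given by Lemma \ref{Lem:LatticeChainUnderDuality}.

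Let $z=(Z_r)\in\mathcal N_j^{I,\varepsilon}(k)$ and $\Phi_I(z)=(\widetilde Z_{\widetilde r})\in\mathcal N^{\widetilde I,\varepsilon'}_{-j}(k)$ with $\widetilde Z_{\nu_\varepsilon-r}=\pi^{-j}Z_{-r-\underline{\varepsilon}}$. For $\widetilde r\in\widetilde I_{\mathbb Z}$, the criterion on the target side is
\begin{equation*}
\overline{\widetilde r}\notin \widetilde J \iff \tau(\widetilde Z_{\widetilde r+\ell^{\widetilde I}_{\overline{\widetilde r}}})\subseteq \pi^{-j+1}\widetilde Z^{\vee}_{-\widetilde r-\underline{\varepsilon'}} .
\end{equation*}
We write $\widetilde r=\varphi_I$-type index. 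Concretely, take $r\in I_{\mathbb Z}$ and set $\widetilde r \coloneq \nu_\varepsilon - r-\ell_{\overline r}$, so that $\overline{\widetilde r}=\varphi_I(\overline r)$ and $\ell^{\widetilde I}_{\overline{\widetilde r}}=\ell_{\overline r}$. Then $\widetilde Z_{\widetilde r+\ell}=\widetilde Z_{\nu_\varepsilon-r}=\pi^{-j}Z_{-r-\underline{\varepsilon}}$. Using $2\nu_\varepsilon+\underline{\varepsilon}+\underline{\varepsilon'}=n$ and $\pi$-periodicity, $-\widetilde r-\underline{\varepsilon'}=\nu_\varepsilon-(n+\ell-r-\nu_\varepsilon-\underline{\varepsilon}) \cdot$; explicitly one gets $\widetilde Z_{-\widetilde r-\underline{\varepsilon'}}=\pi^{-j-1}Z_{r+\ell_{\overline r}}$ up to the appropriate power of $\pi$ (this index bookkeeping is the routine part to be checked carefully). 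Substituting, the target condition becomes
\begin{equation*}
\tau(Z_{-r-\underline{\varepsilon}})\subseteq \pi^{j+1}\,\pi^{c}\,Z_{r+\ell_{\overline r}}^{\vee}
\end{equation*}
for an explicit integer $c$ that should come out to $0$. Now dualize using \eqref{Eq:FormulaVeeTau}: $\tau(A)\subseteq \pi^{j+1}B^\vee$ is equivalent to $\tau(B)\subseteq \pi^{j+1}A^\vee$, since $(\pi^{j+1}B^\vee)^\vee=\pi^{-j-1}\tau(B)$ and duality reverses inclusions, with $\tau$ commuting with $\vee$. This converts the condition into $\tau(Z_{r+\ell_{\overline r}})\subseteq\pi^{j+1}Z^\vee_{-r-\underline{\varepsilon}}$, which is exactly the source condition for $\overline r\notin J$.

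Hence $\overline r\notin J\iff \varphi_I(\overline r)\notin\widetilde J$, i.e. $\widetilde J=\varphi_I(J)$ on the nose. Here a twist appears: the claimed index is $\varphi_I(\theta_I(J))$. So I expect the index computation to actually produce $\overline{\widetilde r}=\varphi_I(\theta_I(\overline r))=\overline{\nu_\varepsilon+r+\underline{\varepsilon}}$-type shift, rather than $\varphi_I(\overline r)$. Indeed $\varphi_I(i)=\theta_I(i)+\nu_\varepsilon+\underline{\varepsilon}$, so $\varphi_I\theta_I(i)=i+\nu_\varepsilon+\underline\varepsilon$. The correct choice should be $\widetilde r$ with $\widetilde r+\ell=\nu_\varepsilon-(-r-\ell-\underline\varepsilon)$, i.e. matching $\widetilde Z_{\widetilde r+\ell}$ with $Z_{r+\ell}$ and $\widetilde Z_{-\widetilde r-\underline{\varepsilon'}}$ with $Z_{-r-\underline\varepsilon}$. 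Fixing this matching is the main obstacle: I would first verify, for a single $r$, which index on the $\widetilde I$ side pairs the two lattices $Z_{r+\ell_{\overline r}}$ and $Z_{-r-\underline\varepsilon}$, check that its class is $\varphi_I(\theta_I(\overline r))$ with $\ell^{\widetilde I}=\ell_{\overline r}$ (using $\ell_{\theta_I(i)}=\ell_i$), and check that the powers of $\pi$ cancel. Then I would apply the dualization above if the roles come out swapped. Since $\varphi_I\circ\theta_I$ is a bijection $\overline I\to\overline{\widetilde I}$ commuting with $\theta$, it also maps $\mathcal P_{0,I}$ to $\mathcal P_{0,\widetilde I}$, and the bijectivity of $\Phi_I$ gives the equality of strata rather than just an inclusion.
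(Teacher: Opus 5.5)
\textbf{There is a genuine gap: the dualization step is false.} You claim that $\tau(A)\subseteq\pi^{j+1}B^{\vee}$ is equivalent to $\tau(B)\subseteq\pi^{j+1}A^{\vee}$. This does not hold. On $\mathbb V_{F_k,0}$ the form $(\cdot,\cdot)$ is only $\sigma$-sesquilinear and satisfies $(v,w)=(w,\tau^{-1}(v))^{\sigma}$. Hence $(\tau(a),b)=(b,a)^{\sigma}$, which gives $\tau(A)\subseteq\pi^{j+1}B^{\vee}\iff B\subseteq\pi^{j+1}A^{\vee}$, whereas $\tau(B)\subseteq\pi^{j+1}A^{\vee}\iff A\subseteq\pi^{j+1}B^{\vee}$.

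Your own chain of dualizations shows this. It yields $\pi^{-j-1}\tau(B)\subseteq\tau(A)^{\vee}=\tau(A^{\vee})$, i.e.\ $B\subseteq\pi^{j+1}A^{\vee}$, so the $\tau$ does not drop out. As a sanity check, if your identity held, the criteria of Proposition \ref{Prop:PointsOfKRStrata} at $i$ and at $\theta_I(i)$ would agree at every point. Then every non-empty KR stratum would have a $\theta_I$-stable index. This fails in general: take $J=\{i\}$ with $\#\{i,\theta_I(i)\}=2$ and $\overline I$ containing another orbit, and apply Corollary \ref{Corol:KRStrataNonEmpty}.

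This false step is what produced the spurious answer $\widetilde J=\varphi_I(J)$. Your proposal then ends with an unresolved contradiction and a plan that still intends to apply the same dualization ``if the roles come out swapped''. As written, it is not a proof.

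\textbf{How to repair it.} No dualization is needed, and your first computation already finishes the argument. With $\widetilde r=\nu_\varepsilon-r-\ell_{\overline r}$, the target criterion reduces to $\tau(Z_{-r-\underline{\varepsilon}})\subseteq\pi^{j+1}Z_{r+\ell_{\overline r}}^{\vee}$. This is verbatim the source criterion at the representative $s=-r-\underline{\varepsilon}-\ell_{\overline r}$ of $\theta_I(\overline r)$. Indeed, $\ell_{\theta_I(\overline r)}=\ell_{\overline r}$ gives $s+\ell_{\overline s}=-r-\underline{\varepsilon}$ and $-s-\underline{\varepsilon}=r+\ell_{\overline r}$. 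Hence $\varphi_I(\overline r)\notin\widetilde J\iff\theta_I(\overline r)\notin J$, i.e.\ $\widetilde J=\varphi_I(\theta_I(J))$.

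Equivalently, the paper uses $\widehat r=r+\nu_\varepsilon+\underline{\varepsilon}$, which is the index you guess at the end. Then $\widetilde Z_{\widehat r+\ell_{\overline r}}=\pi^{-j}Z_{r+\ell_{\overline r}}$ and $\widetilde Z_{-\widehat r-\underline{\varepsilon'}}=\pi^{-j-1}Z_{-r-\underline{\varepsilon}}$, and the two criteria match term by term.

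\textbf{The exponent $c$.} In either version, write the target criterion with the dual $\vee'$ for $\pi(\cdot,\cdot)$, not $\vee$. This is because $\Phi_I$ lands in $\mathcal N_{\widetilde L^I_\bullet}$, built from $H'=\pi H$, and Lemma \ref{Lem:LatticeChainUnderDuality} is stated in that normalization. The relation $Z^{\vee'}=\pi^{-1}Z^{\vee}$ is exactly what makes your undetermined exponent $c$ equal to $0$. With the unprimed dual you would get $c=1$, and the criteria would not match.
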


\begin{proof}
    Let $k$ be a perfect field containing $\overline{\mathbb F_q}$, let $j \in \mathbb Z$ be such that $nj$ is even, and let $z = (Z_r)_{r\in I_{\mathbb Z}} \in \mathcal N_j^{I,\varepsilon}(k)$. Write $\widetilde z = (\widetilde Z_{\widetilde r})_{\widetilde r\in\widetilde I_{\mathbb Z}} \coloneq \Phi_I(z)$. By Lemma \ref{Lem:LatticeChainUnderDuality}, we have
    \begin{equation*}
        \widetilde z \in \mathcal N_{-j}^{\widetilde I,\varepsilon'}(k), \qquad \widetilde Z_{\nu_{\varepsilon}-r} = \pi^{-j}Z_{-r-\underline{\varepsilon}}
    \end{equation*}
    for every $r\in I_{\mathbb Z}$. Fix such an $r$ and define $\widehat r \coloneq r + \nu_{\varepsilon} + \underline{\varepsilon} \in \widetilde I_{\mathbb Z}$. Its reduction modulo $n$ is $\overline{\widehat r} = \varphi_I(\theta_I(\overline r))$ so that $\ell_{\overline{\widehat r}}^{\widetilde I} = \ell_{\overline r}^{I}$. Applying Lemma \ref{Lem:LatticeChainUnderDuality} to $-r-\underline{\varepsilon}-\ell_{\overline r}^{I} \in I_{\mathbb Z}$ gives
    \begin{equation*}
        \widetilde Z_{\widehat r+\ell_{\overline{\widehat r}}^{\widetilde I}} = \pi^{-j}Z_{r+\ell_{\overline r}^{I}}.
    \end{equation*}
    On the other hand, since $2\nu_{\varepsilon} + \underline{\varepsilon} + \underline{\varepsilon'} = n$, periodicity gives
    \begin{equation*}
        \widetilde Z_{-\widehat r-\underline{\varepsilon'}} = \pi^{-j-1}Z_{-r-\underline{\varepsilon}}.
    \end{equation*}
    Let $\vee'$ denote duality for the hermitian form $\pi(\cdot,\cdot)$. We have $Z^{\vee'} = \pi^{-1}Z^\vee$ for every lattice $Z$. Therefore
    \begin{align*}
        \tau\left(\widetilde Z_{\widehat r+ \ell_{\overline{\widehat r}}^{\widetilde I}}\right) \subseteq \pi^{-j+1} \widetilde Z_{-\widehat r-\underline{\varepsilon'}}^{\vee'} & \iff \pi^{-j} \tau\left(Z_{r+\ell_{\overline r}^{I}}\right) \subseteq \pi^{-j+1} \left(\pi^{-j-1}Z_{-r-\underline{\varepsilon}}\right)^{\vee'} \\
        & \iff \pi^{-j} \tau\left(Z_{r+\ell_{\overline r}^{I}}\right) \subseteq \pi Z_{-r-\underline{\varepsilon}}^{\vee} \\
        & \iff \tau\left(Z_{r+\ell_{\overline r}^{I}}\right) \subseteq \pi^{j+1}Z_{-r-\underline{\varepsilon}}^{\vee}.
    \end{align*}
    Besides, by definition we have
    \begin{equation*}
        \overline{\widehat r} \in \overline{\widetilde I} \setminus\varphi_I(\theta_I(J)) \iff \overline r\in\overline I\setminus J.
    \end{equation*}
    It follows that $z \in \mathrm{KR}_{J}^{I,\varepsilon} \iff \widetilde z\in \mathrm{KR}_{\varphi_I(\theta_I(J))}^{\widetilde I,\varepsilon'}$, which concludes the proof.
\end{proof}

If $n$ is even then $\nu_{\varepsilon} + \underline{\varepsilon} = \frac{n}{2}$. If additionally we have $I = \widetilde I$, then for all $J\in \mathcal P_{0,I}$ we obtain an isomorphism
\begin{equation*}
    \mathrm{KR}_J \xrightarrow{\sim} \mathrm{KR}_{J + \frac{n}{2}},
\end{equation*}
where $J+\frac{n}{2} \coloneq  \{i+\frac{n}{2} \mid i \in J\}$.

\section{The Ekedahl-Kottwitz-Oort-Rapoport stratification}
\subsection{\texorpdfstring{$G$-Zip}{G-Zip} datum associated to KR strata}\label{Section4.2}

Following \cite{shenEKORStrataShimura2021}, there are essentially two different ways of introducing the EKOR stratification, either locally as a stratification of individual KR strata via the theory of $G$-zips, either globally via the moduli space of (truncated) local Shtukas. For our purpose the first approach is enough. Regarding $G$-zips, the references are \cite{pinkAlgebraicZipData2011} and \cite{pinkFzipsAdditionalStructure2015}, see also \cite{shenEKORStrataShimura2021} Section 1.1 for a concise summary.
\begin{defi}[\cite{pinkAlgebraicZipData2011} Definition 1.1]\label{Defi:AlgebraicZipDatum}
An \textit{algebraic zip datum over a field $k$} is a tuple
\begin{equation*}
\mathcal Z = \left(\mathbf G,\mathbf P,\mathbf Q,\varphi\right),
\end{equation*}
where $\mathbf G$ is a connected reductive group over a field $k$, $\mathbf P$ and $\mathbf Q$ are two parabolic subgroups of $\mathbf G_{k'}$ for some finite field extension $k'/k$, and 
\begin{equation*}
\varphi:\mathbf P/R_u(\mathbf P) \longrightarrow \mathbf Q/R_u(\mathbf Q)
\end{equation*}
is an isogeny of algebraic groups, where $R_u(\cdot)$ denotes the unipotent radical.
\end{defi}
If $p\in\mathbf P$ and $q\in\mathbf Q$, we denote by $\overline p$ and $\overline q$ their respective images in $\mathbf P/R_u(\mathbf P)$ and in $\mathbf Q/R_u(\mathbf Q)$. The \textit{zip group attached to $\mathcal Z$} is
\begin{equation*}
\mathbf E_{\mathcal Z} \coloneq  \{ (p,q) \in \mathbf P \times \mathbf Q \mid \varphi(\overline p) = \overline q\}.
\end{equation*}
The group $\mathbf E_{\mathcal Z}$ acts on $\mathbf G_{k'}$ on the left by
\begin{equation*}
(p,q)\cdot g \coloneq  pgq^{-1}.
\end{equation*}
Let $\mathcal Z_{\overline{k}}$ denote the base change of the zip datum $\mathcal Z$ to $\overline{k}$. Following \cite{pinkAlgebraicZipData2011} Definition 3.6, we can fix a \textit{frame} of $\mathcal Z_{\overline k}$, that is a triple $(\mathbf T,\mathbf B,g)$ consisting of a Borel subgroup and a maximal torus $\mathbf T \subset \mathbf B \subset \mathbf G_{\overline k}$, together with an element $g \in \mathbf G(\overline k)$ such that 
\begin{align*}
    \mathbf B & \subseteq \mathbf Q_{\overline k}, & \varphi(\overline{{}^g\mathbf B}) & = \overline{\mathbf B},\\
    {}^g\mathbf B & \subseteq \mathbf P_{\overline k}, & \varphi(\overline{{}^g\mathbf T}) & = \overline{\mathbf T},
\end{align*}
where $\overline{\mathbf B}$ and $\overline{\mathbf T}$ (resp. $\overline{{}^g\mathbf B}$ and $\overline{{}^g\mathbf T}$) denote their images in $\mathbf Q_{\overline k}/R_u(\mathbf Q_{\overline k})$ (resp. in $\mathbf P_{\overline k}/R_u(\mathbf P_{\overline k})$). The frame determines Levi components 
\begin{equation*}
    {}^g\mathbf T \subseteq \mathbf L_{\mathbf P} \subseteq \mathbf P_{\overline k}, \qquad \mathbf T \subseteq \mathbf L_{\mathbf Q} \subseteq \mathbf Q_{\overline k},
\end{equation*} 
so that we can identify $\mathbf L_{\mathbf P} \simeq \mathbf P_{\overline k}/R_u(\mathbf P_{\overline k})$ and $\mathbf L_{\mathbf Q} \simeq \mathbf Q_{\overline k}/R_u(\mathbf Q_{\overline k})$. The pair $(\mathbf T,\mathbf B)$ determines a finite Coxeter system $(\mathbf W,\mathbf S)$. Let $\ell$ denote the associated length function. Let $\mathbf J_{\mathbf P},\mathbf J_{\mathbf Q} \subseteq \mathbf S$ denote the types of $\mathbf P$ and $\mathbf Q$. The Coxeter systems of $\mathbf L_{\mathbf P}$ and $\mathbf L_{\mathbf Q}$ can be identified with $(\mathbf W_{\mathbf J_{\mathbf P}}, \mathbf J_{\mathbf P})$ and $(\mathbf W_{\mathbf J_{\mathbf Q}}, \mathbf J_{\mathbf Q})$ respectively, and the composition $\varphi \circ \mathrm{Ad}(g)$ induces an isomorphism
\begin{equation*}
    \psi: (\mathbf W_{\mathbf J_{\mathbf P}}, \mathbf J_{\mathbf P}) \xrightarrow{\sim} (\mathbf W_{\mathbf J_{\mathbf Q}}, \mathbf J_{\mathbf Q}).
\end{equation*}
Let ${}^{\mathbf J_{\mathbf P}}\mathbf W$ denote the set of elements $w$ which are (left) $\mathbf J_{\mathbf P}$-reduced, and fix a representative $\dot{w} \in \mathrm{N}_{\mathbf G_{\overline k}}(\mathbf T)$. We define 
\begin{equation}\label{Eq:DefiOrbitZipGroup}
    \mathbf G^w \coloneq  \mathbf E_{\mathcal Z_{\overline k}}\cdot g\mathbf B\dot{w}\mathbf B.
\end{equation}
Finally, we define a partial order $\leq_{\mathbf J_{\mathbf P},\psi}$ on ${}^{\mathbf J_{\mathbf P}}\mathbf W$ as follows:
\begin{equation*}
    \forall w,w' \in {}^{\mathbf J_{\mathbf P}}\mathbf W, \qquad w' \leq_{\mathbf J_{\mathbf P},\psi} w \iff \exists x \in \mathbf W_{\mathbf J_{\mathbf P}}, \quad xw'\psi(x)^{-1} \leq w,
\end{equation*}
where $\leq$ is the usual Bruhat order on $\mathbf W$.
\begin{theo}[\cite{pinkAlgebraicZipData2011} Theorem 1.3, Theorem 1.4, Proposition 7.3]
    For $w \in {}^{\mathbf J_{\mathbf P}}\mathbf W$, $\mathbf G^w$ is a locally closed smooth subvariety of $\mathbf G_{\overline k}$ of dimension $\dim \mathbf P + \ell(w)$. We have a stratification 
    \begin{equation*}
        \mathbf G_{\overline k} = \bigsqcup_{w \in {}^{\mathbf J_{\mathbf P}}\mathbf W} \mathbf G^w, \quad \quad \overline{\mathbf G^w} = \bigsqcup_{\substack{w' \in {}^{\mathbf J_{\mathbf P}}\mathbf W \\
         w' \leq_{\mathbf J_{\mathbf P},\psi} w}} \mathbf G^{w'}.
    \end{equation*}
    Each $\mathbf G^w$ is a single orbit under the action of $\mathbf E_{\mathcal Z_{\overline k}}$ if the differential of $\varphi$ at $1$ vanishes. 
\end{theo}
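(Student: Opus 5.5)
This theorem is quoted from Pink–Wedhorn–Ziegler, so my plan is to reconstruct their argument rather than build anything new.

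\textbf{Step 1: reduction to $\overline k$ and to a finite union.}
All statements are geometric, so I work over $\overline k$ and use the frame $(\mathbf T,\mathbf B,g)$. The zip group $\mathbf E_{\mathcal Z}$ contains $\mathbf E_{\mathcal Z}\cap(\mathbf B\times\mathbf B)$ up to the twist by $g$. Each $\mathbf G^w$ is therefore a union of images of Bruhat cells under the action map $\mathbf E_{\mathcal Z}\times g\mathbf B\dot w\mathbf B\to\mathbf G$. By the Bruhat decomposition, $\mathbf G=\bigcup_{w\in\mathbf W}g\mathbf B\dot w\mathbf B$, so $\mathbf G=\bigcup_w\mathbf G^w$.

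\textbf{Step 2: restrict to ${}^{\mathbf J_{\mathbf P}}\mathbf W$ and prove disjointness.}
Write $w=xw'$ with $x\in\mathbf W_{\mathbf J_{\mathbf P}}$ and $w'$ left $\mathbf J_{\mathbf P}$-reduced. An induction on $\ell(x)$ shows that the $\mathbf E_{\mathcal Z}$-saturation of $g\mathbf B\dot w\mathbf B$ lies in the union of the $\mathbf G^{w''}$ with $w''$ $\mathbf J_{\mathbf P}$-reduced. The inductive step moves a simple reflection of $\mathbf J_{\mathbf P}$ through $\psi$ to the right-hand side using $(p,q)$ with $\varphi(\bar p)=\bar q$.

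For disjointness, the key tool is the \emph{induction step of PWZ}. Intersect a zip orbit with $g\mathbf P\dot w\mathbf Q$ for $w\in{}^{\mathbf J_{\mathbf P}}\mathbf W^{\mathbf J_{\mathbf Q}}$. This identifies the orbit set with the orbits of a smaller zip datum on the Levi $\mathbf L_{\mathbf Q}\cap{}^{w^{-1}}\mathbf L_{\mathbf P}$, via the new isogeny $\varphi\circ\mathrm{Ad}(\dot w)$. Recursing on $\dim\mathbf G$ reduces to the case $\mathbf P=\mathbf Q=\mathbf G$. In that case the action is twisted conjugation $h\mapsto ph\varphi(p)^{-1}$, and Lang's theorem makes it transitive when $d\varphi=0$. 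Bookkeeping through the recursion matches orbits bijectively with elements of ${}^{\mathbf J_{\mathbf P}}\mathbf W$. This gives the disjoint stratification and, under $d\varphi_1=0$, the single-orbit claim.

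\textbf{Step 3: dimension and smoothness.}
Each $\mathbf G^w$ is an orbit, or in general a homogeneous fibration over orbits of the smaller datum, so it is smooth and locally closed. For the dimension I would compute the stabilizer in $\mathbf E_{\mathcal Z}$ of the point $g\dot w$. From $\dim\mathbf E_{\mathcal Z}=\dim\mathbf G$, the goal is to show the stabilizer has dimension $\dim\mathbf G-\dim\mathbf P-\ell(w)$. This follows from a root-theoretic count of the roots in $R_u(\mathbf P)$ and $R_u(\mathbf Q)$ that are exchanged by $w$.

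\textbf{Step 4: closure relation (the main obstacle).}
I expect the closure relation to be the hardest part. For the inclusion $\supseteq$: if $xw'\psi(x)^{-1}\le w$, then the point $g\,\dot x\dot w'\psi(\dot x)^{-1}$ lies in $\overline{g\mathbf B\dot w\mathbf B}\subseteq\overline{\mathbf G^w}$. Since $(\dot x,\psi(\dot x))$ comes from $\mathbf E_{\mathcal Z}$, acting by it gives $\mathbf G^{w'}\subseteq\overline{\mathbf G^w}$.

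The converse inclusion $\subseteq$ is harder and needs more machinery:
\begin{itemize}
\item one shows $\overline{\mathbf G^w}=\mathbf E_{\mathcal Z}\cdot\overline{g\mathbf B\dot w\mathbf B}$, using properness of a suitable quotient such as $\mathbf E_{\mathcal Z}\times^{\mathbf E\cap(\mathbf B\times\mathbf B)}\overline{\mathbf B\dot w\mathbf B}\to\mathbf G$;
\item one then uses Step 2 to move each Bruhat cell $g\mathbf B\dot u\mathbf B$ with $u\le w$ into some $\mathbf G^{w'}$ with $w'=\min$ of the $\psi$-twisted $\mathbf W_{\mathbf J_{\mathbf P}}$-class;
\item finally, He's partial conjugation combinatorics shows this $w'$ satisfies $w'\le_{\mathbf J_{\mathbf P},\psi}w$.
\end{itemize}
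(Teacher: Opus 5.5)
The paper gives no argument for this statement beyond citing Pink--Wedhorn--Ziegler. Your outline follows their strategy: the induction step on the double cosets $\mathbf P g\dot x\mathbf Q$, reduction to a twisted conjugation on a Levi (transitive by Lang--Steinberg when $d\varphi=0$), and $\overline{\mathbf G^w}=\mathbf E\cdot\overline{g\mathbf B\dot w\mathbf B}$ via projectivity of $\mathbf E/(\mathbf E\cap({}^g\mathbf B\times\mathbf B))\simeq\mathbf P/{}^g\mathbf B$, where $\mathbf E=\mathbf E_{\mathcal Z_{\overline k}}$. However, the mechanism in your second bullet of Step 4 is false on two counts.

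\textbf{Single stratum.} The saturation $\mathbf E\cdot g\mathbf B\dot u\mathbf B$ of one Bruhat cell is in general not contained in one stratum. Take $\mathbf G=\mathrm{GL}_3$, $\mathbf P=\mathbf Q$ the standard parabolic of type $\{s_1\}$, $g=1$ and $\varphi$ the Frobenius of the Levi quotient. Then $\psi=\mathrm{id}$ and ${}^{\mathbf J_{\mathbf P}}\mathbf W=\{1,s_2,s_2s_1\}$. The twisted class of $w_0$ is $\{s_1s_2s_1,s_2\}$, with minimum $s_2$. But $\mathbf B\dot w_0\mathbf B$ is open dense, so its saturation contains the open stratum $\mathbf G^{s_2s_1}$. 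In fact, write $\mathbf B\dot w_0\mathbf B=\mathbf B\dot s_1\mathbf B\,\dot s_2\dot s_1\mathbf B$ and transport the first factor through $\varphi$. This gives $\mathbf E\cdot\mathbf B\dot w_0\mathbf B=\mathbf E\cdot\mathbf B\dot s_2\dot s_1\mathbf B\dot s_1\mathbf B=\mathbf G^{s_2}\sqcup\mathbf G^{s_2s_1}$.

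\textbf{Minimum of the class.} The minimal element of the twisted class need not be $\mathbf J_{\mathbf P}$-reduced. Take $\mathbf P=\mathbf Q=\mathbf G$, so ${}^{\mathbf J_{\mathbf P}}\mathbf W=\{1\}$. The twisted class of a simple reflection $s$ consists of elements of odd length, so its minimum is never $1$.

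\textbf{What is missing.} You need the content of He's partial-conjugation results, in three parts.
\begin{enumerate}
\item For $s\in\mathbf J_{\mathbf P}$, one has $\mathbf E\cdot g\mathbf B\dot u\mathbf B=\mathbf E\cdot g\mathbf B\dot s\dot u\psi(\dot s)\mathbf B$ when $\ell(su\psi(s))=\ell(u)$. When $\ell(su\psi(s))=\ell(u)-2$ the saturation splits as $\mathbf E\cdot g\mathbf B\dot s\dot u\psi(\dot s)\mathbf B\cup\mathbf E\cdot g\mathbf B\dot s\dot u\mathbf B$.
\item Length-non-increasing moves bring any $u$ to some $yw'$ with $w'\in{}^{\mathbf J_{\mathbf P}}\mathbf W$ and $y$ in the parabolic subgroup of the largest $\mathrm{Ad}(w')\circ\psi$-stable subset of $\mathbf J_{\mathbf P}$.
\item You need the geometric absorption $\mathbf E\cdot g\mathbf B\dot y\dot w'\mathbf B\subseteq\mathbf G^{w'}$ for such $y$, coming from the recursive description of $\mathbf G^{w'}$.
\end{enumerate}
Induct on $\ell(u)$ through the branches, and use He's combinatorics to get $w'\le_{\mathbf J_{\mathbf P},\psi}u$ for every index produced. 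This yields $\mathbf E\cdot g\mathbf B\dot u\mathbf B\subseteq\bigcup_{w'\le_{\mathbf J_{\mathbf P},\psi}u}\mathbf G^{w'}$, which is exactly what $\subseteq$ needs.

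The same gap affects the covering claim in Step 2. An induction on $\ell(x)$ is not well founded, because moving $s$ through $\psi$ can return an element of the same length: with $\mathbf P=\mathbf Q=\mathbf G$ and $\psi=\mathrm{id}$, one gets $s_1s_2\mapsto s_2s_1\mapsto s_1s_2$. Covering should instead be extracted from the recursion.

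\textbf{Orbit versus stratum.} Step 3 and the $\supseteq$ part of Step 4 tacitly treat $\mathbf G^w$ as a single orbit. For general $\varphi$ this fails: with $\mathbf P=\mathbf Q=\mathbf G$ and $\varphi=\mathrm{id}$, $\mathbf G^1=\mathbf G$ is a union of conjugacy classes. Then the stabilizer of $g\dot w$ only gives the dimension of one orbit. Likewise, the point $g\dot x\dot w'\psi(\dot x)^{-1}\in\overline{\mathbf G^w}$ only gives $\mathbf E\cdot g\dot w'\subseteq\overline{\mathbf G^w}$, not all of $\mathbf G^{w'}$. Both arguments are fine when $d\varphi=0$, which is the case relevant to the paper since the $\sigma_J$ have vanishing differential. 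For the statement as quoted, you need the fibration structure of $\mathbf G^w$ furnished by the induction step.
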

Let us go back to the original setting, that is before taking algebraic closure. The quotient stack associated with an algebraic zip datum $\mathcal Z$ over $k$ is denoted by
\begin{equation*}
[\mathbf E_{\mathcal Z}\backslash\mathbf G_{k'}].
\end{equation*}
Let $S$ be a $k'$-scheme. 
\begin{defi}[\cite{pinkFzipsAdditionalStructure2015} Definition 3.1]
    A \textit{$\mathbf G$-zip of type $\mathbf P$ over $S$} is a tuple $(\mathcal I, \mathcal I_{\mathbf P}, \mathcal I_{\mathbf Q}, \iota)$ where 
    \begin{itemize}
        \item $\mathcal I$ is a right $\mathbf G_{k'}$-torsor over $S$,
        \item $\mathcal I_{\mathbf P} \subset \mathcal I$ is a right $\mathbf P$-torsor over $S$,
        \item $\mathcal I_{\mathbf Q} \subset \mathcal I$ is a right $\mathbf Q$-torsor over $S$,
        \item $\iota: \mathcal I_{\mathbf P} / R_u(\mathbf P) \xrightarrow{\sim} \mathcal I_{\mathbf Q}/ R_u(\mathbf Q)$ is a $\varphi$-equivariant isomorphism, that is 
        \begin{equation*}
            \forall p \in \mathbf P, \forall x \in \mathcal I_{\mathbf P} / R_u(\mathbf P), \qquad \iota(x\overline p) = \iota(x)\varphi(\overline p).
        \end{equation*}
    \end{itemize}
\end{defi}
One can form the stack $\mathbf G\text{-}\mathrm{Zip}^{\mathbf P}$ classifying $\mathbf G$-zip of type $\mathbf P$, and according to \cite{pinkFzipsAdditionalStructure2015} Theorem 1.5 it is isomorphic to the quotient stack $[\mathbf E_{\mathcal Z}\backslash\mathbf G_{k'}]$. Both stacks are smooth and $0$-dimensional over $k'$. As a result, giving a morphism from $S$ to $[\mathbf E_{\mathcal Z}\backslash\mathbf G_{k'}]$ amounts to giving a $\mathbf G$-zip of type $\mathbf P$ over $S$. See \cite{shenEKORStrataShimura2021} Equation 1.1.6 for more details.\\

To each (non-empty) KR stratum $\mathrm{KR}_J$, $J \in \mathcal P_{0,I}$, we are going to define a zip datum $\mathcal Z_J$. First, given any non-empty subset $I \subseteq \{0,\ldots , \nu_{\varepsilon}\}$, we define 
\begin{equation*}
\mathbf G_I \coloneq  \mathcal G_{I,\mathbb F_q}/ R_u(\mathcal G_{I,\mathbb F_q}).
\end{equation*}
The quotient $\mathbf G_I$ is called the \textit{maximal reductive quotient} of $\mathcal G_{I,\mathbb F_q}$ and it is a connected reductive group over $\mathbb F_q$ which we can describe as follows. Put $r_{\mathrm{min}} \coloneq \min(I)$ and $r_{\mathrm{max}} \coloneq \max(I)$. We define
\begin{equation*}
    V_I^{\mathrm{min}}\coloneq L_{r_{\mathrm{min}}}^\dagger/ L_{r_{\mathrm{min}}}, \qquad V_I^{\mathrm{max}} \coloneq L_{r_{\mathrm{max}}} / \pi L_{r_{\mathrm{max}}}^\dagger, \qquad V_I^{r}\coloneq L_r/L_{r'}.
\end{equation*}
where $r<r'$ run over all consecutive integers in $I$. These are $\mathbb F_{q^2}$-vector spaces of respective dimensions
\begin{equation*}
    \dim V_I^{\mathrm{min}} = h_{r_{\mathrm{min}}}^{\varepsilon} \geq 0, \qquad  \dim V_I^{\mathrm{max}} = n-h_{r_{\mathrm{max}}}^{\varepsilon} \geq 0, \qquad  \dim V_I^{r} = \ell_{\overline r} \geq 1.
\end{equation*}
The spaces $V_I^{\mathrm{min}}$ and $V_I^{\mathrm{max}}$ carry non-degenerate $\mathbb F_{q^2}/\mathbb F_q$-hermitian forms $\overline H_I^{\mathrm{min}}$ and $\overline H_I^{\mathrm{max}}$ induced respectively by $\pi H_{|L_{r_{\mathrm{min}}}^\dagger\times L_{r_{\mathrm{min}}}^\dagger}$ and $H_{|L_{r_{\mathrm{max}}}\times L_{r_{\mathrm{max}}}}$ modulo $\pi$. There is a natural isomorphism
\begin{equation*}
    \mathbf G_I \simeq \left( \mathrm{GU}(V_I^{\mathrm{min}}, \overline H_I^{\mathrm{min}}) \times_{\mathbb G_m} \mathrm{GU}(V_I^{\mathrm{max}},\overline H_I^{\mathrm{max}}) \right) \times \prod_{\substack{r<r'\\\text{consecutive in }I}} \mathrm{Res}_{\mathbb F_{q^2}/\mathbb F_q} \mathrm{GL}(V_I^{r}).
\end{equation*}
where the maps $\mathrm{GU}(V_I^{\mathrm{min}},\overline{H}^{\mathrm{min}}_I) \to \mathbb G_m$ and $\mathrm{GU}(V_I^{\mathrm{max}}, \overline H^{\mathrm{max}}_I) \to \mathbb G_m$ are the multiplier characters, and with the convention that if $h_{r_{\mathrm{min}}}^{\varepsilon} = 0$ or $h_{r_{\mathrm{max}}}^{\varepsilon} = n$ then the corresponding $\mathrm{GU}$ factor is omitted.\\
The complete \textit{decreasing} lattice chain $L_{\bullet,\mathcal O_{\breve F}}$ and its stabilizer subgroup $\mathcal G_{I_{\mathrm{Iw}}}$ (notation of \eqref{Eq:NotationIIw}) determine a pair $(\mathbf B_I,\mathbf T_I)$ consisting of an $\mathbb F_q$-rational Borel subgroup $\mathbf B_I\subset \mathbf G_{I,\overline{\mathbb F_q}}$ and an $\mathbb F_q$-rational maximal torus $\mathbf T_I \subset \mathbf B_I$. In the standard basis induced by $\mathbf e$, the Borel $\mathbf B_I$ consists of \textit{lower} triangular matrices in $\mathbf G_I$, and $\mathbf T_I$ consists of the diagonal matrices. The associated Coxeter system of $\mathbf G_I$ is naturally identified with $(W_{\mathbb S_a^I},\mathbb S_a^{I})$ as defined in \eqref{Eq:DefiSIa} and \eqref{Eq:DefiSubgroupWI}. Note that $W_{\mathbb S_a^I}$ is finite since $\mathbb S_a^{I} \subsetneq \mathbb S_a$ is proper, and the affine Weyl group $W_a$ is of type $A$. For every subset $\mathbf J\subseteq\mathbb S_a^{I}$, we define
\begin{equation*}
W_{\mathbf J} \coloneq  \langle s \mid s \in \mathbf J\rangle \subseteq W_{\mathbb S_a^I},
\end{equation*}
and we denote by $\mathbf P_{\mathbf J} \subseteq \mathbf G_{I,\overline{\mathbb F_q}}$ the standard parabolic subgroup of type $\mathbf J$, and by $\mathbf L_{\mathbf J} \subseteq \mathbf P_{\mathbf J}$ the standard Levi subgroup containing $\mathbf T_I$. We point out that since $\sigma^2 = \mathrm{id}$ on $W_{\mathbb S_a^I}$, the parabolic subgroup $\mathbf P_{\mathbf J}$ and the Levi component $\mathbf L_{\mathbf J}$ are defined over $\mathbb F_{q^2}$.\\

From now on, we consider a fixed non-empty KR stratum $\mathrm{KR}_J$ for some $J \in \mathcal P_{0,I}$. Recall the element $v_J \coloneq  w_J\tau_0 \in \mathrm{Adm}(\{\mu'\})$. By Remark \ref{Rk:wJIsShortestElementDoubleCoset}, $v_J$ is the unique element of minimal length in the double coset $W_{\mathbb S_a^I}v_JW_{\mathbb S_a^I} \subset \widetilde{W}$. We define
\begin{equation}\label{Eq:DefiMathbfJ}
\mathbf J \coloneq  \mathbb S_a^{I} \cap \mathrm{Ad}(v_J^{-1})(\mathbb S_a^{I}) \subseteq \mathbb S_a^{I}.
\end{equation}
Recall the point $x_J \in \mathcal O_J(\mathbb F_{q^2})$ which we defined in Lemma \ref{Lem:xJIsAPointOfOJ}, as well as its stabilizer $\mathcal G_{I,J} \subseteq \mathcal G_{I,\mathbb F_{q^2}}$. By construction, the image of $\mathcal G_{I,J}$ in the maximal reductive quotient $\mathbf G_{I,\mathbb F_{q^2}}$ is exactly $\mathbf P_{\mathbf J}$. We also consider the twisted Frobenius
\begin{equation*}
\sigma_J \coloneq  \sigma\circ \mathrm{Ad}(\dot v_J): G(\breve F) \longrightarrow G(\breve F).
\end{equation*}
We use the same notation for the induced action on $\widetilde W$. By construction, we have
\begin{equation*}
\sigma_J(\mathbf J) = \mathbb S_a^{I} \cap \mathrm{Ad}(\sigma(v_J))(\mathbb S_a^{I}) \subseteq \mathbb S_a^I,
\end{equation*}
and $\sigma_J$ induces an isogeny $\sigma_J: \mathbf L_{\mathbf J} \to \mathbf L_{\sigma_J(\mathbf J)}$.

\begin{defi}
The algebraic zip datum attached to the Kottwitz-Rapoport stratum $\mathrm{KR}_J$ is
\begin{equation*}
\mathcal Z_J \coloneq  (\mathbf G_I, \mathbf P_{\mathbf J}, \mathbf P_{\sigma_J(\mathbf J)}, \sigma_J: \mathbf L_{\mathbf J} \longrightarrow \mathbf L_{\sigma_{J}(\mathbf J)}).
\end{equation*}
\end{defi}
Since $\mathbf P_{\mathbf J}$ and $\mathbf P_{\sigma_J(\mathbf J)}$ are defined over $\mathbb F_{q^2}$, $\mathcal Z_J$ is an algebraic zip datum over $k = \mathbb F_q$ with $k' = \mathbb F_{q^2}$ in the notations of Definition \ref{Defi:AlgebraicZipDatum}. To simplify the notations, we will write $\mathbf E_J$ for the zip group of $\mathcal Z_{J,\overline{\mathbb F_q}}$, and $\mathbf G_I\text{-}\mathrm{Zip}^J \simeq [\mathbf E_J \backslash \mathbf G_{I,\overline{\mathbb F_q}}]$ for the associated stack of $\mathbf G_I$-zips of type $\mathbf P_{\mathbf J}$. By \cite{shenEKORStrataShimura2021} Remark 1.1.7, the differential of $\sigma_J$ at $1$ vanishes, so that we obtain a stratification 
\begin{equation*}
    \mathbf G_{I,\overline{\mathbb F_q}} = \bigsqcup_{w \in {}^{\mathbf J}W_{\mathbb S_a^I}} \mathbf G_I^{w},
\end{equation*}
where each $\mathbf G_I^{w}$ is an orbit under the action of $\mathbf E_J$. In the remainder of this section, we will describe the poset ${}^{\mathbf J}W_{\mathbb S_a^I}$ explicitly.\\

Recall the notations $B_i$ for $i \in \overline I$ defined in \eqref{Defi:DefiBlockBi}, as well as the involution $\theta_I$ defined in Definition \ref{Defi:DefiInvolutionThetaI}. For $i \in \overline I$ we define $C_i \coloneq  \{s_b \mid b \in B_i \setminus \{i\}\}$, so that $C_i \not = \emptyset$ if and only if $\ell_i > 1$. If we define 
\begin{equation*}
    \forall i \in \overline I, \forall 1\leq a \leq \ell_i-1, \qquad t_{i,a} \coloneq  s_{i+\overline{a}},
\end{equation*} 
then we have $C_i = \{t_{i,1}, \ldots ,t_{i,\ell_i-1}\}$. Besides,
\begin{equation*}
    \mathbb S_a^I = \bigsqcup_{i \in \overline I} C_i, \qquad W_{\mathbb S_a^I} \simeq \prod_{i \in \overline I} W_{C_i}.
\end{equation*}
We can identify $W_{C_i} \simeq \mathfrak S_{\ell_i}$, and if $\ell_i > 1$ then $t_{i,a}$ is identified with the simple reflection $(a\ a+1)$ in $W_{C_i}$. 
\begin{lem}\label{Lem:DescriptionMathbfJ}
    We have 
    \begin{equation*}
        \mathbf J = \left(\bigsqcup_{i \in J} C_i \right) \sqcup \left(\bigsqcup_{i \in \overline I \setminus J} C_i^{\circ}\right), 
    \end{equation*}
    where $C_i^{\circ} \coloneq  \{t_{i,a} \mid 2 \leq a \leq \ell_i-1\}$.
\end{lem}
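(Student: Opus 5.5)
The plan is to unwind the definition \eqref{Eq:DefiMathbfJ} block by block. A simple reflection $s\in\mathbb S_a^I$ lies in $\mathbf J$ if and only if $v_J s v_J^{-1}\in\mathbb S_a^I$. Since $v_J=w_J\tau_0$ and $\tau_0 s_b\tau_0^{-1}=s_{b-1}$, this becomes
\begin{equation*}
    s_b\in\mathbf J \iff w_J\, s_{b-1}\, w_J^{-1}\in\mathbb S_a^I .
\end{equation*}
Every $s\in\mathbb S_a^I$ is $s_b=t_{i,a}$ for a unique $i\in\overline I$ and $1\leq a\leq \ell_i-1$, so that $b=i+a\in B_i\setminus\{i\}$ and $c\coloneq b-1\in\{i,\ldots,i+\ell_i-2\}\subseteq B_i$. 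I will compute $w_Js_cw_J^{-1}$ using the explicit factorisation $w_J=w_{B_{j_1}}\cdots w_{B_{j_k}}$, taken in cyclic order starting after some $i_0\in\overline I\setminus J$.

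\textbf{Case $i\in J$.} Write $w_J=w'\,w_{B_i}\,w''$. Here $w''$ is the product of the blocks that follow $B_i$ in the cyclic order, so $\mathrm{Supp}(w'')\subseteq\{s_b\mid b\in[i+\ell_i,i_0-1]\}$. Likewise $w'$ is the product of the blocks preceding $B_i$, so $\mathrm{Supp}(w')\subseteq\{s_b\mid b\in[i_0+1,i-1]\}$. Since $c\leq i+\ell_i-2$, the reflection $s_c$ commutes with $w''$. The Coxeter-element identity
\begin{equation*}
    s_{[i,i+\ell_i-1]}\,s_c\,s_{[i,i+\ell_i-1]}^{-1}=s_{c+1}\qquad (i\leq c\leq i+\ell_i-2)
\end{equation*}
can be checked directly on affine permutations. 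Moreover $s_{c+1}$, with $i+1\leq c+1\leq i+\ell_i-1$, commutes with $w'$. Hence $w_Js_cw_J^{-1}=s_b\in C_i\subseteq\mathbb S_a^I$, and therefore all of $C_i$ lies in $\mathbf J$.

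\textbf{Case $i\notin J$ and $a\geq 2$.} Now $\mathrm{Supp}(w_J)\cap\{s_b\mid b\in B_i\}=\emptyset$. The only reflections in $\mathrm{Supp}(w_J)$ that might fail to commute with some $s_c$ for $c\in B_i$ are $s_{i-1}$ and $s_{i+\ell_i}$. When $a\geq2$ we have $i+1\leq c\leq i+\ell_i-2$, so $s_c$ commutes with $w_J$. Thus $w_Js_cw_J^{-1}=s_c=t_{i,a-1}\in C_i\subseteq\mathbb S_a^I$. This gives $C_i^\circ\subseteq\mathbf J$.

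\textbf{Case $i\notin J$ and $a=1$.} Here $c=i$, and I must show $t_{i,1}\notin\mathbf J$. Suppose $w_Js_iw_J^{-1}=s'$ with $s'\in\mathbb S_a^I$. Then $w_Js_i=s'w_J$. Since $s_i\notin\mathrm{Supp}(w_J)$, the expression $w_Js_i$ is reduced, so $s_i\in\mathrm{Supp}(w_Js_i)$. On the other hand $\mathrm{Supp}(s'w_J)\subseteq\mathrm{Supp}(w_J)\cup\{s'\}$. This forces $s'=s_i$. But $s_i\notin\mathbb S_a^I$ because $i\in\overline I$, which is a contradiction.

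Combining the three cases yields exactly the decomposition in the statement. The step I expect to need the most care is the commutation bookkeeping in the first case, in particular making sure that neighbouring blocks of $w_J$ act on $s_c$ in the correct order, so that the conjugate is $s_{c+1}$ and not something longer. The degenerate situations also need attention: when $\#\overline I$ is small (for example $\#\overline I\leq2$, or $\ell_i$ close to $n$), the blocks adjacent to $B_i$ on both sides may be the same block, or may wrap around $\mathbb Z/n\mathbb Z$. In those situations I would recheck the commutation claims directly, using that $B_J\neq\mathbb Z/n\mathbb Z$ and that $i_0\notin J$ separates the ends of the cyclic product.
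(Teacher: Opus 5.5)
Your proposal is correct and follows the paper's proof. The paper also reduces to computing $\mathrm{Ad}(w_J)(s_{i+a-1})$ and records the same three outcomes: $t_{i,a}$ if $i\in J$, $t_{i,a-1}$ if $i\notin J$ and $a\geq 2$, and an element outside $\mathbb S_a^I$ if $i\notin J$ and $a=1$. It simply calls this a direct computation from the definition of $w_J$, and your block-commutation bookkeeping together with the support argument for $a=1$ is a valid way of supplying it (the degenerate cases you flag cause no difficulty, since $i\in J$ forces $\ell_i\leq n-1$).
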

\begin{proof}
    For $s \in \mathbb S_a$ we have
    \begin{equation*}
        s \in \mathbf J \iff s = t_{i,a} \text{ for some } i\in \overline I,\ 1 \leq a \leq \ell_i-1, \text{ and } \mathrm{Ad}(v_J)(t_{i,a}) \in \mathbb S_a^I.
    \end{equation*}
    Recall that $v_J = w_J\tau_0$, and that $\mathrm{Ad}(\tau_0)(s_i) = s_{i-1}$ for all $i \in \mathbb Z/n\mathbb Z$. Thus, $\mathrm{Ad}(v_J)(t_{i,a}) = \mathrm{Ad}(w_J)(s_{i+\overline a -1})$. A direct computation from the definition of $w_J$ gives
    \begin{equation*}
        \mathrm{Ad}(v_J)(t_{i,a}) = \begin{cases}
           t_{i,a} & \text{if } i\in J,\\
           t_{i,a-1} & \text{if } i\not \in J \text{ and } 2 \leq a \leq \ell_i-1,\\
           \not \in \mathbb S_a^I & \text{if } i\not \in J \text{ and } a=1.
        \end{cases}
    \end{equation*}
    The result follows.
\end{proof}
For $i \in \overline I \setminus J$ and for $0 \leq a \leq \ell_i-1$, we define 
\begin{equation*}
    u_{i,a} \coloneq  t_{i,1} t_{i,2} \cdots t_{i,a},
\end{equation*}
with the convention $u_{i,0} = \mathrm{id}$.
\begin{corol}\label{Corol:DescriptionMathbfJreducedElements}
    We have
    \begin{equation*}
        {}^{\mathbf J}W_{\mathbb S_a^I} = \left\{\prod_{i\in\overline I\setminus J}u_{i,a_i} \:\middle|\; 0\leq a_i\leq\ell_i-1 \text{ for every } i\in\overline I\setminus J \right\}.
    \end{equation*}
    The expression on the right-hand side is unique, and the order of the factors $u_{i,a_i}$ is irrelevant as they all commute.
\end{corol}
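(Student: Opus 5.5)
Since $W_{\mathbb S_a^I}\simeq\prod_{i\in\overline I}W_{C_i}$ is a direct product of Coxeter systems and, by Lemma \ref{Lem:DescriptionMathbfJ}, $\mathbf J$ is a disjoint union of subsets $\mathbf J\cap C_i$ of the individual factors, I will reduce everything to a single factor. For a product of Coxeter groups, an element $w=\prod_i w_i$ with $w_i\in W_{C_i}$ satisfies $\ell(sw)\geq\ell(w)$ for all $s\in\mathbf J$ if and only if each $w_i$ is $(\mathbf J\cap C_i)$-reduced in $W_{C_i}$. This holds because $\ell$ is additive over the factors and a simple reflection $s\in C_i$ only affects the $i$-th coordinate. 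Hence
\begin{equation*}
    {}^{\mathbf J}W_{\mathbb S_a^I}=\prod_{i\in\overline I}{}^{\mathbf J\cap C_i}W_{C_i}.
\end{equation*}
Uniqueness of the expression and commutativity of the factors both follow from the direct product structure, since the supports $C_i$ are pairwise disjoint and commuting.

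Next I treat the two types of factors. If $i\in J$, then $\mathbf J\cap C_i=C_i$, and the only $C_i$-reduced element of $W_{C_i}$ is the identity, so this factor contributes nothing. If $i\in\overline I\setminus J$, then $\mathbf J\cap C_i=C_i^\circ=\{t_{i,2},\ldots,t_{i,\ell_i-1}\}$. Under $W_{C_i}\simeq\mathfrak S_{\ell_i}$ with $t_{i,a}=(a\ a+1)$, this is the type of the maximal parabolic $\mathfrak S_1\times\mathfrak S_{\ell_i-1}$. Its left cosets, equivalently the minimal length representatives of $W_{C_i^\circ}\backslash\mathfrak S_{\ell_i}$, are in bijection with $\mathfrak S_1\times\mathfrak S_{\ell_i-1}\backslash\mathfrak S_{\ell_i}$. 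There are $\ell_i$ of them, determined by $w^{-1}(1)$.

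It remains to check that these representatives are exactly $u_{i,a}=t_{i,1}\cdots t_{i,a}$ for $0\leq a\leq\ell_i-1$. Under the identification, $u_{i,a}=(1\,2)(2\,3)\cdots(a\ a+1)$ is the cycle sending $a+1\mapsto 1$. The left multiplication convention here is $\ell(sw)\geq\ell(w)$. The expression $t_{i,1}\cdots t_{i,a}$ is reduced, since it has $a$ distinct factors forming a path. A reduced word for $u_{i,a}$ starting with some $t_{i,b}$, $b\geq2$, would contradict the fact that $t_{i,1}\cdots t_{i,a}$ is the unique reduced word. That uniqueness holds because consecutive generators do not commute, so no braid or commutation moves apply. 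Therefore every left descent of $u_{i,a}$ is $t_{i,1}\notin C_i^\circ$, and $u_{i,a}$ is $C_i^\circ$-reduced. The $u_{i,a}$ are pairwise distinct, for instance because $u_{i,a}^{-1}(1)=a+1$ differ, and there are $\ell_i$ of them. Comparing with the count of cosets shows they exhaust ${}^{C_i^\circ}W_{C_i}$.

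The only step needing real care is this descent and counting check with the correct left/right convention. In particular, I must confirm that the coset parameter is $w^{-1}(1)$ rather than $w(1)$, so that the $u_{i,a}$, and not their inverses, are the left-reduced representatives. The counting argument handles this cleanly once the reducedness of each $u_{i,a}$ is established.
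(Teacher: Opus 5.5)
Your proof is correct and follows the same route as the paper: you use Lemma \ref{Lem:DescriptionMathbfJ} and the product decomposition $W_{\mathbb S_a^I}\simeq\prod_i W_{C_i}$ to reduce to single factors, where the $\mathbf J$-reduced elements are $\{\mathrm{id}\}$ for $i\in J$ and $\{u_{i,a}\mid 0\leq a\leq \ell_i-1\}$ for $i\notin J$. The only difference is that you supply the details the paper leaves implicit: the left-descent check for $u_{i,a}$ and the coset count via $w^{-1}(1)$.
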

\begin{proof}
    This follows directly from Lemma \ref{Lem:DescriptionMathbfJ}, as we have ${}^{\mathbf J\cap C_i}W_{C_i} = \{\mathrm{id}\}$ if $i \in J$ and 
    \begin{equation*}
        {}^{\mathbf J \cap C_i}W_{C_i} = {}^{C_i^{\circ}}W_{C_i} = \{u_{i,a} \mid 0 \leq a \leq \ell_i-1\},
    \end{equation*}
    if $i \in \overline I \setminus J$.
\end{proof}

\subsection{The EKOR strata}\label{Section4.3}

We now construct the EKOR stratification on the reduced special fiber
$\mathcal N_{\mathrm{red}}^{I,\varepsilon}$. We follow the local construction of Sections 3.3 and 3.4 of \cite{shenEKORStrataShimura2021}. We fix some $J\in\mathcal P_{0,I}$ and consider the associated non-empty Kottwitz-Rapoport stratum $\mathrm{KR}_J$. Recall the local model diagram of Figure \ref{Figure1}. We define
\begin{equation*}
\widetilde{\mathrm{KR}}_J \coloneq  \widetilde{\psi}^{-1}(\mathcal O_J) =  \widetilde{\mathcal N}_{\mathrm{red}}^{I,\varepsilon} \times_{M_{\mathrm{loc},\overline{\mathbb F_q}}^{I,\varepsilon}} \mathcal{O}_{J,\overline{\mathbb F_q}}.
\end{equation*}
By construction, $\widetilde{\mathrm{KR}}_J$ is a $\mathcal G_{I,\overline{\mathbb F_q}}$-torsor over $\mathrm{KR}_J$ via the morphism $\widetilde{\varphi}$. From now on, we regard it as a right torsor by inverting the original left action. Reducing the action to the maximal reductive quotient, we obtain a right $\mathbf G_{I,\overline{\mathbb F_q}}$-torsor over $\mathrm{KR}_J$:
\begin{equation*}
\mathcal I_J \coloneq  \widetilde{\mathrm{KR}}_J \times^{\mathcal G_{I,\overline{\mathbb F_q}}} \mathbf G_{I,\overline{\mathbb F_q}}.
\end{equation*}
Recall the distinguished point $x_J\in \mathcal O_J(\mathbb F_{q^2})$ defined in Lemma \ref{Lem:xJIsAPointOfOJ}. We set $\widetilde{\mathrm{KR}}_J^{+} \coloneq  \widetilde{\psi}^{-1}(x_J) \subset \widetilde{\mathrm{KR}}_J$, which is a right torsor over $\mathrm{KR}_J$ with respect to the action of the stabilizer $\mathcal G_{I,J,\overline{\mathbb F_q}}$ of $x_J$. Therefore $\widetilde{\mathrm{KR}}_J^{+}$ induces a $\mathbf P_{\mathbf J,\overline{\mathbb F_q}}$-torsor over $\mathrm{KR}_J$:
\begin{equation*}
\mathcal I_J^{+} \coloneq  \widetilde{\mathrm{KR}}_J^{+} \times^{\mathcal G_{I,J,\overline{\mathbb F_q}}} \mathbf P_{\mathbf J,\overline{\mathbb F_q}}.
\end{equation*}
In order to define the second parabolic reduction $\mathcal I_J^{-}$, we first define another morphism 
\begin{equation*}
    \widetilde{\psi}^c : \widetilde{\mathcal N}_{\mathrm{red}}^{I,\varepsilon} \longrightarrow M_{\mathrm{loc},\overline{\mathbb F_q}}^{I,\varepsilon}.
\end{equation*} 
Let $S$ be an $\overline{\mathbb F_q}$-scheme, and let $\widetilde{x} = (X_r,i_{X_r},\lambda_{X_r},\rho_{X_r};\gamma_r)_{r \in I}$ be a point of $\widetilde{\mathcal N}_{\mathrm{red}}^{I,\varepsilon}(S)$. Aside from the usual Hodge filtration, the Frobenius twist of the evaluation at $S$ of the Dieudonné crystal $\mathcal D_r \coloneq  \mathbb D(X_r)_S$ is also equipped with the conjugate filtration defined as follows. Since $S$ is an $\overline{\mathbb F_q}$-scheme, the $\mathcal O_S$-module $\mathcal D_r$ is equipped with Frobenius and Verschiebung morphisms
\begin{equation*}
F_r:\mathcal D_r^{(q)} \longrightarrow \mathcal D_r, \qquad V_r:\mathcal D_r\longrightarrow \mathcal D_r^{(q)},
\end{equation*}
which satisfy $V_rF_r =0$ and $F_rV_r = 0$. The Frobenius twist of the usual Hodge filtration is given by $(\mathrm{Fil}^1 \mathcal D_r)^{(q)} = \mathrm{Im}(V_r)$. We define the conjugate filtration by 
\begin{equation*}
\mathrm{Fil}^{1,c} \mathcal D_r \coloneq  \mathrm{Im}(F_r) \subset \mathcal D_r. 
\end{equation*}
Applying the trivialization, for all $r \in I$ we define 
\begin{equation*}
\mathfrak t_{r}^{c}\coloneq \gamma_r(\mathrm{Fil}^{1,c}\mathcal D_r) \subset L_{r}\otimes_{\mathcal O_F}\mathcal O_S.
\end{equation*}
The collection $(\mathfrak t_{r}^{c})_{r \in I}$ extends uniquely by periodicity and self-duality to a point $(\mathfrak t_r^{c})_{r\in I_{\mathbb Z}}$ of $M_{\mathrm{loc},\overline{\mathbb F_q}}^{I,\varepsilon}(S)$. In particular, the Cartier isomorphisms 
\begin{equation}\label{Eq:CartierIsomorphism}
    \overline{F_r}: (\mathcal D_r/\mathrm{Fil}^1\mathcal D_r)^{(q)} \xrightarrow{\sim} \mathrm{Fil}^{1,c}\mathcal D_r, \qquad \overline{V_r}: \mathcal D_r/\mathrm{Fil}^{1,c}\mathcal D_r \xrightarrow{\sim} (\mathrm{Fil}^1\mathcal D_r)^{(q)}.
\end{equation}
guarantee that the $\mathfrak t_{r}^c$ satisfy the correct signature condition. Mapping $\widetilde x$ to $(\mathfrak t_r^{c})_{r\in I_{\mathbb Z}}$ defines the desired map $\widetilde{\psi}^c$. 
\begin{lem}
We have $\widetilde{\psi}^{c}(\widetilde{\mathrm{KR}}_J)=\mathcal O_{\theta_I(J),\overline{\mathbb F_q}}$.
\end{lem}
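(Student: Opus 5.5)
The plan is to work pointwise on $\overline{\mathbb F_q}$-points, in the lattice language of Proposition \ref{Prop:PointsArbitraryParahoricBis}, exactly as in the proof of Proposition \ref{Prop:PointsOfKRStrata}. Both $\widetilde{\psi}^c(\widetilde{\mathrm{KR}}_J)$ and $\mathcal O_{\theta_I(J),\overline{\mathbb F_q}}$ are $\mathcal G_I$-stable, and the latter is a single $\mathcal G_I$-orbit by Proposition \ref{Prop:TheOJAreOrbits}. Since $\widetilde{\mathrm{KR}}_J$ is a $\mathcal G_I$-torsor over $\mathrm{KR}_J$ and $\widetilde\psi^c$ is $\mathcal G_I$-equivariant, the statement reduces to two claims. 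First, for every point $x\in\mathrm{KR}_J(k)$ and every trivialization, the image $(\mathfrak t^c_r)$ lies in $\mathcal O_{\theta_I(J)}$. Second, $\mathrm{KR}_J\neq\emptyset$. The second claim holds because $J\in\mathcal P_{0,I}$, and is admitted via Remark \ref{Rk:KRNonEmptyIFFInAdmMu0}. Equivariance and transitivity then give equality of the image with the whole orbit, at least on $\overline{\mathbb F_q}$-points, which suffices since both sides are reduced.

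For the first claim, I would compute the conjugate filtration in terms of Dieudonné modules. With $M_r=M_{r,0}\oplus M_{r,1}$ as before, we have $\mathrm{Fil}^{1,c}\mathbb D(X_r)_k=\mathcal F M_r/\pi M_r$. Its $\eta_0$-part is $\mathcal F M_{r,1}/\pi M_{r,0}$, and by the proof of Proposition \ref{Prop:PointsMaximalParahoric} we have $\mathcal F M_{r,1}=\pi^{j+1}Z_{-r-\underline{\varepsilon}}^{\vee}$. So the line bundle quotients become
\begin{equation*}
    Q_r^c\simeq Z_r/\pi^{j+1}Z_{-r-\underline{\varepsilon}}^{\vee},
\end{equation*}
and the transition maps $\delta_r^c$ are induced by the inclusions $Z_{r+\ell_{\overline r}}\subseteq Z_r$ and $Z^{\vee}_{-r-\underline{\varepsilon}}\subseteq Z^{\vee}_{-r-\ell_{\overline r}-\underline{\varepsilon}}$. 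Hence $\delta^c_r=0$ if and only if $Z_{r+\ell_{\overline r}}\subseteq \pi^{j+1}Z_{-r-\underline{\varepsilon}}^{\vee}$.

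It then remains to compare this with the KR condition of Proposition \ref{Prop:PointsOfKRStrata} at the index $r'\coloneq -r-\underline{\varepsilon}-\ell_{\overline r}$, for which $\overline{r'}=\theta_I(\overline r)$ and $\ell_{\overline{r'}}=\ell_{\overline r}$. That condition reads $\tau(Z_{-r-\underline{\varepsilon}})\subseteq\pi^{j+1}Z^{\vee}_{r+\ell_{\overline r}}$. Taking duals and using \eqref{Eq:FormulaVeeTau}, namely $(\Lambda^\vee)^\vee=\tau(\Lambda)$ and the fact that $\vee$ reverses inclusions, this is equivalent to $Z_{r+\ell_{\overline r}}\subseteq\pi^{j+1}Z^{\vee}_{-r-\underline{\varepsilon}}$, up to a twist by $\tau^{\pm1}$ which must be tracked with care. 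Thus $\delta^c_r=0$ if and only if $\theta_I(\overline r)\notin J$, that is, if and only if $\overline r\notin\theta_I(J)$. This places $(\mathfrak t^c_r)$ in $\mathcal O_{\theta_I(J)}$.

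I expect the main obstacle to be the bookkeeping of $\sigma$- and $\tau$-twists. The conjugate filtration lives on $\mathcal D_r$ itself, while $\mathcal F$ is semilinear, and the trivialization $\gamma_r$ is defined on $\mathcal D_r$ and not on its Frobenius twist. I would handle this as in Proposition \ref{Prop:PointsOfKRStrata}, using the identity $\pi^{j+1}\tau^{-1}(Z^\vee)$ versus $\pi^{j+1}Z^\vee$, and checking that the power of $\tau$ appears symmetrically on both sides so that it cancels under \eqref{Eq:FormulaVeeTau}. I would also make sure the index shift correctly produces $\theta_I$ rather than $i\mapsto -i-\underline{\varepsilon}$.
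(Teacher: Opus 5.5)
Your proposal is correct. Its overall structure matches the paper's: you show pointwise that $\delta^c_r\neq 0$ exactly when $\overline r\in\theta_I(J)$, and then obtain equality from $\mathcal G_I$-equivariance and Proposition \ref{Prop:TheOJAreOrbits}. You are right that this last step needs $\mathrm{KR}_J\neq\emptyset$, which the paper uses tacitly.

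The key step is done differently. The paper stays at the level of the Dieudonn\'e crystal. It uses the Cartier isomorphism $\overline{V_r}$ together with the self-duality of $(\mathfrak t_r)_{r\in I_{\mathbb Z}}$ to identify $\delta^c_r$, up to a unit, with the Frobenius twist of the dual of $\delta_{-r-\underline{\varepsilon}-\ell_{\overline r}}$. No lattice model of the points is needed there.

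You instead compute $\mathrm{Fil}^{1,c}=\mathcal F M_r/\pi M_r$ and obtain $\mathcal Q^c_r\simeq Z_r/\pi^{j+1}Z^\vee_{-r-\underline{\varepsilon}}$. You then reduce to Proposition \ref{Prop:PointsOfKRStrata} at $r'=-r-\underline{\varepsilon}-\ell_{\overline r}$ by lattice duality. This makes the appearance of $\theta_I$ transparent and avoids Frobenius twists altogether. The conjugate filtration is a submodule of $\mathcal D_r$ itself, so $\gamma_r$ applies directly, and your worry about twists there is unfounded.

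The $\tau$-bookkeeping is exact. By \eqref{Eq:FormulaVeeTau} and $(\pi^aZ)^\vee=\pi^{-a}Z^\vee$, the condition $\tau(Z_{-r-\underline{\varepsilon}})\subseteq\pi^{j+1}Z^\vee_{r+\ell_{\overline r}}$ is equivalent to $\pi^{-j-1}\tau(Z_{r+\ell_{\overline r}})\subseteq\tau(Z^\vee_{-r-\underline{\varepsilon}})$. Applying $\tau^{-1}$, this is exactly $Z_{r+\ell_{\overline r}}\subseteq\pi^{j+1}Z^\vee_{-r-\underline{\varepsilon}}$, with no residual twist.

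One point you should make explicit: your formula for $\mathcal Q^c_r$ must also hold at indices $r=-r_0-\underline{\varepsilon}$ with $r_0\in I$, where $\mathfrak t^c_r$ is defined by self-duality. It does, because $\langle\mathcal F v,w\rangle_H=\langle v,\mathcal V w\rangle_H^\sigma$ shows that the annihilator of $\mathcal F M_{r_0}/\pi M_{r_0}$ in $M_{r_0}^\dagger/\pi M_{r_0}^\dagger$ is $\mathcal F M_{r_0}^\dagger/\pi M_{r_0}^\dagger$. Its $\eta_0$-part is then $\pi^{j+1}Z^\vee_{r_0}$ after the normalization by $\pi^j$.
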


\begin{proof}
Let $k$ be an algebraically closed field containing $\overline{\mathbb F_q}$, and let $\widetilde x = (X_r,i_{X_r},\lambda_{X_r},\rho_{X_r};\gamma_r)_{r \in I}$ be a point of $\widetilde{\mathrm{KR}}_J(k)$. We write $\widetilde{\psi}(\widetilde x) = (\mathfrak t_r)_{r \in I_{\mathbb Z}}$ and $\widetilde{\psi}^{c}(\widetilde x) = (\mathfrak t_r^c)_{r \in I_{\mathbb Z}}$. For $r \in I_{\mathbb Z}$, we consider
\begin{equation*}
    \mathcal Q_r \coloneq  \frac{\Lambda_{r,k}}{\eta_0\mathfrak t_r}, \qquad \mathcal Q_r^c \coloneq  \frac{\Lambda_{r,k}}{\eta_0\mathfrak t_r^c}.
\end{equation*}
We write $\delta_r: \mathcal Q_{r+\ell_{\overline r}} \to \mathcal Q_{r}$ and $\delta_r^c : \mathcal Q_{r+\ell_{\overline r}}^c \to \mathcal Q_{r}^c$ for the transition maps. The Cartier isomorphisms \eqref{Eq:CartierIsomorphism} and the self-duality of $(\mathfrak t_r)_{r\in I_{\mathbb Z}}$ induce
\begin{equation*}
    \mathcal Q_{r+\ell_{\overline r}}^c \xrightarrow{\sim} \eta_0(\mathfrak t_{r+\ell_{\overline r}})^{(q)} \simeq (\eta_1\mathfrak t_{r+\ell_{\overline r}})^{(q)} \simeq  \mathcal Q_{-r-\ell_{\overline r}-\underline{\varepsilon}}^{\vee,(q)}.
\end{equation*}
These identifications form a diagram, commutative up to multiplication by a unit scalar.
\begin{center}
\begin{tikzcd}
    \mathcal Q_{r+\ell_{\overline r}}^c \arrow[rr,"\delta_r^c"] \arrow[d,swap,"\rotatebox{-90}{\(\sim\)}"] & {} & \mathcal Q_{r}^c \arrow[d,"\rotatebox{90}{\(\sim\)}"] \\
    \mathcal Q_{-r-\ell_{\overline r}-\underline{\varepsilon}}^{\vee,(q)} \arrow[rr,"\delta_{-r-\underline{\varepsilon}-\ell_{\overline r}}^{\vee,(q)}"] & {} & \mathcal Q_{-r-\underline{\varepsilon}}^{\vee,(q)}
\end{tikzcd}
\end{center}
We note that the bottom map is well-defined since $\ell_{\theta_I(\overline r)} = \ell_{\overline r}$. In particular, we have
\begin{equation*}
    \delta_r^c \not = 0 \iff \delta_{-r-\underline{\varepsilon}-\ell_{\overline r}} \not = 0 \iff \theta_I(\overline r) \in J \iff \overline r \in \theta_I(J).
\end{equation*}
This proves that $\widetilde{\psi}^{c}(\widetilde{\mathrm{KR}}_J) \subseteq \mathcal O_{\theta_I(J),\overline{\mathbb F_q}}$. Since the morphism $\widetilde{\psi}^{c}$ is clearly $\mathcal G_{I,\overline{\mathbb F_q}}$-equivariant, the equality follows from Proposition \ref{Prop:TheOJAreOrbits}.
\end{proof}
We define $\widetilde{\mathrm{KR}}_J^{-} \coloneq  (\widetilde{\psi}^{c})^{-1}(x_{\theta_I(J)}) \subset \widetilde{\mathrm{KR}}_J$, which is a right torsor over
$\mathrm{KR}_J$ with respect to the action of the stabilizer $\mathcal G_{I,\theta_I(J),\overline{\mathbb F_q}}$ of $x_{\theta_I(J)}$. Since we have 
\begin{equation*}
    \mathbb S_a^I \cap \mathrm{Ad}(v_{\theta_I(J)}^{-1})(\mathbb S_a^I) = \sigma_J(\mathbf J),
\end{equation*}
we deduce that $\widetilde{\mathrm{KR}}_J^{-}$ induces a $\mathbf P_{\sigma_J(\mathbf J),\overline{\mathbb F_q}}$-torsor over $\mathrm{KR}_J$:
\begin{equation*}
    \mathcal I_J^{-} \coloneq  \widetilde{\mathrm{KR}}_J^{-} \times^{\mathcal G_{I,\theta_I(J),\overline{\mathbb F_q}}} \mathbf P_{\sigma_J(\mathbf J),\overline{\mathbb F_q}}.
\end{equation*}
It remains to define the isomorphism between $\mathcal I_J^{+}/R_u(\mathbf P_{\mathbf J,\overline{\mathbb F_q}})$ and $\mathcal I_J^{-}/R_u(\mathbf P_{\sigma_J(\mathbf J),\overline{\mathbb F_q}})$. These two quotients classify the trivializations of the graded objects associated to the Hodge and to the conjugate filtrations respectively.
The Cartier isomorphisms yield an isomorphism $(\mathcal I_J^{+}/R_u(\mathbf P_{\mathbf J,\overline{\mathbb F_q}}))^{(q)} \xrightarrow{\sim} \mathcal I_J^{-}/R_u(\mathbf P_{\sigma_J(\mathbf J),\overline{\mathbb F_q}})$ of $\mathbf L_{\mathbf J}^{(q)} \simeq \mathbf L_{\sigma_J(\mathbf J)}$-torsors. Composing with $\sigma$, we obtain a $\sigma_J$-equivariant isomorphism 
\begin{equation*}
    \iota_J: \mathcal I_J^{+}/R_u(\mathbf P_{\mathbf J,\overline{\mathbb F_q}}) \xrightarrow{\sim} \mathcal I_J^{-}/R_u(\mathbf P_{\sigma_J(\mathbf J),\overline{\mathbb F_q}}).
\end{equation*}
We have therefore constructed a $\mathbf G_I$-zip $(\mathcal I_J,\mathcal I_J^{+},\mathcal I_J^{-},\iota_J)$ of type $\mathbf P_{\mathbf J}$ on $\mathrm{KR}_J$. This gives rise to a morphism of algebraic stacks
\begin{equation*}
\zeta_J: \mathrm{KR}_J \longrightarrow \mathbf G_{I,\overline{\mathbb F_q}}\text{-}\mathrm{Zip}^{J} \simeq [\mathbf E_{J}\backslash \mathbf G_{I,\overline{\mathbb F_q}}].
\end{equation*}
For $u\in{}^{\mathbf J}W_{\mathbb S_a^I}$, let $\mathbf G_I^u\subset \mathbf G_{I,\overline{\mathbb F_q}}$ be the corresponding $\mathbf E_{J}$-orbit, as in \eqref{Eq:DefiOrbitZipGroup}. We define
\begin{equation*}
    \mathrm{EKOR}_{J,u} \coloneq  \left(\mathrm{KR}_J \times_{\mathbf G_{I,\overline{\mathbb F_q}}\text{-}\mathrm{Zip}^{J}} [\mathbf E_{J}\backslash \mathbf G_{I,\overline{\mathbb F_q}}^u]\right)_{\mathrm{red}}.
\end{equation*}
If we let $u$ vary, we obtain a decomposition
\begin{equation*}
    \mathrm{KR}_J = \bigsqcup_{u\in{}^{\mathbf J}W_{\mathbb S_a^I}} \mathrm{EKOR}_{J,u},
\end{equation*}
into a disjoint union of (potentially empty) locally closed subschemes. Finally, if we let $J\in\mathcal P_{0,I}$ vary, we obtain the EKOR stratification 
\begin{equation*}
    \mathcal N_{\mathrm{red}}^{I,\varepsilon} = \bigsqcup_{J\in\mathcal P_{0,I}} \bigsqcup_{u\in{}^{\mathbf J}W_{\mathbb S_a^I}} \mathrm{EKOR}_{J,u}.
\end{equation*}
The indexing set of the EKOR stratification can be rewritten as follows. We define
\begin{equation*}
    {}^{I}\mathrm{Adm}(\{\mu'\}) \coloneq {}^{\mathbb S_a^I}\widetilde W \cap \mathrm{Adm}(\{\mu'\}) = {}^{\mathbb S_a^I}\widetilde W \cap \mathrm{Adm}(\{\mu'\})^I,
\end{equation*}
the last equality being due to Theorem 6.1 of \cite{heKottwitzRapoportConjectureUnions2016}. We also consider 
\begin{equation*}
    {}^{I}\mathrm{Adm}(\{\mu'\})_0 \coloneq  {}^{\mathbb S_a^I}\widetilde W \cap \mathrm{Adm}(\{\mu'\})_0 \subseteq {}^{\mathbb S_a^I}\widetilde W \cap \mathrm{Adm}(\{\mu'\})_0^I \subseteq {}^{I}\mathrm{Adm}(\{\mu'\}).
\end{equation*}
We stress that the first inclusion above can be strict in general. For $J\in\mathcal P_{0,I}$, since $v_J$ is the minimal length element in the double coset $W_{\mathbb S_a^I}v_JW_{\mathbb S_a^I}$, and since $\mathbf J = \mathbb S_a^I\cap \mathrm{Ad}(v_J^{-1})(\mathbb S_a^I)$, we have a bijection
\begin{equation}\label{Eq:BijectionKRPosetEKORPoset}
    {}^{\mathbf J}W_{\mathbb S_a^I} \xrightarrow{\sim} {}^{\mathbb S_a^I}\widetilde W \cap W_{\mathbb S_a^I}v_JW_{\mathbb S_a^I}, \qquad u\longmapsto v_Ju.
\end{equation}
Taking union over all $J \in \mathcal P_{0,I}$, we get a bijection
\begin{equation}\label{Eq:ImageOfEKORIndicesInBasicKR}
    \bigsqcup_{J\in\mathcal P_{0,I}}{}^{\mathbf J}W_{\mathbb S_a^I} \xrightarrow{\sim} {}^{\mathbb S_a^I}\widetilde W \cap \mathrm{Adm}(\{\mu'\})_0^I, \qquad (J,u)\longmapsto v_Ju.
\end{equation}
If $v \in {}^{\mathbb S_a^I}\widetilde W \cap \mathrm{Adm}(\{\mu'\})_0^I$ satisfies $v = v_Ju$ for some $J \in \mathcal P_{0,I}$ and $u \in {}^{\mathbf J}W_{\mathbb S_a^I}$, we will also write
\begin{equation*}
    \mathrm{EKOR}_v\coloneq \mathrm{EKOR}_{J,u}.
\end{equation*}
\begin{rk}
    In Corollary \ref{Corol:NonEmptyEKORStrata}, we prove that $\mathrm{EKOR}_v \not = \emptyset$ if and only if $v \in {}^{I}\mathrm{Adm}(\{\mu'\})_0$. This fact can also be recovered from \cite{gortzFullyHodgeNewton2019} Proposition 5.6. We point out that the emptiness of $\mathrm{EKOR}_v$ for $v \not \in {}^{I}\mathrm{Adm}(\{\mu'\})_0$ does not contradict \cite{shenEKORStrataShimura2021} Corollary 3.5.3. Indeed, it simply means that in the corresponding Shimura variety, the EKOR stratum indexed by $v$ lies in a non-basic Newton stratum, while the KR stratum containing it intersects the basic locus non-trivially. In fact, since the datum $(G,\{\mu\})$ is fully Hodge-Newton decomposable as defined in \cite{gortzFullyHodgeNewton2019}, the EKOR stratification is finer than the Newton stratification, but in general the same can not be said of the KR stratification. 
\end{rk}
We can make the sets ${}^{I}\mathrm{Adm}(\{\mu'\})$, ${}^{\mathbb S_a^I}\widetilde W \cap \mathrm{Adm}(\{\mu'\})_0^I$ and ${}^{I}\mathrm{Adm}(\{\mu'\})_0$ more explicit. 

\begin{prop}\label{Prop:DescriptionIndexSetOfEKORStrata}
    There is a bijection
    \begin{equation*}
        {}^{I}\mathrm{Adm}(\{\mu'\}) \xrightarrow{\sim} \left\{\text{tuples } \mathbf c = (c_i)_{i\in \overline I} \in \prod_{i \in \overline I}\{0,\ldots ,\ell_i\} \;\middle|\; \exists i \in \overline I, \quad c_i \leq \ell_i -1 \right\},
    \end{equation*}
    whose inverse is denoted $\mathbf c \mapsto v_{\mathbf c}$, inducing the following identifications 
    \begin{align*}
        {}^{\mathbb S_a^I}\widetilde W \cap \mathrm{Adm}(\{\mu'\})_0^I & \simeq \left\{v_{\mathbf c} \in {}^{I}\mathrm{Adm}(\{\mu'\}) \mid \exists i \in \overline I, \quad c_i,c_{\theta_I(i)} \leq \ell_i-1\right\},\\
        {}^{I}\mathrm{Adm}(\{\mu'\})_0 & \simeq \left\{v_{\mathbf c} \in {}^{I}\mathrm{Adm}(\{\mu'\}) \mid \exists i \in \overline I, \quad c_i + c_{\theta_I(i)} \leq \ell_i-1\right\}.
    \end{align*}
\end{prop}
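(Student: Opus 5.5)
Using the bijection \eqref{Eq:BijectionKRPosetEKORPoset} for every $J\in\mathcal P_I$ (not only $J\in\mathcal P_{0,I}$), I will first write ${}^{I}\mathrm{Adm}(\{\mu'\})$ as $\bigsqcup_{J\in\mathcal P_I}\{v_Ju\mid u\in{}^{\mathbf J}W_{\mathbb S_a^I}\}$. Here Remark \ref{Rk:wJIsShortestElementDoubleCoset}, Lemma \ref{Lem:DescriptionMathbfJ} and Corollary \ref{Corol:DescriptionMathbfJreducedElements} apply verbatim for any proper $J$, since their proofs never used $J\cup\theta_I(J)\neq\overline I$. The equality ${}^{\mathbb S_a^I}\widetilde W\cap\mathrm{Adm}(\{\mu'\})={}^{\mathbb S_a^I}\widetilde W\cap\mathrm{Adm}(\{\mu'\})^I$ (He's theorem, quoted above) together with Proposition \ref{Prop:IsomAdmMuPoset} ensures that every element of the left side lies in exactly one double coset $W_{\mathbb S_a^I}v_JW_{\mathbb S_a^I}$. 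Corollary \ref{Corol:DescriptionMathbfJreducedElements} then writes $u=\prod_{i\in\overline I\setminus J}u_{i,a_i}$ with $0\le a_i\le\ell_i-1$. I will define $\mathbf c$ by $c_i=\ell_i$ for $i\in J$ and $c_i=a_i$ for $i\notin J$; possibly this needs a reparametrization such as $c_i=\ell_i-1-a_i$, to be fixed later when matching with the $\mathrm{Supp}_\sigma$ condition. The map $(J,u)\mapsto\mathbf c$ is a bijection onto the tuples having some $c_i\le\ell_i-1$, because $J=\{i\mid c_i=\ell_i\}$ is recovered and is proper exactly when some $c_i<\ell_i$.

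For the identification of ${}^{\mathbb S_a^I}\widetilde W\cap\mathrm{Adm}(\{\mu'\})_0^I$, this set is the image of $J\in\mathcal P_{0,I}$ by \eqref{Eq:ImageOfEKORIndicesInBasicKR} and Proposition \ref{Prop:IsomAdmMuPosetBasic}. So it suffices to note that $J\cup\theta_I(J)\neq\overline I$ iff some $i$ has $i\notin J$ and $\theta_I(i)\notin J$, that is, $c_i,c_{\theta_I(i)}\le\ell_i-1$.

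The main obstacle is the last identification, which requires computing $\mathrm{Supp}_\sigma$ of $v_{\mathbf c}=v_Ju=w_J\tau_0u=w_J\,\mathrm{Ad}(\tau_0)(u)\,\tau_0$. Writing $w=w_J\,\mathrm{Ad}(\tau_0)(u)$, the factor $\mathrm{Ad}(\tau_0)(u_{i,a})=s_is_{i+1}\cdots s_{i+a-1}$ has support $\{s_i,\dots,s_{i+a-1}\}\subseteq B_i$. So I will check that the product is reduced with support
\begin{equation*}
\mathrm{Supp}(w)=\{s_b\mid b\in B_J\}\sqcup\bigsqcup_{i\notin J}\{s_i,\dots,s_{i+a_i-1}\}.
\end{equation*}
This holds because the supports live in disjoint blocks except for the interactions inside $B_J$, and reducedness is not even needed since support is additive for products of elements with disjoint support. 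In block $B_i$ the support is then the initial segment $\{i,\dots,i+c_i-1\}$ of length $c_i$, with $c_i=\ell_i$ when $i\in J$. By the proof of Lemma \ref{Lem:CorrespondenceAdtausigmaAndThetaI}, $\mathrm{Ad}(\tau_0)\circ\sigma$ maps $b\mapsto -b-\underline{\varepsilon}-1$, sending $B_i$ to $B_{\theta_I(i)}$ and reversing order. Hence the initial segment of length $c_i$ in $B_i$ goes to the final segment of length $c_i$ in $B_{\theta_I(i)}$. Therefore $\mathrm{Supp}_\sigma(w)\cap B_i$ is the union of an initial segment of length $c_i$ and a final segment of length $c_{\theta_I(i)}$. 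This union misses some element iff $c_i+c_{\theta_I(i)}\le\ell_i-1$, which gives the stated description of ${}^I\mathrm{Adm}(\{\mu'\})_0$. The remaining care is to confirm that $v_{\mathbf c}$, written as $w\tau_0$ with the $w$ above, is indeed the element whose $\sigma$-support is meant in the definition of $\mathrm{Adm}(\{\mu'\})_0$, and that it belongs to $\mathrm{Adm}(\{\mu'\})$ (being $\le v_J\cdot$ a suitable $t^{\mu'_j}$ by support). If instead the segments come out as final rather than initial, the parametrization of $c_i$ is simply reversed, and the statement is unchanged.
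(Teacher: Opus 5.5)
Your proposal is correct. For the two identifications it follows the paper's argument. The set ${}^{\mathbb S_a^I}\widetilde W\cap\mathrm{Adm}(\{\mu'\})_0^I$ is read off from $J\in\mathcal P_{0,I}$ via \eqref{Eq:ImageOfEKORIndicesInBasicKR}. The $\sigma$-support condition comes from $\mathrm{Supp}(w_J\,\mathrm{Ad}(\tau_0)(u))\cap B_i=\{s_i,\dots,s_{i+c_i-1}\}$ together with the order-reversing map $B_i\to B_{\theta_I(i)}$.

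You differ from the paper on the first bijection.

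\emph{The paper's route.} It works intrinsically. By Remark \ref{Rk:DescriptionAdmMu}, an element $w\tau_0\in\mathrm{Adm}(\{\mu'\})$ is determined by its proper support. Since $w$ is a product of increasing runs $s_xs_{x+1}\cdots s_y$ whose left descents are the run starts, $w\tau_0$ is left $\mathbb S_a^I$-reduced iff the support meets each $B_i$ in an initial segment, of length $c_i$. Only afterwards does the paper check that $v_Ju_{\mathbf a}$ has tuple $c_i=\ell_i$ for $i\in J$ and $c_i=a_i$ for $i\notin J$.

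\emph{Your route.} You decompose ${}^I\mathrm{Adm}(\{\mu'\})$ into the pieces ${}^{\mathbb S_a^I}\widetilde W\cap W_{\mathbb S_a^I}v_JW_{\mathbb S_a^I}$. This needs He's equality ${}^{\mathbb S_a^I}\widetilde W\cap\mathrm{Adm}(\{\mu'\})={}^{\mathbb S_a^I}\widetilde W\cap\mathrm{Adm}(\{\mu'\})^I$. It also needs \eqref{Eq:BijectionKRPosetEKORPoset}, Lemma \ref{Lem:DescriptionMathbfJ} and Corollary \ref{Corol:DescriptionMathbfJreducedElements} for all $J\in\mathcal P_I$. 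That extension is legitimate: these statements are purely combinatorial, and the description of left-reduced elements in a parabolic double coset never uses $J\cup\theta_I(J)\neq\overline I$.

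\emph{Trade-off.} Your route makes the formula $v_{\mathbf c}=v_{J_{\mathbf c}}\prod_{i\notin J_{\mathbf c}}u_{i,c_i}$, used later in the paper, the definition, and it makes the second identification a tautology. The paper's route is more elementary for the first bijection: it needs neither He's theorem nor the KR double-coset decomposition, and it characterizes $v_{\mathbf c}$ directly by its support.

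Your hedges are unnecessary. Your own computation $\mathrm{Ad}(\tau_0)(u_{i,a})=s_is_{i+1}\cdots s_{i+a-1}$ forces $c_i=a_i$. The suggested reparametrization $c_i=\ell_i-1-a_i$ would in fact break the third identification, turning the condition into $c_i+c_{\theta_I(i)}\geq\ell_i-1$. The decomposition $v=w\tau_0$ is unique, so there is nothing to confirm about which $w$ is meant. Membership in $\mathrm{Adm}(\{\mu'\})$ is already guaranteed by He's equality, which you invoked at the start.
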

\begin{proof}
    Recall from Remark \ref{Rk:DescriptionAdmMu} that any $v = w\tau_0 \in \mathrm{Adm}(\{\mu'\})$ is determined by its support $\mathrm{Supp}(w)$, which is a proper subset of $\mathbb S_a$. The element $v$ is left $\mathbb S_a^I$-reduced if and only if for every $s_j \in \mathrm{Supp}(w)$, we have $s_j \in \mathbb S_a^I \implies s_{j-1} \in \mathrm{Supp}(w)$. Thus, the support can be written in the form 
    \begin{equation*}
        \mathrm{Supp}(w) = \left\{s_j \;\middle|\; j \in \bigsqcup_{i \in \overline I} \{i, i+1,  \ldots , i+c_i-1\} \right\},
    \end{equation*}
    for a uniquely determined tuple $\mathbf c = (c_i)_{i \in \overline I}$ where $0 \leq c_i \leq \ell_i$ for all $i$. Besides, the fact that $\mathrm{Supp}(w) \not = \mathbb S_a$ is equivalent to $c_i \leq \ell_i-1$ for some $i$. The map $v \mapsto \mathbf c$ is the desired bijection. Now $v$ belongs to ${}^{\mathbb S_a^I}\widetilde W \cap \mathrm{Adm}(\{\mu'\})_0^I$ if and only if it can be written as $v = v_Ju_{\mathbf a}$ for some $J \in \mathcal P_{0,I}$ and $u_{\mathbf a} \in {}^{\mathbf J}W_{\mathbb S_a^I}$. We have $v_Ju_{\mathbf a} = w_J (\tau_0 u_{\mathbf a} \tau_0^{-1})\tau_0$ and 
    \begin{equation*}
        \mathrm{Supp}(w_J\tau_0 u_{\mathbf a} \tau_0^{-1}) = \left\{s_j \;\middle|\; j \in \left( \bigsqcup_{i \in J} B_i \right) \sqcup \left(\bigsqcup_{i \in \overline I \setminus J} \{i, \ldots, i+a_i-1\}\right) \right\}.
    \end{equation*}
    Thus, the corresponding tuple $\mathbf c$ is given by $c_i = \ell_i$ for $i \in J$ and $c_i = a_i$ for $i \in \overline I \setminus J$. If $O = \{i,\theta_I(i)\}$ is a $\theta_I$-orbit contained in $\overline I \setminus J$, we have $c_i,c_{\theta_I(i)} \leq \ell_i-1$, proving the first claim. Lastly, $v_Ju_{\mathbf a}$ belongs to ${}^{I}\mathrm{Adm}(\{\mu'\})_0$ if and only if $\mathrm{Supp}_{\sigma}(w_J\tau_0 u_{\mathbf a} \tau_0^{-1}) \not = \mathbb S_a$. If a simple reflection $s_j$ belongs to some block $B_i$, then $\mathrm{Ad}(\tau_0)\circ \sigma(s_j) = s_{-j-\underline{\varepsilon}-1}$ belongs to $B_{\theta_I(i)}$. Thus, $s_j$ and $\mathrm{Ad}(\tau_0)\circ \sigma(s_j)$ both lie out of $\mathrm{Supp}(w_J\tau_0 u_{\mathbf a} \tau_0^{-1})$ if and only if $s_j \in B_i$ for some $i$ such that $\{i, \theta_I(i)\} \subseteq \overline I \setminus J$, and we have $j = i + d$ for some $0 \leq d \leq \ell_i-1$ satisfying both $d \geq a_i$ and $d \leq \ell_i-1-a_{\theta_I(i)}$. In other words, $v_Ju_{\mathbf a}$ belongs to ${}^{I}\mathrm{Adm}(\{\mu'\})_0$ if and only if the corresponding tuple $\mathbf c$ satisfies $c_i + c_{\theta_I(i)} \leq \ell_i-1$ for some $i$.
\end{proof}
Given an element $v_{\mathbf c} \in {}^{I}\mathrm{Adm}(\{\mu'\})_0$, we define the set 
\begin{equation}\label{Eq:KRIndexAttachedToEKORIndex}
    J_{\mathbf c} \coloneq \{ i \in \overline I \mid c_i = \ell_i \} \in \mathcal P_{0,I}.
\end{equation}
It follows from the proof that $\mathrm{EKOR}_{v_{\mathbf c}} \subseteq \mathrm{KR}_{J_{\mathbf c}}$ (this holds also in the corresponding Shimura variety for $v_{\mathbf c} \in {}^{\mathbb S_a^I}\widetilde W \cap \mathrm{Adm}(\{\mu'\})_0^I$).

\section{The Bruhat-Tits stratification}\label{Section5}
\subsection{The BT strata}

In this section, we recall the main results of \cite{mullerBruhatTitsStratificationBasic2026}. Let $k$ be a perfect field containing $\mathbb F_{q^2}$ and let $Z$ be a $W_{\mathcal O_F}(k)$-lattice in $\mathbb V_{F_k,0}$. For $c \geq 1$ we write 
\begin{equation*}
    T_c(Z) \coloneq  Z + \tau(Z) + \cdots + \tau^{c-1}(Z) \subset \mathbb V_{F_k,0}.
\end{equation*}
By \cite{rapoportPeriodSpacesPdivisible1996} Proposition 2.17, there is an integer $1 \leq c \leq n$ such that $T_c(Z)$ is $\tau$-stable, that is $\tau(T_c(Z)) = T_c(Z)$, or equivalently $T_c(Z) = T_{c+1}(Z)$. In this case, there exists a unique lattice $\Lambda_Z \subset \mathbb V_{K,0}$ such that 
\begin{equation*}
    T_c(Z) = \Lambda_{Z,\mathcal O_{F_k}} = \Lambda_Z \otimes_{\mathcal O_K} \mathcal O_{F_k}.
\end{equation*}
Indeed, $\Lambda_Z$ is simply the $\mathcal O_{K}$-lattice formed by the vectors of $T_c(Z)$ which are fixed by $\tau$. If $k$ contains $\overline{\mathbb F_q}$ and $(Z_r)_{r\in I_{\mathbb Z}} \in \mathcal N_{j}^{I,\varepsilon}(k)$ as in Proposition \ref{Prop:PointsArbitraryParahoricBis}, we will write $\Lambda_{Z_r} \subset \mathbb V_{K,0}$ for the lattice associated by the process described above.
\begin{defi}
    Let $j \in \mathbb Z$. The set of \textit{vertex lattices of rank $j$} is the set 
    \begin{equation*}
        \mathcal L_j \coloneq  \{\mathcal O_K\text{-lattices } \Lambda \subset \mathbb V_{K,0} \mid \pi^{j+1}\Lambda^{\vee} \subseteq \Lambda \subseteq \pi^j\Lambda^{\vee}\}.
    \end{equation*}
    The \textit{orbit type} of a vertex lattice $\Lambda \in \mathcal L_j$ is the integer $t(\Lambda) \coloneq  [\Lambda:\pi^{j+1}\Lambda^{\vee}]$.
\end{defi}
\begin{rk} 
    We point out that for any $\Lambda \in \mathcal L_j$,
    \begin{equation*}
        t(\Lambda) \equiv nj + (1-\underline{\varepsilon}) \mod 2.
    \end{equation*}
    According to \cite{mullerBruhatTitsStratificationBasic2026} Proposition 2.15, the sets $\mathcal L_{j}$ are disjoint for varying $j$, except when $\varepsilon = -1$ in which case we have 
    \begin{equation*}
        \mathcal L_j \cap \mathcal L_{j+1} = \{\Lambda \subset \mathbb V_{K,0} \mid \Lambda = \pi^{j+1}\Lambda^{\vee}\}.
    \end{equation*}
    Because of this, the orbit type of $\Lambda$ is not entirely well-defined. However, the context should avoid any confusion throughout the exposition.
\end{rk}
We introduce some notation that will be useful for the BT stratification. We define
\begin{equation*}
    I^+ \coloneq \{r \in I \mid h_{r}^{\varepsilon} < n\} \subseteq \mathbb Z_{\geq 0}, \qquad I^- \coloneq \{-r-\underline{\varepsilon} \mid r \in I, h_r^{\varepsilon} > 0\} \subseteq \mathbb Z_{<0}.
\end{equation*}
Reduction modulo $n$ induces a bijection $I^+ \sqcup I^- \xrightarrow{\sim} \overline I$. We consider the set $\Omega_{\overline I} \coloneq \overline I / \langle \theta_I \rangle$ of $\theta_I$-orbits on $\overline I$, and we equip it with a total order by setting 
\begin{equation}\label{Eq:OrderOnOrbits}
    \forall O, O' \in \Omega_{\overline I}, \quad O' \leq O \xLeftrightarrow{\text{def}} \max \{r \in I^+ \sqcup I^- \mid \overline r \in O'\} \leq \max \{r \in I^+ \sqcup I^- \mid \overline r \in O\}.
\end{equation}
For any $O \in \Omega_{\overline I}$ with $\# O = 2$, there is a unique pair $(r_O^+,r_O^-) \in I^+ \times I^-$ such that $O = \{\overline{r_O^+}, \overline{r_O^-}\}$. If $\# O = 1$, there is a unique $r_O \in I^+ \sqcup I^-$ such that $O = \{\overline{r_O}\}$. More generally, if $\mathcal T \subseteq \overline I$ is any $\theta_I$-stable subset, we define $\Omega_{\mathcal T} \coloneq \mathcal T/\langle \theta_I\rangle \subseteq \Omega_{\overline I}$ and we equip it with the restriction of the total order $\leq$.
\begin{defi}
    Let $k$ be a perfect field containing $\overline{\mathbb F_q}$, let $j$ be such that $nj$ is even, and let $z = (Z_r)_{r \in I_{\mathbb Z}} \in \mathcal N_{j}^{I,\varepsilon}(k)$. The \textit{Bruhat-Tits type} of $z$ is the subset $\mathrm{Type}(z) \subseteq \overline I$ defined by
    \begin{equation*}
        \mathrm{Type}(z) \coloneq  \left\{\overline r \in \overline I \; \middle| \; \Lambda_{Z_{-r -\underline{\varepsilon}}} \subseteq \pi^{j+1}\Lambda_{Z_{r + \ell_{\overline r}}}^{\vee}\right\}.
    \end{equation*}
\end{defi}
By $\pi$-periodicity of $(Z_r)_{r \in I_{\mathbb Z}}$, the condition defining $\mathrm{Type}(z)$ does not depend on the choice of the representative for $\overline r$. The set $\mathrm{Type}(z)$ is stable under $\theta_I$, since if $\overline r \in \mathrm{Type}(z)$ then we have 
\begin{equation*}
    \Lambda_{Z_{r+\ell_{\overline r}}} \subseteq \pi^{j+1}\Lambda_{Z_{-r-\underline{\varepsilon}}}^{\vee},
\end{equation*}
simply by taking duals. This is equivalent to $\theta_I(\overline r) \in \mathrm{Type}(z)$. Thus, $\mathrm{Type}(z)$ is a union of orbits of $\theta_I$. 

\begin{rk}
    We explain the relation between $\mathrm{Type}(z)$ and the original definition of BT type given in \cite{mullerBruhatTitsStratificationBasic2026} Definition 2.29. Let us write temporarily $I = \{r_1 < \cdots < r_s \}$. In the notation of \cite{mullerBruhatTitsStratificationBasic2026}, for $1 \leq t \leq s$ we have
    \begin{equation*}
        A_t=Z_{r_t}, \qquad B_t = Z_{-r_t-\underline{\varepsilon}}.
    \end{equation*}
    For $1\leq t\leq s-1$, since $\ell_{\overline{r_t}}=r_{t+1}-r_t$, we have
    \begin{equation*}
        \overline{r_t}\in\mathrm{Type}(z) \iff \Lambda_{B_t} \subseteq \pi^{j+1}\Lambda_{A_{t+1}}^\vee.
    \end{equation*}
    Besides, we have the additional conditions
    \begin{align*}
        \text{if } h_{r_1}^{\varepsilon} \not = 0, & & \overline{-r_1-\underline{\varepsilon}} \in \mathrm{Type}(z) & \iff \Lambda_{A_1} \in \mathcal L_{j+1}, \\
        \text{if } h_{r_s}^{\varepsilon} \not = n, & & \overline{r_s} \in \mathrm{Type}(z) & \iff \Lambda_{B_s}\in\mathcal L_j.
    \end{align*}
    For $1 \leq t \leq s-1$, let us write $O_{t} \coloneq \{\overline{r_t}, \overline{-r_{t+1}-\underline{\varepsilon}}\} \subseteq \overline I$. We also define $O_0 \coloneq \{\overline{-r_1-\underline{\varepsilon}}\}$ if $h_{r_1}^{\varepsilon} \not = 0$, and $O_s \coloneq \{\overline{r_s}\}$ if $h_{r_s}^{\varepsilon} \not = n$. Then $\Omega_{\overline I} = \{O_0, O_1, \ldots , O_s\}$ and we have $O_0 < O_1 < \ldots < O_s$ (with $O_0$ and $O_s$ omitted if they are not defined). The BT type of $z$ defined in \cite{mullerBruhatTitsStratificationBasic2026} is exactly the subset $X \subseteq \{0, \ldots , s\}$ such that 
    \begin{equation*}
        \mathrm{Type}(z) = \bigsqcup_{t \in X} O_t.
    \end{equation*}
\end{rk}
In \cite{mullerBruhatTitsStratificationBasic2026}, the so-called ``crucial Lemma'' (Lemma 2.30), which originates from \cite{vollaardSupersingularLocusShimura2010} and \cite{choBasicLocusUnitary2018}, is the following statement. 
\begin{lem}\label{Lem:CrucialLemma}
    Let $k$ be a perfect field containing $\overline{\mathbb F_q}$ and let $j\in\mathbb Z$ be such that $nj$ is even. For any $z \in \mathcal N^{I,\varepsilon}_j(k)$ we have 
    \begin{equation*}
        \mathrm{Type}(z) \not = \emptyset.
    \end{equation*}
\end{lem}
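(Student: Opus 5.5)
The plan is to argue by contradiction with a Vollaard-type ``crossing'' argument, run around the whole periodic chain $(Z_r)_{r\in I_{\mathbb Z}}$ rather than at a single lattice. Throughout, $z=(Z_r)_{r\in I_{\mathbb Z}}\in\mathcal N_j^{I,\varepsilon}(k)$ is as in Proposition \ref{Prop:PointsArbitraryParahoricBis}.

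\textbf{Step 1: index estimates for one $\tau$-step.} From condition 1), $\pi^{j+1}Z_{-r-\underline{\varepsilon}}^{\vee}\overset{1}{\subseteq}Z_r$. Dualizing and applying \eqref{Eq:FormulaVeeTau} gives $\pi^{j+1}Z_r^{\vee}\overset{1}{\subseteq}\tau(Z_{-r-\underline{\varepsilon}})$. Combining the two inclusions, I expect to get $[Z_r+\tau(Z_r):Z_r]\le 1$ for every $r$. Consequently:
\begin{itemize}
\item[(a)] the hulls $T_i(Z_r)$ grow by index at most $1$ at each step;
\item[(b)] $T_{i+1}(Z_r)=T_i(Z_r)+\tau^i(Z_r)$;
\item[(c)] $T_i(Z_r)$ stabilizes at $\Lambda_{Z_r,\mathcal O_{F_k}}$ after $d_r\le n$ steps.
\end{itemize}
Dually, the lattices $\pi^{j+1}T_i(Z_{-r-\underline{\varepsilon}})^{\vee}$ form a decreasing chain that also moves by index at most $1$ per step. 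It stabilizes at $\pi^{j+1}\Lambda_{Z_{-r-\underline{\varepsilon}}}^{\vee}$.

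\textbf{Step 2: compatibilities along the chain.} Monotonicity of $T_i$ gives $\Lambda_{Z_{r+\ell_{\overline r}}}\subseteq\Lambda_{Z_r}$ and $\Lambda_{Z_{r+n}}=\pi\Lambda_{Z_r}$. So $r\mapsto\Lambda_{Z_r}$ is a periodic decreasing chain of $\mathcal O_K$-lattices. Likewise $r\mapsto \pi^{j+1}\Lambda_{Z_{-r-\underline{\varepsilon}}}^{\vee}$ is a periodic increasing-in-$r$ chain. Condition 1) shows that each $\Lambda_{Z_r}$ contains $\pi^{j+1}\Lambda_{Z_{-r-\underline{\varepsilon}}}^{\vee}$, because $Z_r\supseteq\pi^{j+1}Z_{-r-\underline{\varepsilon}}^{\vee}\supseteq\pi^{j+1}\Lambda_{Z_{-r-\underline{\varepsilon}}}^{\vee}$ by duality. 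The negation of the lemma says that for every $\overline r\in\overline I$,
\[\pi^{j+1}\Lambda_{Z_{r+\ell_{\overline r}}}^{\vee}\not\supseteq \Lambda_{Z_{-r-\underline{\varepsilon}}}.\]
In words, when we pass from $r$ to the consecutive index $r+\ell_{\overline r}$, the ``lower'' chain never catches up with the ``upper'' chain.

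\textbf{Step 3: the counting contradiction.} Sum suitable lattice indices $[\cdot:\cdot]$ (in the sense of \eqref{Eq:DefinitionIndexLattices}) around one full period $r\to r+n$, along the $\theta_I$-orbits ordered as in \eqref{Eq:OrderOnOrbits}. Over one period both chains shift by $\pi$. Using (a) and the strict non-inclusions at every orbit, I would show that the total index gained is at least $n+\#\Omega_{\overline I}$, whereas periodicity forces it to equal exactly the discrepancy $[\Lambda_{Z_r}:\pi^{j+1}\Lambda^{\vee}_{Z_{-r-\underline{\varepsilon}}}]$ computed mod $2$. The parity remark $t(\Lambda)\equiv nj+(1-\underline{\varepsilon})$ would be used to sharpen this, yielding a contradiction.

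Concretely, I would first treat $\#I=1$. There $\overline I$ has one or two points, and the statement reduces to the classical argument of Vollaard and Cho (\cite{choBasicLocusUnitary2018}): one finds a minimal $i$ with $T_{i+1}(Z)=T_i(Z)$ and compares $T_i(Z)$ with the dual chain to exhibit a vertex lattice. I would then induct on $\#I$ by removing an orbit and checking that the chain estimates persist.

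\textbf{Main obstacle.} I expect the hard step to be Step 3. The difficulty is bounding how the hulls $\Lambda_{Z_r}$ and $\Lambda_{Z_{r+\ell_{\overline r}}}$ can differ, since the numbers $d_r$ of steps needed to stabilize differ from one $r$ to another. My first attempt would be to pick the index $r$ realizing the maximal $d_r$ and show that the crossing point of its two chains forces the inclusion at the adjacent orbit.
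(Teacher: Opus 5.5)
Your Steps 1 and 2 are sound. $Y_r\coloneq\pi^{j+1}Z_{-r-\underline{\varepsilon}}^{\vee}$ has index $1$ in both $Z_r$ and $\tau(Z_r)$, so hulls grow by at most one per step. The $\Lambda_{Z_r}$ form a periodic decreasing chain containing $\pi^{j+1}\Lambda_{Z_{-r-\underline{\varepsilon}}}^{\vee}$, and your restatement of the negation is correct.

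The paper does not reprove the lemma; it imports it from \cite{mullerBruhatTitsStratificationBasic2026}, Lemma 2.30, which generalizes the crucial lemmas of Vollaard and Cho. Step 3 is the whole content of the lemma, and what you wrote there is not an argument.
\begin{itemize}
\item The non-inclusions $\Lambda_{Z_{-r-\underline{\varepsilon}}}\not\subseteq\pi^{j+1}\Lambda_{Z_{r+\ell_{\overline r}}}^{\vee}$ compare lattices that are not known to be nested. They give no lower bound on any index, so there is nothing to sum up to $n+\#\Omega_{\overline I}$.
\item The only index you can actually compute is $[\Lambda_{Z_r}:\pi^{j+1}\Lambda_{Z_{-r-\underline{\varepsilon}}}^{\vee}]=1+d_r+d_{-r-\underline{\varepsilon}}$, where $d_s\coloneq[\Lambda_{Z_s}:Z_s]$. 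It depends on the point, and periodicity does not fix it.
\item Its parity is automatically the expected one: for $I=\{0\}$, $\varepsilon=1$ it equals $2d_0+1$. So no parity contradiction can arise.
\end{itemize}
What has to be proved is an upper bound on the hull lengths $d_s$. Your inputs (index $\leq 1$ per $\tau$-step, monotonicity, periodicity) use the hermitian structure only through the two index-one inclusions, and that is not enough. Already for $I=\{0\}$, $\varepsilon=1$, the lemma is equivalent to $\tau^k(Z_0)\subseteq\pi^jZ_0^{\vee}$ for all $1\leq k\leq d_0+1$. Proving this requires the form in an essential way. One such input is that a totally isotropic $U$ in a non-degenerate hermitian $\mathbb F_{q^2}$-space satisfies $\tau(U)=(U^{\perp})^{\perp}\subseteq U^{\perp}$; in dimension $2$ this already forces every isotropic line to be $\tau$-stable. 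Nothing in your outline plays this role.

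Your fallback induction on $\#I$ runs in the wrong direction. Refining the level shrinks $\ell_{\overline r}$, which enlarges $\Lambda_{Z_{r+\ell_{\overline r}}}$ and shrinks $\pi^{j+1}\Lambda_{Z_{r+\ell_{\overline r}}}^{\vee}$. So the condition at the finer level is stronger: for $I'\subseteq I$ and $z'$ the image of $z$, you only get $\mathrm{Type}(z)\cap\overline{I'}\subseteq\mathrm{Type}(z')$, not the converse. Knowing $\Lambda_{Z_{-r-\underline{\varepsilon}}}\subseteq\pi^{j+1}\Lambda_{Z_{r+\ell'}}^{\vee}$ across a coarse gap does not give the inclusion at any finer consecutive pair inside it, because both chains move.

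For the same reason, applying the maximal-parahoric lemma of \cite{choBasicLocusUnitary2018} at each $r\in I$ and then an intermediate-value argument does not suffice. It only produces consecutive $r<r'$ in $I$ with $\Lambda_{Z_{-r-\underline{\varepsilon}}}\in\mathcal L_j$ and $\Lambda_{Z_{r'}}\in\mathcal L_{j+1}$. That does not by itself give $\Lambda_{Z_{-r-\underline{\varepsilon}}}\subseteq\pi^{j+1}\Lambda_{Z_{r'}}^{\vee}$. A genuinely new argument along the whole chain is needed at arbitrary level, and that is exactly what is missing.
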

Besides, Proposition 2.28 of \cite{mullerBruhatTitsStratificationBasic2026} also shows that, if $z \in \mathcal N^{I,\varepsilon}_j(k)$,
\begin{equation*}
    \forall r \in I^+ \sqcup I^-, \qquad \overline r \in \mathrm{Type}(z) \implies \Lambda_{Z_{-r-\underline{\varepsilon}}} \in \begin{cases}
        \mathcal L_{j} & \text{if } r \in I^+,\\
        \mathcal L_{j+1} & \text{if } r \in I^-,
    \end{cases}
\end{equation*}
but the converse does not hold. 
\begin{defi}\label{Defi:BTIndexInducedByAPoint}
    For $z \in \mathcal N^{I,\varepsilon}_j(k)$, we define a collection of vertex lattices $\mathbf{\Lambda}^z := (\Lambda_i^z)_{i \in \mathrm{Type}(z)}$ of rank $j$ as follows: 
    \begin{equation*}
        \Lambda_i^z \coloneq \begin{cases}
            \Lambda_{Z_{-r-\underline{\varepsilon}}} & \text{if } i = \overline r \text{ for some } r \in I^+,\\
            \pi^{j+1}\Lambda_{Z_{-r-\underline{\varepsilon}}}^{\vee} & \text{if } i = \overline r \text{ for some } r \in I^-.
        \end{cases}
    \end{equation*} 
\end{defi}
Then $\Lambda_i^z \in \mathcal L_j$ for all $i \in \mathrm{Type}(z)$.

\begin{defi}\label{Defi:NumMathcalT}
    Let $\mathcal T \subseteq \overline I$ be a non-empty $\theta_I$-stable subset. We define 
    \begin{equation*}
        \mathrm{Num}(\mathcal T) := \left\{(m_i)_{i \in \mathcal T} \in \mathbb Z_{\geq 0}^{\mathcal T} \mid \forall i \in \mathcal T, \quad m_i + m_{\theta_I(i)} \leq \ell_i-1\right\}.
    \end{equation*}
\end{defi}

\begin{defi}\label{Defi:BTIndex}
    Let $j\in\mathbb Z$ be such that $nj$ is even. A \textit{Bruhat-Tits index of rank $j$} is a pair $(\mathcal T,\mathbf{\Lambda})$, where $\mathcal T \subseteq \overline I$ is non-empty and $\theta_I$-stable, and 
    \begin{equation*}
        \mathbf{\Lambda} = (\Lambda_i)_{i\in\mathcal T}
    \end{equation*}
    is a collection of $\mathcal O_K$-lattices in $\mathbb V_{K,0}$ for which there exists $\mathbf m = (m_i)_{i\in\mathcal T}\in \mathrm{Num}(\mathcal T)$ such that
    \begin{enumerate}
        \item for every \(r\in I^+\sqcup I^-\) such that $\overline r \in \mathcal T$, we have $\Lambda_{\overline r} \in \mathcal L_j$ and
        \begin{equation*}
            t(\Lambda_{\overline r}) = \left|2(r+m_{\overline r}) +\underline{\varepsilon}+1\right|,
        \end{equation*}
        \item $\mathbf{\Lambda}$ forms a chain under inclusion.
    \end{enumerate}
\end{defi}
If $(\mathcal T, \mathbf{\Lambda})$ is a BT index, then the corresponding element $\mathbf m \in \mathrm{Num}(\mathcal T)$ is uniquely determined. 

\begin{rk}
    The set of vertices of the Bruhat-Tits building of $J^{\mathrm{der}}$ can be identified with $\mathcal L_j$. Then, a Bruhat-Tits index is nothing but a facet of the building, with certain numerical constraints, encoded by an element of $\mathrm{Num}(\mathcal T)$, on the types of its vertices.
\end{rk}

\begin{lem}\label{Lem:BTIndexAssociatedToPoint}
    Let $k$ be a perfect field containing $\overline{\mathbb F_q}$, let $j\in\mathbb Z$ be such that $nj$ is even, and let $z=(Z_r)_{r\in I_{\mathbb Z}} \in \mathcal N_j^{I,\varepsilon}(k)$. Then $(\mathrm{Type}(z),\mathbf{\Lambda}^z)$ is a Bruhat-Tits index of rank $j$.
\end{lem}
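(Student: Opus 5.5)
The plan is to verify the conditions of Definition \ref{Defi:BTIndex} one at a time, with $\mathcal T \coloneq \mathrm{Type}(z)$. The conditions on $\mathcal T$ itself are already settled. Lemma \ref{Lem:CrucialLemma} gives $\mathcal T \neq \emptyset$. $\theta_I$-stability was checked right after the definition of $\mathrm{Type}(z)$, by dualizing the defining inclusion. Also, Proposition 2.28 of \cite{mullerBruhatTitsStratificationBasic2026}, as recalled before Definition \ref{Defi:BTIndexInducedByAPoint}, shows $\Lambda_i^z \in \mathcal L_j$ for all $i \in \mathcal T$: for $r \in I^-$ we have $\Lambda_{Z_{-r-\underline{\varepsilon}}} \in \mathcal L_{j+1}$, and then $\pi^{j+1}\Lambda^{\vee}_{Z_{-r-\underline{\varepsilon}}} \in \mathcal L_j$ by \eqref{Eq:FormulaVeeTau}, since $\Lambda$ is $\tau$-stable. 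What remains is to produce $\mathbf m \in \mathrm{Num}(\mathcal T)$ with the prescribed orbit types, and to prove the chain property.

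For the orbit types, I would define $\mathbf m$ by forcing condition (1) of Definition \ref{Defi:BTIndex}. For $r \in I^+$ with $\overline r \in \mathcal T$, set $2m_{\overline r} \coloneq t(\Lambda^z_{\overline r}) - (2r+\underline{\varepsilon}+1)$. For $r \in I^-$, where $2(r+m)+\underline{\varepsilon}+1 < 0$ is expected, set $2m_{\overline r} \coloneq -t(\Lambda^z_{\overline r}) - 2r - \underline{\varepsilon} - 1$. Integrality comes from the parity remark $t(\Lambda) \equiv nj + 1 - \underline{\varepsilon} \pmod 2$.

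Nonnegativity $m_{\overline r} \geq 0$ should follow from index bookkeeping with $[\cdot:\cdot]$. Since $Z_{-r-\underline{\varepsilon}} \subseteq \Lambda_{Z_{-r-\underline{\varepsilon}}}$, dualizing gives
\begin{equation*}
t(\Lambda_{Z_{-r-\underline{\varepsilon}}}) \geq [Z_{-r-\underline{\varepsilon}} : \pi^{j+1}\tau^{-1}(Z_{-r-\underline{\varepsilon}}^{\vee})] + (\text{correction}).
\end{equation*}
I would then evaluate the right-hand side through the chain relations of Proposition \ref{Prop:PointsArbitraryParahoricBis}, namely conditions 1) and 2) and $\pi$-periodicity, reducing to the reference chain $\Lambda_\bullet$ with $[\Lambda_r : \Lambda_{r'}] = r'-r$.

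The upper bound $m_i + m_{\theta_I(i)} \leq \ell_i - 1$ should come from the defining inclusion $\Lambda_{Z_{-r-\underline{\varepsilon}}} \subseteq \pi^{j+1}\Lambda^{\vee}_{Z_{r+\ell_{\overline r}}}$ of $\overline r \in \mathrm{Type}(z)$. Indeed, $\theta_I(\overline r)$ is represented by $-r-\underline{\varepsilon}-\ell_{\overline r}$, so $\Lambda^z_{\theta_I(\overline r)}$ is essentially $\pi^{j+1}\Lambda^{\vee}_{Z_{r+\ell}}$ or its dual. The inclusion therefore yields $\Lambda^z_{\overline r} \subseteq \Lambda^z_{\theta_I(\overline r)}$, or the reverse inclusion, and comparing indices through the length-$\ell_{\overline r}$ step $Z_{r+\ell} \subseteq Z_r$ gives the sum bound. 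On a singleton orbit the same computation gives $2m_i \leq \ell_i - 1$.

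For the chain property, I would order $\mathcal T$ via the total order \eqref{Eq:OrderOnOrbits} on $\Omega_{\mathcal T}$ and show that the lattices increase, or decrease, along it. Within an orbit, the comparison is the inclusion just obtained. Across orbits, I would use that $(Z_r)$ is decreasing, so that $s \geq s'$ implies $\Lambda_{Z_s} \subseteq \Lambda_{Z_{s'}}$ because $T_c$ is monotone. For the $I^-$-lattices, I would use the duals of these inclusions together with the inclusions $\Lambda_{Z_{-r-\underline{\varepsilon}}} \subseteq \pi^{j+1}\Lambda^{\vee}_{Z_{r'+\ell}}$ coming from the type condition, which let one pass between $I^+$-lattices and $I^-$-lattices.

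I expect the main obstacle to be the index bookkeeping for the numerical constraints, specifically getting the correct signs in the absolute value $|2(r+m)+\underline{\varepsilon}+1|$ uniformly for $r \in I^+$ and $r \in I^-$, and proving $m \geq 0$, which needs a sharp lower bound on $t(\Lambda_Z)$ in terms of $Z$. For that step I would first try to reduce to the maximal-parahoric case $I = \{r\}$, following \cite{choBasicLocusUnitary2018}, and then transport the estimate along the chain.
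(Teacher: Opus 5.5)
Apart from one step, your plan follows the paper's proof. It uses the same definition of $\mathbf m$, gets nonnegativity from $Z\subseteq\Lambda_Z$ and duality, the sum bound from $\Lambda^z_{\overline{r_O^+}}\subseteq\Lambda^z_{\overline{r_O^-}}$, and the singleton bound from $0\leq t\leq n$. Two clarifications. First, no reduction to maximal parahoric level is needed for $m_i\geq 0$, and your ``correction'' term is zero. For $r\in I^+$ the chain $\pi^{j+1}(\Lambda^z_{\overline r})^{\vee}_k\subseteq\pi^{j+1}Z^{\vee}_{-r-\underline{\varepsilon}}\overset{1}{\subset}Z_r\subseteq Z_{-r-\underline{\varepsilon}}\subseteq(\Lambda^z_{\overline r})_k$ gives the bound directly. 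For $r\in I^-$ the chain $(\Lambda^z_{\overline r})_k\subseteq\pi^{j+1}Z^{\vee}_{-r-\underline{\varepsilon}}\overset{1}{\subset}Z_r\subseteq\pi^{-1}Z_{-r-\underline{\varepsilon}}\subseteq\pi^{j}(\Lambda^z_{\overline r})^{\vee}_k$ does the same. Second, for the sum bound the only input is $t(\Lambda')-t(\Lambda)=2[\Lambda':\Lambda]\geq 0$ for $\Lambda\subseteq\Lambda'$ in $\mathcal L_j$. The integer $\ell_i$ enters through $r_O^-=-r_O^+-\underline{\varepsilon}-\ell_i$, not through the step $Z_{r+\ell}\subseteq Z_r$.

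The genuine gap is in the chain property. The inclusions you invoke there are monotonicity $\Lambda_{Z_s}\subseteq\Lambda_{Z_{s'}}$ for $s\geq s'$, its dual, and the type inclusions $\Lambda_{Z_{-r-\underline{\varepsilon}}}\subseteq\pi^{j+1}\Lambda^{\vee}_{Z_{r+\ell_{\overline r}}}$. These only compare lattices $\Lambda_Z$ with each other, or duals $\Lambda_Z^{\vee}$ with each other, or put some $\Lambda_{Z_s}$ inside some $\pi^{j+1}\Lambda^{\vee}_{Z_{s'}}$; dualizing preserves this direction. None puts a $\pi^{j+1}\Lambda^{\vee}_{Z_{s'}}$ inside a $\Lambda_{Z_s}$, but the chain requires exactly that. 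Take $r\in I$ with $0<h_r^{\varepsilon}<n$ such that the $\theta_I$-orbits of $\overline{-r-\underline{\varepsilon}}$ and of $\overline r$ both lie in $\mathrm{Type}(z)$. The first orbit is strictly smaller for \eqref{Eq:OrderOnOrbits}, so you must show $\Lambda^z_{\overline{-r-\underline{\varepsilon}}}=\pi^{j+1}\Lambda^{\vee}_{Z_r}\subseteq\Lambda_{Z_{-r-\underline{\varepsilon}}}=\Lambda^z_{\overline r}$, and nothing in your list yields this. The missing ingredient is the inclusion $\pi^{j+1}\Lambda^{\vee}_{Z_r}\subseteq\Lambda_{Z_{-r-\underline{\varepsilon}}}$, valid for every $r\in I_{\mathbb Z}$ regardless of the type. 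Start from $\pi^{j+1}(\Lambda_{Z_{-r-\underline{\varepsilon}}})^{\vee}_k\subseteq\pi^{j+1}Z^{\vee}_{-r-\underline{\varepsilon}}\subseteq Z_r\subseteq(\Lambda_{Z_r})_k$, which uses the first condition of Proposition \ref{Prop:PointsArbitraryParahoricBis} and $Z\subseteq\Lambda_Z$. Dualizing it with \eqref{Eq:FormulaVeeTau} and the $\tau$-stability of the $\Lambda_Z$ gives the inclusion. Combine it with monotonicity and with the type inclusions for consecutive $r<r'$ in $I$. Then $\Lambda^z_{i'}\subseteq\Lambda^z_i$ whenever $i'\in O'$ and $i\in O$ with $O'<O$ in $\Omega_{\mathrm{Type}(z)}$, and together with the within-orbit inclusion this gives the chain, as in the paper.
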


\begin{proof}
    By Lemma \ref{Lem:CrucialLemma} the set $\mathrm{Type}(z)$ is non-empty, and we have already seen that it is stable under $\theta_I$. Moreover, we also know that $\Lambda_i^z\in \mathcal L_j$ for every $i\in \mathrm{Type}(z)$. Let $i \in \mathrm{Type}(z)$. If $i = \overline r$ for some $r \in I^+$ then we have 
    \begin{equation*}
        \pi^{j+1}(\Lambda_i^z)^{\vee}_k \subseteq \pi^{j+1} Z_{-r-\underline{\varepsilon}}^\vee \overset{1}{\subset} Z_{r} \overset{2r+\underline{\varepsilon}}{\subseteq} Z_{-r-\underline{\varepsilon}} \subseteq (\Lambda_i^z)_k,
    \end{equation*}
    from which it follows that $t(\Lambda_i^z) \geq 2r+\underline{\varepsilon}+1$. On the other hand, if $i = \overline r$ for some $r \in I^-$ then we have
    \begin{equation*}
        (\Lambda_i^z)_k \subseteq \pi^{j+1}Z_{-r-\underline{\varepsilon}}^{\vee} \overset{1}{\subset} Z_{r} \overset{n+2r+\underline{\varepsilon}}{\subseteq} \pi^{-1}Z_{-r-\underline{\varepsilon}} \subseteq \pi^j(\Lambda_i^z)_k^{\vee},
    \end{equation*}
    from which it follows that $n-t(\Lambda_i^z) \geq n+2r+\underline{\varepsilon}+1$, so that $t(\Lambda_i^z) \leq -2r - \underline{\varepsilon}-1$. We define 
    \begin{equation*}
        \text{if } r \in I^+, \quad m^z_{\overline r} \coloneq \frac{t(\Lambda_{\overline r}^z) - 2r - \underline{\varepsilon}-1}{2}, \qquad \text{if } r \in I^-, \quad m_{\overline r}^z \coloneq \frac{-t(\Lambda_{\overline r}^z) - 2r - \underline{\varepsilon}-1}{2}.
    \end{equation*}
    By the inequalities above and since $t(\Lambda_i^z) \equiv 1-\underline{\varepsilon}\mod 2$, each $m_i^z$ is a non-negative integer. Let $O = \{i,\theta_I(i)\} \in \Omega_{\mathrm{Type}(z)}$ be an orbit with two elements. Up to exchanging $i$ and $\theta_I(i)$, we can assume that $i = \overline{r_O^+}$. We have  
    \begin{equation*}
        r_{O}^- =  -r_O^+-\underline{\varepsilon}-\ell_i \in I^-,
    \end{equation*}
    and since $i\in\mathrm{Type}(z)$, we have $\Lambda_i^z\subseteq\Lambda_{\theta_I(i)}^z$. Consequently,
    \begin{equation*}
        t(\Lambda_{\theta_I(i)}^z) - t(\Lambda_i^z) = 2(\ell_i-1-m_i^z-m_{\theta_I(i)}^z) \geq 0,
    \end{equation*}
    so that $m_i^z+m_{\theta_I(i)}^z\leq\ell_i-1$. Suppose now that $O = \{i\}$ is a singleton. If $r_O\in I^+$, then $\ell_i=n-2r_O-\underline{\varepsilon}$ and the inequality $t(\Lambda_i^z)\leq n$ gives
    \begin{equation*}
        2m_i^z = t(\Lambda_i^z)-2r_O-\underline{\varepsilon}-1 \leq\ell_i-1.
    \end{equation*}
    If $r_O\in I^-$, then $\ell_i=-2r_O-\underline{\varepsilon}$ and the inequality $t(\Lambda_i^z)\geq 0$ gives
    \begin{equation*}
        2m_i^z = -t(\Lambda_i^z)-2r_O-\underline{\varepsilon}-1 \leq\ell_i-1.
    \end{equation*}
    This proves that $\mathbf m^z := (m_i^z)_{i \in \mathrm{Type}(z)} \in \mathrm{Num}(\mathrm{Type}(z))$. It remains to check that $\mathbf{\Lambda}^z$ forms a chain. For every $r \in I_{\mathbb Z}$, we have 
    \begin{equation*}
        \pi^{j+1}(\Lambda_{Z_{-r-\underline{\varepsilon}}})_k^{\vee} \subseteq \pi^{j+1}Z_{-r-\underline{\varepsilon}}^\vee \subseteq Z_r \subseteq (\Lambda_{Z_r})_k.
    \end{equation*}
    Taking duals and multiplying by $\pi^{j+1}$, we obtain $\pi^{j+1}\Lambda_{Z_r}^{\vee} \subseteq\Lambda_{Z_{-r-\underline{\varepsilon}}}$. Moreover, if $r<r'$ belong to $I$, the inclusions $Z_{r'}\subseteq Z_r$ and $Z_{-r-\underline{\varepsilon}} \subseteq Z_{-r'-\underline{\varepsilon}}$ give
    \begin{equation*}
        \pi^{j+1}\Lambda_{Z_r}^{\vee} \subseteq\pi^{j+1}\Lambda_{Z_{r'}}^{\vee}, \qquad \Lambda_{Z_{-r-\underline{\varepsilon}}} \subseteq\Lambda_{Z_{-r'-\underline{\varepsilon}}}.
    \end{equation*}
    Finally, if $r<r'$ are consecutive in $I$ and $\overline r\in\mathrm{Type}(z)$, then $\Lambda_{Z_{-r-\underline{\varepsilon}}} \subseteq\pi^{j+1}\Lambda_{Z_{r'}}^\vee$. All together, these inclusions prove that if $O',O \in \Omega_{\mathrm{Type}(z)}$ satisfy $O' < O$, then $\Lambda_{i'}^z \subseteq \Lambda_i^z$ for every $i\in O$ and $i' \in O'$. This concludes the proof.
\end{proof}

Bruhat-Tits indices serve as the indexing set of the Bruhat-Tits stratification. We define a partial order on the set of Bruhat-Tits indices of rank $j$ as in \cite{mullerBruhatTitsStratificationBasic2026} Definition 2.35. 
\begin{defi}
    Given two Bruhat-Tits indices $(\mathcal T, \mathbf{\Lambda}), (\mathcal T', \mathbf{\Lambda}')$ of rank $j$, we write 
    \begin{equation*}
        (\mathcal T', \mathbf{\Lambda}') \leq (\mathcal T, \mathbf{\Lambda}),
    \end{equation*}
    if and only if $\mathcal T \subseteq \mathcal T'$ and, for every $r \in I^+ \sqcup I^-$ such that $\overline r\in \mathcal T$, we have 
    \begin{equation*}
        \begin{cases}
            \Lambda_{\overline r}'\subseteq\Lambda_{\overline r}
            &\text{if }r\in I^+,\\
            \Lambda_{\overline r}\subseteq\Lambda_{\overline r}'
            &\text{if }r\in I^-.
        \end{cases}
    \end{equation*}
\end{defi}

The following statement is a summary of \cite{mullerBruhatTitsStratificationBasic2026} Section 4.1. 
\begin{theo}
    Let $j \in \mathbb Z$ be such that $nj$ is even. For every Bruhat-Tits index $(\mathcal T, \mathbf{\Lambda})$ of rank $j$, there exists a locally closed subscheme 
    \begin{equation*}
        \mathcal N_{\mathcal T,\mathbf{\Lambda}}^{0} \hookrightarrow \mathcal N_{j,\mathrm{red}}^{I,\varepsilon},
    \end{equation*}
    such that, for every perfect field $k$ containing $\overline{\mathbb F_q}$, we have
    \begin{equation*}
        \mathcal N_{\mathcal T,\mathbf{\Lambda}}^{0}(k) = \left\{z \in \mathcal N_{j}^{I,\varepsilon}(k) \mid (\mathcal T, \mathbf{\Lambda}) = (\mathrm{Type}(z), \mathbf{\Lambda}^z) \right\} \not = \emptyset.
    \end{equation*}
    Moreover, we have 
    \begin{equation*}
        \mathcal N_{j}^{I,\varepsilon}(k) = \bigsqcup_{\substack{\text{BT index } (\mathcal T, \mathbf{\Lambda})\\ \text{of rank } j}} \mathcal N_{\mathcal T,\mathbf{\Lambda}}^{0}(k),
    \end{equation*}
    and the Zariski closure of $\mathcal N_{\mathcal T,\mathbf{\Lambda}}^{0}$ is projective, smooth and given by 
    \begin{equation*}
        \overline{\mathcal N_{\mathcal T,\mathbf{\Lambda}}^{0}} = \bigsqcup_{(\mathcal T', \mathbf{\Lambda}') \leq (\mathcal T, \mathbf{\Lambda})} \mathcal N_{\mathcal T',\mathbf{\Lambda}'}^{0},
    \end{equation*}
    where $(\mathcal T', \mathbf{\Lambda}')$ runs over all the Bruhat-Tits indices of rank $j$ smaller or equal to $(\mathcal T,\mathbf{\Lambda})$.
\end{theo}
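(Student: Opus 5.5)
The plan is to build the strata as complements inside closed subschemes that are cut out by lattice inclusions, in the style of Vollaard--Wedhorn and Cho. Fix $j$ and a Bruhat--Tits index $(\mathcal T,\mathbf\Lambda)$ of rank $j$. I would first define a closed subfunctor $\mathcal N_{\mathcal T,\mathbf\Lambda}\subseteq\mathcal N^{I,\varepsilon}_{j,\mathrm{red}}$ by the conditions, for each $r\in I^+\sqcup I^-$ with $\overline r\in\mathcal T$,
\begin{equation*}
Z_{-r-\underline{\varepsilon}}\subseteq(\Lambda_{\overline r})_{k}\ \ (r\in I^+),\qquad (\Lambda_{\overline r})_k\subseteq\pi^{j+1}Z_{-r-\underline{\varepsilon}}^{\vee}\ \ (r\in I^-),
\end{equation*}
together with the condition $\overline r\in\mathrm{Type}$ for $\overline r\in\mathcal T$. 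Over an arbitrary base $S$, these conditions are phrased through the Dieudonné crystals, namely as the vanishing of the composite of the quasi-isogeny induced by $\Lambda_{\overline r}$ with the relevant projections. Vanishing of a map between finite locally free modules is a closed condition, so these subfunctors are representable by closed subschemes.

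The key elementary observation is that, since $(\Lambda_{\overline r})_k$ is $\tau$-stable, the inclusion $Z\subseteq(\Lambda_{\overline r})_k$ holds if and only if $T_c(Z)\subseteq(\Lambda_{\overline r})_k$, that is, if and only if $\Lambda_Z\subseteq\Lambda_{\overline r}$. The dual statement holds in the same way. Combining this with Lemma \ref{Lem:BTIndexAssociatedToPoint}, I would prove the point-level identity
\begin{equation*}
\mathcal N_{\mathcal T,\mathbf\Lambda}(k)=\bigsqcup_{(\mathcal T',\mathbf\Lambda')\le(\mathcal T,\mathbf\Lambda)}\{z\mid(\mathrm{Type}(z),\mathbf\Lambda^z)=(\mathcal T',\mathbf\Lambda')\}.
\end{equation*}
By Lemma \ref{Lem:BTIndexAssociatedToPoint} every point $z$ has a BT index, so the sets on the right partition $\mathcal N^{I,\varepsilon}_j(k)$. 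I would then set
\begin{equation*}
\mathcal N^0_{\mathcal T,\mathbf\Lambda}\coloneq\mathcal N_{\mathcal T,\mathbf\Lambda}\setminus\bigcup_{(\mathcal T',\mathbf\Lambda')<(\mathcal T,\mathbf\Lambda)}\mathcal N_{\mathcal T',\mathbf\Lambda'}.
\end{equation*}
Only finitely many smaller indices meet a given quasi-compact open, because vertex lattices inside a fixed lattice are finite in number. Hence this union is locally finite and $\mathcal N^0_{\mathcal T,\mathbf\Lambda}$ is locally closed, with exactly the prescribed $k$-points.

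For the geometry, I would identify $\mathcal N_{\mathcal T,\mathbf\Lambda}$ with a closed subvariety of a partial flag variety over $\mathbb F_{q^2}$. Once all $Z_r$ are sandwiched between the lattices of the chain $\mathbf\Lambda$ and their duals, the point $z$ is determined by its images in the finite-dimensional $\mathbb F_{q^2}$-hermitian spaces obtained as successive quotients of the chain. The conditions of Proposition \ref{Prop:PointsArbitraryParahoricBis} then become incidence conditions between a flag $U$ and its Frobenius twist $\tau(U)$, of the shape $\tau(U)^{\perp}\subseteq U$ with prescribed codimensions. This exhibits $\mathcal N_{\mathcal T,\mathbf\Lambda}$ as projective, since it is closed in a flag variety, and presents it as a closure of a union of fine Deligne--Lusztig varieties. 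Non-emptiness of $\mathcal N^0_{\mathcal T,\mathbf\Lambda}$ follows by exhibiting an explicit $\mathbb F_{q^{2m}}$-point of the open Deligne--Lusztig cell. The numerical constraint $\mathbf m\in\mathrm{Num}(\mathcal T)$ is exactly what guarantees that this cell is non-empty.

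I expect two steps to be the main obstacles. The first is smoothness of $\mathcal N_{\mathcal T,\mathbf\Lambda}$, which is needed also with its reduced scheme structure. Here I would compute tangent spaces via Grothendieck--Messing deformation theory, showing that the dimension is constant and equal to the dimension of the open cell. Alternatively, I would realize $\mathcal N_{\mathcal T,\mathbf\Lambda}$ as an iterated fibration of closures of Coxeter-type Deligne--Lusztig varieties, using the Vollaard--Wedhorn argument for generalized Deligne--Lusztig varieties attached to hermitian spaces. The second is the density statement $\overline{\mathcal N^0_{\mathcal T,\mathbf\Lambda}}=\mathcal N_{\mathcal T,\mathbf\Lambda}$. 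Once smoothness is known, this follows from a dimension count: every boundary stratum has strictly smaller dimension, and each connected component of the smooth variety $\mathcal N_{\mathcal T,\mathbf\Lambda}$ is irreducible of the top dimension.
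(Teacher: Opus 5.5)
Your set-up is sound and is the standard one. The closed strata are cut out by $Z_{-r-\underline{\varepsilon}}\subseteq(\Lambda_{\overline r})_k$ for $r\in I^+$ and $(\Lambda_{\overline r})_k\subseteq\pi^{j+1}Z_{-r-\underline{\varepsilon}}^{\vee}$ for $r\in I^-$. Passing to $\tau$-hulls gives the point-level decomposition over indices $\le(\mathcal T,\mathbf\Lambda)$, and $\mathcal N^0_{\mathcal T,\mathbf\Lambda}$ is then an open complement; all of this works. Your extra condition $\overline r\in\mathrm{Type}$ can be dropped, since it follows from these inclusions together with $\Lambda_{\overline{r_O^+}}\subseteq\Lambda_{\overline{r_O^-}}$. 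Non-emptiness needs no explicit point, because fine Deligne--Lusztig varieties are non-empty by Lang's theorem; what must be checked is that the open cell lies in $\mathcal N^0_{\mathcal T,\mathbf\Lambda}$.

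The genuine gap is in your last paragraph: you have no working argument for smoothness of the closure, and you make density depend on it.

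\emph{Tangent-space route.} In the flag-variety picture the Frobenius twist has zero differential. A Grothendieck--Messing tangent computation at a boundary point therefore amounts to the tangent space of a Schubert-type variety at a smaller cell. Closures of Deligne--Lusztig varieties are smoothly equivalent to Schubert varieties and are singular in general. So ``the tangent dimension is constant'' is exactly the non-trivial content; it depends on the specific shape of $w_{\mathbf c^{\mathrm{max}}}$, and you give no mechanism for it.

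\emph{Circularity.} Even granting constant tangent dimension, this holds for your functorially defined, possibly non-reduced $\mathcal N_{\mathcal T,\mathbf\Lambda}$. That only bounds the dimensions of the components of its reduction from above. Smoothness of the reduced closure needs every point to lie on a component of dimension $\dim\mathcal N^0_{\mathcal T,\mathbf\Lambda}$, which is density. Your density step (``each connected component is irreducible of the top dimension'') in turn presupposes that every connected component meets $\mathcal N^0_{\mathcal T,\mathbf\Lambda}$. So the argument goes in a circle.

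\emph{Fibration route.} The fallback via iterated fibrations of closures of Coxeter-type varieties is unsubstantiated. Outside the three Coxeter-type cases the fine Deligne--Lusztig varieties involved are not of Coxeter type (this is why BT and fine BT strata differ there), and you do not say what the fibration would be.

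Smoothness also transfers only if the comparison with the flag variety is an isomorphism of reduced schemes. A bijection on perfect points gives at most a universal homeomorphism.

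The paper only recalls this theorem; in the cited work, density is obtained independently of smoothness. The closed stratum is the closure of the single fine Deligne--Lusztig variety $X_{\mathcal K_{\mathcal T,\mathbf m}}\{w_{\mathbf c^{\mathrm{max}}}^{-1}\}$. This closure has an explicit decomposition into the $X_{\mathcal K_{\mathcal T,\mathbf m}}\{(w^{\mathcal T,\mathbf m}_{\mathbf d})^{-1}\}$, each $w^{\mathcal T,\mathbf m}_{\mathbf d}$ being a subword of $w_{\mathbf c^{\mathrm{max}}}$. Smoothness is then proved by identifying this closure with the closure of a coarse Deligne--Lusztig variety, which is smoothly equivalent to a Schubert variety, and checking the latter by a pattern-avoidance criterion. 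This Schubert-variety input is the idea your proposal is missing.
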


Recall the group $J \simeq \mathrm{GU}(\mathbb V_{K,0}, (\cdot,\cdot))$ of Proposition \ref{Prop:GroupJIsomorphism}, as well as the map $\alpha_J:J(F) \to \mathbb Z$ defined as in Proposition \ref{Prop:ImageOfalpha}. An element $g \in J(F)$ induces an orbit type-preserving bijection 
\begin{equation*}
    g:\mathcal L_j \xrightarrow{\sim} \mathcal L_{j+\alpha_J(g)}, \qquad \Lambda \mapsto g(\Lambda).
\end{equation*}
It induces an action of $J(F)$ on the set of Bruhat-Tits indices via
\begin{equation*}
    g\cdot (\mathcal T, \mathbf{\Lambda}) \coloneq  (\mathcal T, g(\mathbf{\Lambda})),
\end{equation*}
where $g(\mathbf{\Lambda}) \coloneq (g(\Lambda_i))_{i\in \mathcal T}$. If $(\mathcal T, \mathbf{\Lambda})$ has rank $j$, then $g\cdot (\mathcal T, \mathbf{\Lambda})$ has rank $j + \alpha_J(g)$. The following statement is \cite{mullerBruhatTitsStratificationBasic2026} Proposition 4.10.

\begin{prop}\label{Prop:OrbitBruhatTitsIndex}
    Two Bruhat-Tits indices $(\mathcal T, \mathbf{\Lambda})$ and $(\mathcal T', \mathbf{\Lambda}')$ are in the same $J(F)$-orbit if and only if $\mathcal T = \mathcal T'$ and $t(\Lambda_i) = t(\Lambda_i')$ for all $i \in \mathcal T$.
\end{prop}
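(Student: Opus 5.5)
The plan is to reduce the statement to the classification of $J(F)$-orbits on vertex lattices, combined with the elementary observation that, inside a BT index, the chain structure is forced by the orbit types.

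\emph{Necessity.} Since $g\in J(F)$ acts by $g\cdot(\mathcal T,\mathbf\Lambda)=(\mathcal T,g(\mathbf\Lambda))$ and the induced bijections $\mathcal L_j\to\mathcal L_{j+\alpha_J(g)}$ preserve orbit types, nothing more is needed.

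\emph{Sufficiency.} Let $(\mathcal T,\mathbf\Lambda)$ and $(\mathcal T,\mathbf\Lambda')$ be BT indices of ranks $j,j'$ with $t(\Lambda_i)=t(\Lambda_i')$ for all $i$.

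First I reduce to $j=j'$. Since $t(\Lambda)\equiv nj+(1-\underline{\varepsilon}) \bmod 2$ and the types agree, $nj\equiv nj'\bmod 2$. Both $nj$ and $nj'$ are even, so $j-j'\in\mathrm{Im}(\alpha_J)$ by Proposition \ref{Prop:ImageOfalpha}. I then pick $g\in J(F)$ with $\alpha_J(g)=j-j'$ and replace $\mathbf\Lambda'$ by $g(\mathbf\Lambda')$.

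Next I order the lattices. By condition (1) of Definition \ref{Defi:BTIndex}, $t(\Lambda_i)$ is determined by $r$ and $m_i$. Using the total order on $\Omega_{\mathcal T}$ from \eqref{Eq:OrderOnOrbits} (as in the proof of Lemma \ref{Lem:BTIndexAssociatedToPoint}), the chain $\mathbf\Lambda$ is ordered compatibly with orbit types. The strictly smaller type sits below, and equal types force equality of the lattices in a chain. The same holds for $\mathbf\Lambda'$ with the same combinatorics. So both $\mathbf\Lambda$ and $\mathbf\Lambda'$ are flags of vertex lattices in $\mathcal L_j$ whose type sequences coincide, indexed compatibly with the index sets.

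The key step is transitivity of $J(F)$, in fact of the subgroup of $\alpha_J$-trivial elements, on such flags of a given type. I would prove this by simultaneous adaptation of a basis. Given a chain $\pi^{j+1}\Lambda_{1}^\vee\subseteq \dots\subseteq\Lambda_1\subseteq\dots\subseteq\Lambda_s\subseteq\pi^j\Lambda_s^\vee$ (dualising reverses the order, yielding a self-dual periodic lattice chain), I would build a Witt-type basis of $\mathbb V_{K,0}$ adapted to the whole chain. Concretely, I would take a hyperbolic basis splitting the successive quotients, using that hermitian forms over $\mathbb F_{q^2}/\mathbb F_q$ are classified by dimension. This shows the self-dual chain is conjugate to a standard subchain of $\Lambda_\bullet$ (suitably scaled by $\pi^{j}$) determined only by the types. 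Equivalently, each such flag is a facet of the building of $J$, and $J(F)$ acts transitively on facets of a given type, because the types are $\sigma$-fixed: the group is unitary, with no diagram automorphism issues beyond what the type already records.

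I expect the main obstacle to be this transitivity in the non-quasi-split or boundary cases. There, $\mathcal L_j\cap\mathcal L_{j+1}\neq\emptyset$ (when $\varepsilon=-1$), so that types are only defined up to this ambiguity. The plan is to handle them by the explicit lifting of isometries of the finite hermitian quotients $\pi^{j}\Lambda^\vee/\Lambda$, using Hensel lifting to extend an isometry of graded pieces to $g\in J(F)$.
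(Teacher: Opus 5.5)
Your outline is correct, and it is the natural argument. The paper does not reprove this statement: it quotes it from \cite{mullerBruhatTitsStratificationBasic2026}, Proposition 4.10, with the remark that a BT index is a facet of the building of $J^{\mathrm{der}}$ subject to constraints on vertex types. Necessity and the reduction to equal rank are fine. For the reduction you only need that $nj$ and $nj'$ are even, so $j-j'\in\mathrm{Im}(\alpha_J)$; the parity remark adds nothing. For the shape of the chains you do not need the order on $\Omega_{\mathcal T}$. If $\Lambda\subseteq\Lambda'$ both lie in $\mathcal L_j$, then $t(\Lambda')=t(\Lambda)+2[\Lambda':\Lambda]$. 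So inside a chain the inclusions are dictated by the types, and equal types force equal lattices. This is also what ensures that an element $g$ carrying the set $\{\Lambda'_i\}$ onto $\{\Lambda_i\}$ satisfies $g(\Lambda'_i)=\Lambda_i$ for every $i\in\mathcal T$.

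All the content sits in your transitivity step, and the reason you give for it (types being $\sigma$-fixed) is not the one that does the work. What you need is that $J^{\mathrm{der}}=\mathrm{SU}(\mathbb V_{K,0})$ is simply connected. Then $J^{\mathrm{der}}(F)$ acts type-preservingly and transitively on chambers, hence transitively on facets of each building type. You must then still identify building types with values of $t$ inside a fixed $\mathcal L_j$, and this is not a tautology. The vertex of $\Lambda\in\mathcal L_j$ is also the vertex of $\pi^{j+1}\Lambda^{\vee}\in\mathcal L_{j+1}$, whose orbit type is $n-t(\Lambda)$. Consequently, for $n$ even, an element with $\alpha_J(g)$ odd acts on vertex types by $t\mapsto n-t$. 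The identification does hold: $t$ is $J^{\mathrm{der}}(F)$-invariant and takes exactly $\nu_{-\varepsilon}+1$ values on $\mathcal L_j$, all realized in $\Lambda_\bullet$. Since the Witt index of $\mathbb V_{K,0}$ is $\nu_{-\varepsilon}$, this equals the number of vertex types.

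Your adapted-basis route proves the same thing directly. There you choose a basis splitting the whole self-dual periodic chain with a standard Gram matrix, using that hermitian spaces over $\mathbb F_{q^2}/\mathbb F_q$ are classified by dimension. Finally, the obstacle you anticipate is not there. Once both indices have rank $j$, $t$ is a genuine function on $\mathcal L_j$, and the overlap $\mathcal L_j\cap\mathcal L_{j+1}$ plays no role. Besides, the non-quasi-split case for $J$ is $n$ even with $\varepsilon=1$, where this overlap is empty.
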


Since the integers $t(\Lambda_i)$ are completely determined by the corresponding tuple $\mathbf m \in \mathrm{Num}(\mathcal T)$, it follows that $\mathrm{Num}(\mathcal T)$ is a full set of invariants for the action of $J(F)$ on the set of BT indices associated to $\mathcal T$. Recall the standard lattice chain $\Lambda_r = \eta_0L_{r,\mathcal O_K}$ defined in \eqref{Eq:DefiLatticeLambda}. If $0 \leq t \leq n$ has the same parity as $nj + (1-\underline{\varepsilon})$ then
\begin{equation*}
    \Lambda_{\frac{nj+(1-\underline{\varepsilon})-t}{2}} \in \mathcal L_j \text{ and has orbit type } t.
\end{equation*}
Fix a non-empty $\theta_I$-stable subset $\mathcal T$ and a tuple $\mathbf m \in \mathrm{Num}(\mathcal T)$. We define a representative $(\mathcal T, \mathbf{\Lambda}(\mathbf m))$ of the $J(F)$-orbit corresponding to $\mathbf m$ by setting, for all $i \in \mathcal T$, 
\begin{equation*}
    \Lambda(\mathbf m)_{i} \coloneq \Lambda_{\frac{(1-\underline{\varepsilon})-\left|2(r+m_i) + \underline{\varepsilon} + 1\right|}{2}},
\end{equation*}
where $r \in I^+ \sqcup I^-$ is such that $i = \overline r$. Then $(\mathcal T, \mathbf{\Lambda}(\mathbf m))$ is a BT index of rank $0$, and any BT index is in the $J(F)$-orbit of some $(\mathcal T, \mathbf{\Lambda}(\mathbf m))$ for a unique pair $(\mathcal T, \mathbf m)$. According to Proposition 4.8 of \cite{mullerBruhatTitsStratificationBasic2026}, the $J(F)$ action on $\mathcal N_{\mathrm{red}}^{I,\varepsilon}$ is compatible with the Bruhat-Tits stratification, in the sense that any $g \in J(F)$ induces an isomorphism
\begin{equation*}
    g: \mathcal N_{\mathcal T, \mathbf{\Lambda}}^{0} \xrightarrow{\sim} \mathcal N_{\mathcal T, g(\mathbf{\Lambda})}^{0}.
\end{equation*}
From the discussion above, it is enough to focus on the Bruhat-Tits strata associated to Bruhat-Tits indices of the form $(\mathcal T,\mathbf{\Lambda}(\mathbf m))$ for $\mathbf m \in \mathrm{Num}(\mathcal T)$.

\subsection{Stratification by fine Deligne-Lusztig varieties}\label{Section5.2}

One of the main results of \cite{mullerBruhatTitsStratificationBasic2026} is the isomorphism between the Bruhat-Tits strata $\mathcal N_{\mathcal T, \mathbf{\Lambda}}^{0}$ and certain disjoint unions of explicitly determined fine Deligne-Lusztig varieties. In this section, we will assume familiarity with the definitions of (fine) Deligne-Lusztig varieties, see for instance Section 3.2.1 of \cite{mullerBruhatTitsStratificationBasic2026} for a summary and for the notations that we will borrow. Throughout this section, we fix a non-empty $\theta_I$-stable subset $\mathcal T\subseteq\overline I$ and an element $\mathbf m\in\mathrm{Num}(\mathcal T)$. For every $O\in\Omega_{\mathcal T}$, we define two lattices $\Lambda_O^+\subseteq\Lambda_O^-$ as follows
\begin{equation*}
    \Lambda_O^+ \coloneq \begin{cases}
        \Lambda(\mathbf m)_{\overline{r_O^+}} & \text{if } \#O = 2,\\
        \Lambda(\mathbf m)_i
        &\text{if }O=\{i\},
    \end{cases}
    \qquad \Lambda_O^- \coloneq \begin{cases}
        \Lambda(\mathbf m)_{\overline{r_O^-}} & \text{if } \#O = 2,\\
        \Lambda(\mathbf m)_i
        &\text{if }O=\{i\},
        \end{cases}
\end{equation*}
The inequalities defining $\mathrm{Num}(\mathcal T)$ imply that $\Lambda_O^+\subseteq\Lambda_O^-$. Moreover, for all $O,O'\in\Omega_{\mathcal T}$, we have
\begin{equation*}
    O'\leq O \iff \Lambda_{O'}^-\subseteq\Lambda_O^- \iff \Lambda_{O'}^+\subseteq\Lambda_O^+.
\end{equation*}
We write $O'\prec O$ if $O'<O$ and there is no $O''\in\Omega_{\mathcal T}$ such that $O'<O''<O$. We also denote by $O_{\mathrm{min}}$ and $O_{\mathrm{max}}$ the minimal and maximal elements of $\Omega_{\mathcal T}$. We define
\begin{align*}
    \forall O' \prec O, & & W_{O',O} & \coloneq \Lambda_O^+ / \Lambda_{O'}^-, &  W_{O',O}' & \coloneq \pi(\Lambda_{O'}^-)^\vee /\pi(\Lambda_O^+)^\vee,\\
    \forall O \in \Omega_{\mathcal T} \text{ such that } \# O = 2, & &
    H_O & \coloneq \Lambda_O^- / \Lambda_O^+, & H_O' & \coloneq \pi(\Lambda_O^+)^\vee / \pi(\Lambda_O^-)^\vee,\\
    & & W_{\mathrm{min}} & \coloneq \Lambda_{O_{\mathrm{min}}}^+ / \pi(\Lambda_{O_{\mathrm{min}}}^+)^\vee,  & W_{\mathrm{max}} & \coloneq \pi(\Lambda_{O_{\mathrm{max}}}^-)^\vee / \pi\Lambda_{O_{\mathrm{max}}}^-.
\end{align*}
All these quotients are $\mathbb F_{q^2}$-vector spaces. We have
\begin{align*}
    \dim W_{\mathrm{min}} & = t(\Lambda_{O_{\mathrm{min}}}^+),\\
    \dim W_{\mathrm{max}} & = n - t(\Lambda_{O_{\mathrm{max}}}^-),\\
    \dim W_{O',O} = \dim W_{O',O}' & = \frac{t(\Lambda_O^+) - t(\Lambda_{O'}^-)}{2},\\
    \dim H_O = \dim H_O' & = \frac{t(\Lambda_O^-) - t(\Lambda_O^+)}{2} = \ell_{\overline{r_O^+}}-1
       -m_{\overline{r_O^+}}-m_{\overline{r_O^-}}.
\end{align*}
The spaces $W_{\mathrm{min}}$ and $W_{\mathrm{max}}$, when non-zero, inherit non-degenerate hermitian forms induced respectively by $(\cdot,\cdot)$ and by $\pi^{-1}(\cdot,\cdot)$ modulo $\pi$. Moreover, $(\cdot,\cdot)$ also induces a perfect pairing between $W_{O',O}$ and $W_{O',O}'$, and between $H_O$ and $H_O'$. 

We denote by $\mathcal J_{\mathcal T,\mathbf m} \to \mathrm{Spec}(\mathcal O_F)$ the parahoric group scheme of $J$ attached to the self-dual lattice chain generated by $\mathbf{\Lambda}(\mathbf m)$. In particular, we have $\mathcal J_{\mathcal T,\mathbf m}(\mathcal O_F) = \mathrm{Stab}_{J(F)}(\mathbf{\Lambda}(\mathbf m))$. We define
\begin{equation*}
\mathbf J_{\mathcal T,\mathbf m} \coloneq  \mathcal J_{\mathcal T,\mathbf m,\mathbb F_q} /R_u(\mathcal J_{\mathcal T,\mathbf m,\mathbb F_q}).
\end{equation*}
This is the maximal reductive quotient of the special fiber of $\mathcal J_{\mathcal T,\mathbf m}$. Similarly to $\mathbf G_{I}$, it can be described explicitly as follows. 
\begin{equation*}
    \mathbf J_{\mathcal T,\mathbf m} \simeq \left(\mathrm{GU}(W_{\mathrm{min}}) \times_{\mathbb G_m} \mathrm{GU}(W_{\mathrm{max}})\right) \times \prod_{O'\prec O} \mathrm{Res}_{\mathbb F_{q^2}/\mathbb F_q} \mathrm{GL}(W_{O',O}) \times \prod_{\substack{O\in\Omega_{\mathcal T}\\ \#O = 2}} \mathrm{Res}_{\mathbb F_{q^2}/\mathbb F_q} \mathrm{GL}(H_O).
\end{equation*}
If $W_{\mathrm{min}}$ or $W_{\mathrm{max}}$ is zero, then the corresponding $\mathrm{GU}$-factor must be omitted. The preceding formulas give 
\begin{equation*}
    \dim W_{\mathrm{min}}+\dim W_{\mathrm{max}} +2\sum_{O'\prec O}\dim W_{O',O}     +2\sum_{\substack{O\in\Omega_{\mathcal T}\\\#O=2}}\dim H_O=n.
\end{equation*}
Finally, we point out that 
\begin{align*}
    (\mathrm{Res}_{\mathbb F_{q^2}/\mathbb F_q} \mathrm{GL}(W_{O',O}))_{\mathbb F_{q^2}}  & \simeq \mathrm{GL}(W_{O',O})\times\mathrm{GL}(W_{O',O}'), \\
    (\mathrm{Res}_{\mathbb F_{q^2}/\mathbb F_q} \mathrm{GL}(H_O))_{\mathbb F_{q^2}}  & \simeq \mathrm{GL}(H_O)\times\mathrm{GL}(H_O').
\end{align*}
Using the basis $\mathbf e^{(0)}$, we may think of $\mathbf J_{\mathcal T,\mathbf m}$ in terms of matrices. The full \textit{decreasing} lattice chain $\Lambda_{\bullet}$ determines a pair $(\mathbf B_{\mathcal T,\mathbf m}, \mathbf  T_{\mathcal T,\mathbf m})$ consisting of an $\mathbb F_q$-rational Borel subgroup $\mathbf B_{\mathcal T,\mathbf m} \subset \mathbf J_{\mathcal T,\mathbf m, \overline{\mathbb F_q}}$ containing the $\mathbb F_q$-rational maximal torus $\mathbf  T_{\mathcal T,\mathbf m}$. In the basis $\mathbf e^{(0)}$, $\mathbf  B_{\mathcal T,\mathbf m}$ is identified with the Borel subgroup of \textit{lower-triangular} matrices, while $\mathbf  T_{\mathcal T,\mathbf m}$ consists of the diagonal matrices.\\

In the following paragraphs, we will define the families of fine Deligne-Lusztig varieties introduced in \cite{mullerBruhatTitsStratificationBasic2026} Section 3.3 while adapting to our current notations. We warn the reader that our settings lead us to consider parameters suited to \textit{lower triangular} parabolic subgroups, whereas \cite{mullerBruhatTitsStratificationBasic2026} has been written for \textit{upper triangular} parabolic subgroups. This amounts to conjugating all sets of simple reflections and Weyl group elements by the element of maximal length.\\

There is a unique integer $x_0\in\mathbb Z$ such that $\Lambda_{O_{\mathrm{max}}}^-=\Lambda_{x_0}$. We define the integer interval
\begin{equation}\label{Eq:DefiIntervalWidehatMathcalK}
    \widehat{\mathcal K}_{\mathcal T,\mathbf m} \coloneq \{x_0,x_0+1,\ldots,x_0+n-1\}.
\end{equation}
For every $i\in \mathbb Z/n \mathbb Z$, we denote by $r_i^{\mathcal T,\mathbf m}$ the unique representative of $i$ contained in $\widehat{\mathcal K}_{\mathcal T,\mathbf m}$. We will usually suppress the dependence on $(\mathcal T,\mathbf m)$ from the notation and write $r_i \coloneq r_i^{\mathcal T,\mathbf m}$. We then have
\begin{equation}\label{Eq:DefiIntervalMathcalK}
    \mathcal K_{\mathcal T,\mathbf m} \coloneq I_{\mathbb Z} \cap \widehat{\mathcal K}_{\mathcal T,\mathbf m} = \{r_i\mid i\in\overline I\}.
\end{equation}
Let $V$ be any one of the quotient vector spaces introduced above. Then $V$ has the form $V = \Lambda_{\alpha_V}/\Lambda_{\beta_V}$ for some unique integers $x_0\leq\alpha_V\leq\beta_V\leq x_0+n$. We associate with $V$ the integer intervals
\begin{equation*}
    \widehat{\mathcal K}_V \coloneq \{\alpha_V,\alpha_V+1,\ldots,\beta_V-1\}, \qquad \mathcal K_V \coloneq \widehat{\mathcal K}_V \cap I_{\mathbb Z}.
\end{equation*}
The intervals $\widehat{\mathcal K}_V$ are pairwise disjoint and give a decomposition
\begin{equation*}
    \widehat{\mathcal K}_{\mathcal T,\mathbf m} = \widehat{\mathcal K}_{W_{\mathrm{min}}} \sqcup \widehat{\mathcal K}_{W_{\mathrm{max}}} \sqcup \bigsqcup_{\substack{O,O' \in \Omega_{\mathcal T}\\ O' \prec O}} \left( \widehat{\mathcal K}_{W_{O',O}} \sqcup \widehat{\mathcal K}_{W_{O',O}'} \right) \sqcup \bigsqcup_{\substack{O\in\Omega_{\mathcal T}\\\#O=2}} \left( \widehat{\mathcal K}_{H_O} \sqcup \widehat{\mathcal K}_{H_O'} \right).
\end{equation*}
The inequalities defining $\mathrm{Num}(\mathcal T)$ imply that $\mathcal K_{H_O}=\mathcal K_{H_O'}=\emptyset$. Thus, we have 
\begin{equation*}
    \mathcal K_{\mathcal T,\mathbf m} = \mathcal K_{W_{\mathrm{min}}} \sqcup \mathcal K_{W_{\mathrm{max}}} \sqcup \bigsqcup_{\substack{O,O' \in \Omega_{\mathcal T}\\ O' \prec O}} \left( \mathcal K_{W_{O',O}} \sqcup \mathcal K_{W_{O',O}'}\right).
\end{equation*}
For every component $V=\Lambda_{\alpha_V}/\Lambda_{\beta_V}$ and every $r\in\mathcal K_V$, we define
\begin{equation*}
    U_r^\circ \coloneq \Lambda_r/\Lambda_{\beta_V} \subseteq \Lambda_{\alpha_V}/\Lambda_{\beta_V}=V.
\end{equation*}
The subspaces $(U_r^{\circ})_{r\in\mathcal K_V}$ form a (decreasing) partial flag in $V$, satisfying $\dim U_r^\circ=\beta_V-r$. If $V = W_{\mathrm{min}}$ or $W_{\mathrm{max}}$, the stabilizer of $(U_r^{\circ})_{r\in\mathcal K_V}$ is a standard parabolic subgroup $\mathbf P_{\mathcal K_{V}} \subseteq \mathrm{GU}(V)_{\overline{\mathbb F}_q}$. On the other hand, for all $O, O' \in \Omega_{\mathcal T}$ with $O' \prec O$, the stabilizer of the double flag $(U_r^{\circ})_{r \in \mathcal K_{W_{O',O}}}$ and $(U_r^{\circ})_{r \in \mathcal K_{W_{O',O}'}}$ is a standard parabolic subgroup $\mathbf P_{\mathcal K_{W_{O',O}} \sqcup \mathcal K_{W_{O',O}'}}$ of $(\mathrm{Res}_{\mathbb F_{q^2}/\mathbb F_q} \mathrm{GL}(W_{O',O}))_{\overline{\mathbb F}_q}$. The product $\mathbf P_{\mathcal K_{\mathcal T, \mathbf m}}$ of all these parabolic subgroups is the standard parabolic subgroup of $\mathbf J_{\mathcal T, \mathbf m,\overline{\mathbb F}_q}$ stabilizing the concatenated flags $(U_r^{\circ})_{r \in \mathcal K_{\mathcal T,\mathbf m}}$.\\
We now describe the Coxeter system associated with the pair $(\mathbf B_{\mathcal T,\mathbf m}, \mathbf T_{\mathcal T,\mathbf m})$. Let $V$ be one of the quotient spaces introduced above. The images of $\mathbf e_{\alpha_V+1}^{(0)},\ldots, \mathbf e_{\beta_V}^{(0)}$ form a basis of $V$. We index these basis vectors by the elements of $\widehat{\mathcal K}_V$ by associating $x\in\widehat{\mathcal K}_V$ with the image of $\mathbf e_{x+1}^{(0)}$. With respect to this basis, the diagonal torus and the lower-triangular Borel subgroup determine an identification of the Weyl group $\mathbf W_V$ of $\mathrm{GL}(V)$ with $\mathrm{Bij}(\widehat{\mathcal K}_V)$. Its set of simple reflections is
\begin{equation*}
    \mathbf S_V \coloneq \left\{s_x = (x \ x+1) \;\middle|\; \alpha_V \leq x \leq \beta_V-2
    \right\}.
\end{equation*}
Let $\mathbf W_{\mathcal T,\mathbf m}$ denote the Weyl group of $\mathbf J_{\mathcal T,\mathbf m,\overline{\mathbb F_q}}$ with respect to $\mathbf T_{\mathcal T,\mathbf m}$. The preceding identifications give
\begin{equation*} \hspace{-15mm}
    \mathbf W_{\mathcal T,\mathbf m} \simeq \mathrm{Bij}(\widehat{\mathcal K}_{W_{\mathrm{min}}}) \times \mathrm{Bij}(\widehat{\mathcal K}_{W_{\mathrm{max}}}) \times \prod_{\substack{O,O' \in \Omega_{\mathcal T}\\ O' \prec O}} \left( \mathrm{Bij}(\widehat{\mathcal K}_{W_{O',O}}) \times \mathrm{Bij}(\widehat{\mathcal K}_{W_{O',O}'}) \right) \times \prod_{\substack{O\in\Omega_{\mathcal T}\\ \# O = 2}} \left( \mathrm{Bij}(\widehat{\mathcal K}_{H_O}) \times \mathrm{Bij}(\widehat{\mathcal K}_{H_O'}) \right).
\end{equation*}
Under this identification, the set of simple reflections is
\begin{equation*}
    \mathbf S_{\mathcal T,\mathbf m} = \bigsqcup_V \mathbf S_V.
\end{equation*}
Thus, whenever $x$ and $x+1$ belong to the same $\widehat{\mathcal K}_V$ for some $V$, we will write $s_x \coloneq(x \ x+1)\in \mathbf W_{\mathcal T,\mathbf m}$. Besides, if $x, x+1, y, y+1$ all belong to the same $\widehat{\mathcal K}_V$ for some $V$, we write 
\begin{equation*}
    s_{[x,y]} \coloneq \begin{cases}
        s_x s_{x+1} \cdots s_{y} & \text{if } x \leq y, \\
        \mathrm{id} & \text{if } x > y.
    \end{cases}
\end{equation*}
We now introduce a new set of parameters
\begin{equation}\label{Eq:DefiSetMathcalBT}
    \mathcal B_{\mathcal T} \coloneq \left\{(b_i)_{i \in \overline I \setminus \mathcal T} \in \mathbb Z_{\geq 0}^{\overline I \setminus \mathcal T} \;\middle|\; \text{for every }i\in\overline I\setminus\mathcal T, \quad \begin{array}{c}
            0\leq b_i\leq\ell_i, \\
            b_i+b_{\theta_I(i)}\geq\ell_i.
        \end{array} \right\}.
\end{equation}
Let us fix some $\mathbf b\in\mathcal B_{\mathcal T}$, and define a new tuple $\mathbf c( \mathbf b) = (c_i)_{i\in\overline I}$ by
\begin{equation}\label{Eq:DefinitionTuplec(b)}
    c_i\coloneq \begin{cases}
        m_i & \text{if } i \in \mathcal T,\\
        b_i & \text{if } i \in \overline I \setminus \mathcal T.
    \end{cases}
\end{equation}
Thus, $\mathbf c(\mathbf b)$ is just the concatenation of $\mathbf m$ and $\mathbf b$. When $\mathbf b$ is clear from the context, we simply write $\mathbf c \coloneq \mathbf c(\mathbf b)$. We note that the triple $(\mathcal T, \mathbf m, \mathbf b)$ can be recovered from $\mathbf c$ alone. Indeed, $\mathcal T$ is the subset of $i \in \overline I$ such that $c_i + c_{\theta_I(i)} \leq \ell_i-1$, and $\mathbf m$ and $\mathbf b$ can then be read directly from $\mathbf c$.\\
For every $i\in\overline I$, the inequalities defining $\mathrm{Num}(\mathcal T)$ and $\mathcal B_{\mathcal T}$ imply that the interval
\begin{equation*}
    \{r_{-i-\underline{\varepsilon}}-c_i, \ldots , r_{-i-\underline{\varepsilon}}\}
\end{equation*}
is contained in a unique one of the intervals $\widehat{\mathcal K}_{W_{\mathrm{min}}}$, $\widehat{\mathcal K}_{W_{\mathrm{max}}}$, $\widehat{\mathcal K}_{W_{O',O}}$ or $\widehat{\mathcal K}_{W_{O',O}'}$. We may therefore define
\begin{equation*}
    w_i(\mathbf c) \coloneq s_{[r_{-i-\underline{\varepsilon}}-c_i, r_{-i-\underline{\varepsilon}}-1]}.
\end{equation*}
We then set
\begin{equation}\label{Eq:DefinitionWc}
    w_{\mathbf c} \coloneq \prod_{i\in\overline I}w_i(\mathbf c),
\end{equation}
where the factors are written from left to right in increasing order of $r_{-i-\underline{\varepsilon}}$. In particular, $w_{\mathbf c}$ belongs to $\mathbf W_{\mathcal T,\mathbf m}$, and its components in the factors attached to $H_O$ and $H_O'$ are trivial. Moreover, $w_{\mathbf c}$ is right $\mathcal K_{\mathcal T, \mathbf m}$-reduced, that is $w_{\mathbf c}$ is the minimal length element of the right coset $w_{\mathbf c} \mathbf W_{\mathcal K_{\mathcal T,\mathbf m}}$, where $\mathbf W_{\mathcal K_{\mathcal T,\mathbf m}}$ is the parabolic subgroup of $\mathbf W_{\mathcal T,\mathbf m}$ generated by all the simple reflections $s_x$ such that $s_x \in \mathbf S_V$ for some $V$, but $x+1 \not \in \mathcal K_V$.

We may consider the fine Deligne-Lusztig variety
\begin{equation*}
    X_{\mathcal K_{\mathcal T,\mathbf m}} \{w_{\mathbf c}^{-1}\} \subseteq \mathbf J_{\mathcal T,\mathbf m,\overline{\mathbb F_q}} / \mathbf P_{\mathcal K_{\mathcal T,\mathbf m}}.
\end{equation*}
More explicitly, let $k$ be an algebraically closed field containing $\overline{\mathbb F_q}$. A $k$-point of $X_{\mathcal K_{\mathcal T,\mathbf m}}\{w_{\mathbf c}^{-1}\}$ is a collection $(U_r)_{r\in\mathcal K_{\mathcal T,\mathbf m}}$ of partial flags conjugate to $(U_r^{\circ})_{r \in \mathcal K_{\mathcal T, \mathbf m}}$, which can be lifted to a collection of full flags
\begin{equation*}
    (\mathcal F_r)_{r\in \widehat{\mathcal K}_{\mathcal T,\mathbf m}}, \qquad \mathcal F_r = U_r \text{ for all } r\in\mathcal K_{\mathcal T,\mathbf m},
\end{equation*}
such that $(\mathcal F_r)_{r\in \widehat{\mathcal K}_{\mathcal T,\mathbf m}} \in X(w_{\mathbf c}^{-1})(k)$. Here, $X(w_{\mathbf c}^{-1})$ denotes the classical Deligne-Lusztig variety of $\mathbf J_{\mathcal T,\mathbf m,\overline{\mathbb F_q}}$ with parameter $w_{\mathbf c}^{-1}$. We have 
\begin{equation}\label{Eq:DimensionFineDLVarieties}
    \dim X_{\mathcal K_{\mathcal T,\mathbf m}} \{w_{\mathbf c}^{-1}\}  = \ell(w_{\mathbf c}) = \sum_{i \in \overline I} c_i = \sum_{i \in \mathcal T} m_i + \sum_{i \in \overline I \setminus \mathcal T} b_i.
\end{equation}
Observe that $\mathbf J_{\mathcal T, \mathbf m}(\mathbb F_q)$ is the quotient of $\mathrm{Stab}_{J(F)}(\mathbf{\Lambda}(\mathbf m))$ by its pro-$p$ radical. We have an action of $\mathrm{Stab}_{J(F)}(\mathbf{\Lambda}(\mathbf m))$ on the Bruhat-Tits stratum $\mathcal N_{\mathcal T,\mathbf{\Lambda}(\mathbf m)}^{0}$. The following theorem is a combination of \cite{mullerBruhatTitsStratificationBasic2026} Theorem 3.36 and Theorem 4.3.

\begin{theo}\label{Theo:IsomorphismBTStrataAndFineDLVarieties}
    Let $\mathcal T \subset \overline I$ be a non-empty $\theta_I$-stable subset and let $\mathbf m \in \mathrm{Num}(\mathcal T)$. The action of $\mathrm{Stab}_{J(F)}(\mathbf{\Lambda}(\mathbf m))$ on $\mathcal N_{\mathcal T,\mathbf{\Lambda}(\mathbf m)}^{0}$ factors through $\mathbf J_{\mathcal T, \mathbf m}(\mathbb F_q)$, and we have a $\mathbf J_{\mathcal T, \mathbf m}(\mathbb F_q)$-equivariant isomorphism
    \begin{equation*}
        \mathcal N_{\mathcal T,\mathbf{\Lambda}(\mathbf m)}^{0} \xrightarrow{\sim} \bigsqcup_{\mathbf b \in \mathcal B_{\mathcal T}} X_{\mathcal K_{\mathcal T, \mathbf m}}\{w_{\mathbf c(\mathbf b)}^{-1}\},
    \end{equation*}
    where the right-hand side is a smooth, irreducible locally closed subvariety of the partial flag variety $\mathbf J_{\mathcal T,\mathbf m,\overline{\mathbb F_q}} / \mathbf P_{\mathcal K_{\mathcal T,\mathbf m}}$.
\end{theo}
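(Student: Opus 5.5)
The plan follows the strategy of \cite{mullerBruhatTitsStratificationBasic2026} and has three stages: first a bijection on $k$-points, then an upgrade to an isomorphism of schemes, then equivariance. Work in rank $j=0$ and fix the representative $\mathbf\Lambda(\mathbf m)$; all other BT indices reduce to this one via the $J(F)$-action and Proposition~\ref{Prop:OrbitBruhatTitsIndex}.

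\emph{Map on points.} Let $z=(Z_r)_{r\in I_{\mathbb Z}}\in\mathcal N^0_{\mathcal T,\mathbf\Lambda(\mathbf m)}(k)$. The inclusions established in the proof of Lemma~\ref{Lem:BTIndexAssociatedToPoint} sandwich each lattice $Z_r$ or its normalized dual $\pi Z_{-r-\underline{\varepsilon}}^{\vee}$, for $r\in\mathcal K_{\mathcal T,\mathbf m}$, between the two ends $\Lambda_{\alpha_V,k}$ and $\Lambda_{\beta_V,k}$ of one of the quotients $V$ (namely $W_{\mathrm{min}}$, $W_{\mathrm{max}}$, $W_{O',O}$ or $W'_{O',O}$). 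Define $U_r$ to be its image in $V\otimes k$.
\begin{enumerate}
\item Check that $\dim U_r=\beta_V-r$, which follows from the index conditions in Proposition~\ref{Prop:PointsArbitraryParahoricBis}.
\item Check that the family $(U_r)$ is a partial flag of type $\mathcal K_{\mathcal T,\mathbf m}$: isotropic for the hermitian factors $W_{\mathrm{min}},W_{\mathrm{max}}$, and a compatible double flag for the pairs $W_{O',O},W'_{O',O}$. Both properties come from the duality condition $\pi Z_{-r-\underline{\varepsilon}}^\vee\overset{1}{\subseteq}Z_r$.
\item Because the lattices $\Lambda(\mathbf m)_i$ are $\tau$-fixed, the operator $\tau$ induces the $q^2$-Frobenius on each $V$, while the semilinear duality induces the $q$-Frobenius on the unitary factors.
\end{enumerate}
The stratum conditions then become relative-position conditions. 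Saying that $\mathrm{Type}(z)=\mathcal T$ with $\Lambda^z_i=\Lambda(\mathbf m)_i$ means that the $\tau$-saturation $T_c(Z)$ reaches exactly $\Lambda_{Z}$. For each $i$, the length of the chain $Z\subsetneq Z+\tau Z\subsetneq\cdots$ inside the relevant block is therefore an integer $c_i\in\{0,\ldots,\ell_i\}$. For $i\in\mathcal T$ this integer is forced to equal $m_i$. For $i\notin\mathcal T$ it is a free parameter $b_i$, subject to $b_i+b_{\theta_I(i)}\ge\ell_i$; that inequality is exactly the statement $\overline r\notin\mathrm{Type}(z)$. A standard computation (the full-flag lift being $\mathcal F_{r-a}=U_r+\tau U_r+\cdots+\tau^{a}U_r$ on the block) shows that this records relative position $w_i(\mathbf c)^{-1}$ between the flag and its Frobenius. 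This yields a bijection of $k$-points
\[
\mathcal N^0_{\mathcal T,\mathbf\Lambda(\mathbf m)}(k)\;\longrightarrow\;\bigsqcup_{\mathbf b\in\mathcal B_{\mathcal T}}X_{\mathcal K_{\mathcal T,\mathbf m}}\{w_{\mathbf c(\mathbf b)}^{-1}\}(k).
\]
The inverse sends a flag to the lattice chain obtained by taking preimages in $\Lambda_{\alpha_V,k}$ and then closing up by duality and $\pi$-periodicity.

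\emph{Upgrade to schemes.} Perform the same construction in families: for an $\overline{\mathbb F_q}$-scheme $S$, apply it to the Dieudonné crystals $\mathbb D(X_r)_S$, which are locally free quotients between the constant lattices $\Lambda(\mathbf m)_{\cdot}\otimes\mathcal O_S$. This defines a morphism from the closed BT stratum to the partial flag variety, and its inverse is given by the quasi-isogeny data. To conclude that this morphism is an isomorphism I would compare deformation theory. Both sides are reduced, the right-hand side is smooth, and the morphism is bijective on points; it therefore suffices to check that tangent spaces match, via Grothendieck--Messing on one side and the tangent space of Deligne--Lusztig varieties on the other. This is the step I expect to be the main obstacle: controlling the non-reduced structure so that one obtains an isomorphism rather than a universal homeomorphism. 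If the tangent-space comparison proves awkward, I would instead realise the closure $\overline{\mathcal N^0}$ as a closed subscheme cut out by explicit incidence relations, matching a generalised Deligne--Lusztig closure, and compare these directly.

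\emph{Equivariance and smoothness.} $\mathrm{Stab}_{J(F)}(\mathbf\Lambda(\mathbf m))$ acts on the quotients $V$ through its reduction modulo the pro-$p$ radical, which is $\mathbf J_{\mathcal T,\mathbf m}(\mathbb F_q)$; the construction is visibly equivariant for this action. Smoothness and irreducibility of each piece, together with the dimension formula \eqref{Eq:DimensionFineDLVarieties}, are general facts about fine Deligne--Lusztig varieties attached to right-$\mathcal K_{\mathcal T,\mathbf m}$-reduced elements such as $w_{\mathbf c}$.
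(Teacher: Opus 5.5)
This theorem is not proved in the paper; it is quoted from \cite{mullerBruhatTitsStratificationBasic2026} (Theorems 3.36 and 4.3). Your outline has the expected shape, but there is a genuine gap: the dictionary at its core is wrong for $i\notin\mathcal T$.

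For $i\in\mathcal T$ the $\tau$-saturation does see $m_i$, since $m_i=[\Lambda_Z:Z]$ for the relevant $Z$. For $i\notin\mathcal T$, however, $b_i$ is not the length of the chain $Z\subsetneq Z+\tau Z\subsetneq\cdots$, and $\mathcal F_{r-a}=U_r+\tau U_r+\cdots+\tau^aU_r$ is not a lift. What $b_i$ records is the position of $U_{r_{-i-\underline{\varepsilon}}}$ relative to the (Frobenius-twisted) orthogonal of the companion flag attached to $\theta_I(i)$. For instance, $b_i<\ell_i$ if and only if $\tau(U_{r_{i+\ell_i}})\subseteq U^{\perp}_{r_{-i-\underline{\varepsilon}}}$. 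The intermediate members of a lift have the form $U_{r_{-i-\underline{\varepsilon}}}+\mathcal F^{\perp}_{s}$ with $\mathcal F_s$ in the companion flag, as in the two coupled diagrams in the proof of Theorem~\ref{Theo:ComparisonFineBTStrataAndKR}, and such lifts need not be unique.

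Here is a test case. Take $n=3$, $\varepsilon=1$, $I=\{1\}$, so $\overline I=\{1,2\}$ with $\ell_1=1$, $\ell_2=2$, both fixed by $\theta_I$. Take $\mathcal T=\{1\}$ and $\mathbf m=(0)$.
\begin{itemize}
\item $\Lambda(\mathbf m)_1=\Lambda_{-1}$ is self-dual, and $V=W_{\mathrm{min}}$ is $3$-dimensional hermitian.
\item The partial flag is the line $X\subset V_k$ given by $Z_1$. The stratum consists of all lines except the $\mathbb F_{q^2}$-rational isotropic ones.
\item $b_2=1$ exactly when $X\subseteq X^{\perp}$, and $b_2=2$ otherwise.
\end{itemize}
So an $\mathbb F_{q^2}$-rational non-isotropic $X$ has $b_2=2$ but a trivial $\tau$-chain. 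A non-rational isotropic $X$ has $b_2=1$ but $X\subsetneq X+\tau X=X^\perp\subsetneq V_k$. For $b_2=2$, a lift is a plane $\mathcal F\supset X$ with $\mathcal F^\perp\subseteq\mathcal F$, $\mathcal F^\perp\neq X$ and $\mathcal F\neq X^\perp$, which is generally not $X+\tau X$.

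Relatedly, the unitary factors split over $\overline{\mathbb F_q}$, so the flags are arbitrary flags in $V_k$. Conditions such as $U^\perp\subseteq U$ belong to the relative position, not to the flag variety.

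Second, the geometry of the right-hand side is not a general fact about fine Deligne--Lusztig varieties, and your upgrade step depends on it. The theorem asserts that the \emph{union} over $\mathbf b\in\mathcal B_{\mathcal T}$ is locally closed, irreducible and smooth. Properties of the individual pieces do not give this:
\begin{itemize}
\item Smoothness of each piece does not give smoothness of a union of strata; a cuspidal cubic is a union of two smooth strata.
\item Irreducibility of a fine Deligne--Lusztig variety needs a support condition.
\item Local closedness needs the closure relations. The union is the complement, inside the closure of the piece indexed by $\mathbf c^{\mathrm{max}}$ (all $b_i=\ell_i$), of the pieces with $d_i<m_i$ for some $i\in\mathcal T$ or $d_i+d_{\theta_I(i)}\le\ell_i-1$ for some $i\notin\mathcal T$.
\end{itemize}
In \cite{mullerBruhatTitsStratificationBasic2026} smoothness is the real geometric input. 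The closure of $X_{\mathcal K_{\mathcal T,\mathbf m}}\{w_{\mathbf c^{\mathrm{max}}}^{-1}\}$ is identified with the closure of a coarse Deligne--Lusztig variety, which is smoothly equivalent to a Schubert variety, and a pattern-avoidance criterion applies (cf.\ the remarks after Proposition~\ref{Prop:SmoothnessIrreducibleComponentsClosureEKOR}). Your tangent-space argument assumes the target is smooth, so it rests on this unproved input.

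The scheme-level step also needs a moduli-theoretic structure. Grothendieck--Messing computes tangent spaces of $\mathcal N^{I,\varepsilon}$, or of a closed subscheme cut out by a moduli condition (the quasi-isogenies to the constant objects attached to $\mathbf\Lambda(\mathbf m)$ being isogenies). It does not compute tangent spaces of a reduced stratum. Before Zariski's main theorem can turn your bijection into an isomorphism, you must:
\begin{itemize}
\item define the map on that moduli-defined subscheme;
\item show that the relevant images of crystals are locally direct summands there;
\item prove injectivity on its tangent spaces.
\end{itemize}
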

By abuse of notation, we will still write $X_{\mathcal K_{\mathcal T, \mathbf m}}\{w_{\mathbf c}^{-1}\}$ for the locally closed subscheme of $\mathcal N_{\mathrm{red}}^{I,\varepsilon}$ transported through this isomorphism. We note that the components $\mathrm{Res}_{\mathbb F_{q^2}/\mathbb F_q} \mathrm{GL}(H_{O})$ all act trivially on $\mathcal N_{\mathcal T,\mathbf{\Lambda}(\mathbf m)}^{0}$.

\begin{rk}
    There is a polysemy around the meaning of ``Bruhat-Tits stratification'' in the literature. In \cite{gortzBasicLociCoxeter2024} Section 2.4 and in the context of affine Deligne-Lusztig varieties, it is the resulting decomposition 
    \begin{equation}\label{Eq:FineBTStratification}
        \mathcal N_{\mathrm{red}}^{I,\varepsilon} = \bigsqcup_{\mathcal T, \mathbf m} \quad \bigsqcup_{g \in J(F)/\mathrm{Stab}_{J(F)}(\mathbf{\Lambda}(\mathbf m))} \quad \bigsqcup_{\mathbf b \in \mathcal B_{\mathcal T}} g\cdot X_{\mathcal K_{\mathcal T,\mathbf m}}\{w_{\mathbf c(\mathbf b)}^{-1}\},
    \end{equation}
    which should be called the \textit{(weak) Bruhat-Tits stratification}, where $\mathcal T$ runs over non-empty $\theta_I$-stable subsets of $\overline I$ and $\mathbf m \in \mathrm{Num}(\mathcal T)$. In other words, the (weak) Bruhat-Tits strata are intended to be fine Deligne-Lusztig varieties. Here, the adjective ``weak'' is added only in case the triple $(G,\{\mu\},\mathcal G_{I})$ is not of Coxeter type (see Definition 2.4 of \cite{gortzBasicLociCoxeter2024}), to mark the difference with the Coxeter case. We point out that this triple turns out to be of Coxeter type only if ($I = \{0\}$ and $\varepsilon = 1$), ($I = \{\nu_{\varepsilon}\}$ and $2\nu_{\varepsilon}+\underline{\varepsilon} = n$), or ($I = \{0,\nu_{\varepsilon}\}$, $n$ is even and $\varepsilon = 1$). In the two first cases, the parahoric level $\mathcal G_I$ is hyperspecial. \\
    In \cite{mullerBruhatTitsStratificationBasic2026} and in this paper, we choose to call the decomposition of \eqref{Eq:FineBTStratification} the \textit{fine Bruhat-Tits stratification}, since it is a refinement of the Bruhat-Tits stratification, as defined here, by fine Deligne-Lusztig varieties.
\end{rk}

\subsection{Comparison of the fine BT and the KR stratifications}

In this section, we fix $\mathcal T\subseteq\overline I$ a non-empty $\theta_I$-stable subset, $\mathbf m\in\mathrm{Num}(\mathcal T)$ and $\mathbf b\in\mathcal B_{\mathcal T}$. We write $\mathbf c \coloneq \mathbf c(\mathbf b) = (c_i)_{i \in \overline I}$ as in \eqref{Eq:DefinitionTuplec(b)}. We define
\begin{equation*}
    X_{\mathcal T,\mathbf m,\mathbf b} \coloneq \bigsqcup_{g\in J(F)/\mathrm{Stab}_{J(F)}(\mathbf{\Lambda}(\mathbf m))} g\cdot X_{\mathcal K_{\mathcal T,\mathbf m}}\{w_{\mathbf c}^{-1}\}.
\end{equation*}
Then $X_{\mathcal T,\mathbf m,\mathbf b}$ is a $J(F)$-stable locally closed subscheme of $\mathcal N_{\mathrm{red}}^{I,\varepsilon}$. Recall the definition of $J_{\mathbf c}$ from \eqref{Eq:KRIndexAttachedToEKORIndex}. We have $J_{\mathbf c} \in \mathcal P_{0,I}$ because $\mathcal T$ is not empty. 

\begin{theo}\label{Theo:ComparisonFineBTStrataAndKR}
    We have
    \begin{equation*}
        X_{\mathcal T,\mathbf m,\mathbf b} \hookrightarrow \mathrm{KR}_{J_{\mathbf c}}.
    \end{equation*}
\end{theo}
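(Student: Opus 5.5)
Since both KR strata and the map $\mathbf b\mapsto J_{\mathbf c}$ are $J(F)$-equivariant (KR strata are $J(F)$-stable), it suffices to show $X_{\mathcal K_{\mathcal T,\mathbf m}}\{w_{\mathbf c}^{-1}\}\subseteq \mathrm{KR}_{J_{\mathbf c}}$. Both sides are reduced locally closed subschemes of $\mathcal N_{\mathrm{red}}^{I,\varepsilon}$, and $X_{\mathcal K_{\mathcal T,\mathbf m}}\{w_{\mathbf c}^{-1}\}$ is reduced (smooth). It is therefore enough to check the inclusion on $k$-points for $k$ algebraically closed; the KR strata partition $\mathcal N^{I,\varepsilon}_{\mathrm{red}}$, so a reduced subscheme whose points all lie in $\mathrm{KR}_{J_{\mathbf c}}$ factors through it. By Proposition \ref{Prop:PointsOfKRStrata}, with $j=0$, I must show: for $z=(Z_r)_{r\in I_{\mathbb Z}}$ in the image of the fine DL variety and every $r\in I_{\mathbb Z}$,
\begin{equation*}
    \tau(Z_{r+\ell_{\overline r}})\subseteq \pi Z_{-r-\underline{\varepsilon}}^{\vee}\iff c_{\overline r}\le \ell_{\overline r}-1 .
\end{equation*}

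The key step is to make explicit the isomorphism of Theorem \ref{Theo:IsomorphismBTStrataAndFineDLVarieties}, as in \cite{mullerBruhatTitsStratificationBasic2026}. Each $Z_r$ with $r\in\mathcal K_{\mathcal T,\mathbf m}$ lies between the appropriate $\Lambda(\mathbf m)$-lattices ($\Lambda_{\beta_V}\subseteq Z_r\subseteq\Lambda_{\alpha_V}$, extended to $k$) and corresponds to $U_r=Z_r/\Lambda_{\beta_V,k}$. The dual lattices $\pi Z^\vee_{-r-\underline{\varepsilon}}$ correspond to the orthogonals of the $U$'s in $V$, respectively to the partner flag in $V'$. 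Frobenius $\tau$ corresponds to the $q^2$-Frobenius on the flag variety; for the unitary and $\mathrm{Res}$ factors the relevant relative position is twisted by duality. The DL condition with parameter $w_{\mathbf c}^{-1}$ then records the relative position between $\tau(\mathcal F)$ and $\mathcal F^{\perp}$. I would read off, for each $i=\overline r$, the factor $w_i(\mathbf c)=s_{[r_{-i-\underline{\varepsilon}}-c_i,\,r_{-i-\underline{\varepsilon}}-1]}$. This factor is a cycle of length $c_i$ sitting just below the index $r_{-i-\underline{\varepsilon}}$, and it prescribes how far $\tau(Z_{r+\ell})$ sticks out of $\pi Z_{-r-\underline{\varepsilon}}^\vee$. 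Concretely, I expect that $\dim\big((\tau(Z_{r+\ell_{\overline r}})+\pi Z^\vee_{-r-\underline{\varepsilon}})/\pi Z^\vee_{-r-\underline{\varepsilon}}\big)$ is $0$ exactly when the cycle $w_i(\mathbf c)$ does not reach the position $r_{-i-\underline{\varepsilon}}-\ell_i$, that is when $c_i<\ell_i$. When $c_i=\ell_i$ the cycle crosses the whole block, so the Frobenius of the lattice at the next index is forced out by one.

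Before the main computation I would dispatch two cases. For $i\in\mathcal T$ we have $c_i=m_i\le\ell_i-1$, and the required inclusion should follow directly from the definition of $\mathrm{Type}(z)=\mathcal T$. Indeed $\Lambda_{Z_{-r-\underline{\varepsilon}}}\subseteq\pi\Lambda^\vee_{Z_{r+\ell}}$, and dualizing gives $\tau(Z_{r+\ell})\subseteq\tau(\Lambda_{Z_{r+\ell}})\subseteq\pi\Lambda_{Z_{-r-\underline{\varepsilon}}}^{\vee}$. This is only the weaker inclusion, so the DL condition is still needed to refine it to $\pi Z^\vee_{-r-\underline{\varepsilon}}$; there I use that $\Lambda_{Z}$ is the $\tau$-hull. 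For $i\notin\mathcal T$ the inclusion on $\tau$-hulls fails, and the distinction between $b_i<\ell_i$ and $b_i=\ell_i$ must come from the DL relative position.

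The main obstacle is this bookkeeping: translating the Deligne–Lusztig relative position $\mathrm{inv}(\mathcal F,\tau\mathcal F)=w_{\mathbf c}^{-1}$ into precise lattice inclusions, with the lower-triangular conventions, the duality pairing $V\leftrightarrow V'$, and the index shift $-r-\underline{\varepsilon}$. I would first work out the Iwahori case $\ell_i=1$, where $w_i(\mathbf c)$ is $\mathrm{id}$ or a single $s_x$, to fix the conventions. The general case should then follow block by block, since the flags in different $\widehat{\mathcal K}_V$ interact only through duality.
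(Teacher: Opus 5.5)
Your overall route is the paper's: reduce by $J(F)$-invariance to $X_{\mathcal K_{\mathcal T,\mathbf m}}\{w_{\mathbf c}^{-1}\}$, test $k$-points against Proposition \ref{Prop:PointsOfKRStrata} with $j=0$, settle $i\in\mathcal T$ through $\mathrm{Type}(z)=\mathcal T$, and read the remaining indices off the blocks $w_i(\mathbf c)$. There are two problems, however.

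The first is that in the case $i\in\mathcal T$ you have the comparison backwards. Since $Z_{-r-\underline{\varepsilon}}\subseteq(\Lambda_{Z_{-r-\underline{\varepsilon}}})_k$, duality gives $\pi(\Lambda^{\vee}_{Z_{-r-\underline{\varepsilon}}})_k\subseteq\pi Z^{\vee}_{-r-\underline{\varepsilon}}$. So the chain $\tau(Z_{r+\ell_i})\subseteq(\Lambda_{Z_{r+\ell_i}})_k\subseteq\pi(\Lambda^{\vee}_{Z_{-r-\underline{\varepsilon}}})_k$ that you derived is the \emph{stronger} inclusion, and it already gives $\tau(Z_{r+\ell_i})\subseteq\pi Z^{\vee}_{-r-\underline{\varepsilon}}$. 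No Deligne--Lusztig input is needed here. This is exactly how the paper obtains $J\subseteq\overline I\setminus\mathcal T$ for the KR index $J$ of $z$.

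The real gap is the case $i\in\overline I\setminus\mathcal T$. It carries all the content of the theorem, and you only state it as an expectation. Here is what is missing.

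By Lemma \ref{Lem:IntervalsOutsideMathcalT}, $r_{\theta_I(i)}=r_{-i-\underline{\varepsilon}}-\ell_i$ and $r_{-i-\underline{\varepsilon}}$ are consecutive in $\mathcal K_{\mathcal T,\mathbf m}$ inside a single $\widehat{\mathcal K}_V$. Moreover $r_{i+\ell_i}$ lies in the same interval (if $V=W_{\mathrm{min}}$ or $W_{\mathrm{max}}$) or in the partner one. The KR criterion therefore becomes $\tau(U_{r_{i+\ell_i}})\subseteq U^{\perp}_{r_{-i-\underline{\varepsilon}}}$.

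The Deligne--Lusztig condition compares the lifted full flag $\mathcal F$ with its Frobenius image, which is the orthogonal flag $\mathcal F^{\perp}$. It does not compare $\tau(\mathcal F)$ with $\mathcal F^{\perp}$; that pair has the $F$-twisted relative position. The operator $\tau$ enters only through $(U^{\perp})^{\perp}=\tau(U)$.

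The block $w_i(\mathbf c)=s_{[r_{-i-\underline{\varepsilon}}-b_i,\,r_{-i-\underline{\varepsilon}}-1]}$ then says the following about the equidimensional spaces $\mathcal F^{\perp}_{r_{i+\ell_i}-\ell_i+t}$ and $\mathcal F_{r_{-i-\underline{\varepsilon}}-t+1}$, for $1\le t\le\ell_i$. They differ for $t\le b_i$, with the $t$-th upper space contained in the $(t+1)$-st lower one, and they coincide for $t>b_i$.

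If $b_i<\ell_i$, the equality at $t=\ell_i$ reads $U^{\perp}_{r_{i+\ell_i}}=\mathcal F_{r_{-i-\underline{\varepsilon}}-\ell_i+1}\supseteq U_{r_{-i-\underline{\varepsilon}}}$. Applying $\perp$ gives the inclusion.

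If $b_i=\ell_i$, the diagonal inclusions force $\mathcal F_{r_{-i-\underline{\varepsilon}}-\ell_i+1}=\mathcal F^{\perp}_{r_{i+\ell_i}-1}+U_{r_{-i-\underline{\varepsilon}}}$. If the inclusion held, taking orthogonals would give $U_{r_{-i-\underline{\varepsilon}}}\subseteq U^{\perp}_{r_{i+\ell_i}}$, hence $\mathcal F_{r_{-i-\underline{\varepsilon}}-\ell_i+1}\subseteq U^{\perp}_{r_{i+\ell_i}}$. These spaces have equal dimension, so they would be equal, contradicting the inequality at $t=\ell_i$.

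Your picture, that the cycle reaches the far end of the block exactly when $c_i=\ell_i$, is the right intuition. This argument is what is needed to turn it into a proof.
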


\begin{proof}
    Since the KR strata are $J(F)$-invariant, it is enough to prove that $X_{\mathcal K_{\mathcal T,\mathbf m}}\{w_{\mathbf c}^{-1}\} \hookrightarrow \mathrm{KR}_{J_{\mathbf c}}$. Let $k$ be an algebraically closed field containing $\overline{\mathbb F_q}$ and let $z = (Z_r)_{r \in I_{\mathbb Z}} \in X_{\mathcal K_{\mathcal T,\mathbf m}}\{w_{\mathbf c}^{-1}\}(k)$. Let $J \in \mathcal P_{I}$ denote the index of the KR stratum containing $z$. We will prove that $J = J_{\mathbf c}$ using Proposition \ref{Prop:PointsOfKRStrata}. 

    First, let $i\in\mathcal T$ and let $r\in I_{\mathbb Z}$ be any representative of $i$. Since $z$ belongs to the Bruhat-Tits stratum $\mathcal N^0_{\mathcal T, \mathbf{\Lambda}(\mathbf m)}$, we have $\mathrm{Type}(z) = \mathcal T$. Therefore
    \begin{equation*}
        \tau(Z_{r+\ell_i}) \subseteq (\Lambda_{Z_{r+\ell_i}})_k \subseteq \pi(\Lambda_{Z_{-r-\underline{\varepsilon}}}^{\vee})_k \subseteq \pi Z_{-r-\underline{\varepsilon}}^{\vee}.
    \end{equation*}
    It follows that $J \subseteq \overline I \setminus \mathcal T$, and in particular $J \in \mathcal P_{0,I}$. 
    
    Next, let $V = \Lambda_{\alpha_V}/\Lambda_{\beta_V}$ be one of the quotient spaces such that $\mathcal K_V \not = \emptyset$. The point $z$ induces the following partial flag of $V_k$
    \begin{equation*}
        \forall r \in \mathcal K_V, \qquad U_r \coloneq Z_r / (\Lambda_{\beta_V})_k.
    \end{equation*}
    All together, they form a collection of partial flags $(U_r)_{r \in \mathcal K_{\mathcal T, \mathbf m}}$, that is a $k$-point of the partial flag variety $\mathbf J_{\mathcal T,\mathbf m,\overline{\mathbb F_q}} / \mathbf P_{\mathcal K_{\mathcal T,\mathbf m}}$. This partial flag can be lifted to a complete flag $(\mathcal F_r)_ {r\in\widehat{\mathcal K}_{\mathcal T,\mathbf m}} \in X(w_{\mathbf c}^{-1})(k)$.
    
    Let now $i\in\overline I\setminus\mathcal T$. Let $V$ be the unique component among $W_{\mathrm{min}}$, $W_{\mathrm{max}}$, $W_{O',O}$ and $W_{O',O}'$ such that $r_{-i-\underline{\varepsilon}} \in \mathcal K_V$. By construction, if $V = W_{\mathrm{min}}$ or $W_{\mathrm{max}}$, then $r_{i+\ell_i} \in \mathcal K_V$, while if $V$ is one of $W_{O',O}$ and $W_{O',O}'$, then $r_{i+\ell_i}$ belongs to the interval attached to the other one. If $V = W_{\mathrm{min}}$ or $W_{\mathrm{max}}$, the symbol $(\cdot)^{\perp}$ denotes the orthogonal complement in $V_k$ with respect to its hermitian form. If $V$ is one of $W_{O',O}$ and $W_{O',O}'$, it denotes the annihilator in the other space with respect to their perfect pairing. In either case, if $\tau$ denotes the operator acting as $x\mapsto x^{q^2}$ on the scalars, then $(U^{\perp})^{\perp} = \tau(U)$. We claim that
    \begin{equation}\label{Eq:FlagCalculationKR}
        \tau(U_{r_{i+\ell_i}})\subseteq U_{r_{-i-\underline{\varepsilon}}}^{\perp} \iff b_i < \ell_i.
    \end{equation}
    Indeed, the block $w_i(\mathbf c) = s_{[r_{-i-\underline{\varepsilon}}-b_i,r_{-i-\underline{\varepsilon}}-1]}$ translates to the following condition regarding the relative position of $(\mathcal F_r)_{r\in\widehat{\mathcal K}_{\mathcal T,\mathbf m}}$ and of $(\mathcal F_r^{\perp})_{r\in\widehat{\mathcal K}_{\mathcal T,\mathbf m}}$:
    \begin{center}
    \begin{tikzcd}[column sep=small]
    \mathcal F_{r_{i+\ell_i}-\ell_i+1}^{\perp} \arrow[d,equal,"/" marking] \arrow[dr,hook,shorten >=10pt] \arrow[r,symbol=\overset{1}{\subset}] & \ldots \arrow[r,symbol=\overset{1}{\subset}] & \mathcal F_{r_{i+\ell_i}-\ell_i+b_i}^{\perp} \arrow[d,equal,"/" marking] \arrow[dr,hook,shorten >=10pt] \arrow[r,symbol=\overset{1}{\subset}] & \mathcal F_{r_{i+\ell_i}-\ell_i+b_i+1}^{\perp} \arrow[d,equal] \arrow[r,symbol=\overset{1}{\subset}] & \ldots \arrow[r,symbol=\overset{1}{\subset}] & U_{r_{i+\ell_i}}^{\perp} \arrow[d,equal] \\
    U_{r_{-i-\underline{\varepsilon}}} \arrow[r,symbol=\overset{1}{\subset}] & \ldots \arrow[r,symbol=\overset{1}{\subset}] & \mathcal F_{r_{-i-\underline{\varepsilon}}-b_i+1} \arrow[r,symbol=\overset{1}{\subset}] & \mathcal F_{r_{-i-\underline{\varepsilon}}-b_i} \arrow[r,symbol=\overset{1}{\subset}] & \ldots \arrow[r,symbol=\overset{1}{\subset}] & \mathcal F_{r_{-i-\underline{\varepsilon}}-\ell_i+1}.
    \end{tikzcd}
    \end{center}
    Here, the first $b_i$ vertical relations are inequalities and the remaining $\ell_i-b_i$ are equalities. We note that the extremal cases $b_i=0$ or $b_i=\ell_i$ are possible. Interchanging $i$ and $\theta_I(i)$ gives the companion diagram
    \begin{center}
    \begin{tikzcd}[column sep=small]
    \mathcal F_{r_{-i-\underline{\varepsilon}}-\ell_i+1}^{\perp} \arrow[d,equal,"/" marking] \arrow[dr,hook,shorten >=10pt] \arrow[r,symbol=\overset{1}{\subset}] & \ldots \arrow[r,symbol=\overset{1}{\subset}] & \mathcal F_{r_{-i-\underline{\varepsilon}}-\ell_i+b_{\theta_I(i)}}^{\perp} \arrow[d,equal,"/" marking] \arrow[dr,hook,shorten >=10pt] \arrow[r,symbol=\overset{1}{\subset}] & \mathcal F_{r_{-i-\underline{\varepsilon}}-\ell_i+b_{\theta_I(i)}+1}^{\perp} \arrow[d,equal] \arrow[r,symbol=\overset{1}{\subset}] & \ldots \arrow[r,symbol=\overset{1}{\subset}] & U_{r_{-i-\underline{\varepsilon}}}^{\perp} \arrow[d,equal] \\
    U_{r_{i+\ell_i}} \arrow[r,symbol=\overset{1}{\subset}] & \ldots \arrow[r,symbol=\overset{1}{\subset}] & \mathcal F_{r_{i+\ell_i}-b_{\theta_I(i)}+1} \arrow[r,symbol=\overset{1}{\subset}] & \mathcal F_{r_{i+\ell_i}-b_{\theta_I(i)}} \arrow[r,symbol=\overset{1}{\subset}] & \ldots \arrow[r,symbol=\overset{1}{\subset}] & \mathcal F_{r_{i+\ell_i}-\ell_i+1}.
    \end{tikzcd}
    \end{center}
    If $V=W_{\mathrm{min}}$ or $W_{\mathrm{max}}$, both diagrams lie in the same hermitian space $V_k$. Otherwise, the first diagram lies in one of $(W_{O',O})_k$ and $(W_{O',O}')_k$, and the second lies in the other. If $i = \theta_I(i)$, then $r_{i+\ell_i} = r_{-i-\underline{\varepsilon}}$ and the two diagrams coincide, so that only the first one is needed.

    Suppose first that $b_i = \ell_i$. All the vertical relations in the first diagram are then inequalities, and the successive diagonal inclusions give
    \begin{equation*}
        \mathcal F_{r_{-i-\underline{\varepsilon}}-\ell_i+1}
        =\mathcal F_{r_{i+\ell_i}-1}^{\perp}+U_{r_{-i-\underline{\varepsilon}}}.
    \end{equation*}
    If we had $\tau(U_{r_{i+\ell_i}})\subseteq U_{r_{-i-\underline{\varepsilon}}}^{\perp}$, then, by taking orthogonal complements, we would have $U_{r_{-i-\underline{\varepsilon}}}\subseteq U_{r_{i+\ell_i}}^{\perp}$. Since also $\mathcal F_{r_{i+\ell_i}-1}^{\perp}\subseteq U_{r_{i+\ell_i}}^{\perp}$, it would follow that
    \begin{equation*}
        \mathcal F_{r_{-i-\underline{\varepsilon}}-\ell_i+1}\subseteq U_{r_{i+\ell_i}}^{\perp}.
    \end{equation*}
    These two subspaces have the same dimension, contradicting the rightmost inequality in the first diagram. Hence $\tau(U_{r_{i+\ell_i}})\not\subseteq U_{r_{-i-\underline{\varepsilon}}}^{\perp}$.

    Suppose now that $b_i < \ell_i$. The rightmost vertical relation in the first diagram is then an equality, so that
    \begin{equation*}
        \mathcal F_{r_{-i-\underline{\varepsilon}}-\ell_i+1}=U_{r_{i+\ell_i}}^{\perp}.
    \end{equation*}
    Taking orthogonal complements and using the top row of the companion diagram gives $\tau(U_{r_{i+\ell_i}}) = \mathcal F_{r_{-i-\underline{\varepsilon}}-\ell_i+1}^{\perp} \subseteq U_{r_{-i-\underline{\varepsilon}}}^{\perp}$. This proves \eqref{Eq:FlagCalculationKR}.

    Observe that \eqref{Eq:FlagCalculationKR} is equivalent, for any $r\in I_{\mathbb Z}$ such that $\overline r = i$, to the condition
    \begin{equation*}
        \tau(Z_{r+\ell_i}) \subseteq\pi Z_{-r-\underline{\varepsilon}}^\vee \iff b_i<\ell_i.
    \end{equation*}
    Proposition \ref{Prop:PointsOfKRStrata} therefore gives, for every $i\in\overline I\setminus\mathcal T$,
    \begin{equation*}
        i\in\overline I\setminus J \iff b_i < \ell_i \iff i\in\overline I\setminus J_{\mathbf c}.
    \end{equation*}
    Since we already know that $J\subseteq\overline I\setminus\mathcal T$, this proves that $J=J_{\mathbf c}$ and concludes the proof. 
\end{proof}

As pointed out in Remark \ref{Rk:KRNonEmptyIFFInAdmMu0}, we can now directly confirm a fact expected from the general theory on the non-emptiness of the KR strata.

\begin{corol}\label{Corol:KRStrataNonEmpty}
    Let $J \in \mathcal P_I$. We have $\mathrm{KR}_J \not = \emptyset \iff J\in \mathcal P_{0,I}$.  
\end{corol}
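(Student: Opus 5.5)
The plan is to prove the two implications separately: the forward direction by the crucial Lemma together with Proposition \ref{Prop:PointsOfKRStrata}, the converse by Theorem \ref{Theo:ComparisonFineBTStrataAndKR}. Throughout, the KR stratum is tested on $k$-points for $k$ algebraically closed, since a locally closed subscheme of $\mathcal N_{\mathrm{red}}^{I,\varepsilon}$ is non-empty if and only if it has such a point.

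For ($\Rightarrow$), let $z=(Z_r)_{r\in I_{\mathbb Z}}\in(\mathrm{KR}_J\cap\mathcal N_j^{I,\varepsilon})(k)$. By Lemma \ref{Lem:CrucialLemma}, $\mathrm{Type}(z)\neq\emptyset$, and $\mathrm{Type}(z)$ is $\theta_I$-stable. Take $\overline r\in\mathrm{Type}(z)$. Then $\Lambda_{Z_{-r-\underline{\varepsilon}}}\subseteq\pi^{j+1}\Lambda_{Z_{r+\ell_{\overline r}}}^\vee$, and dualizing gives
\begin{equation*}
\tau(Z_{r+\ell_{\overline r}})\subseteq(\Lambda_{Z_{r+\ell_{\overline r}}})_k\subseteq\pi^{j+1}(\Lambda_{Z_{-r-\underline{\varepsilon}}}^\vee)_k\subseteq\pi^{j+1}Z_{-r-\underline{\varepsilon}}^\vee.
\end{equation*}
This is exactly the computation already used in the first step of the proof of Theorem \ref{Theo:ComparisonFineBTStrataAndKR}. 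By Proposition \ref{Prop:PointsOfKRStrata}, it follows that $\overline r\notin J$, so $\mathrm{Type}(z)\subseteq\overline I\setminus J$. Since $\mathrm{Type}(z)$ is $\theta_I$-stable, we also get $\mathrm{Type}(z)\cap\theta_I(J)=\emptyset$. Hence $J\cup\theta_I(J)\subseteq\overline I\setminus\mathrm{Type}(z)\subsetneq\overline I$, which means $J\in\mathcal P_{0,I}$.

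For ($\Leftarrow$), let $J\in\mathcal P_{0,I}$. The first step is to construct a triple $(\mathcal T,\mathbf m,\mathbf b)$ with $J_{\mathbf c(\mathbf b)}=J$.
\begin{enumerate}
\item Set $\mathcal T\coloneq\overline I\setminus(J\cup\theta_I(J))$. It is non-empty by assumption and $\theta_I$-stable.
\item Take $\mathbf m=0$, which lies in $\mathrm{Num}(\mathcal T)$ because $\ell_i\geq1$.
\item On $\overline I\setminus\mathcal T=J\cup\theta_I(J)$, set $b_i=\ell_i$ for $i\in J$ and $b_i=0$ for $i\in\theta_I(J)\setminus J$.
\end{enumerate}
We then check that $\mathbf b\in\mathcal B_{\mathcal T}$. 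Each $\theta_I$-orbit in $J\cup\theta_I(J)$ meets $J$, so it contains an index with $b_i=\ell_i$. Hence $b_i+b_{\theta_I(i)}\geq\ell_i$, using $\ell_{\theta_I(i)}=\ell_i$. By construction $J_{\mathbf c(\mathbf b)}=\{i\mid c_i=\ell_i\}=J$; here $m_i=0<\ell_i$ on $\mathcal T$, and $b_i=0<\ell_i$ on $\theta_I(J)\setminus J$.

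It remains to see that the stratum is non-empty. By the Theorem in Section \ref{Section5} recalled from \cite{mullerBruhatTitsStratificationBasic2026}, the BT stratum $\mathcal N^0_{\mathcal T,\mathbf\Lambda(\mathbf m)}$ is non-empty. Theorem \ref{Theo:IsomorphismBTStrataAndFineDLVarieties} identifies it with a disjoint union of the varieties $X_{\mathcal K_{\mathcal T,\mathbf m}}\{w_{\mathbf c(\mathbf b)}^{-1}\}$ over $\mathbf b\in\mathcal B_{\mathcal T}$. The only genuine obstacle is that this gives non-emptiness of the union, not of the particular summand for our chosen $\mathbf b$. To handle it, I would use that fine Deligne-Lusztig varieties $X_{\mathcal K}\{w\}$ are non-empty whenever $w$ is a right $\mathcal K$-reduced element of the Weyl group, which holds here by the discussion preceding \eqref{Eq:DimensionFineDLVarieties}; their dimension there is $\ell(w_{\mathbf c})$. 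If that general fact is not available, the fallback is to exhibit an $\mathbb F_{q^2}$-rational lift of the relative-position conditions directly, for instance by choosing a Coxeter-type rational flag.

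Granting this, Theorem \ref{Theo:ComparisonFineBTStrataAndKR} gives
\begin{equation*}
\emptyset\neq X_{\mathcal K_{\mathcal T,\mathbf m}}\{w_{\mathbf c}^{-1}\}\subseteq\mathrm{KR}_{J_{\mathbf c}}=\mathrm{KR}_J.
\end{equation*}
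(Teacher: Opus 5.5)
Your proof is correct and is essentially the paper's. For $(\Leftarrow)$ you use the same triple $(\mathcal T,\mathbf 0,\mathbf b)$ together with Theorem \ref{Theo:ComparisonFineBTStrataAndKR}. For $(\Rightarrow)$ you use the crucial Lemma and Proposition \ref{Prop:PointsOfKRStrata}, but invoke the $\theta_I$-stability of $\mathrm{Type}(z)$ instead of writing out the second chain of inclusions for $\theta_I(\overline r)$.

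The non-emptiness of your particular summand is settled exactly as in the paper. A fine Deligne--Lusztig variety is the image of a classical one, and the classical one is non-empty by Lang's theorem, so your fallback is unnecessary.
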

\begin{proof}
    Assume that $J\in\mathcal P_{0,I}$ and set $\mathcal T \coloneq \overline I \setminus \left(J \cup \theta_I(J)\right)$. Then $\mathcal T$ is non-empty and $\theta_I$-stable. Take $\mathbf m = (0)_{i\in\mathcal T}$ and define
    \begin{equation*}
        \forall i \in \overline I \setminus \mathcal T, \qquad b_i \coloneq \begin{cases}
            \ell_i & \text{if } i \in J,\\
            0 & \text{if } i \not \in J.
        \end{cases}
    \end{equation*}
    Then $\mathbf b\in\mathcal B_{\mathcal T}$ and we have $J_{\mathbf c(\mathbf b)} = J$. By Theorem \ref{Theo:ComparisonFineBTStrataAndKR}, we have $X_{\mathcal T, \mathbf m, \mathbf b} \hookrightarrow \mathrm{KR}_J$. Since fine Deligne-Lusztig varieties are not empty, we have $\mathrm{KR}_J \not = \emptyset$.\\
    
    Conversely, assume that $\mathrm{KR}_J \not = \emptyset$ and let $z$ be a geometric point in it. Let $j \in \mathbb Z$ such that $z \in \mathcal N_j^{I,\varepsilon}$. By Lemma \ref{Lem:CrucialLemma}, the Bruhat-Tits type $\mathrm{Type}(z)$ of $z$ is not empty. Any integer $r\in I_{\mathbb Z}$ such that $\overline r \in \mathrm{Type}(z)$ yields the following sequences of inclusions 
    \begin{align*}
        \tau(Z_{r+\ell_{\overline r}}) & \subseteq \Lambda_{Z_{r+\ell_{\overline r}}} \subseteq \pi^{j+1}\Lambda_{Z_{-r-\underline{\varepsilon}}}^{\vee} \subseteq \pi^{j+1}Z_{-r-\underline{\varepsilon}}^{\vee}, \\ 
        \tau(Z_{-r-\underline{\varepsilon}})  & \subseteq \Lambda_{Z_{-r-\underline{\varepsilon}}}  \subseteq \pi^{j+1} \Lambda_{Z_{r+\ell_{\overline r}}}^{\vee} \subseteq \pi^{j+1}Z_{r+\ell_{\overline r}}^{\vee}, 
    \end{align*}
    proving that the $\theta_I$-orbit $O = \{\overline r, \theta_I(\overline r)\}$ is contained in $\overline I \setminus J$. Thus $J \in \mathcal P_{0,I}$.
\end{proof}
It follows from Theorem \ref{Theo:ComparisonFineBTStrataAndKR} that a single Bruhat-Tits stratum $\mathcal N_{\mathcal T, \mathbf{\Lambda}}^{0}$ is in general far from being contained in a single KR stratum. In fact, it intersects all the strata $\mathrm{KR}_{J_{\mathbf c}}$ for varying $\mathbf b \in \mathcal B_{\mathcal T}$.

\subsection{Comparison of the fine BT and the EKOR stratifications}

Recall from Proposition \ref{Prop:DescriptionIndexSetOfEKORStrata} that the elements of ${}^{I}\mathrm{Adm}(\{\mu'\})_0$ are naturally indexed by the tuples $\mathbf c=(c_i)_{i\in\overline I}$ such that $0 \leq c_i \leq \ell_i$ for all $i$, and $c_i + c_{\theta_I(i)} \leq \ell_i-1$ for at least one $i \in \overline I$. The corresponding element of ${}^{I}\mathrm{Adm}(\{\mu'\})_0$ is denoted by $v_{\mathbf c}$.

Let $\mathcal T\subseteq\overline I$ be a non-empty $\theta_I$-stable subset, let $\mathbf m \in \mathrm{Num}(\mathcal T)$ and let $\mathbf b \in \mathcal B_{\mathcal T}$. Recall the associated tuple $\mathbf c = \mathbf c(\mathbf b)$ defined by
\begin{equation*}
    c_i = \begin{cases}
        m_i & \text{if } i \in \mathcal T,\\
        b_i & \text{if } i \in \overline I \setminus \mathcal T.
    \end{cases}
\end{equation*}
Since $\mathcal T$ is non-empty, the inequalities defining $\mathrm{Num}(\mathcal T)$ show that $v_{\mathbf c}\in{}^{I}\mathrm{Adm}(\{\mu'\})_0$.

\begin{prop}\label{Prop:BijectionIndexSetsBTEKOR}
    The map $(\mathcal T,\mathbf m,\mathbf b) \mapsto v_{\mathbf c(\mathbf b)}$ induces a bijection
    \begin{equation*} 
        \left\{ (\mathcal T,\mathbf m,\mathbf b) \;\middle|\; \begin{array}{l}
            \mathcal T \subseteq \overline I \text{ is non-empty and } \theta_I\text{-stable},\\
            \mathbf m \in \mathrm{Num}(\mathcal T),\\
            \mathbf b \in \mathcal B_{\mathcal T}.
        \end{array} \right\} \xrightarrow{\sim} {}^{I}\mathrm{Adm}(\{\mu'\})_0.
    \end{equation*}
\end{prop}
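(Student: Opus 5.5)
The bijection is purely combinatorial. I would prove it by constructing an explicit inverse, using the description of ${}^{I}\mathrm{Adm}(\{\mu'\})_0$ from Proposition \ref{Prop:DescriptionIndexSetOfEKORStrata}. By that proposition, $\mathbf c\mapsto v_{\mathbf c}$ identifies ${}^{I}\mathrm{Adm}(\{\mu'\})_0$ with the set $\mathcal C_0$ of tuples $\mathbf c=(c_i)_{i\in\overline I}$ such that $0\leq c_i\leq \ell_i$ for all $i$ and $c_i+c_{\theta_I(i)}\leq \ell_i-1$ for at least one $i$. So it suffices to show that $(\mathcal T,\mathbf m,\mathbf b)\mapsto \mathbf c(\mathbf b)$ is a bijection onto $\mathcal C_0$.

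\textbf{Well-definedness.} First I check that $\mathbf c(\mathbf b)$ lies in $\mathcal C_0$.
\begin{itemize}
\item For $i\in\mathcal T$, the condition $m_i+m_{\theta_I(i)}\leq \ell_i-1$ together with $m_{\theta_I(i)}\geq 0$ gives $0\leq c_i\leq \ell_i-1$. Since $\mathcal T$ is $\theta_I$-stable, $c_{\theta_I(i)}=m_{\theta_I(i)}$.
\item For $i\notin\mathcal T$, we have $0\leq b_i\leq \ell_i$ by definition of $\mathcal B_{\mathcal T}$.
\item Since $\mathcal T\neq\emptyset$, some $i$ satisfies $c_i+c_{\theta_I(i)}\leq\ell_i-1$.
\end{itemize}

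\textbf{Key identity.} For every $\theta_I$-stable $\mathcal T$ and every $(\mathbf m,\mathbf b)$,
\begin{equation*}
\mathcal T=\mathcal T_{\mathbf c}\coloneq\{i\in\overline I\mid c_i+c_{\theta_I(i)}\leq \ell_i-1\}.
\end{equation*}
For $i\in\mathcal T$ this is the $\mathrm{Num}(\mathcal T)$ inequality. For $i\notin\mathcal T$, the orbit $\theta_I(i)$ also lies outside $\mathcal T$, so $c_i+c_{\theta_I(i)}=b_i+b_{\theta_I(i)}\geq \ell_i$ by definition of $\mathcal B_{\mathcal T}$. One should also note $\ell_{\theta_I(i)}=\ell_i$, so the condition is symmetric in $i$ and $\theta_I(i)$.

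\textbf{Injectivity.} By the key identity, $\mathcal T$ is determined by $\mathbf c$, and then $\mathbf m=\mathbf c|_{\mathcal T}$ and $\mathbf b=\mathbf c|_{\overline I\setminus\mathcal T}$. This recovers the triple, as already remarked after \eqref{Eq:DefinitionTuplec(b)}.

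\textbf{Surjectivity.} Given $\mathbf c\in\mathcal C_0$, set $\mathcal T\coloneq\mathcal T_{\mathbf c}$, $\mathbf m\coloneq\mathbf c|_{\mathcal T}$ and $\mathbf b\coloneq\mathbf c|_{\overline I\setminus\mathcal T}$.
\begin{itemize}
\item $\mathcal T$ is $\theta_I$-stable, because $\theta_I$ is an involution and $\ell_{\theta_I(i)}=\ell_i$. It is non-empty by the defining condition of $\mathcal C_0$.
\item $\mathbf m\in\mathrm{Num}(\mathcal T)$ holds by definition of $\mathcal T_{\mathbf c}$, with $m_i\geq 0$.
\item $\mathbf b\in\mathcal B_{\mathcal T}$: the bounds $0\leq b_i\leq\ell_i$ come from $\mathbf c$, and $b_i+b_{\theta_I(i)}\geq \ell_i$ holds because $i\notin\mathcal T_{\mathbf c}$.
\end{itemize}
Clearly $\mathbf c(\mathbf b)=\mathbf c$.

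There is no real obstacle here. The only point needing care is the $\theta_I$-stability of $\mathcal T_{\mathbf c}$, which relies on $\ell_{\theta_I(i)}=\ell_i$.
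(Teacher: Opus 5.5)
Your proposal is correct and follows the paper's own argument. The paper likewise builds the inverse by setting $\mathcal T_{\mathbf c}=\{i\in\overline I \mid c_i+c_{\theta_I(i)}\leq \ell_i-1\}$ and splitting $\mathbf c$ into $\mathbf m_{\mathbf c}$ and $\mathbf b_{\mathbf c}$. You additionally spell out the checks the paper leaves as ``by construction'': well-definedness, $\mathcal T=\mathcal T_{\mathbf c(\mathbf b)}$, and $\theta_I$-stability via $\ell_{\theta_I(i)}=\ell_i$.
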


\begin{proof}
    Let $v_{\mathbf c}\in{}^{I}\mathrm{Adm}(\{\mu'\})_0$ and define
    \begin{equation*}
        \mathcal T_{\mathbf c} \coloneq \left\{i \in \overline I \;\middle|\; c_i + c_{\theta_I(i)} \leq \ell_i-1 \right\}.
    \end{equation*}
    Then $\mathcal T_{\mathbf c}$ is non-empty and $\theta_I$-stable. Define $\mathbf m_{\mathbf c} \coloneq(c_i)_{i\in\mathcal T_{\mathbf c}}$ and $\mathbf b_{\mathbf c} \coloneq(c_i)_{i\in\overline I\setminus\mathcal T_{\mathbf c}}$. By construction, $\mathbf m_{\mathbf c} \in \mathrm{Num}(\mathcal T_{\mathbf c})$ and $\mathbf b_{\mathbf c}\in\mathcal B_{\mathcal T_{\mathbf c}}$. This construction is inverse to the map of the Proposition.
\end{proof}

Given a tuple $\mathbf c$ as above, under the bijection \eqref{Eq:BijectionKRPosetEKORPoset}, recall that we have $v_{\mathbf c} = v_{J_{\mathbf c}} \prod_{i \in \overline I \setminus J_{\mathbf c}} u_{i,c_i}$. We can state the main theorem.

\begin{theo}\label{Theo:ComparisonBTEKOR}
    Let $\mathcal T\subseteq\overline I$ be a non-empty $\theta_I$-stable subset, let
    $\mathbf m\in\mathrm{Num}(\mathcal T)$ and let $\mathbf b\in\mathcal B_{\mathcal T}$. If $\mathbf c=\mathbf c(\mathbf b)$, then
    \begin{equation*}
        X_{\mathcal T,\mathbf m,\mathbf b} = \mathrm{EKOR}_{v_{\mathbf c}}.
    \end{equation*}
\end{theo}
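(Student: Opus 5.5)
Both sides of the equality are unions of locally closed strata that partition $\mathcal N_{\mathrm{red}}^{I,\varepsilon}$. On one side, the fine BT strata $X_{\mathcal T,\mathbf m,\mathbf b}$ cover everything by Theorem \ref{Theo:IsomorphismBTStrataAndFineDLVarieties} and the BT decomposition. On the other side, the EKOR strata cover everything by construction, and by Proposition \ref{Prop:BijectionIndexSetsBTEKOR} the index sets correspond bijectively through $(\mathcal T,\mathbf m,\mathbf b)\mapsto v_{\mathbf c(\mathbf b)}$. It therefore suffices to prove the inclusion $X_{\mathcal T,\mathbf m,\mathbf b}\subseteq \mathrm{EKOR}_{v_{\mathbf c}}$ on geometric points for every triple. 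The reverse inclusion then follows formally: a point of $\mathrm{EKOR}_{v_{\mathbf c}}$ lies in some $X_{\mathcal T',\mathbf m',\mathbf b'}\subseteq \mathrm{EKOR}_{v_{\mathbf c'}}$, and disjointness of EKOR strata together with injectivity of the bijection forces $(\mathcal T',\mathbf m',\mathbf b')=(\mathcal T,\mathbf m,\mathbf b)$. Since both strata are reduced, equality on geometric points gives equality of subschemes. By $J(F)$-invariance of EKOR strata, which follows from the construction via the local model diagram, we may further restrict to a point $z\in X_{\mathcal K_{\mathcal T,\mathbf m}}\{w_{\mathbf c}^{-1}\}(k)$.

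By Theorem \ref{Theo:ComparisonFineBTStrataAndKR} we already have $z\in\mathrm{KR}_{J_{\mathbf c}}$, so we write $J\coloneq J_{\mathbf c}$. It remains to show that the EKOR index $u\in{}^{\mathbf J}W_{\mathbb S_a^I}$ of $z$ equals $\prod_{i\in\overline I\setminus J}u_{i,c_i}$. Equivalently, for each $i\in \overline I\setminus J$ we must read off $a_i=c_i$ from the $\mathbf G_I$-zip at $z$. The zip decomposes along the factors of $\mathbf G_I$, namely the blocks $V_I^r$ and $V_I^{\mathrm{min}},V_I^{\mathrm{max}}$, and the Weyl group decomposes as $\prod_i W_{C_i}$. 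The zip relative position in the block $W_{C_i}\simeq\mathfrak S_{\ell_i}$ is measured by comparing two filtrations on the $\ell_i$-dimensional graded piece $Z_r/Z_{r+\ell_i}$ (with $\overline r=i$):
\begin{itemize}
\item the Hodge filtration, which yields the line $\pi^{j+1}\tau^{-1}Z^\vee_{-r-\underline\varepsilon}$-image;
\item the conjugate filtration, which by the Cartier isomorphism is the $\tau$-twist of the dual Hodge datum at $\theta_I(i)$.
\end{itemize}
Concretely, I would prove that for $i\notin J$ the component $u_{i,a}$ is characterized by the integer $a=$ the smallest $d\geq0$ such that the conjugate line becomes compatible after $d$ steps. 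This can be phrased as the condition that $\tau(Z_{r+\ell_i-d})\subseteq \pi^{j+1}Z^\vee_{-r-\underline\varepsilon}$ holds for exactly $d\ge$ some threshold, or equivalently as the position of the image of the Hodge line in the flag $(Z_{r+s}/Z_{r+\ell_i})_s$ relative to its Frobenius conjugate. This amounts to an explicit computation with the representative $\dot v_J\dot u$, in the spirit of Lemmas \ref{Lem:ActionOfvJOnLattices} and \ref{Lem:xJIsAPointOfOJ}: the orbit $\mathbf G_I^u$ contains the standard point obtained from $\dot v_J\dot u$, and for that point the invariant is computed directly.

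Next I would compute the same invariant on the fine DL side. This uses exactly the two flag diagrams appearing in the proof of Theorem \ref{Theo:ComparisonFineBTStrataAndKR}. For $i\in\overline I\setminus\mathcal T$, the block $w_i(\mathbf c)=s_{[r_{-i-\underline\varepsilon}-b_i,\,r_{-i-\underline\varepsilon}-1]}$ says that $\mathcal F^\perp$ and $\mathcal F$ differ exactly in the first $b_i$ positions. Translating back to lattices via $U_r=Z_r/\Lambda_{\beta_V}$, this gives $\tau(Z_{r+\ell_i-d})\subseteq\pi^{j+1}Z^\vee_{-r-\underline\varepsilon}$ if and only if $d<\ell_i-b_i$ (suitably indexed), which yields $a_i=b_i$. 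For $i\in\mathcal T$ the lattices $Z$ are sandwiched between the $\Lambda(\mathbf m)$'s. There the relevant positions are governed by the orbit type $t(\Lambda(\mathbf m)_i)$, hence by $m_i$, and one gets $a_i=m_i$. This last case follows from the same kind of index count as in Lemma \ref{Lem:BTIndexAssociatedToPoint}, together with the fact that the $H_O$ factors act trivially.

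The main obstacle is the first step: making the zip-theoretic relative position of the Hodge and conjugate filtrations explicit enough, through the local model trivializations $\gamma_r$, to express the orbit $\mathbf G_I^u$ by lattice-inclusion conditions. I would handle it by first reducing, via Proposition \ref{Prop:TheOJAreOrbits} and equivariance, to the distinguished points $x_J$ and $x_{\theta_I(J)}$. With these fixed, the parabolic reductions $\mathcal I_J^{\pm}$ are explicit, and the zip relative position becomes the $\mathbf P_{\mathbf J}$-double-coset of the transition matrix between two explicit flags. This can then be matched blockwise with the DL relative position $w_{\mathbf c}$, including the twist by $\mathrm{Ad}(\tau_0)\circ\sigma$ that interchanges $i$ and $\theta_I(i)$.
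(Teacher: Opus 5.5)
Your outer reductions are fine and agree with the paper. The inclusion suffices by Proposition~\ref{Prop:BijectionIndexSetsBTEKOR} and disjointness, you may restrict to $X_{\mathcal K_{\mathcal T,\mathbf m}}\{w_{\mathbf c}^{-1}\}$ by $J(F)$-invariance, and Theorem~\ref{Theo:ComparisonFineBTStrataAndKR} puts $z$ in $\mathrm{KR}_{J_{\mathbf c}}$.

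The gap is in the core step, reading off the $\mathbf E_J$-orbit of $z$. Comparing the Hodge and conjugate filtrations, even after moving the two parabolic reductions to $x_J$ and $x_{\theta_I(J)}$, only recovers the image of $u$ in $W_{\mathbf J}\backslash W_{\mathbb S_a^I}/W_{\sigma_J(\mathbf J)}$. A double coset of the transition matrix forgets the $\sigma_J$-twisted identification of Levi quotients that defines the zip group. This is strictly coarser than ${}^{\mathbf J}W_{\mathbb S_a^I}$. For $i\notin J\cup\theta_I(J)$, both $\mathbf J$ and $\sigma_J(\mathbf J)$ restrict to $C_i^\circ$ on $W_{C_i}\simeq\mathfrak S_{\ell_i}$, which gives two double cosets against the $\ell_i$ elements $u_{i,0},\ldots,u_{i,\ell_i-1}$. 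For instance, at hyperspecial level $I=\{0\}$, $\varepsilon=1$, your comparison only detects whether $Z_0$ is $\tau$-stable, while $\mathcal N^{I,\varepsilon}_{\mathrm{red}}$ has $\lceil n/2\rceil$ non-empty EKOR strata.

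Your lattice criterion does not repair this. The condition $\tau(Z_{r+\ell_i-d})\subseteq\pi^{j+1}Z^\vee_{-r-\underline{\varepsilon}}$ uses indices $r+\ell_i-d\notin I_{\mathbb Z}$ for $0<d<\ell_i$, so these lattices are not part of a level-$I$ point. They exist only on an Iwahori-level lift, and different lifts of the same $z$ lie in different Iwahori KR strata, so the criterion is not an invariant of $z$.

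The missing bridge is the change-of-parahoric theorem (\cite[Proposition 6.11]{heStratificationsReductionShimura2017}, \cite[Proposition 3.5.2]{shenEKORStrataShimura2021}). For $v\in{}^{I}\mathrm{Adm}(\{\mu'\})$, the projection from Iwahori level maps the Iwahori KR stratum of $v$ onto $\mathrm{EKOR}_v$, so no zip computation is needed at all. The paper applies it to a specific lift:
\begin{itemize}
\item choose $g\in\mathbf J_{\mathcal T,\mathbf m}(k)$ carrying the standard flag to a full flag $(\mathcal F_r)\in X(w_{\mathbf c}^{-1})(k)$ over $z$;
\item lift $g$ to $\widetilde g\in\mathcal J_{\mathcal T,\mathbf m}(W_{\mathcal O_F}(k))$, using smoothness;
\item set $\widetilde Z_r=\widetilde g(\Lambda_r\otimes_{\mathcal O_K}W_{\mathcal O_F}(k))$ for all $r\in\mathbb Z$;
\item check with Proposition~\ref{Prop:PointsOfKRStrata} at Iwahori level that the resulting point lies in the stratum of $J^{\mathrm{Iw}}=\bigsqcup_{i\in\overline I}\{i,\ldots,i+c_i-1\}$, i.e.\ $v_{J^{\mathrm{Iw}}}=v_{\mathbf c}$.
\end{itemize}

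Your flag computation for $i\notin\mathcal T$ is essentially this check, once it is read on $\widetilde Z$ rather than on $z$. For $i\in\mathcal T$ the check is not an orbit-type count. The range $m_i<d\leq\ell_i-1$ needs the diagrams in $H_O$ (resp.\ $W_{\mathrm{min}}$, $W_{\mathrm{max}}$) and in the neighbouring quotient space. It also uses that $w_{\mathbf c}$ is trivial on $H_O,H_O'$ and avoids the relevant simple reflections in the neighbouring space.
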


\begin{proof}
    It is enough to prove the inclusion $X_{\mathcal T,\mathbf m,\mathbf b} \subseteq \mathrm{EKOR}_{v_{\mathbf c}}$. Indeed, Proposition \ref{Prop:BijectionIndexSetsBTEKOR} identifies the indexing sets of two stratifications of $\mathcal N_{\mathrm{red}}^{I,\varepsilon}$. Besides, since the EKOR strata are $J(F)$-invariant, it is enough to prove that $X_{\mathcal K_{\mathcal T,\mathbf m}}\{w_{\mathbf c}^{-1}\} \subseteq \mathrm{EKOR}_{v_{\mathbf c}}$. Let $k$ be an algebraically closed field containing $\overline{\mathbb F_q}$ and let $z = (Z_r)_{r\in I_{\mathbb Z}} \in X_{\mathcal K_{\mathcal T,\mathbf m}}\{w_{\mathbf c}^{-1}\}(k)$. As in the proof of Theorem \ref{Theo:ComparisonFineBTStrataAndKR}, the corresponding partial flag $(U_r)_{r\in\mathcal K_{\mathcal T,\mathbf m}}$ can be lifted to a full flag $(\mathcal F_r)_{r\in\widehat{\mathcal K}_{\mathcal T,\mathbf m}} \in X(w_{\mathbf c}^{-1})(k)$.

    Choose $g \in \mathbf J_{\mathcal T,\mathbf m}(k)$ sending the standard full flag induced by $(\Lambda_r)_{r\in\widehat{\mathcal K}_{\mathcal T,\mathbf m}}$ to $(\mathcal F_r)_{r\in\widehat{\mathcal K}_{\mathcal T,\mathbf m}}$. Since $\mathcal J_{\mathcal T,\mathbf m}$ is smooth, $g$ admits a lift $\widetilde g \in \mathcal J_{\mathcal T,\mathbf m}(W_{\mathcal O_F}(k))$. For every $r \in \mathbb Z$, set 
    \begin{equation*}
        \widetilde Z_r \coloneq \widetilde g \cdot\left(\Lambda_r \otimes_{\mathcal O_K}W_{\mathcal O_F}(k) \right).
    \end{equation*}
    Then $(\widetilde Z_r)_{r\in\mathbb Z}$ is a full decreasing $\pi$-periodic lattice chain satisfying
    \begin{equation*}
        \pi\widetilde Z_{-r-\underline{\varepsilon}}^\vee \overset{1}{\subset} \widetilde Z_r \subset \widetilde Z_{-r-\underline{\varepsilon}}^\vee.
    \end{equation*}
    It therefore defines a point $\widetilde z \in \mathcal N_0^{I_{\mathrm{Iw}},\varepsilon}(k)$, where $I_{\mathrm{Iw}} = \{0,\ldots,\nu_{\varepsilon}\}$ corresponds to the Iwahori level, and whose image under the change of parahoric morphism $\mathcal N_0^{I_{\mathrm{Iw}},\varepsilon} \to \mathcal N_0^{I,\varepsilon}$ is $z$. Let $J^{\mathrm{Iw}} \subsetneq \mathbb Z/n\mathbb Z$ be the index of the Iwahori-level KR stratum containing $\widetilde z$. We claim that
    \begin{equation}\label{Eq:ComputationOfJIwahori}
        J^{\mathrm{Iw}} = \bigsqcup_{i \in \overline I} \{i,i+1,\ldots,i+c_i-1\},
    \end{equation}
    where the set indexed by $i$ is understood to be empty if $c_i=0$. Recall that the subsets $B_i = \{i, i+1, \ldots, i+\ell_i-1 \}$ for $i \in \overline I$ form a partition of $\mathbb Z/n\mathbb Z$. Thus every element can be written uniquely as $i+d$, where $i\in\overline I$ and $0\leq d\leq\ell_i-1$. Consequently, we have to prove that
    \begin{equation}\label{Eq:MembershipJIwahori}
        i+d \in J^{\mathrm{Iw}} \iff d < c_i.
    \end{equation}
    Fix $i\in\overline I$ and $0\leq d\leq\ell_i-1$. By definition, $r_{-i-\underline{\varepsilon}-d} \in \widehat{\mathcal K}_{\mathcal T,\mathbf m}$ is the chosen representative of $-i-\underline{\varepsilon}-d$ in $\widehat{\mathcal K}_{\mathcal T,\mathbf m}$. Therefore $-r_{-i-\underline{\varepsilon}-d} -\underline{\varepsilon}$ is a representative of $i+d$. Proposition \ref{Prop:PointsOfKRStrata}, applied at Iwahori level to this representative, gives
    \begin{equation}
        i+d \not \in J^{\mathrm{Iw}} \iff \tau \left(\widetilde Z_{-r_{-i-\underline{\varepsilon}-d} -\underline{\varepsilon}+1} \right) \subseteq \pi \widetilde Z_{r_{-i-\underline{\varepsilon}-d}}^\vee.
    \end{equation}
    Hence \eqref{Eq:MembershipJIwahori} is equivalent to proving that
    \begin{equation}\label{Eq:CriterionToBeCheckedOnFlags}
        \tau\left(\widetilde Z_{-r_{-i-\underline{\varepsilon}-d}-\underline{\varepsilon}+1} \right) \subseteq \pi \widetilde Z_{r_{-i-\underline{\varepsilon}-d}}^\vee \iff d \geq  c_i.
    \end{equation}
    We now verify \eqref{Eq:CriterionToBeCheckedOnFlags} by distinguishing two cases.\\

    \textbf{Case 1:} Let $i\in \overline I \setminus \mathcal T$. Then $c_i = b_i$ and there is a unique quotient space $V$ introduced in Section \ref{Section5.2} such that
    \begin{equation}\label{Eq:Case1IntervalInclusion}
        \{r_{-i-\underline{\varepsilon}}-\ell_i+1,\ldots, r_{-i-\underline{\varepsilon}}\} \subseteq \widehat{\mathcal K}_V.
    \end{equation}
    The first diagram in the proof of Theorem \ref{Theo:ComparisonFineBTStrataAndKR} becomes
    \begin{center}
    \begin{tikzcd}[column sep=small]
        \mathcal F_{r_{i+\ell_i}-\ell_i+1}^{\perp} \arrow[d,equal,"/" marking] \arrow[dr,hook,shorten >=10pt] \arrow[r,symbol=\overset{1}{\subset}] & \ldots \arrow[r,symbol=\overset{1}{\subset}] & \mathcal F_{r_{i+\ell_i}-\ell_i+b_i}^{\perp} \arrow[d,equal,"/" marking] \arrow[dr,hook,shorten >=10pt] \arrow[r,symbol=\overset{1}{\subset}] & \mathcal F_{r_{i+\ell_i}-\ell_i+b_i+1}^{\perp} \arrow[d,equal] \arrow[r,symbol=\overset{1}{\subset}] & \ldots \arrow[r,symbol=\overset{1}{\subset}] & U_{r_{i+\ell_i}}^{\perp} \arrow[d,equal] \\
        U_{r_{-i-\underline{\varepsilon}}} \arrow[r,symbol=\overset{1}{\subset}] & \ldots \arrow[r,symbol=\overset{1}{\subset}] & \mathcal F_{r_{-i-\underline{\varepsilon}}-b_i+1} \arrow[r,symbol=\overset{1}{\subset}] & \mathcal F_{r_{-i-\underline{\varepsilon}}-b_i} \arrow[r,symbol=\overset{1}{\subset}] & \ldots \arrow[r,symbol=\overset{1}{\subset}] & \mathcal F_{r_{-i-\underline{\varepsilon}}-\ell_i+1}
    \end{tikzcd}
    \end{center}
    The first $b_i$ vertical relations are inequalities, while the last $\ell_i-b_i$ vertical relations are equalities. Let $0\leq d\leq\ell_i-1$. By \eqref{Eq:Case1IntervalInclusion} we have $r_{-i-\underline{\varepsilon}-d} = r_{-i-\underline{\varepsilon}}-d$. Thus, the lower row contains the term $\mathcal F_{r_{-i-\underline{\varepsilon}-d}}$, and the term lying directly above it is $\mathcal F_{r_{i+\ell_i}-\ell_i+d+1}^{\perp}$. Moreover,
    \begin{equation*}
        r_{i+\ell_i}-\ell_i+d+1 = \begin{cases}
        -r_{-i-\underline{\varepsilon}-d} -\underline{\varepsilon}+1 & \text{if } V \not  = W_{\mathrm{max}},\\
        -r_{-i-\underline{\varepsilon}-d} -\underline{\varepsilon}+1+n & \text{if } V = W_{\mathrm{max}}.
        \end{cases}
    \end{equation*}
    By taking orthogonal complements and using $\pi$-periodicity when $V=W_{\mathrm{max}}$, we therefore obtain
    \begin{equation*}
        \mathcal F_{r_{-i-\underline{\varepsilon}-d}} = \mathcal F_{r_{i+\ell_i}-\ell_i+d+1}^{\perp} \iff \tau\left( \widetilde Z_{-r_{-i-\underline{\varepsilon}-d} -\underline{\varepsilon}+1} \right) \subseteq \pi \widetilde Z_{r_{-i-\underline{\varepsilon}-d}}^\vee.
    \end{equation*}
    The diagram shows that these two flag terms are distinct for $0\leq d<b_i$ and equal for $b_i\leq d\leq\ell_i-1$. Consequently, we have proved \eqref{Eq:CriterionToBeCheckedOnFlags} for every $i\in\overline I\setminus\mathcal T$.\\

    \textbf{Case 2:} Let $i\in\mathcal T$, so that $c_i=m_i$. There is a unique quotient space $V$ such that 
    \begin{equation}\label{Eq:Case2IntervalInclusion}
        \{r_{-i-\underline{\varepsilon}}-m_i,\ldots, r_{-i-\underline{\varepsilon}}\} \subseteq \widehat{\mathcal K}_V.
    \end{equation}
    Moreover, we have $r_{-i-\underline{\varepsilon}}-m_i=\alpha_V$. The block $w_i(\mathbf c) = s_{[r_{-i-\underline{\varepsilon}}-m_i, r_{-i-\underline{\varepsilon}}-1]}$ gives the following diagram inside $V_k$.
    \begin{center}
    \begin{tikzcd}[column sep=small]
        \mathcal F_{r_{i+1}}^{\perp} \arrow[d,equal,"/" marking] \arrow[dr,hook,shorten >=10pt] \arrow[r,symbol=\overset{1}{\subset}] & \ldots \arrow[r,symbol=\overset{1}{\subset}] & \mathcal F_{r_{i+m_i}}^{\perp} \arrow[d,equal,"/" marking] \arrow[dr,hook,shorten >=10pt] \arrow[r,symbol=\overset{1}{\subset}] & \{0\}^{\perp} \arrow[d,equal] \\
        U_{r_{-i-\underline{\varepsilon}}} \arrow[r,symbol=\overset{1}{\subset}] & \ldots \arrow[r,symbol=\overset{1}{\subset}] & \mathcal F_{r_{-i-\underline{\varepsilon}}-m_i+1} \arrow[r,symbol=\overset{1}{\subset}] & V_k
    \end{tikzcd}
    \end{center}
    If $m_i=0$, this diagram consists only of its rightmost vertical equality. Let $0 \leq d \leq m_i$. By \eqref{Eq:Case2IntervalInclusion}, we have $r_{-i-\underline{\varepsilon}-d} = r_{-i-\underline{\varepsilon}}-d$. Thus, the lower row contains the term $\mathcal F_{r_{-i-\underline{\varepsilon}-d}}$, and the term lying directly above it is $\mathcal F_{r_{i+d+1}}^{\perp}$. Moreover,
    \begin{equation*}
        r_{i+d+1} = \begin{cases}
        -r_{-i-\underline{\varepsilon}-d} -\underline{\varepsilon}+1 & \text{if } V \not = W_{\mathrm{max}},\\
        -r_{-i-\underline{\varepsilon}-d} -\underline{\varepsilon}+1+n & \text{if }V = W_{\mathrm{max}}.
        \end{cases}
    \end{equation*}
    As in Case 1, taking orthogonal complements and using $\pi$-periodicity when $V=W_{\mathrm{max}}$ gives
    \begin{equation*}
        \mathcal F_{r_{-i-\underline{\varepsilon}-d}} = \mathcal F_{r_{i+d+1}}^{\perp} \iff \tau\left(\widetilde Z_{-r_{-i-\underline{\varepsilon}-d} -\underline{\varepsilon}+1} \right) \subseteq \pi\widetilde Z_{r_{-i-\underline{\varepsilon}-d}}^\vee.
    \end{equation*}
    By the diagram, we deduce that $\tau\left(\widetilde Z_{-r_{-i-\underline{\varepsilon}-d} -\underline{\varepsilon}+1} \right) \not \subseteq \pi\widetilde Z_{r_{-i-\underline{\varepsilon}-d}}^\vee$ when $0 \leq d < m_i$, but that this inclusion holds for $d = m_i$. 
    
    It remains to prove that it also holds for $m_i+1 \leq d \leq \ell_i-1$. We do this by distinguishing on the size of the $\theta_I$-orbit $O$ containing $i$.\\

    \textbf{Case 2.a:} Suppose first that $\#O=2$. Up to exchanging $i$ and $\theta_I(i)$, we may assume that $i = \overline{r_O^+}$ and $r_O^- = -r_O^+ - \underline{\varepsilon}-\ell_i$. We have
    \begin{align*}
        \widehat{\mathcal K}_{H_O} & = \{r_O^-+m_{\theta_I(i)}+1,\ldots, r_O^-+\ell_i-m_i-1\},\\
        \widehat{\mathcal K}_{H_O'} & = \{r_O^++m_i+1,\ldots, r_O^++\ell_i-m_{\theta_I(i)}-1\}.
    \end{align*}
    Since the components of $w_{\mathbf c}$ on $H_O$ and $H_O'$ are trivial, we have the following diagram inside $(H_O)_k$.
    \begin{center}
    \begin{tikzcd}[column sep=small]
    (H_O')_k^{\perp} \arrow[d,equal] \arrow[r,symbol=\overset{1}{\subset}] & \mathcal F_{r_O^++m_i+2}^{\perp} \arrow[d,equal] \arrow[r,symbol=\overset{1}{\subset}] & \ldots \arrow[r,symbol=\overset{1}{\subset}] & \mathcal F_{r_O^++\ell_i-m_{\theta_I(i)}-1}^{\perp} \arrow[d,equal] \arrow[r,symbol=\overset{1}{\subset}] & \{0\}^{\perp} \arrow[d,equal] \\
    \{0\} \arrow[r,symbol=\overset{1}{\subset}] & \mathcal F_{r_O^-+\ell_i-m_i-1} \arrow[r,symbol=\overset{1}{\subset}] & \ldots \arrow[r,symbol=\overset{1}{\subset}] & \mathcal F_{r_O^-+m_{\theta_I(i)}+2} \arrow[r,symbol=\overset{1}{\subset}] & (H_O)_k.
    \end{tikzcd}
    \end{center}
    The leftmost equality of this diagram is the equality already obtained for $d=m_i$ in Case 2 above. Let $m_i < d \leq \ell_i-m_{\theta_I(i)}-1$. Then $r_O^- + \ell_i-d \in \widehat{\mathcal K}_{H_O}$ and
    \begin{equation*}
        r_O^-+\ell_i-d \equiv-r_O^+ -\underline{\varepsilon}-d \equiv -i-\underline{\varepsilon}-d \mod n.
    \end{equation*}
    It follows that $r_{-i-\underline{\varepsilon}-d} = r_O^-+\ell_i-d$. The term lying directly above $\mathcal F_{r_O^-+\ell_i-d}$ is $\mathcal F_{r_O^++d+1}^{\perp}$, so that we have
    \begin{equation*}
        r_O^+ +d+1 = -r_{-i-\underline{\varepsilon}-d} -\underline{\varepsilon}+1.
    \end{equation*}
    Thus, each equality displayed in the diagram translates into the inclusions $\tau(\widetilde Z_{-r_{-i-\underline{\varepsilon}-d} -\underline{\varepsilon}+1}) \subseteq \pi\widetilde Z_{r_{-i-\underline{\varepsilon}-d}}^\vee$, for all $m_i < d \leq \ell_i-m_{\theta_I(i)}-1$.

    It remains to consider the case $\ell_i - m_{\theta_I(i)} \leq d \leq \ell_i-1$. This range is empty if $m_{\theta_I(i)} = 0$. Assume therefore that $m_{\theta_I(i)} > 0$. We have
    \begin{equation*}
        \{r_O^-+1,\ldots,r_O^-+m_{\theta_I(i)}\} \subseteq \begin{cases} 
        \widehat{\mathcal K}_{W_{O,O'}} & \text{if } O \prec O',\\ 
        \widehat{\mathcal K}_{W_{\mathrm{max}}} & \text{if } O = O_{\mathrm{max}},
    \end{cases}
    \end{equation*}
    where $O'$ is the unique orbit such that $O\prec O'$ in the first case. Let $V$ denote the quotient space appearing on the right-hand side. We have the following diagram inside $V_k$.
    \begin{center}
    \begin{tikzcd}[column sep=small]
        \mathcal F_{r_{i+\ell_i}-m_{\theta_I(i)}+1}^{\perp} \arrow[d,equal] \arrow[r,symbol=\overset{1}{\subset}] & \ldots \arrow[r,symbol=\overset{1}{\subset}] & U_{r_{i+\ell_i}}^{\perp} \arrow[d,equal] \\
        \mathcal F_{r_O^-+m_{\theta_I(i)}} \arrow[r,symbol=\overset{1}{\subset}] & \ldots \arrow[r,symbol=\overset{1}{\subset}] & \mathcal F_{r_O^-+1}.
    \end{tikzcd}
    \end{center}
    All the vertical relations are equalities. Indeed, the integer $r_O^-+\ell_i-d$ belongs to $\widehat{\mathcal K}_V$ and is congruent to $-i-\underline{\varepsilon}-d$ modulo $n$. Therefore $r_{-i-\underline{\varepsilon}-d} = r_O^-+\ell_i-d$. Under the identification $\mathbf W_V\simeq\mathrm{Bij}(\widehat{\mathcal K}_V)$, none of the simple reflections $s_{r_{-i-\underline{\varepsilon}-d}-1}$ for $\ell_i - m_{\theta_I(i)} \leq d \leq \ell_i-1$ occurs in $w_{\mathbf c}$. For otherwise, it would occur in some factor $w_j(\mathbf c)$, and by definition of $w_j(\mathbf c)$ there would exist an integer $0 \leq e < c_j$ such that
    \begin{equation*}
        r_{-i-\underline{\varepsilon}-d}-1 = r_{-j-\underline{\varepsilon}}-e-1.
    \end{equation*}
    Reducing modulo $n$ would give $i+d=j+e$. Since $i+d\in B_i$ and $j+e\in B_j$, the fact that the subsets $B_j$ form a partition of $\mathbb Z/n\mathbb Z$ implies that $j=i$ and $e=d$. This is impossible because $e < c_i = m_i < d$.\\
    The lower terms of the diagram have the form $\mathcal F_{r_{-i-\underline{\varepsilon}-d}}$, whereas the term directly above is $\mathcal F_{r_{i+\ell_i}-\ell_i+d+1}^{\perp}$. Their indices satisfy
    \begin{equation*}
        r_{i+\ell_i}-\ell_i+d+1 = \begin{cases} 
        -r_{-i-\underline{\varepsilon}-d} -\underline{\varepsilon}+1 & \text{if } V \not = W_{\mathrm{max}},\\ 
        -r_{-i-\underline{\varepsilon}-d} -\underline{\varepsilon}+1+n & \text{if } V = W_{\mathrm{max}}.
    \end{cases}
    \end{equation*}
    Taking orthogonal complements and using $\pi$-periodicity when $V = W_{\mathrm{max}}$ now gives
    \begin{equation*}
        \tau\left(\widetilde Z_{-r_{-i-\underline{\varepsilon}-d} -\underline{\varepsilon}+1} \right) \subseteq \pi\widetilde Z_{r_{-i-\underline{\varepsilon}-d}}^\vee.
    \end{equation*}
    This concludes the proof of \eqref{Eq:CriterionToBeCheckedOnFlags} when $i=\overline{r_O^+}$. If $i=\overline{r_O^-}$, the same argument applies after interchanging $i$ and $\theta_I(i)$, and $H_O$ and $H_O'$.\\

    \textbf{Case 2.b:} Suppose now that $O = \{i\}$. We then have $\theta_I(i) = i$ and $2m_i \leq \ell_i-1$. Moreover, 
    \begin{equation*}
        2r_O+\underline{\varepsilon}+\ell_i = \begin{cases}
        n & \text{if } r_O\in I^+,\\
        0 & \text{if } r_O\in I^-.
        \end{cases}
    \end{equation*}
    Set $V = W_{\mathrm{max}}$ if $r_O\in I^+$, and $V = W_{\mathrm{min}}$ if $r_O\in I^-$. Then
    \begin{equation*}
        \widehat{\mathcal K}_V = \{r_O+m_i+1,\ldots,r_O+\ell_i-m_i-1\},
    \end{equation*}
    and we have the following diagram inside $V_k$.
    \begin{center}
    \begin{tikzcd}[column sep=small]
        V_k^{\perp} \arrow[d,equal] \arrow[r,symbol=\overset{1}{\subset}] & \mathcal F_{r_O+m_i+2}^{\perp} \arrow[d,equal] \arrow[r,symbol=\overset{1}{\subset}] & \ldots \arrow[r,symbol=\overset{1}{\subset}] & \mathcal F_{r_O+\ell_i-m_i-1}^{\perp} \arrow[d,equal] \arrow[r,symbol=\overset{1}{\subset}] & \{0\}^{\perp} \arrow[d,equal] \\
        \{0\} \arrow[r,symbol=\overset{1}{\subset}] & \mathcal F_{r_O+\ell_i-m_i-1} \arrow[r,symbol=\overset{1}{\subset}] & \ldots \arrow[r,symbol=\overset{1}{\subset}] & \mathcal F_{r_O+m_i+2} \arrow[r,symbol=\overset{1}{\subset}] & V_k.
    \end{tikzcd}
    \end{center}
    The leftmost equality of this diagram is the equality already obtained for $d=m_i$ in Case 2 above. Let $m_i < d \leq \ell_i-m_i-1$. Then $r_O+\ell_i-d\in\widehat{\mathcal K}_V$ and we have
    \begin{equation*}
        r_O+\ell_i-d \equiv -i-\underline{\varepsilon}-d \mod n.
    \end{equation*}
    Consequently, $r_{-i-\underline{\varepsilon}-d} = r_O+\ell_i-d$. The term lying directly above $\mathcal F_{r_O+\ell_i-d}$ is $\mathcal F_{r_O+d+1}^{\perp}$, so that we have
    \begin{equation*}
        r_O+d+1 = \begin{cases}
        -r_{-i-\underline{\varepsilon}-d} -\underline{\varepsilon}+1+n & \text{if } r_O \in I^+,\\
        -r_{-i-\underline{\varepsilon}-d} -\underline{\varepsilon}+1 & \text{if } r_O \in I^-.
    \end{cases}
    \end{equation*}
    We deduce that the equalities displayed in the diagram give the inclusion $\tau(\widetilde Z_{-r_{-i-\underline{\varepsilon}-d} -\underline{\varepsilon}+1}) \subseteq \pi\widetilde Z_{r_{-i-\underline{\varepsilon}-d}}^\vee$ for all $m_i < d \leq \ell_i-m_i-1$.

    It remains to consider the case $\ell_i-m_i\leq d\leq\ell_i-1$. This range is empty if $m_i = 0$. Assume that $m_i > 0$. We have
    \begin{equation*}
    \{r_O+1,\ldots,r_O+m_i\} \subseteq \begin{cases}
        \widehat{\mathcal K}_{W_{O',O}'} & \text{if } r_O \in I^+ \text{ and } O'\prec O,\\
        \widehat{\mathcal K}_{W_{\mathrm{min}}} & \text{if } r_O \in I^+ \text{ and }O = O_{\mathrm{min}},\\
        \widehat{\mathcal K}_{W_{O,O'}} & \text{if } r_O \in I^- \text{ and } O\prec O',\\
        \widehat{\mathcal K}_{W_{\mathrm{max}}} & \text{if } r_O \in I^- \text{ and }O = O_{\mathrm{max}},
        \end{cases}
    \end{equation*}
    where $O'$ is the unique adjacent orbit in the indicated direction. Let $V$ denote the quotient space appearing on the right-hand side. We have the following diagram inside $V_k$.
    \begin{center}
    \begin{tikzcd}[column sep=small]
        \mathcal F_{r_{i+\ell_i}-m_i+1}^{\perp} \arrow[d,equal] \arrow[r,symbol=\overset{1}{\subset}] & \ldots \arrow[r,symbol=\overset{1}{\subset}] & U_{r_{i+\ell_i}}^{\perp} \arrow[d,equal] \\
        \mathcal F_{r_O+m_i} \arrow[r,symbol=\overset{1}{\subset}] & \ldots \arrow[r,symbol=\overset{1}{\subset}] & \mathcal F_{r_O+1}.
    \end{tikzcd}
    \end{center}
    As in Case 2.a, all the vertical relations are equalities. Since $r_O+\ell_i-d \in \widehat{\mathcal K}_V$ and this integer is congruent to $-i-\underline{\varepsilon}-d$ modulo $n$, we have $r_{-i-\underline{\varepsilon}-d} = r_O+\ell_i-d$. The lower terms in the diagram are therefore $\mathcal F_{r_{-i-\underline{\varepsilon}-d}}$, and the term directly above is $\mathcal F_{r_{i+\ell_i}-\ell_i+d+1}^{\perp}$. Their indices satisfy
    \begin{equation*}
        r_{i+\ell_i}-\ell_i+d+1 = \begin{cases}
            -r_{-i-\underline{\varepsilon}-d} -\underline{\varepsilon}+1 & \text{if } V \not = W_{\mathrm{max}},\\
            -r_{-i-\underline{\varepsilon}-d} -\underline{\varepsilon}+1+n & \text{if }V = W_{\mathrm{max}}.
        \end{cases}
    \end{equation*}
    Taking orthogonal complements and using $\pi$-periodicity in the latter case shows that
    \begin{equation*}
        \tau\left(\widetilde Z_{-r_{-i-\underline{\varepsilon}-d} -\underline{\varepsilon}+1} \right) \subseteq \pi\widetilde Z_{r_{-i-\underline{\varepsilon}-d}}^\vee.
    \end{equation*}
    This concludes the proof of \eqref{Eq:CriterionToBeCheckedOnFlags}, as well as Case 2.\\

    Combining Cases 1 and 2, we deduce that $J^{\mathrm{Iw}}$ is given by the formula of \eqref{Eq:ComputationOfJIwahori}. It follows that $v_{J^{\mathrm{Iw}}} = v_{\mathbf c} \in \mathrm{Adm}(\{\mu'\})_0$. At Iwahori level, the KR and EKOR stratifications agree. Moreover, since we also have $v_{\mathbf c} \in {}^I\mathrm{Adm}(\{\mu'\})$, the change of parahoric morphism $\mathcal N_0^{I_{\mathrm{Iw}},\varepsilon} \to \mathcal N_0^{I,\varepsilon}$ maps the KR stratum indexed by $v_{\mathbf c}$ onto $\mathrm{EKOR}_{v_{\mathbf c}}$, see \cite{shenEKORStrataShimura2021} Proposition 3.5.2 and \cite{heStratificationsReductionShimura2017} Proposition 6.11. Since $\widetilde z$ maps to $z$, we obtain $z \in \mathrm{EKOR}_{v_{\mathbf c}}(k)$, and this concludes the proof.
\end{proof}

We record the compatibility of the fine Bruhat-Tits and EKOR stratifications with the duality isomorphism $\Phi_I : \mathcal N^{I,\varepsilon} \xrightarrow{\sim} \mathcal N^{\widetilde I,\varepsilon'}$ constructed in Proposition \ref{Prop:IsomorphismDualityArbitraryParahoric}, where $\varepsilon' = (-1)^n \varepsilon$ and $\widetilde I$ is defined in Equation \ref{Eq:DefiWidetildeI}. Recall also the bijection $\varphi_I : \overline I \xrightarrow{\sim} \overline{\widetilde I}$ introduced in \ref{Eq:DefinitionMapVarphiI}. Proposition \ref{Prop:DualityKRStrata} describes the compatibility of $\Phi_I$ with the KR stratifications. We now establish the analogous statement for the Bruhat-Tits and EKOR stratifications.

Let $\mathcal T \subseteq \overline I$ be non-empty and $\theta_I$-stable, let $\mathbf m \in \mathrm{Num}(\mathcal T)$ and let $\mathbf b \in \mathcal B_{\mathcal T}$. We define
\begin{equation*}
    \widetilde{\mathcal T} \coloneq \varphi_I(\mathcal T) \subseteq \overline{\widetilde I},
\end{equation*}
together with tuples $\widetilde{\mathbf m}$ and $\widetilde{\mathbf b}$ determined by
\begin{equation*}
    \widetilde m_{\varphi_I(i)} \coloneq m_{\theta_I(i)} \quad \text{for all } i \in \mathcal T, \qquad \widetilde b_{\varphi_I(i)} \coloneq b_{\theta_I(i)} \quad \text{for all } i \in \overline I \setminus \mathcal T.
\end{equation*}
If $\mathbf c = \mathbf c(\mathbf b)$ is the tuple defined in Equation \ref{Eq:DefinitionTuplec(b)}, and if $\widetilde{\mathbf c} = \mathbf c(\widetilde{\mathbf b})$ denotes its counterpart for $\widetilde I$, then $\widetilde c_{\varphi_I(i)} = c_{\theta_I(i)}$ for all $i \in \overline I$. The properties of $\varphi_I$ established in Section \ref{Section3.2} show that $\widetilde{\mathcal T}$ is $\theta_{\widetilde I}$-stable, that $\widetilde{\mathbf m} \in \mathrm{Num}(\widetilde{\mathcal T})$, and that $\widetilde{\mathbf b} \in \mathcal B_{\widetilde{\mathcal T}}$.

\begin{prop}\label{Prop:DualityFineBTAndEKORStrata}
    With the notation above, the duality isomorphism induces isomorphisms
    \begin{equation*}
        \Phi_I: X_{\mathcal T,\mathbf m,\mathbf b}^{I,\varepsilon} \xrightarrow{\sim} X_{\widetilde{\mathcal T}, \widetilde{\mathbf m}, \widetilde{\mathbf b}}^{\widetilde I,\varepsilon'}, \qquad \Phi_I: \mathrm{EKOR}_{v_{\mathbf c}}^{I,\varepsilon} \xrightarrow{\sim} \mathrm{EKOR}_{v_{\widetilde{\mathbf c}}}^{\widetilde I, \varepsilon'}.
    \end{equation*}
\end{prop}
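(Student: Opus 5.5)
The plan is to reduce the EKOR statement to the fine Bruhat-Tits statement via Theorem \ref{Theo:ComparisonBTEKOR}. Since $X_{\mathcal T,\mathbf m,\mathbf b}^{I,\varepsilon} = \mathrm{EKOR}_{v_{\mathbf c}}^{I,\varepsilon}$ and $X_{\widetilde{\mathcal T},\widetilde{\mathbf m},\widetilde{\mathbf b}}^{\widetilde I,\varepsilon'} = \mathrm{EKOR}_{v_{\widetilde{\mathbf c}}}^{\widetilde I,\varepsilon'}$, it suffices to prove the first isomorphism.

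Moreover, the fine Bruhat-Tits strata partition $\mathcal N_{\mathrm{red}}^{I,\varepsilon}$ by \eqref{Eq:FineBTStratification}. The map $(\mathcal T,\mathbf m,\mathbf b)\mapsto(\widetilde{\mathcal T},\widetilde{\mathbf m},\widetilde{\mathbf b})$ is a bijection of index sets, because $\varphi_I$ is bijective and intertwines $\theta_I$ with $\theta_{\widetilde I}$. Hence it is enough to show the inclusion $\Phi_I(X_{\mathcal T,\mathbf m,\mathbf b})\subseteq X_{\widetilde{\mathcal T},\widetilde{\mathbf m},\widetilde{\mathbf b}}$ on $k$-points for $k$ algebraically closed. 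Equality then follows by disjointness, and the isomorphism follows since both sides are reduced locally closed subschemes.

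To prove the inclusion, I would avoid the flag-variety description and instead characterize strata by invariants of $z$. Via Theorem \ref{Theo:ComparisonBTEKOR} and Theorem \ref{Theo:ComparisonFineBTStrataAndKR}, the proof of Theorem \ref{Theo:ComparisonBTEKOR} gives the following criterion: $z$ lies in $X_{\mathcal T,\mathbf m,\mathbf b}$ if and only if some (equivalently any) Iwahori lift $\widetilde z$ obtained from a flag in $X(w_{\mathbf c}^{-1})$ lies in the Iwahori KR stratum with $J^{\mathrm{Iw}}=\bigsqcup_i\{i,\ldots,i+c_i-1\}$. It is cleaner, however, to use the EKOR description directly, as follows.
\begin{itemize}
\item Through the bijection of Proposition \ref{Prop:BijectionIndexSetsBTEKOR}, $\mathbf c$ is determined by the triple $(\mathrm{Type}(z),\mathbf m^z,\mathbf b)$. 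Here $\mathcal T=\mathrm{Type}(z)$ and $\mathbf m=\mathbf m^z$ are read off from the orbit types of the lattices $\Lambda^z_i$ in Lemma \ref{Lem:BTIndexAssociatedToPoint}.
\item The entries $\mathbf b$ are read off from the relative position data.
\end{itemize}
So the first step is to compute $\mathrm{Type}(\Phi_I(z))$ and $\mathbf\Lambda^{\Phi_I(z)}$ using Lemma \ref{Lem:LatticeChainUnderDuality}. Since $\widetilde Z_{\nu_\varepsilon-r}=\pi^{-j}Z_{-r-\underline{\varepsilon}}$ and $\Phi_I(z)\in\mathcal N_{-j}$, we get $\Lambda_{\widetilde Z_{\nu_\varepsilon-r}}=\pi^{-j}\Lambda_{Z_{-r-\underline\varepsilon}}$. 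Substituting into the definition of $\mathrm{Type}$, exactly as in the proof of Proposition \ref{Prop:DualityKRStrata} but with $\Lambda_{Z}$ in place of $Z$, the condition for $\overline{\widehat r}=\varphi_I(\theta_I(\overline r))$ becomes the dual of the condition for $\overline r$. This gives $\mathrm{Type}(\Phi_I(z))=\varphi_I(\theta_I(\mathrm{Type}(z)))=\varphi_I(\mathrm{Type}(z))$, using $\theta_I$-stability. The same substitution shows that the vertex lattices of $\Phi_I(z)$ are $\pi^{-j}$ times the duals (or the originals) of those of $z$, with the roles of $I^+$ and $I^-$ exchanged. Comparing orbit types $t$ with respect to $\vee'$ then yields $m^{\Phi_I(z)}_{\varphi_I(i)}=m^z_{\theta_I(i)}$. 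This sign and shift bookkeeping, with $t\mapsto n-t$ and $r\mapsto \nu_\varepsilon - r$ and the identity $2\nu_\varepsilon+\underline\varepsilon+\underline{\varepsilon'}=n$, is routine but needs care.

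The main obstacle is the $\mathbf b$ component, i.e.\ the EKOR position inside the KR stratum. I would lift to Iwahori level. Duality $\Phi_{I_{\mathrm{Iw}}}$ commutes with the change-of-parahoric maps, since it is defined componentwise on the lattice chain. By Proposition \ref{Prop:DualityKRStrata} at Iwahori level, it sends the KR stratum $J^{\mathrm{Iw}}$ to $\varphi_{I_{\mathrm{Iw}}}(\theta_{I_{\mathrm{Iw}}}(J^{\mathrm{Iw}}))$, where $\theta_{I_{\mathrm{Iw}}}(x)=-x-\underline\varepsilon-1$. Applied to $J^{\mathrm{Iw}}=\bigsqcup_i\{i,\ldots,i+c_i-1\}$, this reflects each block $\{i,\ldots,i+c_i-1\}$ to the block $B_{\theta_I(i)}$ read from its end, namely $\{-i-\underline\varepsilon-c_i,\ldots,-i-\underline\varepsilon-1\}$. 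This set is not an initial segment of $B_{\theta_I(i)}$. However, the image of the lifted stratum in level $\widetilde I$ is the EKOR stratum indexed by the $\mathbb S_a^{\widetilde I}$-minimal representative of the corresponding double coset, and left-reducing moves each block to the initial segment of length $c_i$ in the shifted block $B_{\varphi_I(\theta_I(i))}$. Checking this left-reduction step at the level of supports is where I expect the real work. Granting it, the resulting tuple is $\widetilde c_{\varphi_I(i)}=c_{\theta_I(i)}$, and \cite{shenEKORStrataShimura2021} Proposition 3.5.2 gives $\Phi_I(z)\in\mathrm{EKOR}_{v_{\widetilde{\mathbf c}}}$, which together with the reductions above finishes the proof.
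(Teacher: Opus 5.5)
Your final paragraph follows the same route as the paper: lift to Iwahori level, use $\Phi_I\circ p_I=p_{\widetilde I}\circ\Phi_{I_{\mathrm{Iw}}}$, apply Proposition \ref{Prop:DualityKRStrata} at Iwahori level, and push down with \cite{shenEKORStrataShimura2021} Proposition 3.5.2. However, the decisive index computation is wrong, and the step you ``grant'' to repair it is false.

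Proposition \ref{Prop:DualityKRStrata} sends $\mathrm{KR}_{J^{\mathrm{Iw}}}$ to $\mathrm{KR}_{\varphi_{I_{\mathrm{Iw}}}(\theta_{I_{\mathrm{Iw}}}(J^{\mathrm{Iw}}))}$, but you applied only $\theta_{I_{\mathrm{Iw}}}$. Since $\varphi_{I_{\mathrm{Iw}}}(x)=\nu_\varepsilon-x-1$ is itself a reflection, the composite is the translation $x\mapsto x+\nu_\varepsilon+\underline{\varepsilon}$. By \eqref{Eq:DefinitionMapVarphiI} one also has $\varphi_I(\theta_I(i))=i+\nu_\varepsilon+\underline{\varepsilon}$ for $i\in\overline I$. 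Hence $J^{\mathrm{Iw}}_{\mathbf c}=\bigsqcup_i\{i,\ldots,i+c_i-1\}$ is sent to $\bigsqcup_i\{\varphi_I(\theta_I(i)),\ldots,\varphi_I(\theta_I(i))+c_i-1\}$. Because $\widetilde c_{\varphi_I(\theta_I(i))}=c_i$ and $\ell^{\widetilde I}_{\varphi_I(\theta_I(i))}=\ell_i$, this set is exactly $J^{\mathrm{Iw}}_{\widetilde{\mathbf c}}$. Its blocks are already initial segments, so the Iwahori index is already $v_{\widetilde{\mathbf c}}\in{}^{\widetilde I}\mathrm{Adm}(\{\mu'\})$, and the change-of-parahoric result applies with no reduction step.

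This is the paper's proof. It gives the equality $\Phi_I(\mathrm{EKOR}_{v_{\mathbf c}})=p_{\widetilde I}(\Phi_{I_{\mathrm{Iw}}}(\mathrm{KR}_{J^{\mathrm{Iw}}_{\mathbf c}}))=\mathrm{EKOR}_{v_{\widetilde{\mathbf c}}}$ directly. So your computation of $\mathrm{Type}(\Phi_I(z))$ and $\mathbf m^{\Phi_I(z)}$, and the disjointness argument, are not needed. The statement for $X_{\mathcal T,\mathbf m,\mathbf b}$ then follows from Theorem \ref{Theo:ComparisonBTEKOR}.

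The left-reduction step cannot rescue the reflected index. If $x$ is the minimal element of $W_{\mathbb S_a^{\widetilde I}}w\tau_0$, then $w\tau_0=ux$ with $\ell(w)=\ell(u)+\ell(x)$, so $\mathrm{Supp}(x\tau_0^{-1})\subseteq\mathrm{Supp}(w)$. Left reduction only deletes simple reflections; it never translates a support into another block.

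Concretely, take $n=5$, $\varepsilon=1$, $I=\{1\}$. Then $\overline I=\{\overline 1,\overline 4\}$ with $\ell_{\overline 1}=3$ and $\ell_{\overline 4}=2$, both fixed by $\theta_I$. Take $\mathbf c=(c_{\overline 1},c_{\overline 4})=(2,0)$, so $J^{\mathrm{Iw}}_{\mathbf c}=\{\overline 1,\overline 2\}$. The correct image is $\{\overline 3,\overline 4\}=J^{\mathrm{Iw}}_{\widetilde{\mathbf c}}$, with $\widetilde c_{\overline 3}=2$. Your reflected set is $\{\overline 2,\overline 3\}$. Left reduction with respect to $\mathbb S_a^{\widetilde I}=\{s_0,s_2,s_4\}$ turns $s_2s_3\tau_0$ into $s_3\tau_0$, i.e.\ a block of length $1$ instead of $2$.

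So the claim you grant is false in general. The fix is simply to compose with $\varphi_{I_{\mathrm{Iw}}}$.
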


\begin{proof}
    We prove the assertion for the EKOR strata. The corresponding statement for $X_{\mathcal T, \mathbf m, \mathbf b}$ is a direct consequence of Theorem \ref{Theo:ComparisonBTEKOR}. Let
    \begin{equation*}
        p_I:\mathcal N_{\mathrm{red}}^{I_{\mathrm{Iw}},\varepsilon} \longrightarrow \mathcal N_{\mathrm{red}}^{I,\varepsilon}, \qquad p_{\widetilde I}: \mathcal N_{\mathrm{red}}^{I_{\mathrm{Iw}},\varepsilon'} \longrightarrow \mathcal N_{\mathrm{red}}^{\widetilde I,\varepsilon'},
    \end{equation*}
    denote the change of parahoric morphisms. Recall that $\nu_{\varepsilon} = \nu_{\varepsilon'}$, so that the Iwahori index set $I_{\mathrm{Iw}}$ is the same for the source and the target. The definition of the duality isomorphisms in Proposition \ref{Prop:IsomorphismDualityArbitraryParahoric} gives a commutative diagram
    \begin{center}
    \begin{tikzcd}
        \mathcal N_{\mathrm{red}}^{I_{\mathrm{Iw}},\varepsilon} \arrow[r,"\Phi_{I_{\mathrm{Iw}}}"] \arrow[d,swap,"p_I"] & \mathcal N_{\mathrm{red}}^{I_{\mathrm{Iw}},\varepsilon'} \arrow[d,"p_{\widetilde I}"] \\
        \mathcal N_{\mathrm{red}}^{I,\varepsilon} \arrow[r,"\Phi_I"] & \mathcal N_{\mathrm{red}}^{\widetilde I,\varepsilon'}.
    \end{tikzcd}
    \end{center}
    Associated with $\mathbf c$ and $\widetilde{\mathbf c}$, consider the Iwahori-level KR indices
    \begin{equation*}
        J_{\mathbf c}^{\mathrm{Iw}} \coloneq \bigsqcup_{i\in\overline I} \{i, i+1, \ldots, i+c_i-1\}, J_{\widetilde{\mathbf c}}^{\mathrm{Iw}} \coloneq \bigsqcup_{\widetilde i\in\overline{\widetilde I}} \{\widetilde i, \widetilde i+1, \ldots, \widetilde i+\widetilde c_{\widetilde i}-1\}.
    \end{equation*}
    By the description \eqref{Eq:ComputationOfJIwahori} and the change of parahoric result of \cite{shenEKORStrataShimura2021} Proposition 3.5.2 and \cite{heStratificationsReductionShimura2017} Proposition 6.11, we have
    \begin{equation}\label{Eq:EKORAsImageIwahoriKRUnderDuality}
        p_I\left(\mathrm{KR}_{J_{\mathbf c}^{\mathrm{Iw}}}^{I_{\mathrm{Iw}},\varepsilon}\right) = \mathrm{EKOR}_{v_{\mathbf c}}^{I,\varepsilon}, \qquad p_{\widetilde I}\left( \mathrm{KR}_{J_{\widetilde{\mathbf c}}^{\mathrm{Iw}}}^{I_{\mathrm{Iw}},\varepsilon'} \right) = \mathrm{EKOR}_{v_{\widetilde{\mathbf c}}}^{\widetilde I,\varepsilon'}.
    \end{equation}
    At Iwahori level, we have $\varphi_{I_{\mathrm{Iw}}} \left(\theta_{I_{\mathrm{Iw}}}(i)\right) = i+\nu_{\varepsilon}+\underline{\varepsilon}$ for every $i \in \mathbb Z/n\mathbb Z$. Proposition \ref{Prop:DualityKRStrata} therefore gives
    \begin{equation*}
        \Phi_{I_{\mathrm{Iw}}}\left(\mathrm{KR}_{J_{\mathbf c}^{\mathrm{Iw}}}^{I_{\mathrm{Iw}},\varepsilon} \right) = \mathrm{KR}_{J_{\mathbf c}^{\mathrm{Iw}}+\nu_{\varepsilon}+\underline{\varepsilon}}^{I_{\mathrm{Iw}},\varepsilon'}.
    \end{equation*}
    On the other hand, Equation \eqref{Eq:DefinitionMapVarphiI} gives $\varphi_I(\theta_I(i)) = i+\nu_{\varepsilon}+\underline{\varepsilon}$. Since $\widetilde c_{\varphi_I(\theta_I(i))} = c_i$, we obtain
    \begin{equation*}
        J_{\mathbf c}^{\mathrm{Iw}} +\nu_{\varepsilon}+\underline{\varepsilon} = \bigsqcup_{i\in\overline I} \left\{ \varphi_I(\theta_I(i)), \ldots, \varphi_I(\theta_I(i))+c_i-1 \right\} = J_{\widetilde{\mathbf c}}^{\mathrm{Iw}}.
    \end{equation*}
    The commutative diagram and
    \eqref{Eq:EKORAsImageIwahoriKRUnderDuality} now give
    \begin{align*}
        \Phi_I\left(\mathrm{EKOR}_{v_{\mathbf c}}^{I,\varepsilon} \right) = \Phi_I\left(p_I\left(\mathrm{KR}_{J_{\mathbf c}^{\mathrm{Iw}}}^{I_{\mathrm{Iw}},\varepsilon} \right)\right) & = p_{\widetilde I}\left( \Phi_{I_{\mathrm{Iw}}}\left( \mathrm{KR}_{J_{\mathbf c}^{\mathrm{Iw}}}^{I_{\mathrm{Iw}},\varepsilon} \right)\right)\\
        & = p_{\widetilde I}\left( \mathrm{KR}_{J_{\widetilde{\mathbf c}}^{\mathrm{Iw}}}^{I_{\mathrm{Iw}},\varepsilon'} \right) = \mathrm{EKOR}_{v_{\widetilde{\mathbf c}}}^{\widetilde I,\varepsilon'},
    \end{align*}
    which concludes the proof.
\end{proof}

Recall the description of the indexing set of the EKOR stratification given in Proposition \ref{Prop:DescriptionIndexSetOfEKORStrata}.

\begin{corol}\label{Corol:NonEmptyEKORStrata}
    Let $v \in {}^{\mathbb S_a^I}\widetilde W \cap \mathrm{Adm}(\{\mu'\})_{0}^I$. The EKOR stratum $\mathrm{EKOR}_{v}$ is non-empty if and only if $v \in {}^{I}\mathrm{Adm}(\{\mu'\})_0$. In this case, it is smooth of pure dimension $\ell(v)$. 
\end{corol}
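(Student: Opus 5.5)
The plan is to treat the two directions separately and to reduce both to Theorem \ref{Theo:ComparisonBTEKOR}, combined with the description \eqref{Eq:ImageOfEKORIndicesInBasicKR} of the index set and Proposition \ref{Prop:DescriptionIndexSetOfEKORStrata}.

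\emph{Non-emptiness for $v\in{}^I\mathrm{Adm}(\{\mu'\})_0$.} Write $v=v_{\mathbf c}$. By Proposition \ref{Prop:BijectionIndexSetsBTEKOR}, $\mathbf c=\mathbf c(\mathbf b)$ for a unique triple $(\mathcal T,\mathbf m,\mathbf b)$. Theorem \ref{Theo:ComparisonBTEKOR} then gives $\mathrm{EKOR}_{v}=X_{\mathcal T,\mathbf m,\mathbf b}$. This is a disjoint union, over $J(F)/\mathrm{Stab}_{J(F)}(\mathbf\Lambda(\mathbf m))$, of translates of the fine Deligne-Lusztig variety $X_{\mathcal K_{\mathcal T,\mathbf m}}\{w_{\mathbf c}^{-1}\}$. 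By Theorem \ref{Theo:IsomorphismBTStrataAndFineDLVarieties}, that variety is non-empty, smooth and irreducible. By \eqref{Eq:DimensionFineDLVarieties}, its dimension is $\ell(w_{\mathbf c})=\sum_i c_i$.

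It remains to identify $\sum_i c_i$ with $\ell(v_{\mathbf c})$. Write $v_{\mathbf c}=w\tau_0$ with $\mathrm{Supp}(w)$ as in the proof of Proposition \ref{Prop:DescriptionIndexSetOfEKORStrata}. Since elements of $\mathrm{Adm}(\{\mu'\})$ are of the form $w\tau_0$ with $w\leq s_{[j+1,j-1]}$ for some $j$, each simple reflection of $\mathrm{Supp}(w)$ occurs exactly once in a reduced expression. Hence $\ell(v_{\mathbf c})=\#\mathrm{Supp}(w)=\sum_i c_i$. The translates are open and closed in the union: they lie in distinct BT strata indexed by distinct facets, and the union is locally finite, since $\mathcal N_{\mathrm{red}}$ is locally of finite type and each closure of a BT stratum meets only finitely many strata. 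This gives smoothness and pure dimension $\ell(v)$.

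\emph{Emptiness for $v\notin{}^I\mathrm{Adm}(\{\mu'\})_0$.} Since every BT index gives rise to strata $X_{\mathcal T,\mathbf m,\mathbf b}$, Theorem \ref{Theo:IsomorphismBTStrataAndFineDLVarieties} yields the fine BT stratification \eqref{Eq:FineBTStratification}, which covers $\mathcal N^{I,\varepsilon}_{\mathrm{red}}$ entirely. By Theorem \ref{Theo:ComparisonBTEKOR}, every piece of this covering equals $\mathrm{EKOR}_{v_{\mathbf c}}$ for some $v_{\mathbf c}\in{}^I\mathrm{Adm}(\{\mu'\})_0$. Since the EKOR strata indexed by ${}^{\mathbb S_a^I}\widetilde W\cap\mathrm{Adm}(\{\mu'\})_0^I$ are pairwise disjoint, any stratum with index outside ${}^I\mathrm{Adm}(\{\mu'\})_0$ must be empty.

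The only delicate point is the length computation. If needed, I would double-check it directly from the factorization $v_{\mathbf c}=v_{J_{\mathbf c}}\prod u_{i,c_i}$: Remark \ref{Rk:wJIsShortestElementDoubleCoset} and the bijection \eqref{Eq:BijectionKRPosetEKORPoset} give $\ell(v_Ju)=\ell(v_J)+\ell(u)$, with $\ell(v_J)=\sum_{i\in J}\ell_i$ and $\ell(u_{i,a})=a$.
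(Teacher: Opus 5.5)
Your proposal is correct and follows the paper's argument. The paper likewise obtains both directions of the non-emptiness statement from Theorem \ref{Theo:ComparisonBTEKOR}, the bijection of Proposition \ref{Prop:BijectionIndexSetsBTEKOR} and the covering by fine BT strata, and the dimension from \eqref{Eq:DimensionFineDLVarieties}. Your extra checks fill in details the paper leaves implicit: that $\ell(v_{\mathbf c})=\sum_i c_i$, and that the $J(F)$-translates are open and closed in $X_{\mathcal T,\mathbf m,\mathbf b}$, which really rests on distinct facets of the same type being incomparable for the BT closure order.
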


\begin{proof}
    The non-emptiness statement is a direct consequence of Theorem \ref{Theo:ComparisonBTEKOR} and Proposition \ref{Prop:BijectionIndexSetsBTEKOR}. As for the dimension, while it is already well known from \cite{shenEKORStrataShimura2021} Theorem 3.4.12, it also follows from \eqref{Eq:DimensionFineDLVarieties}. 
\end{proof}

In \cite{shenEKORStrataShimura2021} Theorem 3.4.12, for $J \in \mathcal P_I$, the $J$-ordinary locus of any KR stratum $\mathrm{KR}_J$ is defined, in the context of Shimura varieties, as the unique top dimensional EKOR stratum contained in $\mathrm{KR}_J$. When $J \in \mathcal P_{0,I}$, it is natural to ask whether the KR stratum $\mathrm{KR}_J$ is fully included in the basic locus or not. This amounts to determining whether the $J$-ordinary locus is in the basic locus or not. Corollary \ref{Corol:NonEmptyEKORStrata} allows us to answer this question. 

\begin{prop}\label{Prop:EntirelyBasicKRStrata}
    Let $J \in \mathcal P_{0,I}$. The KR stratum indexed by $J$ is basic if and only if there exists $i \in \overline I \setminus (J \cup \theta_I(J))$ such that $\ell_i = 1$.
\end{prop}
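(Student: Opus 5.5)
The plan is to reduce the question to a comparison between the $J$-ordinary EKOR stratum of $\mathrm{KR}_J$ and the set ${}^I\mathrm{Adm}(\{\mu'\})_0$, via Corollary \ref{Corol:NonEmptyEKORStrata}. In the Shimura variety, $\mathrm{KR}_J$ is a disjoint union of the EKOR strata indexed by the elements $v_{\mathbf c}$ with $J_{\mathbf c}=J$. By Proposition \ref{Prop:DescriptionIndexSetOfEKORStrata} and the bijection \eqref{Eq:BijectionKRPosetEKORPoset}, these are exactly the tuples $\mathbf c$ with $c_i=\ell_i$ for $i\in J$ and $0\le c_i\le \ell_i-1$ for $i\notin J$.

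Since the datum is fully Hodge-Newton decomposable, each EKOR stratum of the Shimura variety lies entirely in a single Newton stratum. Corollary \ref{Corol:NonEmptyEKORStrata}, together with the uniformization of the basic locus by $\mathcal N^{I,\varepsilon}$, then shows that an EKOR stratum is basic if and only if its index lies in ${}^I\mathrm{Adm}(\{\mu'\})_0$. Hence $\mathrm{KR}_J$ is basic if and only if every $\mathbf c$ as above satisfies $c_i+c_{\theta_I(i)}\le \ell_i-1$ for some $i$. Equivalently, it suffices to test the $J$-ordinary (top-dimensional) one. Dimension being $\sum c_i$, that is the tuple with $c_i=\ell_i-1$ for all $i\notin J$; it is also the maximal such tuple coordinatewise, so the criterion is monotone and it suffices to test it anyway.

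The remaining step is the combinatorial check on this maximal tuple $\mathbf c^{\max}$. First take $i$ with $\{i,\theta_I(i)\}\not\subseteq \overline I\setminus J$. Then one of $c_i,c_{\theta_I(i)}$ equals $\ell_i$, so the sum is at least $\ell_i$. Now take $i\in \overline I\setminus(J\cup\theta_I(J))$, which is the same as $\{i,\theta_I(i)\}\subseteq\overline I\setminus J$. Then $c_i+c_{\theta_I(i)}=2(\ell_i-1)$, using $\ell_{\theta_I(i)}=\ell_i$, and $2(\ell_i-1)\le\ell_i-1$ holds iff $\ell_i\le 1$, i.e. $\ell_i=1$. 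This case also covers fixed points $i=\theta_I(i)$, where the condition reads $2c_i\le \ell_i-1$ with $c_i=\ell_i-1$. So $\mathbf c^{\max}$ lies in ${}^I\mathrm{Adm}(\{\mu'\})_0$ iff some $i\in\overline I\setminus(J\cup\theta_I(J))$ has $\ell_i=1$, and then all smaller tuples do as well. This gives the statement.

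The main obstacle is the first step: justifying rigorously that "KR stratum basic" is detected by EKOR indices in ${}^I\mathrm{Adm}(\{\mu'\})_0$. This requires that in the Shimura variety every EKOR stratum indexed by ${}^{\mathbb S_a^I}\widetilde W\cap \mathrm{Adm}(\{\mu'\})_0^I$ is non-empty (by \cite{shenEKORStrataShimura2021} Corollary 3.5.3). It also requires that it lies either entirely in the basic locus or entirely outside it (full Hodge-Newton decomposability, \cite{gortzFullyHodgeNewton2019}). With these in hand, Corollary \ref{Corol:NonEmptyEKORStrata} on the Rapoport-Zink side identifies the basic ones precisely.
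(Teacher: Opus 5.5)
Your proposal is correct and follows the paper's argument. The paper likewise reduces to the $J$-ordinary EKOR stratum, indexed by the tuple with $c_i=\ell_i$ for $i\in J$ and $c_i=\ell_i-1$ otherwise, and uses Corollary \ref{Corol:NonEmptyEKORStrata} to see that this stratum is basic exactly when its index lies in ${}^I\mathrm{Adm}(\{\mu'\})_0$, which happens precisely when some $i\in\overline I\setminus(J\cup\theta_I(J))$ has $\ell_i=1$. Your monotonicity observation (every tuple with $J_{\mathbf c}=J$ is coordinatewise bounded by this one) is a welcome explicit justification of the reduction to the $J$-ordinary locus, which the paper only asserts.
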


\begin{proof}
    The index of the $J$-ordinary locus is $v_{\mathbf c^{\mathrm{top}}}$ where $\mathbf c^{\mathrm{top}} = (c_i^{\mathrm{top}})_{i \in \overline I}$ is given by 
    \begin{equation*}
        c_i^{\mathrm{top}} \coloneq \begin{cases}
            \ell_i & \text{if } i \in J, \\
            \ell_i-1 & \text{if } i \in \overline I \setminus J.
        \end{cases}
    \end{equation*}
    Then $v_{\mathbf c^{\mathrm{top}}}$ belongs to ${}^{I}\mathrm{Adm}(\{\mu'\})_0$ if and only if there is a $\theta_I$-orbit $O = \{i, \theta_I(i)\} \subseteq \overline I \setminus J$ such that $c_i^{\mathrm{top}} + c_{\theta_I(i)}^{\mathrm{top}} = 2(\ell_i-1) \leq \ell_i-1$. This is only possible if $\ell_i = 1$.
\end{proof}

Another natural question to ask is whether the irreducible components of the closure of EKOR strata are smooth or not. We can answer this question for some of them.

\begin{prop}\label{Prop:SmoothnessIrreducibleComponentsClosureEKOR}
    Let $v_{\mathbf c} \in {}^{I}\mathrm{Adm}(\{\mu'\})_{0}$. Suppose that for every $i \in \overline I$, we have 
    \begin{equation*}
        c_i \leq \ell_i - 1 \implies c_i + c_{\theta_I(i)} \leq \ell_i-1,
    \end{equation*}
    then every irreducible component of the Zariski closure $\overline{\mathrm{EKOR}_{v_{\mathbf c}}}$ is smooth.
\end{prop}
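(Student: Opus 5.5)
The plan is to show that, under the hypothesis, the EKOR stratum $\mathrm{EKOR}_{v_{\mathbf c}}$ is a disjoint union of $J(F)$-translates of an \emph{entire} Bruhat-Tits stratum $\mathcal N^0_{\mathcal T,\mathbf\Lambda(\mathbf m)}$, not merely of one of its fine pieces. Then the smoothness of closures of BT strata, recalled from \cite{mullerBruhatTitsStratificationBasic2026} in the theorem of Section \ref{Section5}, gives the claim.

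First, write $v_{\mathbf c}$ as $(\mathcal T,\mathbf m,\mathbf b)$ via Proposition \ref{Prop:BijectionIndexSetsBTEKOR}, so that $\mathcal T=\mathcal T_{\mathbf c}$. The hypothesis says that every $i\notin\mathcal T$ satisfies $c_i=\ell_i$, that is $b_i=\ell_i$ for all $i\in\overline I\setminus\mathcal T$. Such $\mathbf b$ is the maximal element $\mathbf b^{\max}$ of $\mathcal B_{\mathcal T}$, since each $b_i\leq \ell_i$. By \eqref{Eq:DimensionFineDLVarieties}, the piece $X_{\mathcal K_{\mathcal T,\mathbf m}}\{w_{\mathbf c(\mathbf b^{\max})}^{-1}\}$ has strictly larger dimension than every other piece in the decomposition of Theorem \ref{Theo:IsomorphismBTStrataAndFineDLVarieties}. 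That decomposition is a stratification of the BT stratum. Moreover, the closure relation in the BT closure theorem includes the indices $(\mathcal T,\mathbf\Lambda(\mathbf m))$ with the same $\mathcal T$, so all of $\mathcal N^0_{\mathcal T,\mathbf\Lambda(\mathbf m)}$ lies in one smooth projective closure. I would argue that the top piece $X_{\mathcal K_{\mathcal T,\mathbf m}}\{w_{\mathbf c(\mathbf b^{\max})}^{-1}\}$ is open and dense in $\mathcal N^0_{\mathcal T,\mathbf\Lambda(\mathbf m)}$, as follows. The BT stratum is irreducible: its closure is a smooth connected projective variety, which I would check through the Deligne-Lusztig description in \cite{mullerBruhatTitsStratificationBasic2026}. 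Among its finitely many locally closed pieces, the one of maximal dimension must then be dense. Its closure is therefore $\overline{\mathcal N^0_{\mathcal T,\mathbf\Lambda(\mathbf m)}}$, which is smooth.

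Then, by Theorem \ref{Theo:ComparisonBTEKOR},
\begin{equation*}
\overline{\mathrm{EKOR}_{v_{\mathbf c}}}=\overline{X_{\mathcal T,\mathbf m,\mathbf b^{\max}}}=\bigcup_{g\in J(F)/\mathrm{Stab}_{J(F)}(\mathbf\Lambda(\mathbf m))} g\cdot\overline{\mathcal N^0_{\mathcal T,\mathbf\Lambda(\mathbf m)}}.
\end{equation*}
This union is locally finite because $\mathcal N^{I,\varepsilon}_{\mathrm{red}}$ is locally of finite type and the translates are closed, so the closure of the union is the union of the closures. Each translate $g\cdot\overline{\mathcal N^0_{\mathcal T,\mathbf\Lambda(\mathbf m)}}$ is irreducible, and distinct translates have distinct generic strata. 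Hence these translates are exactly the irreducible components, and each is smooth, being isomorphic via $g$ to $\overline{\mathcal N^0_{\mathcal T,\mathbf\Lambda(\mathbf m)}}$.

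The main obstacle is the irreducibility and density claim, namely that the piece indexed by $\mathbf b^{\max}$ is dense in the BT stratum. If irreducibility of $\mathcal N^0_{\mathcal T,\mathbf\Lambda}$ is not directly available, I would instead use the closure relations of fine Deligne-Lusztig varieties. The task would be to show that $w_{\mathbf c(\mathbf b)}\leq w_{\mathbf c(\mathbf b^{\max})}$ in the relevant partial order for every $\mathbf b$, which holds because each factor $w_i(\mathbf c)$ is a prefix-type interval product increasing in $c_i$. This places every other piece in the closure of the top one. It would also require checking that $X_{\mathcal K_{\mathcal T,\mathbf m}}\{w^{-1}_{\mathbf c(\mathbf b^{\max})}\}$ is irreducible, for instance because the support of $w_{\mathbf c(\mathbf b^{\max})}$ together with its Frobenius conjugate meets every $\mathbb F_q$-simple factor of $\mathbf J_{\mathcal T,\mathbf m}$ acting nontrivially. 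This is the step that needs the most care.
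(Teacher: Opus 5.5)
Your route is the same as the paper's. The hypothesis forces $b_i=\ell_i$ for every $i\in\overline I\setminus\mathcal T_{\mathbf c}$. So by Theorem~\ref{Theo:ComparisonBTEKOR}, $\mathrm{EKOR}_{v_{\mathbf c}}$ is the $J(F)$-saturation of the top piece $X_{\mathcal K_{\mathcal T,\mathbf m}}\{w_{\mathbf c^{\mathrm{max}}}^{-1}\}$, whose closure is a closed BT stratum, smooth by \cite{mullerBruhatTitsStratificationBasic2026}.

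What you call the main obstacle is not one. Theorem~\ref{Theo:IsomorphismBTStrataAndFineDLVarieties}, as recalled in the paper, already states that $\mathcal N^0_{\mathcal T,\mathbf\Lambda(\mathbf m)}$ is irreducible, so your density argument via \eqref{Eq:DimensionFineDLVarieties} goes through. The paper instead quotes \cite[Theorem 3.36]{mullerBruhatTitsStratificationBasic2026}, as in Lemma~\ref{Lem:BoundaryDecompositionClosedBT}, for the fact that this closure is the closed BT stratum. Your fallback via $w_{\mathbf c(\mathbf b)}\leq w_{\mathbf c(\mathbf b^{\mathrm{max}})}$ and the closure relations of fine Deligne--Lusztig varieties also works.

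The step that fails as written is local finiteness. You claim that the family $\bigl(g\cdot\overline{\mathcal N^0_{\mathcal T,\mathbf\Lambda(\mathbf m)}}\bigr)_{g\in J(F)/\mathrm{Stab}_{J(F)}(\mathbf\Lambda(\mathbf m))}$ is locally finite ``because $\mathcal N^{I,\varepsilon}_{\mathrm{red}}$ is locally of finite type and the translates are closed''. That does not follow. Any infinite set of closed points of $\mathbb A^1_{\overline{\mathbb F_q}}$ missing at least one closed point is a family of closed subsets of a finite type scheme. The closure of its union is $\mathbb A^1$, strictly larger than the union of the closures.

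This step carries real weight. Without local finiteness you cannot identify $\overline{\mathrm{EKOR}_{v_{\mathbf c}}}$ with the union of the closed translates. Nor can you conclude that its irreducible components are among those translates, and the smoothness conclusion depends on both. The correct input is a genuine property of the BT stratification, coming from the lattice description rather than from any formal finiteness: every point has an open neighbourhood meeting only finitely many closed BT strata. The paper takes this from \cite{mullerBruhatTitsStratificationBasic2026} and records the consequence as Lemma~\ref{Lem:ClosureXTmbIsUnion}.

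With that lemma in place of your justification, the rest of your argument is correct: the translates have equal dimension and distinct generic points, so they are exactly the irreducible components, each isomorphic to a smooth closed BT stratum.
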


\begin{proof}
    As we will prove in Lemma \ref{Lem:ClosureXTmbIsUnion}, given a non-empty $\theta_I$-stable subset $\mathcal T \subseteq \overline I$, a tuple $\mathbf m \in \mathrm{Num}(\mathcal T)$ and an element $\mathbf b \in \mathcal B_{\mathcal T}$, the closure of $X_{\mathcal T, \mathbf m, \mathbf b}$ is the union of the closures of $g\cdot X_{\mathcal K_{\mathcal T,\mathbf m}}\{w_{\mathbf c}^{-1}\}$ for $g \in J(F)/\mathrm{Stab}_{J(F)}(\mathbf{\Lambda}(\mathbf m))$. If we have $b_i = \ell_i$ for all $\overline I \setminus \mathcal T$, then the closure of $g\cdot X_{\mathcal K_{\mathcal T,\mathbf m}}\{w_{\mathbf c}^{-1}\}$ is 
    \begin{equation*}
        \overline{g\cdot X_{\mathcal K_{\mathcal T,\mathbf m}}\{w_{\mathbf c}^{-1}\}} = \overline{\mathcal N_{\mathcal T, g\cdot\mathbf{\Lambda}(\mathbf m)}^{0}},
    \end{equation*}
    that is a closed BT stratum. According to \cite{mullerBruhatTitsStratificationBasic2026} Corollary 3.37, the closed BT strata are smooth.\\
    Now, let $(\mathcal T_{\mathbf c},\mathbf m_{\mathbf c},\mathbf b_{\mathbf c})$ be the triple associated with $\mathbf c$ by Proposition \ref{Prop:BijectionIndexSetsBTEKOR}, and let $J_{\mathbf c}$ as in \eqref{Eq:KRIndexAttachedToEKORIndex}. We have $X_{\mathcal T_{\mathbf c}, \mathbf m_{\mathbf c}, \mathbf b_{\mathbf c}} = \mathrm{EKOR}_{v_{\mathbf c}}$ by Theorem \ref{Theo:ComparisonBTEKOR}. The proof is over once we notice that we have $(\mathbf b_{\mathbf c})_i = \ell_i$ for all $i \in \overline I \setminus \mathcal T_{\mathbf c}$ if and only if $\mathcal T_{\mathbf c} = \overline I \setminus J_{\mathbf c}$.
\end{proof}

While we may conjecture that Proposition \ref{Prop:SmoothnessIrreducibleComponentsClosureEKOR} remains true for all EKOR strata, the proof of \cite{mullerBruhatTitsStratificationBasic2026} does not generalize well. Indeed, the approach used in Propositions 3.17, 3.21 and 3.25 to prove that the closure of the fine Deligne-Lusztig varieties involved is smooth, fails for $X_{\mathcal K_{\mathcal T,\mathbf m}}\{w_{\mathbf c}^{-1}\}$ when some $b_i$ is less than $\ell_i$. Namely, the proof involves identifying the closure of $X_{\mathcal K_{\mathcal T,\mathbf m}}\{w_{\mathbf c}^{-1}\}$ with the closure of a coarse Deligne-Lusztig variety (terminology of \cite{mullerBruhatTitsStratificationBasic2026} Definition 3.5), which is smoothly equivalent to the closure of an associated Schubert variety. Then, one may use some pattern avoidance criterion to deduce smoothness. When $b_i < \ell_i$ for some $i$, it appears that there is no coarse Deligne-Lusztig variety whose closure agrees with the closure of $X_{\mathcal K_{\mathcal T,\mathbf m}}\{w_{\mathbf c}^{-1}\}$, so that the proof does not carry over.

\section{Closure relations}\label{Section6}

\subsection{Auxiliary combinatorics}\label{Section6.1}

We begin with the combinatorial statement which will be used to compute the closure of the fine Deligne-Lusztig varieties defined in Section \ref{Section5.2}. In this paragraph only, let $d\geq 1$ and consider the symmetric group $\mathfrak S_d$, with simple reflections $s_i=(i\ i+1)$ for $1\leq i\leq d-1$. Let $\ell$ and $\leq$ denote the associated length function and Bruhat order, and let $\omega_0\in\mathfrak S_d$ be the element of maximal length. Let $H \coloneq \langle s_1,\ldots,s_{d-2} \rangle \subseteq \mathfrak S_d$ denote the subgroup of permutations fixing $d$. For $0 \leq a \leq d-1$, we define
\begin{equation*}
    u_a\coloneq s_{d-a}s_{d-a+1}\cdots s_{d-1}\in\mathfrak S_d,
\end{equation*}
with the convention that $u_0 = \mathrm{id}$.

\begin{prop}\label{Prop:CombinatorialProposition}
    Let $a, a', b, b' \in \{0, \ldots, d-1\}$. There exist $x,y \in H$ such that
    \begin{equation*}
        \mathrm{Ad}(\omega_0)(y)u_{a'}x^{-1} \leq u_a, \qquad \mathrm{Ad}(\omega_0)(x)u_{b'}y^{-1} \leq u_b,
    \end{equation*}
    if and only if
    \begin{equation*}
        (a' \leq a \text{ and } b' \leq b) \quad \text{or} \quad (a'+b' \leq d-2 \text{ and } a+b \geq d-1).
    \end{equation*}
\end{prop}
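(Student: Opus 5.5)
The plan is to turn the Bruhat conditions into explicit conditions on permutations, and then handle sufficiency by construction and necessity by an invariant-counting argument.

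\textbf{Step 1: describe the lower Bruhat interval of $u_a$.} Since $s_{d-a}s_{d-a+1}\cdots s_{d-1}$ is reduced with distinct letters, the elements $w\leq u_a$ are exactly the subwords of it. These are the products, in increasing order of indices, of subsets of $\{s_{d-a},\ldots,s_{d-1}\}$. Equivalently, $w\leq u_a$ if and only if $w$ is a product of disjoint ``cyclic shifts'' $(k\mapsto k+1,\ e\mapsto k_0)$ on consecutive blocks of $\{d-a,\ldots,d\}$, and $w$ fixes $1,\ldots,d-a-1$. I will use the following usable form: $w\leq u_a$ if and only if $w(k)\leq k+1$ for all $k$, $w$ fixes every $k<d-a$, and the descents of $w$ lie only at block ends. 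In particular, such a $w$ satisfies $w(d)\geq d-a$, and $w(k)\in\{k,k+1\}$ or $w(k)$ is the start of its block.

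Note that $u_a(d)=d-a$ and $u_a(k)=k+1$ for $d-a\leq k\leq d-1$. Also, $x,y\in H$ fix $d$, and $\mathrm{Ad}(\omega_0)(x),\mathrm{Ad}(\omega_0)(y)$ fix $1$.

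\textbf{Step 2: sufficiency.}
\begin{itemize}
\item If $a'\leq a$ and $b'\leq b$, take $x=y=\mathrm{id}$. Then $u_{a'}\leq u_a$ and $u_{b'}\leq u_b$ as subwords.
\item In the second case, the condition is monotone in $(a,b)$: enlarging $a$ or $b$ enlarges the intervals. So I may assume $a+b=d-1$.
\item I will then construct $x,y\in H$ explicitly. I expect $x,y$ to be suitable cyclic shifts (products $s_{[\,\cdot\,,\,\cdot\,]}$) inside $H$. Their role is to move the $a'$-cycle of $u_{a'}$ into the window $\{d-a,\ldots,d\}$ and the $b'$-cycle into the complementary window coming from $\mathrm{Ad}(\omega_0)$, namely $\{1,\ldots,b+1\}$ after reflecting.
\item The hypothesis $a'+b'\leq d-2$ is exactly what makes the two moved supports disjoint, so that the required cancellations happen inside $H$ (which avoids $s_{d-1}$).
\item I will verify both inequalities by checking the Step 1 criterion on the explicit products.
\end{itemize}

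\textbf{Step 3: necessity.} Suppose $x,y$ exist and, say, $a'>a$; the case $b'>b$ is symmetric. I must show $a'+b'\leq d-2$ and $a+b\geq d-1$. Write $y'=\mathrm{Ad}(\omega_0)(y)$ and $x'=\mathrm{Ad}(\omega_0)(x)$.

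\emph{Tracking the first inequality.} Evaluating $w_1=y'u_{a'}x^{-1}$ at $d$ gives $y'(d-a')$, and this must be $\geq d-a$. So $y$ sends $a'+1$ into $\{1,\ldots,a+1\}$. Combining this with the ``$w(k)\leq k+1$'' constraint along the cycle of $u_{a'}$ forces $x$ to move many points. More precisely, $x^{-1}$ must carry $\{d-a',\ldots,d-1\}$ essentially onto a set where $y'$ shifts by at most one.

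\emph{Feeding into the second inequality.} Constraints of the same type for $w_2=x'u_{b'}y^{-1}$ then show that the sets moved by $x$ and $y$, reflected by $\omega_0$, must fit inside $\{d-b,\ldots,d\}$. Counting sizes of these windows should give $a+b\geq d-1$ and $a'+b'\leq d-2$.

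\textbf{Main obstacle.} I expect this counting in Step 3 to be the hardest part. The plan is to compose the two relations into a single statement about the element $x^{-1}x'$ (or $y$ with $y'$), and to compare its number of non-fixed points on both sides. I expect the equality case $a'+b'=d-1$ to be excluded by parity or fixed-point reasons involving $d$, and this is the step I would test first on small $d$ (say $d=3,4$) before committing to the general bookkeeping.
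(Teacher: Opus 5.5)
There is a genuine gap: neither direction is actually proved. Step 2 announces a construction and Step 3 announces a count, and the specific guesses in both would fail. (Step 1 and the reduction to $a+b=d-1$ are fine.)

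\textbf{Sufficiency.} The witnesses are not cyclic shifts. Take $d=4$ and $(a,a',b,b')=(0,2,3,0)$, which satisfies the second alternative. Since $u_0=\mathrm{id}$, the first inequality forces $x=\mathrm{Ad}(\omega_0)(y)u_2$. Then $x(4)=4$ forces $y(3)=1$, so $y\in\{s_1s_2,\ s_1s_2s_1\}$. The shift $y=s_1s_2$ gives $x=\mathrm{id}$, and the second left-hand side is then $y^{-1}=s_2s_1\not\leq s_1s_2s_3=u_3$. So the only solution is $x=s_2$, $y=s_1s_2s_1$.

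What works in general (for $a'>a$, hence $b'<b$) is a pair of reversals. Write $\omega_{[s,t]}$ for the permutation reversing $\{s,\dots,t\}$, and take $x=\omega_{[d-a',d-a-1]}$ and $y=\omega_{[a+1,a'+1]}$; the latter lies in $H$ because $a'\leq d-2$. Then the first left-hand side equals $u_a$ exactly, and the second equals $(s_{a+1}\cdots s_{a'})u_{b'}$. In other words, the excess $a'-a$ is transferred to the $b$-side as an ascending cycle on $\{a+1,\dots,a'+1\}$. This is a subword of $u_b$ precisely because $d-b\leq a+1$ and $a'\leq d-b'-2$.

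\textbf{Necessity.} Only your first observation, $y(a'+1)\leq a+1$, is an actual argument. The invariants you propose (parity, numbers of moved points) cannot detect the obstruction. For $d=3$ and $(a,a',b,b')=(0,1,2,1)$, where $a'+b'=d-1$, the first inequality forces $x=\mathrm{id}$ and $y=s_1$, and the second then reads $s_2s_1\leq s_1s_2$. Both sides are $3$-cycles on $\{1,2,3\}$ of length $2$; the inequality fails only because the cycle is traversed in the wrong direction.

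The missing idea is to eliminate $x$. If $v,w$ denote the two left-hand sides, then
$Q=\omega_0v\omega_0w=y(\omega_0u_{a'}\omega_0u_{b'})y^{-1}$.
The argument then runs as follows.
\begin{enumerate}
\item Use the length bound $a'+b'\leq a+b$, which follows from $u_{a'},u_{b'}$ being right $H$-reduced.
\item Dispose of the edge cases $a'=d-1$ or $b'=d-1$.
\item In the remaining cases, check that $y$ fixes $1$ (it fixes $d$ as an element of $H$). Hence the cycles of $Q$ through $1$ and $d$ are those of $\omega_0u_{a'}\omega_0u_{b'}$. If $a'+b'\leq d-2$, these are two cycles of lengths $a'+1$ and $b'+1$. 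Otherwise they form a single cycle reaching $d$ from $1$ in $d-a'$ steps and $1$ from $d$ in $d-b'-1$ steps.
\item On the other side, $Q=v'w$ with $v'=\omega_0v\omega_0\leq s_a\cdots s_1$ and $w\leq u_b$. So $v'(k)\geq k-1$ and $w(k)\leq k+1$, with supports in $\{1,\dots,a+1\}$ and $\{d-b,\dots,d\}$ respectively.
\item When $a+b\leq d-2$, this confines the two cycles to those windows. When $b\leq d-2$ (after a symmetric swap), it bounds the two return times below by $d-a$ and $d-b-1$. In each case you obtain $a'\leq a$ and $b'\leq b$.
\end{enumerate}
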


\begin{proof}
    The assertion is immediate when $d=1$, so we assume $d\geq2$. We start with the reverse implication. If $a' \leq a$ and $b' \leq b$, we may take $x = y = \mathrm{id}$. Assume now that $a'+b' \leq d-2$ and $a+b \geq d-1$. We may also assume, up to exchanging $(a,a')$ with $(b,b')$, that $a' > a$. It follows that $b' < b$. For $1 \leq s \leq t \leq d$, let $\omega_{[s,t]} \in \mathfrak S_d$ denote the permutation which reverses $\{s,\ldots,t\}$ and fixes its complement. We take
    \begin{equation*}
        x \coloneq \omega_{[d-a',d-a-1]} \in H, \qquad y \coloneq \omega_{[a+1,a'+1]} \in H.
    \end{equation*}
    It is clear that $x \in H$, whereas for $y$ it follows from the hypothesis $a' \leq d-2$. A direct computation gives
    \begin{equation*}
        \mathrm{Ad}(\omega_0)(y)u_{a'}x^{-1}=u_a, \qquad \mathrm{Ad}(\omega_0)(x)u_{b'}y^{-1}= (s_{a+1}\cdots s_{a'})u_{b'}.
    \end{equation*}
    Since $d-b \leq a+1$ and $a' \leq d-b'-2$, the expression on the right is a subword of $u_b$. This proves the reverse implication.\\

    Conversely, let us now assume that there exists some $x,y \in H$ as in the Proposition. We write 
    \begin{equation*}
        v \coloneq \mathrm{Ad}(\omega_0)(y)u_{a'}x^{-1}, \qquad w \coloneq \mathrm{Ad}(\omega_0)(x)u_{b'}y^{-1},
    \end{equation*}
    so that we have $v \leq u_a$ and $w \leq u_b$ in the Bruhat order. Let us rewrite it as 
    \begin{equation*}
        u_{a'}x^{-1}=\mathrm{Ad}(\omega_0)(y)^{-1}v, \qquad u_{b'}y^{-1}=\mathrm{Ad}(\omega_0)(x)^{-1}w,
    \end{equation*}
    Taking the length, we deduce that 
    \begin{equation*}
        a' + \ell(x) = \ell(u_{a'}x^{-1}) = \ell(\mathrm{Ad}(\omega_0)(y)^{-1}v) \leq \ell(\mathrm{Ad}(\omega_0)(y)^{-1}) + \ell(v) \leq \ell(y) + a,
    \end{equation*}
    and similarly $b' + \ell(y) \leq \ell(x) + b$. Here, we used the fact that $u_{a'}$ and $u_{b'}$ are right $H$-reduced, and that the length is compatible with the Bruhat order. We deduce that
    \begin{equation}\label{Eq:SumInequality}
        a'+b' \leq a+b.
    \end{equation}
    Suppose that $a' = d-1$. Since $\mathrm{Ad}(\omega_0)(y)$ fixes $1$, the definition of $v$ implies that $v^{-1}(1) = d$. Since $v \leq u_a$, this is only possible if $a = d-1$ takes the maximal value. But then \eqref{Eq:SumInequality} gives $b'\leq b$, and we are done. The same argument applies if $b' = d-1$. We may therefore assume that $a',b'\leq d-2$ from now on. We define 
    \begin{equation}\label{Eq:DefinitionOfQ}
        Q \coloneq \omega_0 v \omega_0 w =y\left(\omega_0u_{a'}\omega_0u_{b'}\right)y^{-1} \in \mathfrak S_d.
    \end{equation}
    
    \begin{lem}\label{Lem:LemmaCombinatorialProposition}
        The following statements hold.
        \begin{enumerate}
            \item If $a+b\leq d-2$, then the cycle of $Q$ containing $d$ is contained in $\{d-b, \ldots, d\}$, and the cycle containing $1$ is contained in $\{1, \ldots, a+1\}$. 
            \item Suppose that $b \leq d-2$ and that $1$ and $d$ belong to the same cycle of $Q$. If $r \geq 1$ is the smallest integer such that $Q^r(1) = d$, and $s \geq 1$ is the smallest integer such that $Q^s(d) = 1$, then we have $r \geq d-a$ and $s \geq d-b-1$.
        \end{enumerate}
    \end{lem}
    
    \begin{proof}[Proof of Lemma \ref{Lem:LemmaCombinatorialProposition}]
        We write $v' \coloneq \omega_0 v \omega_0 \leq s_a s_{a-1}\cdots s_1$. Thus $v'$ fixes $\{a+2, \ldots, d\}$, while $w$ fixes $\{1, \ldots, d-b-1\}$. If $a+b \leq d-2$, these two intervals are disjoint. Since $Q = v'w$, the cycles of $Q$ containing $1$ and $d$ are therefore contained respectively in $\{1, \ldots, a+1\}$ and in $\{d-b, \ldots, d\}$. This proves (1).\\

        We now prove (2). Since $v'$ is represented by a subword of $s_a \cdots s_1$, and $w$ by a subword of $s_{d-b} \cdots s_{d-1}$, we have 
        \begin{equation}\label{Eq:LemmaCombinatorialProposition}
            \forall 1 \leq k \leq d, \qquad v'(k) \geq k-1, \qquad w(k) \leq k+1.
        \end{equation}
        Moreover, $v'$ preserves $\{1, \ldots, a+1\}$ and fixes every larger integer. Consequently, $Q(k) = v'(w(k)) \leq \max\{a+1, w(k)\} \leq \max\{a+1,k+1\}$. It follows that
        \begin{equation*}
            \forall 1 \leq k \leq d, \qquad \max\{0,Q(k)-a\} \leq \max\{0,k-a\}+1.
        \end{equation*}
        The assumption that $1$ and $d$ lie in the same cycle forces $a > 0$, because $b \leq d-2$ and hence $w$ fixes $1$. Iterating the preceding inequality with $k = d = Q^r(1), Q^{r-1}(1), \ldots , Q(1), 1$, we obtain $d-a\leq r$.
        
        On the other hand, if $k < d-b$ then $w(k) = k$, and \eqref{Eq:LemmaCombinatorialProposition} gives $Q(k) \geq k-1$. If $k \geq d-b$, then $w(k) \geq d-b$, because $w$ preserves $\{d-b, \ldots, d\}$, and hence $Q(k) \geq d-b-1$. We deduce that
        \begin{equation*}
            \forall 1 \leq k \leq d, \qquad \max\{0, d-b-Q(k)\} \leq \max\{0, d-b-k\}+1.
        \end{equation*}
        Again, iterating this inequality with $k = 1 = Q^s(d), Q^{s-1}(d), \ldots , Q(d), d$, we obtain $d-b-1\leq s$.
    \end{proof}

    If $a+b\geq d-1$ and $a'+b'\leq d-2$, then there is nothing to prove. Thus, we can assume that this condition does not hold, and we prove that $a' \leq a$ and $b' \leq b$. Before that, we record the following observation. If $a,b\leq d-2$, then $v^{-1}$, $u_{a'}^{-1}$ and $\mathrm{Ad}(\omega_0)(y)^{-1}$ all fix $1$, and the definition of $v$ gives $x(1) = 1$. Similarly, the definition of $w$ gives $y(1) = 1$. Thus, in this case, $x$ and $y$ fix both $1$ and $d$. 
    
    We now distinguish two exclusive cases. \\
    
    \textbf{Case 1: $a+b \leq d-2$.} In particular, $a,b \leq d-2$, and \eqref{Eq:SumInequality} gives $a'+b' \leq a+b \leq d-2$. A direct computation shows that the cycles of $\omega_0 u_{a'} \omega_0 u_{b'}$ containing $1$ and $d$ are respectively
    \begin{equation*}
        (1\ a'+1\ a'\ \ldots \ 2), \qquad (d\ d-b'\ d-b'+1\ \ldots \ d-1),
    \end{equation*}
    Their lengths are $a'+1$ and $b'+1$. Since $y$ fixes both $1$ and $d$, \eqref{Eq:DefinitionOfQ} shows that the cycles of $Q$ containing $1$ and $d$ have the same respective lengths. Point (1) of Lemma \ref{Lem:LemmaCombinatorialProposition} therefore gives $a' \leq a$ and $b' \leq b$. This completes Case 1.\\

    \textbf{Case 2: $a+b\geq d-1$ and $a'+b'\geq d-1$.} Since $a',b' \leq d-2$, we necessarily have $a',b' > 0$. If $a = b = d-1$, there is nothing to do. Otherwise, after possibly exchanging $(a,a')$ with $(b,b')$, we may assume that $b \leq d-2$. If also $a \leq d-2$, we have already seen that $y$ fixes $1$. If $a = d-1$, then $w$, $\mathrm{Ad}(\omega_0)(x)$ and $u_{b'}$ fix $1$, and the definition of $w$ shows that $y^{-1}(1) = 1$. Thus, in every case under consideration, $y$ fixes both $1$ and $d$. A direct computation shows that $1$ and $d$ lie in the same cycle of $\omega_0 u_{a'} \omega_0 u_{b'}$, namely
    \begin{equation*}
        (1\ a'+1\ a'+2\ \ldots \ d\ d-b'-1\ d-b'-2\ \ldots \ 2).
    \end{equation*}
    Starting from $1$, we reach $d$ after $d-a'$ successive steps. Likewise, starting from $d$, we reach $1$ after $d-b'-1$ successive steps. Since $y$ fixes $1$ and $d$, we deduce that
    \begin{equation*}
        Q^{d-a'}(1)=d, \qquad Q^{d-b'-1}(d)=1,
    \end{equation*}
    and these exponents are minimal. Point (2) of Lemma \ref{Lem:LemmaCombinatorialProposition} therefore yields $d-a' \geq d-a$ and $d-b'-1 \geq d-b-1$, that is $a' \leq a$ and $b' \leq b$. This completes Case 2 and concludes the proof.
\end{proof}

\subsection{Closure of EKOR strata}\label{Section6.2}

In this section, we determine explicitly the poset structure on ${}^I\mathrm{Adm}(\{\mu'\})_0$ underlying the closure relations of the EKOR stratification on $\mathcal N_{\mathrm{red}}^{I,\varepsilon}$. Recall from Proposition \ref{Prop:DescriptionIndexSetOfEKORStrata} that the elements of ${}^{I}\mathrm{Adm}(\{\mu'\})_0$ are indexed by the tuples $\mathbf c = (c_i)_{i\in\overline I}$ where $0 \leq c_i \leq \ell_i$ for all $i \in \overline I$, and $c_i + c_{\theta_I(i)} \leq \ell_i-1$ for at least one $i \in \overline I$. The corresponding element of ${}^{I}\mathrm{Adm}(\{\mu'\})_0$ is still denoted $v_{\mathbf c}$. We define a partial order $\mathrel{\dot{\leq}}$ on ${}^{I}\mathrm{Adm}(\{\mu'\})_0$ as follows. We use the symbol $\mathrel{\dot{\leq}}$ so that it does not get confused with the Bruhat order.

\begin{defi}\label{Defi:OrderOnTuplesC}
    Let $v_{\mathbf c}, v_{\mathbf c'} \in {}^{I}\mathrm{Adm}(\{\mu'\})_0$. We write $v_{\mathbf c'} \mathrel{\dot{\leq}} v_{\mathbf c}$ if, for every $i \in \overline I$, we have
    \begin{equation*}
        (c_i' \leq c_i \text{ and } c_{\theta_I(i)}' \leq c_{\theta_I(i)}) \quad \text{or} \quad (c_i' + c_{\theta_I(i)}' \leq \ell_i-1 \text{ and } c_i + c_{\theta_I(i)} \geq \ell_i).
    \end{equation*}
\end{defi}

Both conditions are invariant under the substitution $i \mapsto \theta_I(i)$, so it is enough to check the condition once on every orbit in $\Omega_{\overline I}$. If $\theta_I(i) = i$, then the condition boils down to $c_i' \leq c_i$. 

\begin{lem}\label{Lem:OrderOnTuplesC}
    The relation $\mathrel{\dot{\leq}}$ is a partial order on ${}^{I}\mathrm{Adm}(\{\mu'\})_0$. Moreover, let $(\mathcal T, \mathbf m, \mathbf b)$ and $(\mathcal T', \mathbf m', \mathbf b')$ be the triples corresponding respectively to $\mathbf c$ and to $\mathbf c'$ under Proposition \ref{Prop:BijectionIndexSetsBTEKOR}. Then
    \begin{equation*}
        v_{\mathbf c'} \mathrel{\dot{\leq}} v_{\mathbf c} \iff \begin{cases}
            \mathcal T \subseteq \mathcal T',  & \\
            m_i' \leq m_i & \text{for every } i \in \mathcal T,\\
            b_i' \leq b_i & \text{for every } i \in \overline I \setminus \mathcal T'.
        \end{cases}
    \end{equation*}
\end{lem}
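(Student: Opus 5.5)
The statement has two parts, and I handle both orbit by orbit. The relation $\mathrel{\dot{\leq}}$ is defined by one condition per $\theta_I$-orbit $O\in\Omega_{\overline I}$. So I first describe the local relation on each orbit and then take the product.

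\textbf{The orbit-wise relation.} Fix an orbit $O$ and put $\ell=\ell_i$ for $i\in O$.

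If $O=\{i\}$, the condition reads $c_i'\le c_i$. This is already a total order, because $2c_i'\le 2c_i$ covers both alternatives.

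If $O=\{i,\theta_I(i)\}$, the relation lives on the grid $\{0,\ldots,\ell\}^2$. Split the grid into the lower part $D^-=\{x+y\le \ell-1\}$ and the upper part $D^+=\{x+y\ge \ell\}$. The relation is then the coordinatewise order, together with the rule $p'\preceq p$ whenever $p'\in D^-$ and $p\in D^+$.

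\textbf{Partial order.}
\begin{itemize}
\item Reflexivity is clear.
\item Antisymmetry. If $p'\preceq p$ and $p\preceq p'$ hold without both being coordinatewise, then the second alternative must hold in at least one direction, say $p'\in D^-$ and $p\in D^+$. The reverse relation $p\preceq p'$ cannot then use the second alternative, since that would require $p\in D^-$. So it is coordinatewise, $p\le p'$ entrywise. This forces $x+y\le x'+y'\le \ell-1$, contradicting $p\in D^+$.
\item Transitivity. Coordinatewise steps preserve $D^-$ downward and $D^+$ upward. Concretely, if $p''\le p'$ and $p'\in D^-$ then $p''\in D^-$, and dually for $D^+$. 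A short case check on which alternative holds in each of $p''\preceq p'\preceq p$ then gives $p''\preceq p$.
\end{itemize}
Each orbit-wise relation is therefore a partial order. A product of partial orders is a partial order, and restricting to ${}^I\mathrm{Adm}(\{\mu'\})_0$ keeps this property.

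\textbf{Reformulation in terms of triples.} By Proposition \ref{Prop:BijectionIndexSetsBTEKOR}, the triple attached to $\mathbf c$ is read off as follows:
\begin{itemize}
\item $\mathcal T=\mathcal T_{\mathbf c}$ is the union of the orbits $O$ whose point lies in $D^-$ (for a singleton orbit this means $2c_i\le\ell_i-1$);
\item $\mathbf m=\mathbf c|_{\mathcal T}$;
\item $\mathbf b=\mathbf c|_{\overline I\setminus\mathcal T}$.
\end{itemize}

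For ($\Rightarrow$), fix an orbit $O$.
\begin{itemize}
\item If $O\subseteq\mathcal T$, then $\mathbf c|_O\in D^-$. The second alternative is impossible here, so the first holds, and $\mathbf c'|_O\le\mathbf c|_O$ entrywise. Hence $\mathbf c'|_O\in D^-$, so $O\subseteq\mathcal T'$ and $m'_i\le m_i$ on $O$. This proves $\mathcal T\subseteq\mathcal T'$ and $m_i'\le m_i$ for $i\in\mathcal T$.
\item If $O\subseteq\overline I\setminus\mathcal T'$, then $\mathbf c'|_O\in D^+$. The second alternative is again excluded, so the first holds and $b'_i\le b_i$. (Note that $O\cap\mathcal T=\emptyset$ in this case, since $\mathcal T\subseteq\mathcal T'$.)
\end{itemize}

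For ($\Leftarrow$), check each orbit $O$.
\begin{itemize}
\item If $O\subseteq\mathcal T$, then $O\subseteq\mathcal T'$ and $m'\le m$ on $O$, which is the first alternative.
\item If $O\subseteq\overline I\setminus\mathcal T'$, then $O\subseteq\overline I\setminus\mathcal T$ and $b'\le b$ on $O$, again the first alternative.
\item If $O\subseteq\mathcal T'\setminus\mathcal T$, then $\mathbf c'|_O\in D^-$ and $\mathbf c|_O\in D^+$, which is the second alternative.
\end{itemize}
These three cases exhaust all orbits precisely because $\mathcal T\subseteq\mathcal T'$, so there is no orbit in $\mathcal T\setminus\mathcal T'$.

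There is no genuine obstacle. The only point needing care is the bookkeeping for singleton orbits, where $D^-$ means $2c_i\le \ell_i-1$ and the two alternatives collapse to $c_i'\le c_i$. This is compatible with the description above, since $c'_i\le c_i$ with $2c_i\le\ell_i-1$ forces $2c'_i\le\ell_i-1$.
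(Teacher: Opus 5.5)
Your proposal is correct and follows the paper's proof essentially step for step. Like the paper, you write $\mathrel{\dot{\leq}}$ as a product over $\Omega_{\overline I}$ of two kinds of local orders: the usual order on singleton orbits, and on two-element orbits the coordinatewise order on $\{0,\ldots,\ell_i\}^2$ with every point of $D^-$ placed below every point of $D^+$. You then prove the triple reformulation by the same orbit-by-orbit case analysis, using $\mathcal T\subseteq\mathcal T'$; your explicit checks of antisymmetry and transitivity on the grid fill in a step the paper only asserts.
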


\begin{proof}
    Over a singleton orbit $O = \{i\} \in \Omega_{\overline I}$, the condition defining $\mathrel{\dot{\leq}}$ amounts to the usual order on $\{0, \ldots , \ell_i\}$. Over an orbit $O = \{i, \theta_I(i)\}$ of size $2$, the condition defining $\mathrel{\dot{\leq}}$ amounts to the partial order on the finite grid $\{0,\ldots , \ell_i\}^2$ given by the coordinatewise order, with the extra condition that any point lying below the diagonal $x+y = \ell_i$ is smaller than all the points lying on it or above it. Then $\mathrel{\dot{\leq}}$ is the product of all these order relations for $O$ running through $\Omega_{\overline I}$. It follows that $\mathrel{\dot{\leq}}$ is a partial order.

    Suppose that $v_{\mathbf c'} \mathrel{\dot{\leq}} v_{\mathbf c}$. If $i \in \mathcal T$, then $c_i + c_{\theta_I(i)} \leq \ell_i-1$, so that we must have $c_i' \leq c_i$ and $c_{\theta_I(i)}' \leq c_{\theta_I(i)}$. It follows that $i \in \mathcal T'$ and $m_i' \leq  m_i$. Thus $\mathcal T \subseteq \mathcal T'$. If $i \in \overline I \setminus \mathcal T'$, then $c_i' + c_{\theta_I(i)}' \geq \ell_i$, so that we must have $b_i' = c_i' \leq c_i = b_i$. Conversely, if the three conditions on $(\mathcal T, \mathbf m, \mathbf b)$ and $(\mathcal T', \mathbf m', \mathbf b')$ hold, then we have $c_i' \leq c_i$ for all $i \in \mathcal T$ as well as for all $i \in \overline I \setminus \mathcal T'$. On the other hand, we have $c_i' + c_{\theta_I(i)}' \leq \ell_i-1$ and $c_i + c_{\theta_I(i)} \geq \ell_i$ for all $i \in \mathcal T' \setminus \mathcal T$. This proves that $v_{\mathbf c'} \mathrel{\dot{\leq}} v_{\mathbf c}$. 
\end{proof}

We will repeatedly use the following elementary property of the Bruhat order, so let us record it for good.

\begin{lem}\label{Lem:BruhatOrderForProducts}
    Let $(W,S)$ be a Coxeter system and let $\leq$ denote the Bruhat order. Let $S_1, \ldots, S_t \subseteq S$ be pairwise disjoint subsets, and let $y_j, z_j \in W_{S_j}$ for all $1 \leq j \leq t$, where $W_{S_j} \coloneq \langle S_j \rangle \subseteq W$ is the corresponding parabolic subgroup. Then 
    \begin{equation*}
        y_1 \cdots y_t \leq z_1 \cdots z_t \iff \forall 1 \leq j \leq t, \quad y_j \leq z_j.
    \end{equation*}
\end{lem}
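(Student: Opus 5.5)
The plan is to reduce to the case of a single parabolic factor using the standard subword characterization of the Bruhat order. First I would fix reduced expressions $z_j = t^{(j)}_1\cdots t^{(j)}_{k_j}$ in $W_{S_j}$. Since the $S_j$ are pairwise disjoint, the concatenation $t^{(1)}_1\cdots t^{(t)}_{k_t}$ is a reduced expression for $z_1\cdots z_t$. Indeed, $\ell(z_1\cdots z_t)=\sum_j \ell(z_j)$, because the length of an element of $W_{S_1\sqcup\cdots\sqcup S_t}$ can be computed from the supports. Concretely, I would prove $\ell(y_1\cdots y_t)=\sum\ell(y_j)$ by induction on $t$, using that $W_{S_1}\cap W_{S_2\cup\cdots\cup S_t}=\{1\}$ and that every element of $W_{S_1}$ is minimal in its coset $W_{S_1}\cdot$ modulo $W_{S'}$ with $S'$ disjoint from $S_1$. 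Equivalently, one can use the known fact that the support of any reduced word for $w$ equals $\mathrm{Supp}(w)$.

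For the reverse implication: if $y_j\le z_j$ for every $j$, then each $y_j$ is a subword of the reduced expression of $z_j$. Concatenating these subwords gives a subword of the reduced word for $z_1\cdots z_t$ whose product is $y_1\cdots y_t$, so $y_1\cdots y_t\le z_1\cdots z_t$ by the subword property.

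For the forward implication: if $y_1\cdots y_t\le z_1\cdots z_t$, the subword property gives a subword, possibly non-reduced, of the concatenated reduced word with product $y_1\cdots y_t$. Grouping its letters by blocks, it has the form $y_1'\cdots y_t'$, where $y_j'\in W_{S_j}$ is the product of a subword of the reduced word for $z_j$; hence $y_j'\le z_j$. The key remaining step is uniqueness of the decomposition: $W_{S_1\sqcup\cdots\sqcup S_t}$ need not be a direct product, since the $S_j$ may not commute. So I will argue instead by induction on $t$ via the parabolic projection. Let $S'=S_2\sqcup\cdots\sqcup S_t$. The map $w\mapsto$ (the $W_{S_1}$-component in $W_{S_1}\times {}^{S_1}W$) is order preserving, by Deodhar's property that $w\le w'$ implies $w_{S_1}$-parts compare after projection. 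More safely, I would use the known result that $u\mapsto u^{J}$ (minimal coset representative) is order preserving, applied to left cosets $W_{S_1}w$.

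Writing $w=y_1(y_2\cdots y_t)$, I would check that $y_2\cdots y_t$ is the minimal representative of its coset $W_{S_1}w$, which holds because its reduced words avoid $S_1$. Projecting then gives $y_2\cdots y_t\le z_2\cdots z_t$, and induction handles these factors. For $y_1\le z_1$, I would apply the symmetric argument with right cosets modulo $W_{S'}$: $w^{S'}=y_1$. I expect this projection step, namely justifying that the factor in $W_{S_1}$ is the coset representative and that projection is monotone, to be the main obstacle. I would cite Björner–Brenti, Proposition 2.5.1, for monotonicity of $w\mapsto w^J$.
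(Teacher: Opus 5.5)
Your proof is correct. The reverse implication is exactly the paper's argument: the concatenated reduced words for the $z_j$ form a reduced word for $z_1\cdots z_t$, and then you apply the subword property. For the forward implication you take a different route.

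The paper stays with the subword criterion. A subword of the concatenated word gives $y_1\cdots y_t=y_1'\cdots y_t'$ with $y_j'\le z_j$, and one concludes because this factorization is unique. You dropped that route out of concern that uniqueness fails when the $S_j$ do not commute, but it does not fail. If $y_1u=y_1'u'$ with $y_1,y_1'\in W_{S_1}$ and $u,u'\in W_{S'}$, then $(y_1')^{-1}y_1=u'u^{-1}\in W_{S_1}\cap W_{S'}=\{1\}$, a fact you already use in your first step. Induction on $t$ then finishes, so the subword route closes in one line and needs nothing beyond the subword property.

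Your replacement argument is also valid. You identify $y_1=w^{S'}$ and $y_2\cdots y_t={}^{S_1}w$ as minimal coset representatives and invoke monotonicity of $w\mapsto w^{J}$ (Bj\"orner--Brenti, Prop.~2.5.1, with the left-coset version obtained by inversion). This bypasses the subword bookkeeping in that direction and gives uniqueness of the factorization as a by-product, at the price of citing a less elementary theorem.

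One caution concerns the first formulation you float, that the $W_{S_1}$-components of the factorization $W_{S_1}\cdot{}^{S_1}W$ compare monotonically. That statement is false. In $\mathfrak S_3$ with $J=\{s_1\}$, one has $s_1\le s_2s_1$, while the $W_J$-components are $s_1$ and $1$. It is therefore essential that you use only the minimal-representative map.
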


\begin{proof}
    The concatenation of any reduced expressions of $y_1, \ldots , y_t$ is automatically a reduced expression of the product $y_1 \cdots y_t$. The Lemma then follows easily from the subword criterion for the Bruhat order. 
\end{proof}

From now on, we fix a non-empty $\theta_I$-stable subset $\mathcal T \subseteq \overline I$ and a tuple $\mathbf m = (m_i)_{i \in \mathcal T} \in \mathrm{Num}(\mathcal T)$. Recall the notations and definitions introduced in Section \ref{Section5.2}. In particular, recall that $\mathbf P_{\mathcal K_{\mathcal T,\mathbf m}}$ is the standard parabolic subgroup of $\mathbf J_{\mathcal T,\mathbf m,\overline{\mathbb F_q}}$ stabilizing the partial flag $(U_r^{\circ})_{r \in \mathcal K_{\mathcal T,\mathbf m}}$. Its Weyl group with respect to the maximal torus $\mathbf T_{\mathcal T,\mathbf m}$ is
\begin{equation*}
    \mathbf W_{\mathcal K_{\mathcal T,\mathbf m}} \coloneq \left\langle s_x \;\middle|\; s_x \in \mathbf S_{V} \text{ for some quotient space } V \text{ and } x+1 \not \in \mathcal K_V \right\rangle \subseteq \mathbf W_{\mathcal T, \mathbf m}.
\end{equation*}
We denote its set of simple reflections by $\mathbf S_{\mathcal K_{\mathcal T,\mathbf m}}$. The $\mathbb F_q$-structure of $\mathbf J_{\mathcal T,\mathbf m}$ is defined by a Frobenius morphism $F : \mathbf J_{\mathcal T,\mathbf m,\overline{\mathbb F_q}} \longrightarrow \mathbf J_{\mathcal T,\mathbf m,\overline{\mathbb F_q}}$. Since $F$ stabilizes $(\mathbf B_{\mathcal T,\mathbf m},\mathbf T_{\mathcal T,\mathbf m})$, it induces an automorphism of the Coxeter system $(\mathbf W_{\mathcal T,\mathbf m}, \mathbf S_{\mathcal T,\mathbf m})$ which we denote by the same letter. We define $\mathbf b^{\mathrm{max}} \coloneq (\ell_i)_{i\in\overline I\setminus\mathcal T} \in \mathcal B_{\mathcal T}$ and let
\begin{equation*}
    \mathbf c^{\mathrm{max}} \coloneq \mathbf c(\mathbf b^{\mathrm{max}}) = (c_i^{\mathrm{max}})_{i\in\overline I},
\end{equation*}
be the tuple associated with $(\mathcal T,\mathbf m,\mathbf b^{\mathrm{max}})$ as in \eqref{Eq:DefinitionTuplec(b)}. Thus we have $c_i^{\mathrm{max}} = \ell_i$ for all $i \in \overline I \setminus \mathcal T$, and $c_i^{\mathrm{max}} = m_i$ for all $i \in \mathcal T$. We consider tuples $\mathbf d$ as follows
\begin{equation}\label{Eq:BoundsBoundaryTuple}
    \mathbf d = (d_i)_{i\in\overline I} \in \prod_{i\in\overline I} \{0, \ldots, c_i^{\mathrm{max}}\}.
\end{equation}
For every $i \in \overline I$, the integer interval $\{r_{-i-\underline{\varepsilon}}-c_i^{\mathrm{max}}, \ldots, r_{-i-\underline{\varepsilon}}\}$ is contained in $\widehat{\mathcal K}_V$ for a unique quotient space $V$. Denote this unique quotient space by $V_i = V_i^{\mathcal T,\mathbf m}$. Recall that we write $\mathbf W_{V_i} \coloneq \mathrm{Bij}(\widehat{\mathcal K}_{V_i}) \subseteq\mathbf W_{\mathcal T,\mathbf m}$. We may define
\begin{equation}\label{Eq:DefinitionBoundaryWord}
    w_i^{\mathcal T,\mathbf m}(\mathbf d) \coloneq s_{[r_{-i-\underline{\varepsilon}}-d_i, r_{-i-\underline{\varepsilon}}-1]} \in \mathbf W_{V_i}, \qquad w_{\mathbf d}^{\mathcal T,\mathbf m} \coloneq \prod_{i\in\overline I} w_i^{\mathcal T,\mathbf m}(\mathbf d) \in \mathbf W_{\mathcal T,\mathbf m},
\end{equation}
where the factors in the product are written from left to right in increasing order of
$r_{-i-\underline{\varepsilon}}$. The resulting expression of $w_{\mathbf d}^{\mathcal T,\mathbf m}$ is reduced, and this element is right $\mathcal K_{\mathcal T,\mathbf m}$-reduced. When $\mathbf d = \mathbf c^{\mathrm{max}}$, it agrees with the element $w_{\mathbf c^{\mathrm{max}}} \in \mathbf W_{\mathcal T,\mathbf m}$ defined in Section \ref{Section5.2}. 

On the set of right $\mathcal K_{\mathcal T,\mathbf m}$-reduced elements of $\mathbf W_{\mathcal T,\mathbf m}$, we introduce the following twisted Bruhat order: 
\begin{equation}\label{Eq:RightTwistedOrderFineDL}
    w' \leq_{\mathcal K_{\mathcal T,\mathbf m},F} w \iff \exists x \in \mathbf W_{\mathcal K_{\mathcal T,\mathbf m}}, \quad F(x) w' x^{-1} \leq w.
\end{equation}
We will prove that $\leq_{\mathcal K_{\mathcal T,\mathbf m},F}$ amounts to a collection of local inequalities, each of which boils down to the situation of Proposition \ref{Prop:CombinatorialProposition}.

\begin{lem}\label{Lem:IntervalsOutsideMathcalT}
    Let $i\in\overline I\setminus\mathcal T$. Then $r_{-i-\underline{\varepsilon}}-\ell_i = r_{\theta_I(i)} \in \widehat{\mathcal K}_{\mathcal T,\mathbf m}$, and the two integers $r_{\theta_I(i)}$ and $r_{-i-\underline{\varepsilon}}$ are consecutive elements of $\mathcal K_{\mathcal T,\mathbf m}$, belonging to the same interval $\widehat{\mathcal K}_{V_i}$.
\end{lem}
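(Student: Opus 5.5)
The plan is to work directly with the description of the interval $\widehat{\mathcal K}_{\mathcal T,\mathbf m}$ and its decomposition into the subintervals $\widehat{\mathcal K}_V$ recalled in Section \ref{Section5.2}, together with the fact, used in the proofs of Theorem \ref{Theo:ComparisonFineBTStrataAndKR} and Theorem \ref{Theo:ComparisonBTEKOR} (Case 1), that for $i \in \overline I\setminus\mathcal T$ the whole interval $\{r_{-i-\underline{\varepsilon}}-\ell_i+1,\ldots,r_{-i-\underline{\varepsilon}}\}$ lies inside a single $\widehat{\mathcal K}_V$ with $V$ among $W_{\mathrm{min}}$, $W_{\mathrm{max}}$, $W_{O',O}$, $W'_{O',O}$.

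First I would check the congruence. Since $\theta_I(i) = -i-\underline{\varepsilon}-\ell_i$ in $\mathbb Z/n\mathbb Z$, the integer $r_{-i-\underline{\varepsilon}}-\ell_i$ is a representative of $\theta_I(i)$. Hence it equals $r_{\theta_I(i)}$ as soon as it lies in $\widehat{\mathcal K}_{\mathcal T,\mathbf m}$. Next I would check that no element of $I_{\mathbb Z}$ lies strictly between $r_{-i-\underline{\varepsilon}}-\ell_i$ and $r_{-i-\underline{\varepsilon}}$. This holds because the residues $\theta_I(i)+1,\ldots,\theta_I(i)+\ell_{\theta_I(i)}-1$ are exactly $B_{\theta_I(i)}\setminus\{\theta_I(i)\}$, which contains no element of $\overline I$, using $\ell_{\theta_I(i)}=\ell_i$. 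Thus, once both endpoints lie in the same $\widehat{\mathcal K}_{V}$ (in particular in $\widehat{\mathcal K}_{\mathcal T,\mathbf m}$), they are consecutive elements of $\mathcal K_{\mathcal T,\mathbf m}$.

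The main step is to show that $r_{-i-\underline{\varepsilon}}-\ell_i$ still belongs to the same $\widehat{\mathcal K}_{V}$ as $r_{-i-\underline{\varepsilon}}$, i.e.\ that it is $\geq \alpha_V$. The statement that $\{r_{-i-\underline{\varepsilon}}-c^{\max}_i,\ldots,r_{-i-\underline{\varepsilon}}\}\subseteq\widehat{\mathcal K}_{V_i}$ with $c_i^{\max}=\ell_i$ gives this immediately. However, that statement was asserted in Section \ref{Section5.2} as a consequence of the inequalities defining $\mathcal B_{\mathcal T}$ (for $b_i=\ell_i$), so I would rather verify it honestly. I would express $\alpha_V$ and $\beta_V$ through the lattices $\Lambda(\mathbf m)_j$ for the two orbits adjacent to the position of $-i-\underline{\varepsilon}$. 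For $V=W_{O',O}=\Lambda^+_O/\Lambda^-_{O'}$, the endpoints are determined by $r^+_O+m_{\overline{r^+_O}}$ and $r^-_{O'}+\ell-m$-type quantities. The key point is that an element of $\overline I\setminus\mathcal T$ lies in a $\theta_I$-orbit not in $\Omega_{\mathcal T}$, so its entire block pair $B_i\cup B_{\theta_I(i)}$ is swallowed between two consecutive orbits of $\mathcal T$ (or before $O_{\min}$, or after $O_{\max}$). Concretely, I would write $\alpha_V$ as $r_{O'}$-type index plus $m$-shift and check that every index from $r_{\theta_I(i)}$ upward lies in the range, using that the orbits of $\overline I\setminus\mathcal T$ between $O'$ and $O$ contribute full blocks.

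I expect this bookkeeping, organized by the total order \eqref{Eq:OrderOnOrbits} on $\Omega_{\overline I}$ and by the cases $V=W_{\min},W_{\max},W_{O',O},W'_{O',O}$ (the latter with the dual indexing $r\mapsto -r-\underline{\varepsilon}$), to be the main obstacle. I would handle the case $W_{O',O}$ carefully and obtain $W'_{O',O}$ by the symmetry $r\mapsto -r-\underline{\varepsilon}+1$ exchanging the two intervals. The extremal spaces $W_{\min}$ and $W_{\max}$ are treated the same way, with a single bounding orbit.
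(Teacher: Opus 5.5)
Your argument is essentially the paper's: its proof likewise just invokes the containment $\{r_{-i-\underline{\varepsilon}}-\ell_i,\ldots,r_{-i-\underline{\varepsilon}}\}\subseteq\widehat{\mathcal K}_{V_i}$ asserted in Section~\ref{Section5.2} (the case $c_i^{\mathrm{max}}=\ell_i$), together with the congruence $r_{-i-\underline{\varepsilon}}-\ell_i\equiv\theta_I(i) \bmod n$. Your explicit check that the residues $\theta_I(i)+1,\ldots,\theta_I(i)+\ell_i-1$ avoid $\overline I$ (using $\ell_{\theta_I(i)}=\ell_i$) supplies the consecutiveness that the paper leaves implicit; the extra bookkeeping re-verification of the containment is not needed, and the paper does not carry it out either.
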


\begin{proof}
    Since $i \in \overline I \setminus \mathcal T$, we know that $\widehat{\mathcal K}_{V_i}$ contains the integer interval $\{r_{-i-\underline{\varepsilon}}-\ell_i,\ldots, r_{-i-\underline{\varepsilon}}\}$. Since the reduction modulo $n$ of $r_{-i-\underline{\varepsilon}}-\ell_i$ is $\theta_I(i)$, we have $r_{-i-\underline{\varepsilon}}-\ell_i = r_{\theta_I(i)}$. 
\end{proof}

We define 
\begin{align}\label{Eq:DefinitionWiMinusPlus}
    \mathbf W_i^- & \coloneq \left\langle s_{r_{-i-\underline{\varepsilon}}-\ell_i}, \ldots, s_{r_{-i-\underline{\varepsilon}}-2} \right\rangle \subseteq \mathbf W_{\mathcal K_{\mathcal T,\mathbf m}} \cap \mathbf W_{V_i}, \\
    \mathbf W_i^+ & \coloneq \left\langle s_{r_{-i-\underline{\varepsilon}}-\ell_i+1}, \ldots, s_{r_{-i-\underline{\varepsilon}}-1} \right\rangle \subseteq \mathbf W_{V_i}. \nonumber
\end{align}
The Frobenius morphism satisfies
\begin{equation}\label{Eq:FrobeniusOnLocalSimpleReflections}
    \forall 0 \leq j \leq \ell_i-2, \qquad F(s_{r_{-\theta_I(i)-\underline{\varepsilon}}-\ell_i+j}) = s_{r_{-i-\underline{\varepsilon}}-1-j} \in \mathbf W_i^+.
\end{equation}
Consequently, $F(\mathbf W_{\theta_I(i)}^-) = \mathbf W_i^+$. We next recall the decomposition of a closed BT stratum into fine Deligne-Lusztig varieties and identify the EKOR index of every piece.

\begin{lem}\label{Lem:BoundaryDecompositionClosedBT}
    Inside $\mathcal N_{\mathrm{red}}^{I,\varepsilon}$, the closure of the Bruhat-Tits stratum $\mathcal N_{\mathcal T, \mathbf\Lambda(\mathbf m)}^{0}$ decomposes as 
    \begin{equation*}
        \overline{\mathcal N_{\mathcal T, \mathbf\Lambda(\mathbf m)}^{0}} = \bigsqcup_{\mathbf d} X_{\mathcal K_{\mathcal T,\mathbf m}}\{(w_{\mathbf d}^{\mathcal T,\mathbf m})^{-1}\},
    \end{equation*}
    where $\mathbf d$ runs over all the tuples as in \eqref{Eq:BoundsBoundaryTuple}. For such a $\mathbf d$, put 
    \begin{equation*}
        \mathcal T_{\mathbf d} \coloneq \left\{i \in \overline I \;\middle|\; d_i + d_{\theta_I(i)} \leq \ell_i-1 \right\} \subseteq \overline I.
    \end{equation*}
    Then $\mathcal T \subseteq \mathcal T_{\mathbf d}$, $v_{\mathbf d} \in {}^{I}\mathrm{Adm}(\{\mu'\})_0$, and 
    \begin{equation*}
        X_{\mathcal K_{\mathcal T,\mathbf m}}\{(w_{\mathbf d}^{\mathcal T,\mathbf m})^{-1}\} = \overline{\mathcal N_{\mathcal T, \mathbf\Lambda(\mathbf m)}^{0}} \cap \mathrm{EKOR}_{v_{\mathbf d}}.
    \end{equation*}
\end{lem}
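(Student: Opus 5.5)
The plan is to work entirely inside the flag variety $\mathbf J_{\mathcal T,\mathbf m,\overline{\mathbb F_q}}/\mathbf P_{\mathcal K_{\mathcal T,\mathbf m}}$, transported to $\mathcal N_{\mathrm{red}}^{I,\varepsilon}$ via Theorem \ref{Theo:IsomorphismBTStrataAndFineDLVarieties}, and then identify each piece by reusing the Iwahori-lift argument from the proof of Theorem \ref{Theo:ComparisonBTEKOR}.

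\emph{Step 1: the decomposition.} The closure of $\mathcal N^0_{\mathcal T,\mathbf\Lambda(\mathbf m)}$ is the closure of the union over $\mathbf b\in\mathcal B_{\mathcal T}$ of the fine Deligne-Lusztig varieties $X_{\mathcal K_{\mathcal T,\mathbf m}}\{w_{\mathbf c(\mathbf b)}^{-1}\}$. Its open dense piece is the one with $\mathbf b=\mathbf b^{\mathrm{max}}$, whose dimension is maximal by \eqref{Eq:DimensionFineDLVarieties}. On the other hand, the closed BT stratum is identified in \cite{mullerBruhatTitsStratificationBasic2026} with the closure of a coarse Deligne-Lusztig variety. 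Since the components on the $H_O$ factors are trivial, this closure is the locus of partial flags $(U_r)$ whose relative position with $F(U_r)$ is bounded by $w_{\mathbf c^{\mathrm{max}}}$ in the $\mathbf W_{\mathcal K}$-double coset sense. I would take that relative-position description from loc. cit. and decompose it by fine Deligne-Lusztig varieties: the closure is the union of those $X_{\mathcal K}\{w^{-1}\}$ with $w$ right $\mathcal K$-reduced and $w\leq_{\mathcal K,F} w_{\mathbf c^{\mathrm{max}}}$.

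The main step, which I expect to be the obstacle, is to show that these $w$ are exactly the elements $w^{\mathcal T,\mathbf m}_{\mathbf d}$ with $0\leq d_i\leq c_i^{\mathrm{max}}$. Since $w_{\mathbf c^{\mathrm{max}}}$ is a product of the commuting blocks $s_{[r_{-i-\underline\varepsilon}-c^{\mathrm{max}}_i,\,r_{-i-\underline\varepsilon}-1]}$, a lower bound in the Bruhat order is a product of subwords by Lemma \ref{Lem:BruhatOrderForProducts}. The right-$\mathcal K$-reduced condition should then force each subword to be a terminal segment $s_{[r-d_i,r-1]}$. The delicate point is that twisted conjugation by $x\in\mathbf W_{\mathcal K}$ yields nothing new. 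I would argue this directly from the explicit flag incidence (the diagrams used in the proof of Theorem \ref{Theo:ComparisonFineBTStrataAndKR}), by recording, for each $i$, the number $d_i$ of strict vertical relations. This number is a $\mathbf P_{\mathcal K}$-invariant of the point, so the pieces are determined by $\mathbf d$. Disjointness then follows because distinct $\mathbf d$ give distinct reduced elements.

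\emph{Step 2: combinatorics of $\mathbf d$.} For $i\in\mathcal T$ we have $d_i\leq m_i$, so $d_i+d_{\theta_I(i)}\leq m_i+m_{\theta_I(i)}\leq\ell_i-1$. Hence $\mathcal T\subseteq\mathcal T_{\mathbf d}$, which is non-empty since $\mathcal T$ is, and Proposition \ref{Prop:DescriptionIndexSetOfEKORStrata} gives $v_{\mathbf d}\in{}^I\mathrm{Adm}(\{\mu'\})_0$.

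\emph{Step 3: the EKOR index.} I would repeat verbatim the proof of Theorem \ref{Theo:ComparisonBTEKOR}, with $\mathbf c$ replaced by $\mathbf d$ and $w_{\mathbf c}$ by $w^{\mathcal T,\mathbf m}_{\mathbf d}$. Take a point, lift its flag to a full flag in $X(w_{\mathbf d}^{-1})$, and lift that to an Iwahori point $\widetilde z$ via smoothness of $\mathcal J_{\mathcal T,\mathbf m}$. Then check criterion \eqref{Eq:CriterionToBeCheckedOnFlags} with $d_i$ in place of $c_i$; the case analysis (Cases 1, 2.a, 2.b) only used $d_i\leq c^{\mathrm{max}}_i$ and the block shape. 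This gives $J^{\mathrm{Iw}}=\bigsqcup_i\{i,\dots,i+d_i-1\}$, so the point lies in $\mathrm{EKOR}_{v_{\mathbf d}}$. Since the pieces for distinct $\mathbf d$ land in distinct EKOR strata and together cover the closure, the equality $X_{\mathcal K}\{(w^{\mathcal T,\mathbf m}_{\mathbf d})^{-1}\}=\overline{\mathcal N^0}\cap\mathrm{EKOR}_{v_{\mathbf d}}$ follows.
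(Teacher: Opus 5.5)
Your Steps 2 and 3 coincide with the paper's proof. That proof runs the Iwahori lift from Theorem \ref{Theo:ComparisonBTEKOR} with $\mathbf d$ in place of $\mathbf c$, then uses injectivity of $\mathbf d\mapsto v_{\mathbf d}$ to turn the inclusion into an equality.

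The gap is in Step 1, at exactly the point you flag. The paper does not derive the decomposition. It takes two facts from \cite{mullerBruhatTitsStratificationBasic2026}:
\begin{enumerate}
\item By Theorem 3.36 there, the closed BT stratum is the closure of $X_{\mathcal K_{\mathcal T,\mathbf m}}\{w_{\mathbf c^{\mathrm{max}}}^{-1}\}$. Maximality of dimension alone would not give this.
\item By Propositions 3.18, 3.22 and 3.26 there, after conjugating by longest elements, this closure is the disjoint union of the fine Deligne-Lusztig varieties whose parameters are exactly the $w^{\mathcal T,\mathbf m}_{\mathbf d}$.
\end{enumerate}
In your route through He's closure theorem, the subword argument with Lemma \ref{Lem:BruhatOrderForProducts} only treats $x=1$ in $F(x)wx^{-1}\leq w_{\mathbf c^{\mathrm{max}}}$. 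It shows that every $X_{\mathcal K_{\mathcal T,\mathbf m}}\{(w^{\mathcal T,\mathbf m}_{\mathbf d})^{-1}\}$ lies in the closure. It does not exclude the many other right $\mathcal K_{\mathcal T,\mathbf m}$-reduced elements, namely the minimal coset representatives of the Young subgroups in each $\mathbf W_V$. Once $x$ ranges over all of $\mathbf W_{\mathcal K_{\mathcal T,\mathbf m}}$, including its factors supported between or outside the blocks, these could a priori occur.

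Your proposed fix reads off $d_i$ as the number of strict vertical relations in the diagrams of Theorem \ref{Theo:ComparisonFineBTStrataAndKR}, and this is circular. Those diagrams come from a full-flag lift in $X(w^{-1})$ with $w$ already of block form. For an arbitrary point of the closure you only know that it lies in $X_{\mathcal K_{\mathcal T,\mathbf m}}\{w^{-1}\}$ for some unknown $w\leq_{\mathcal K_{\mathcal T,\mathbf m},F}w_{\mathbf c^{\mathrm{max}}}$.

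The numbers you can read off directly from $(U_r)$ and its $\tau$-twisted orthogonal flag, such as dimensions of pairwise intersections, only record the coarse relative position. Membership in a fine stratum is determined by an iterated refinement, so such counts cannot certify that the point lies in $X_{\mathcal K_{\mathcal T,\mathbf m}}\{(w^{\mathcal T,\mathbf m}_{\mathbf d})^{-1}\}$. Lemma \ref{Lem:LocalizationFineDLTwistedInequality} does not help either, since it only localizes the twisted inequality when the smaller element is already some $w^{\mathcal T,\mathbf m}_{\mathbf d}$.

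To close Step 1 you have two options:
\begin{enumerate}
\item Construct, for every point of the closed stratum, a full-flag lift of relative position $(w^{\mathcal T,\mathbf m}_{\mathbf d})^{-1}$ for the appropriate $\mathbf d$. This is the content of the cited propositions.
\item Cite those propositions, as the paper does.
\end{enumerate}
With either, the rest of your argument goes through unchanged.
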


\begin{proof}
    The tuple $\mathbf c^{\mathrm{max}}$ corresponds to the choice $b_i = \ell_i$ for every $i \in \overline I \setminus \mathcal T$. By \cite{mullerBruhatTitsStratificationBasic2026} Theorem 3.36, the closed BT stratum $\overline{\mathcal N_{\mathcal T, \mathbf\Lambda(\mathbf m)}^{0}}$ is the closure of the fine Deligne-Lusztig variety $X_{\mathcal K_{\mathcal T,\mathbf m}}\{w_{\mathbf c^{\mathrm{max}}}^{-1}\} \subseteq \mathbf J_{\mathcal T,\mathbf m,\overline{\mathbb F_q}} /\mathbf P_{\mathcal K_{\mathcal T,\mathbf m}}$. The explicit decomposition of this closure is given by \cite{mullerBruhatTitsStratificationBasic2026} Propositions 3.18, 3.22 and 3.26. After conjugating by the elements of maximal length in order to suit our conventions here (recall that we use the Borel subgroup of \textit{lower-triangular} matrices), the parameters of these fine Deligne-Lusztig varieties are precisely the elements $w_{\mathbf d}^{\mathcal T,\mathbf m} \in \mathbf W_{\mathcal T,\mathbf m}$ of \eqref{Eq:DefinitionBoundaryWord}, for the tuples $\mathbf d$ as in \eqref{Eq:BoundsBoundaryTuple}. 

    If $i\in\mathcal T$, then $d_i + d_{\theta_I(i)} \leq m_i + m_{\theta_I(i)} \leq \ell_i-1$. Hence $\mathcal T \subseteq \mathcal T_{\mathbf d}$ and, in particular, $v_{\mathbf d}$ belongs to ${}^{I}\mathrm{Adm}(\{\mu'\})_0$.

    It remains to determine the index of the EKOR stratum containing $X_{\mathcal K_{\mathcal T,\mathbf m}}\{(w_{\mathbf d}^{\mathcal T,\mathbf m})^{-1}\}$. Let $k$ be an algebraically closed field containing $\overline{\mathbb F_q}$ and let $z \in X_{\mathcal K_{\mathcal T,\mathbf m}}\{(w_{\mathbf d}^{\mathcal T,\mathbf m})^{-1}\}(k)$. Lift its partial flag to a full flag of relative position $(w_{\mathbf d}^{\mathcal T,\mathbf m})^{-1}$. This gives rise to a point $\widetilde z$ at Iwahori level, which maps to $z$ via the change of parahoric morphism, just as in the proof of Theorem \ref{Theo:ComparisonBTEKOR}. Just as in said proof, one determines that the index of the Iwahori-level KR stratum containing $\widetilde z$ is 
    \begin{equation*}
        J^{\mathrm{Iw}} = \bigsqcup_{i\in\overline I} \{i, i+1, \ldots, i+d_i-1\} \subsetneq \mathbb Z/n\mathbb Z.
    \end{equation*}
    The final argument of Theorem \ref{Theo:ComparisonBTEKOR} now gives $z \in \mathrm{EKOR}_{v_{\mathbf d}}(k)$. Since the intersection $\overline{\mathcal N_{\mathcal T, \mathbf\Lambda(\mathbf m)}^{0}} \cap \mathrm{EKOR}_{v_{\mathbf d}}$ is reduced, we deduce that $X_{\mathcal K_{\mathcal T,\mathbf m}}\{(w_{\mathbf d}^{\mathcal T,\mathbf m})^{-1}\} \subseteq \overline{\mathcal N_{\mathcal T, \mathbf\Lambda(\mathbf m)}^{0}} \cap \mathrm{EKOR}_{v_{\mathbf d}}$. Since this holds for every $\mathbf d$ and since the map $\mathbf d \mapsto v_{\mathbf d}$ is injective, this inclusion is actually an equality.
\end{proof}

We can now compute the twisted Bruhat order $\leq_{\mathcal K_{\mathcal T,\mathbf m},F}$. We start with a Lemma. For $i\in\overline I\setminus\mathcal T$, define
\begin{equation}\label{Eq:DefinitionLocalFullSupport}
    \mathbf S_i \coloneq \left\{s_{r_{-i-\underline{\varepsilon}}-\ell_i}, \ldots, s_{r_{-i-\underline{\varepsilon}}-1} \right\} \subseteq \mathbf S_{V_i},
\end{equation}
and $\mathbf W_i \coloneq \langle \mathbf S_i \rangle \subseteq \mathbf W_{V_i}$. The subsets $\mathbf S_i$ are pairwise disjoint. Indeed, by Lemma \ref{Lem:IntervalsOutsideMathcalT}, they are the intervals of simple reflections lying between distinct pairs of consecutive elements of $\mathcal K_{\mathcal T,\mathbf m}$. We point out that we have $\mathbf W_i^-, \mathbf W_i^+ \subset \mathbf W_i$. If we identify $\mathbf W_i$ with $\mathfrak S_{\ell_i+1}$, then $\mathbf W_i^-$ and $\mathbf W_i^+$ are the subgroups of permutations fixing respectively $\ell_i+1$ and $1$. 

Let $V = \Lambda_{\alpha_V}/\Lambda_{\beta_V}$ be one of the quotient spaces of Section \ref{Section5.2}. If $\mathcal K_V \not = \emptyset$, we have
\begin{align}\label{Eq:YoungFactorsParabolicWeylGroup}
    \mathbf W_{\mathcal K_{\mathcal T,\mathbf m}} \cap \mathbf W_V = & \mathrm{Bij}\left( \{\alpha_V, \ldots, \min(\mathcal K_V)-1\} \right) \nonumber\\
    & \times \bigl( \prod_{\substack{r<r'\\ \text{consecutive} \\ \text{in } \mathcal K_V}} \mathrm{Bij}\left( \{r, \ldots, r'-1\} \right) \bigr) \times \mathrm{Bij}\left( \{\max(\mathcal K_V), \ldots, \beta_V-1\} \right),
\end{align}
and if $\mathcal K_V = \emptyset$, then $\mathbf W_{\mathcal K_{\mathcal T,\mathbf m}} \cap \mathbf W_V = \mathbf W_V$. The group $\mathbf W_{\mathcal K_{\mathcal T,\mathbf m}}$ is then the product of all the components $\mathbf W_{\mathcal K_{\mathcal T,\mathbf m}} \cap \mathbf W_V$ for $V$ running through all the quotient spaces. Likewise, if $\mathcal K_V \not = \emptyset$ we have 
\begin{align}\label{Eq:YoungFactorsFrobeniusParabolicWeylGroup}
    F\left(\mathbf W_{\mathcal K_{\mathcal T,\mathbf m}}\right) \cap \mathbf W_V = & \mathrm{Bij} \left( \{\alpha_V, \ldots, \min(\mathcal K_V)\} \right) \nonumber\\
    & \times \bigl( \prod_{\substack{r < r'\\ \text{consecutive}\\ \text{in }\mathcal K_V}} \mathrm{Bij} \left( \{ r+1, \ldots, r'\} \right) \bigr) \times \mathrm{Bij} \left( \{\max(\mathcal K_V)+1, \ldots, \beta_V-1\} \right),
\end{align}
if $\mathcal K_V = \emptyset$, then $F\left(\mathbf W_{\mathcal K_{\mathcal T,\mathbf m}}\right) \cap \mathbf W_V = \mathbf W_V$.

For every $i \in \overline I \setminus \mathcal T$, the groups $\mathbf W_i^-$ and $\mathbf W_i^+$ are precisely the factors associated with the consecutive elements $r_{\theta_I(i)} = r_{-i-\underline{\varepsilon}} - \ell_i < r_{-i-\underline{\varepsilon}}$ of $\mathcal K_{\mathcal T,\mathbf m}$ respectively in \eqref{Eq:YoungFactorsParabolicWeylGroup} and in \eqref{Eq:YoungFactorsFrobeniusParabolicWeylGroup}.

\begin{lem}\label{Lem:LocalizationFineDLTwistedInequality}
    Let $\mathbf b \in \mathcal B_{\mathcal T}$, write $\mathbf c = \mathbf c(\mathbf b)$ and let $\mathbf d$ satisfy \eqref{Eq:BoundsBoundaryTuple}. Then the following conditions are equivalent.
    \begin{enumerate}
        \item There exists $x \in \mathbf W_{\mathcal K_{\mathcal T,\mathbf m}}$ such that
        \begin{equation*}
            F(x) w_{\mathbf d}^{\mathcal T,\mathbf m} x^{-1} \leq w_{\mathbf c} \quad \text{in } \mathbf W_{\mathcal T,\mathbf m}.
        \end{equation*}
        \item There exists a family $(x_i)_{i\in\overline I\setminus\mathcal T}$ with $x_i \in \mathbf W_i^-$ such that, for all $i \in \overline I \setminus \mathcal T$,
        \begin{equation*}
            F(x_{\theta_I(i)}) w_i^{\mathcal T,\mathbf m}(\mathbf d) x_i^{-1} \leq w_i(\mathbf c) \quad \text{in } \mathbf W_i.
        \end{equation*}
    \end{enumerate}
\end{lem}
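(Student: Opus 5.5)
The plan is to decompose everything along the product structure of $\mathbf W_{\mathcal T,\mathbf m}$ and apply Lemma \ref{Lem:BruhatOrderForProducts}. The first step is to locate the supports. For $i\in\mathcal T$, the factors of $w_{\mathbf d}^{\mathcal T,\mathbf m}$ and $w_{\mathbf c}$ agree up to the bound $d_i\leq m_i=c_i$. For $i\in\overline I\setminus\mathcal T$, both $w_i^{\mathcal T,\mathbf m}(\mathbf d)$ and $w_i(\mathbf c)$ lie in $\mathbf W_i$. The reflections occurring in $w_{\mathbf c}$ are therefore contained in $\bigsqcup_{i\notin\mathcal T}\mathbf S_i$ together with the "$\mathcal T$-blocks" $s_{[r_{-i-\underline{\varepsilon}}-m_i,r_{-i-\underline{\varepsilon}}-1]}$ for $i\in\mathcal T$. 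These blocks are pairwise disjoint intervals of simple reflections, by Lemma \ref{Lem:IntervalsOutsideMathcalT} and the partition of $\mathbb Z/n\mathbb Z$ into the $B_i$.

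For (2)$\Rightarrow$(1), I would set $x\coloneq\prod_{i\notin\mathcal T}x_i\in\mathbf W_{\mathcal K_{\mathcal T,\mathbf m}}$. The factors commute because the $\mathbf W_i^-$ sit in disjoint supports. By \eqref{Eq:FrobeniusOnLocalSimpleReflections}, we have $F(x)=\prod_i F(x_{\theta_I(i)})$ with $F(x_{\theta_I(i)})\in\mathbf W_i^+\subseteq\mathbf W_i$. The conjugated element then factors as
\begin{equation*}
    \prod_{i\notin\mathcal T}F(x_{\theta_I(i)})\,w_i^{\mathcal T,\mathbf m}(\mathbf d)\,x_i^{-1}\cdot\prod_{i\in\mathcal T}w_i^{\mathcal T,\mathbf m}(\mathbf d).
\end{equation*}
Here the factors live in disjoint parabolic subgroups $\mathbf W_i$ (for $i\notin\mathcal T$) and in the $\mathcal T$-blocks. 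Lemma \ref{Lem:BruhatOrderForProducts} then reduces the inequality to the local ones, and for $i\in\mathcal T$ these hold via the subword $d_i\leq m_i$.

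For (1)$\Rightarrow$(2), I would decompose $x$ along \eqref{Eq:YoungFactorsParabolicWeylGroup} as $x=x'\prod_{i\notin\mathcal T}x_i$, where $x_i\in\mathbf W_i^-$ and $x'$ collects the Young factors not of this form. Those other factors are the end pieces near $\alpha_V,\beta_V$ and the pieces between consecutive $r<r'$ in $\mathcal K_{\mathcal T,\mathbf m}$ which straddle a boundary involving $\mathcal T$. The key claim is that $F(x)w_{\mathbf d}x^{-1}\leq w_{\mathbf c}$ forces the contribution of $x'$ to be harmless, and that one may replace $x$ by $\prod x_i$. Here is how I would argue this. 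Since $w_{\mathbf c}$ has support in the union $S^\ast$ of the $\mathbf S_i$ and the $\mathcal T$-blocks, every element below it lies in the parabolic subgroup $\mathbf W_{S^\ast}$. This subgroup is a product of the $\mathbf W_i$ and the block groups. I would then project $F(x)w_{\mathbf d}x^{-1}$ to each factor. Because $w_{\mathbf d}$ and $\prod x_i$ already lie in $\mathbf W_{S^\ast}$, membership forces $F(x')$ and $x'$ to combine into an element of $\mathbf W_{S^\ast}$.

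The main obstacle is exactly this step: showing that the "parasitic" parts $x'$ and $F(x')$, which live in different Young factors (compare \eqref{Eq:YoungFactorsParabolicWeylGroup} with \eqref{Eq:YoungFactorsFrobeniusParabolicWeylGroup}), cannot help. I would try a length argument in the spirit of Proposition \ref{Prop:CombinatorialProposition}. Set $y\coloneq F(x)w_{\mathbf d}x^{-1}$. Writing $y=F(x')\,y_0\,x'^{-1}$ with $y_0$ the local product, I would use that $w_{\mathbf d}$ is right $\mathcal K_{\mathcal T,\mathbf m}$-reduced to get $\ell(y)\geq\ell(y_0)+\ell(x')-\ell(F(x'))$-type bounds. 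Failing that, I would use a direct support/fixed-point argument: the permutation $y$ must stabilize each interval $\widehat{\mathcal K}$ corresponding to a factor of $\mathbf W_{S^\ast}$, and this forces the components of $x'$ to be trivial in the relevant positions. Once $x'$ is eliminated, or absorbed, the projection onto each $\mathbf W_i$ gives $F(x_{\theta_I(i)})w_i^{\mathcal T,\mathbf m}(\mathbf d)x_i^{-1}\leq w_i(\mathbf c)$ by Lemma \ref{Lem:BruhatOrderForProducts}, which is (2).
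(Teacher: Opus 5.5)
Your direction (2)$\Rightarrow$(1) is essentially the paper's converse argument and is fine, with one correction: you should not move all the $\mathcal T$-blocks to the right. Keep the factors in increasing order of $r_{-i-\underline{\varepsilon}}$, as in $w_{\mathbf c}$, before applying Lemma \ref{Lem:BruhatOrderForProducts}. The reason is that the intervals involved can be adjacent. For example, if $i\notin\mathcal T$ and $i'=i+\ell_i\in\mathcal T$ with $m_{i'}\geq 1$, then the block of $i'$ ends at $s_{r_{\theta_I(i)}-1}$, while $\mathbf S_i$ begins at $s_{r_{\theta_I(i)}}$. The commutations you actually need hold because $\mathbf W_i^+$ misses the first reflection of $\mathbf S_i$ and $\mathbf W_i^-$ misses the last one; disjoint supports alone would not suffice.

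The direction (1)$\Rightarrow$(2) has a genuine gap. You leave the ``main obstacle'' open, and none of the routes you sketch works:
\begin{itemize}
\item Because of the adjacency above, $\mathbf W_{S^\ast}$ is in general not the direct product of the $\mathbf W_i$ and the block groups. So there are no projections ``to each factor'', and an element below $w_{\mathbf c}$ need not stabilize the corresponding sets of points.
\item Your length bound collapses to $\ell(y)\geq\ell(y_0)$, since $\ell(F(x'))=\ell(x')$. This says nothing about the local Bruhat inequalities.
\item Trying to force $x'$ to be trivial is the wrong target. On factors with $\mathcal K_V=\emptyset$, such as those attached to $H_O$ and $H_O'$, condition (1) only forces the two components of $x$ to be exchanged by $F$, not to be trivial.
\end{itemize}

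The missing idea is that $x'$ never needs to be controlled. Fix $V$, let $\mathrm{pr}_V$ be the projection onto $\mathbf W_V$, and let $i_1,\ldots,i_q$ be the elements $i\notin\mathcal T$ with $V_i=V$. Partition $\mathbf S_V$ alternately as $\mathbf S_{V,0}^{\partial}\sqcup\mathbf S_{i_1}\sqcup\mathbf S_{V,1}^{\partial}\sqcup\cdots\sqcup\mathbf S_{i_q}\sqcup\mathbf S_{V,q}^{\partial}$. By \eqref{Eq:YoungFactorsParabolicWeylGroup} and \eqref{Eq:YoungFactorsFrobeniusParabolicWeylGroup}, all Young factors of $x$ and $F(x)$ other than the $x_{i_j}$ and $F(x_{\theta_I(i_j)})$ are supported in the complementary intervals $\mathbf S_{V,j}^{\partial}=\{s_a,\ldots,s_b\}$. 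Moreover, those of $x$ avoid $s_b$ (for $j<q$), and those of $F(x)$ avoid $s_a$ (for $j\geq 1$). Together with the shapes of $\mathbf W_i^{\pm}$, this lets you commute everything into the ordered form
\[
\mathrm{pr}_V\bigl(F(x)w_{\mathbf d}^{\mathcal T,\mathbf m}x^{-1}\bigr)=y_{V,0}\prod_{j=1}^{q}\bigl[\bigl(F(x_{\theta_I(i_j)})\,w_{i_j}^{\mathcal T,\mathbf m}(\mathbf d)\,x_{i_j}^{-1}\bigr)\,y_{V,j}\bigr],\qquad y_{V,j}\in\langle\mathbf S_{V,j}^{\partial}\rangle .
\]
This matches $\mathrm{pr}_V(w_{\mathbf c})=z_{V,0}\prod_{j}[w_{i_j}(\mathbf c)\,z_{V,j}]$ factor by factor. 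Lemma \ref{Lem:BruhatOrderForProducts} then gives the local inequalities in each $\mathbf W_{i_j}$ directly. The contribution of $x'$ is simply absorbed into the $y_{V,j}$ and discarded.
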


\begin{proof}
    Suppose first that (1) holds. Fix a quotient space $V$ and let $i_1, \ldots , i_q$ denote the elements of $\overline I \setminus \mathcal T$ such that $V_{i_j} = V$ for all $1 \leq j \leq q$, and numbered in increasing order of $r_{-i_j-\underline{\varepsilon}}$. The set of simple reflections $\mathbf S_V$ of $\mathbf W_V$ decomposes as
    \begin{equation}\label{Eq:AlternatingPartitionSimpleReflectionsV}
        \mathbf S_V = \mathbf S_{V,0}^{\partial} \sqcup\mathbf S_{i_1} \sqcup \mathbf S_{V,1}^{\partial} \sqcup \cdots \sqcup \mathbf S_{i_q} \sqcup \mathbf S_{V,q}^{\partial},
    \end{equation}
    for certain potentially empty intervals of simple reflections $\mathbf S_{V,0}^{\partial}, \ldots , \mathbf S_{V,q}^{\partial}$. For $0 \leq j \leq q$ we write $\mathbf W_{V,j}^{\partial} \coloneq \langle \mathbf S_{V,j}^{\partial} \rangle \subseteq \mathbf W_V$. We decompose $x$ in the product \eqref{Eq:YoungFactorsParabolicWeylGroup}, and denote by $x_i \in \mathbf W_i^-$ its component in the factor indexed by $i \in \overline I \setminus \mathcal T$. Finally, let $\mathrm{pr}_V : \mathbf W_{\mathcal T,\mathbf m} = \prod_{V'} \mathbf W_{V'} \to \mathbf W_V$ denote the projection. We claim that there exist $y_{V,j}, z_{V,j} \in \mathbf W_{V,j}^{\partial}$ for $0 \leq j \leq q$ such that
    \begin{align}\label{Eq:AlternatingFactorizationsFineDL}
        \mathrm{pr}_V \left( F(x) w_{\mathbf d}^{\mathcal T,\mathbf m} x^{-1} \right) & = y_{V,0} \prod_{j = 1}^{q} \left[ \left( F(x_{\theta_I(i_j)}) w_{i_j}^{\mathcal T,\mathbf m}(\mathbf d) x_{i_j}^{-1} \right) y_{V,j} \right],\\
        \mathrm{pr}_V (w_{\mathbf c}) & = z_{V,0} \prod_{j = 1}^{q} \left[ w_{i_j}(\mathbf c) z_{V,j} \right], \nonumber
    \end{align}
    where the products are ordered from $j = 1$ to $q$. 
    
    We prove the claim. For $0\leq j\leq q$, let $\eta_{V,j}, \xi_{V,j} \in \mathbf W_{V,j}^{\partial}$ be respectively the products of the factors of $\mathrm{pr}_V(x)$ in \eqref{Eq:YoungFactorsParabolicWeylGroup} and of the factors of $\mathrm{pr}_V(F(x))$ in \eqref{Eq:YoungFactorsFrobeniusParabolicWeylGroup} whose simple reflections belong to $\mathbf S_{V,j}^{\partial}$. The factors in each of these products commute. We obtain
    \begin{equation*}
        \mathrm{pr}_V(x) = \eta_{V,0} \prod_{j=1}^{q} \left[ x_{i_j} \eta_{V,j} \right], \qquad \mathrm{pr}_V(F(x)) = \xi_{V,0} \prod_{j=1}^{q} \left[ F(x_{\theta_I(i_j)})\xi_{V,j} \right].
    \end{equation*}
    Let $i\in\mathcal T$ be such that $V_i = V$. We have $d_i \leq m_i = c_i$. If $m_i > 0$, the partition \eqref{Eq:AlternatingPartitionSimpleReflectionsV} gives a unique $0 \leq j \leq q$ such that
    \begin{equation*}
        \left\{s_{r_{-i-\underline{\varepsilon}}-d_i}, \ldots, s_{r_{-i-\underline{\varepsilon}}-1}\right\} \subseteq \left\{s_{r_{-i-\underline{\varepsilon}}-m_i}, \ldots, s_{r_{-i-\underline{\varepsilon}}-1}\right\} \subseteq \mathbf S_{V,j}^{\partial}.
    \end{equation*}
    We let $w_{\mathbf d,V,j}^{\partial}, w_{\mathbf c,V,j}^{\partial} \in \mathbf W_{V,j}^{\partial}$ denote respectively the products, in the order prescribed by \eqref{Eq:DefinitionBoundaryWord} and by the definition of $w_{\mathbf c}$, of the blocks indexed by $i \in \mathcal T$ and supported on $\mathbf S_{V,j}^{\partial}$. We obtain
    \begin{equation*}
        \mathrm{pr}_V(w_{\mathbf d}^{\mathcal T,\mathbf m}) = w_{\mathbf d,V,0}^{\partial} \prod_{j = 1}^{q} \left[ w_{i_j}^{\mathcal T,\mathbf m}(\mathbf d) w_{\mathbf d,V,j}^{\partial} \right], \qquad \mathrm{pr}_V(w_{\mathbf c}) = w_{\mathbf c,V,0}^{\partial} \prod_{j=1}^{q} \left[ w_{i_j}(\mathbf c) w_{\mathbf c,V,j}^{\partial} \right].
    \end{equation*}
    For every $1 \leq j \leq q$, we have
    \begin{align*}
        F(x_{\theta_I(i_j)}) & \in \left\langle s_{r_{-i_j-\underline{\varepsilon}}-\ell_{i_j}+1}, \ldots, s_{r_{-i_j-\underline{\varepsilon}}-1} \right\rangle,\\
        x_{i_j} & \in \left\langle s_{r_{-i_j-\underline{\varepsilon}}-\ell_{i_j}}, \ldots, s_{r_{-i_j-\underline{\varepsilon}}-2} \right\rangle.
    \end{align*}
    Similarly, if $\mathbf S_{V,j}^{\partial} = \{s_a,\ldots,s_b\} \not = \emptyset$, then \eqref{Eq:YoungFactorsParabolicWeylGroup} and \eqref{Eq:YoungFactorsFrobeniusParabolicWeylGroup} give
    \begin{equation*}
        \xi_{V,j} \in \langle s_{a+1}, \ldots, s_b\rangle \quad\text{if } j \geq 1, \qquad \eta_{V,j} \in \langle s_a,\ldots,s_{b-1}\rangle \quad \text{if } j < q.
    \end{equation*}
    Since $s_a$ and $s_b$ commute whenever $|a-b|>1$, we can reorder the factors to write
    \begin{equation*}
        \mathrm{pr}_V \left(F(x)w_{\mathbf d}^{\mathcal T,\mathbf m}x^{-1}\right) = \left( \xi_{V,0}w_{\mathbf d,V,0}^{\partial}\eta_{V,0}^{-1} \right) \prod_{j = 1}^{q} \left[ \left( F(x_{\theta_I(i_j)}) w_{i_j}^{\mathcal T,\mathbf m}(\mathbf d)x_{i_j}^{-1} \right) \left( \xi_{V,j}w_{\mathbf d,V,j}^{\partial}\eta_{V,j}^{-1} \right) \right].
    \end{equation*}
    Thus the claim \eqref{Eq:AlternatingFactorizationsFineDL} holds with $y_{V,j} \coloneq \xi_{V,j} w_{\mathbf d,V,j}^{\partial} \eta_{V,j}^{-1}$ and $z_{V,j} \coloneq w_{\mathbf c,V,j}^{\partial}$ for all $0 \leq j \leq q$. \\

    Since the Bruhat order on $\mathbf W_{\mathcal T,\mathbf m} = \prod_V \mathbf W_V$ is componentwise, condition (1) and \eqref{Eq:AlternatingFactorizationsFineDL} give an inequality in every $\mathbf W_V$. Applying Lemma \ref{Lem:BruhatOrderForProducts} to the ordered family of pairwise disjoint subsets in \eqref{Eq:AlternatingPartitionSimpleReflectionsV} yields
    \begin{equation*}
        F(x_{\theta_I(i_j)}) w_{i_j}^{\mathcal T,\mathbf m}(\mathbf d)x_{i_j}^{-1} \leq w_{i_j}(\mathbf c) \quad \text{in }\mathbf W_{i_j}
    \end{equation*}
    for every $1\leq j\leq q$. Letting $V$ vary proves (2).\\

    Conversely, suppose that (2) holds and define $x \coloneq \prod_{i\in\overline I\setminus\mathcal T} x_i \in \prod_{i \in \overline I \setminus \mathcal T} \mathbf W_i^- \subseteq \mathbf W_{\mathcal K_{\mathcal T,\mathbf m}}$. Since $F(\mathbf W_{\theta_I(i)}^-)=\mathbf W_i^+$, all the factors of $\mathrm{pr}_V(x)$ and $\mathrm{pr}_V(F(x))$ whose simple reflections lie in some $\mathbf S_{V,j}^{\partial}$ are trivial. Hence all the elements $\eta_{V,j}$ and $\xi_{V,j}$ in the construction above are trivial. Consequently, we have $y_{V,j} = w_{\mathbf d,V,j}^{\partial}$ and $z_{V,j} = w_{\mathbf c,V,j}^{\partial}$. If $i \in \mathcal T$, then $d_i \leq c_i^{\mathrm{max}} = m_i = c_i$. Hence, in every $\mathbf W_{V,j}^{\partial}$, the reduced expression defining $w_{\mathbf d,V,j}^{\partial}$ is a subword of the reduced expression defining $w_{\mathbf c,V,j}^{\partial}$. Therefore $y_{V,j} \leq z_{V,j}$. Together with the local inequalities in (2), Lemma \ref{Lem:BruhatOrderForProducts} applied to \eqref{Eq:AlternatingFactorizationsFineDL} gives
    \begin{equation*}
        \mathrm{pr}_V \left(F(x)w_{\mathbf d}^{\mathcal T,\mathbf m}x^{-1}\right) \leq \mathrm{pr}_V(w_{\mathbf c}) \quad \text{in } \mathbf W_V
    \end{equation*}
    for every quotient space $V$. Since $\mathbf W_{\mathcal T,\mathbf m} = \prod_V \mathbf W_V$, this is exactly condition (1).
\end{proof}

\begin{prop}\label{prop:ComputationTwistedOrderBoundaryWords}
    Let $\mathbf b\in\mathcal B_{\mathcal T}$, write $\mathbf c = \mathbf c(\mathbf b)$ and let $\mathbf d$ satisfy \eqref{Eq:BoundsBoundaryTuple}. Then
    \begin{equation*}
        w_{\mathbf d}^{\mathcal T,\mathbf m} \leq_{\mathcal K_{\mathcal T,\mathbf m},F} w_{\mathbf c} \iff v_{\mathbf d} \mathrel{\dot{\leq}} v_{\mathbf c}.
    \end{equation*}
\end{prop}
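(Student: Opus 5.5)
By the definition \eqref{Eq:RightTwistedOrderFineDL} and Lemma \ref{Lem:LocalizationFineDLTwistedInequality}, the left-hand side is equivalent to the existence of a family $(x_i)_{i\in\overline I\setminus\mathcal T}$ with $x_i\in\mathbf W_i^-$ satisfying the local inequalities $F(x_{\theta_I(i)})\, w_i^{\mathcal T,\mathbf m}(\mathbf d)\, x_i^{-1}\leq w_i(\mathbf c)$ in $\mathbf W_i$ for all $i\in\overline I\setminus\mathcal T$. These conditions only couple $i$ with $\theta_I(i)$. We therefore split the family according to the $\theta_I$-orbits contained in $\overline I\setminus\mathcal T$, and the condition becomes orbitwise. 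On the other side, we unwind $v_{\mathbf d}\mathrel{\dot{\leq}} v_{\mathbf c}$ orbit by orbit as well. For $i\in\mathcal T$ we have $d_i\leq c_i^{\mathrm{max}}=m_i=c_i$, so the first alternative in Definition \ref{Defi:OrderOnTuplesC} holds automatically. Hence $v_{\mathbf d}\mathrel{\dot{\leq}} v_{\mathbf c}$ reduces to requiring the condition of Definition \ref{Defi:OrderOnTuplesC} on each orbit $O\subseteq\overline I\setminus\mathcal T$. It thus suffices to compare the two conditions orbit by orbit.

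Fix an orbit $O\subseteq\overline I\setminus\mathcal T$ and set $d:=\ell_i+1$. We identify $\mathbf W_i\simeq\mathfrak S_d$ through $\mathbf S_i=\{s_{r_{-i-\underline{\varepsilon}}-\ell_i},\ldots,s_{r_{-i-\underline{\varepsilon}}-1}\}$, so that $\mathbf W_i^-$ becomes the stabilizer of $d$, that is $H$, and $\mathbf W_i^+$ becomes the stabilizer of $1$. Under this identification, $w_i(\mathbf c)=s_{[r_{-i-\underline{\varepsilon}}-c_i,\,r_{-i-\underline{\varepsilon}}-1]}$ is $u_{c_i}$ and $w_i^{\mathcal T,\mathbf m}(\mathbf d)$ is $u_{d_i}$, with $c_i,d_i\leq\ell_i=d-1$. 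By \eqref{Eq:FrobeniusOnLocalSimpleReflections}, $F$ sends the $j$-th simple reflection of $\mathbf W_{\theta_I(i)}$ to the reversed-index simple reflection of $\mathbf W_i$. Using the same identification for $\theta_I(i)$, the map $F\colon\mathbf W_{\theta_I(i)}\to\mathbf W_i$ is therefore $\mathrm{Ad}(\omega_0)$. The main bookkeeping step is to check this matching carefully, together with the index shift between $\mathbf W_{\theta_I(i)}^-$ and $\mathbf W_i^+$.

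If $\#O=2$, put $x:=x_i$ and $y:=x_{\theta_I(i)}$. The two local inequalities read
\begin{equation*}
\mathrm{Ad}(\omega_0)(y)\,u_{d_i}\,x^{-1}\leq u_{c_i}, \qquad \mathrm{Ad}(\omega_0)(x)\,u_{d_{\theta_I(i)}}\,y^{-1}\leq u_{c_{\theta_I(i)}}.
\end{equation*}
Proposition \ref{Prop:CombinatorialProposition}, applied with $(a,a',b,b')=(c_i,d_i,c_{\theta_I(i)},d_{\theta_I(i)})$, gives exactly the orbit condition of Definition \ref{Defi:OrderOnTuplesC}, because $d-2=\ell_i-1$ and $d-1=\ell_i$.

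If $O=\{i\}$, the single condition is $\mathrm{Ad}(\omega_0)(x)\,u_{d_i}\,x^{-1}\leq u_{c_i}$. Since $i\notin\mathcal T$, we know $c_i+c_i\geq\ell_i$. We apply Proposition \ref{Prop:CombinatorialProposition} with $a=b=c_i$ and $a'=b'=d_i$. For the forward direction, take the pair $(x,x)$. For the converse, the proposition produces some pair $(x,y)$, possibly with $x\neq y$, so we need the explicit witnesses from its proof. These are $x=y=\mathrm{id}$ when $d_i\leq c_i$. In the other case $a'\leq a$ fails while $a'+a'\leq d-2$ and $a+a\geq d-1$. Then $x=\omega_{[d-a',d-a-1]}$ and $y=\omega_{[a+1,a'+1]}$, and these are conjugate by $\omega_0$, which is compatible with a single $x$ only up to $\mathrm{Ad}(\omega_0)$. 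The main obstacle is to verify that this witness can be taken $\theta$-symmetric. I would attempt this directly by checking $\mathrm{Ad}(\omega_0)(x)u_{a'}x^{-1}\leq u_a$ with $x=\omega_{[d-a',d-a-1]}$.

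If that direct check fails, I would fall back on the singleton clause of Definition \ref{Defi:OrderOnTuplesC}, which is simply $d_i\leq c_i$. Its necessity follows from the length argument in the proof of Proposition \ref{Prop:CombinatorialProposition}: taking $x=y$ there gives $a'\leq a$ immediately, and $x=\mathrm{id}$ gives sufficiency. Combining all orbits yields the equivalence.
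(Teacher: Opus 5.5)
Your proposal is correct and follows the paper's route. You localize via Lemma~\ref{Lem:LocalizationFineDLTwistedInequality}, identify $\mathbf W_i\simeq\mathfrak S_{\ell_i+1}$ so that $F\colon\mathbf W_{\theta_I(i)}^-\to\mathbf W_i^+$ becomes $\mathrm{Ad}(\omega_0)$, apply Proposition~\ref{Prop:CombinatorialProposition} on two-element orbits, and use the length argument (with $x=\mathrm{id}$ for the converse) on singleton orbits, the indices $i\in\mathcal T$ being automatic. The ``obstacle'' you flag for singleton orbits is vacuous, because $2d_i\leq\ell_i-1<\ell_i\leq 2c_i$ already forces $d_i<c_i$, so your fallback is exactly the paper's argument.
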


\begin{proof}
    Suppose first that $w_{\mathbf d}^{\mathcal T,\mathbf m} \leq_{\mathcal K_{\mathcal T,\mathbf m},F} w_{\mathbf c}$. Choose $x \in \mathbf W_{\mathcal K_{\mathcal T,\mathbf m}}$ satisfying $F(x) w_{\mathbf d}^{\mathcal T,\mathbf m} x^{-1} \leq w_{\mathbf c}$, and let $x_i\in\mathbf W_i^-$ be its component for $i \in \overline I \setminus \mathcal T$. By Lemma
    \ref{Lem:LocalizationFineDLTwistedInequality}, we have $F(x_{\theta_I(i)}) w_i^{\mathcal T,\mathbf m}(\mathbf d) x_i^{-1} \leq w_i(\mathbf c)$ for every such $i$.

    Let $O = \{i,\theta_I(i)\} \in \Omega_{\overline I}$ be a $\theta_I$-orbit of size $2$ contained in $\overline I \setminus \mathcal T$. We can identify $\mathbf W_i$ with $\mathfrak S_{\ell_i+1}$ in such a way that, under this identification, we have $\mathbf W_i^- \simeq H = \langle s_1, \ldots, s_{\ell_i-1}\rangle$ and $\mathbf W_i^+ \simeq \langle s_{2}, \ldots , s_{\ell_i} \rangle$. Besides, in the notations of Section \ref{Section6.1}, $w_i^{\mathcal T,\mathbf m}(\mathbf d)$ is identified with $u_{d_i} \in \mathfrak S_{\ell_i+1}$ and $w_i(\mathbf c)$ is identified with $u_{c_i} \in \mathfrak S_{\ell_i+1}$. By \eqref{Eq:FrobeniusOnLocalSimpleReflections}, the isomorphism $F: \mathbf W_{\theta_I(i)}^- \xrightarrow{\sim} \mathbf W_i^+$ is identified with conjugation by $\omega_0$, the longest element of $\mathfrak S_{\ell_i+1}$. The two local inequalities indexed by $i$ and $\theta_I(i)$ therefore become
    \begin{equation*}
        \mathrm{Ad}(\omega_0)(y)u_{d_i}x^{-1} \leq u_{c_i}, \qquad \mathrm{Ad}(\omega_0)(x)u_{d_{\theta_I(i)}}y^{-1} \leq u_{c_{\theta_I(i)}},
    \end{equation*}
    for some $x, y \in H \subseteq \mathfrak S_{\ell_i+1}$. Proposition \ref{Prop:CombinatorialProposition} shows that $(d_i \leq c_i \text{ and } d_{\theta_I(i)} \leq c_{\theta_I(i)})$ or $(d_i + d_{\theta_I(i)} \leq \ell_i-1 \text{ and } c_i + c_{\theta_I(i)} \geq \ell_i)$.

    Suppose now that $O = \{i\} \subseteq \overline I \setminus \mathcal T$ is a singleton. Under the same identification, the local inequality has the form
    \begin{equation*}
        \mathrm{Ad}(\omega_0)(x)u_{d_i}x^{-1}\leq u_{c_i} \quad\text{in }\mathfrak S_{\ell_i+1},
    \end{equation*}
    for some $x \in H \subseteq \mathfrak S_{\ell_i+1}$. Let us denote by $v$ the left-hand side of this inequality. Since $u_{d_i}$ is right $H$-reduced, we have
    \begin{equation*}
        \ell(x)+d_i = \ell(u_{d_i}x^{-1}) = \ell(\mathrm{Ad}(\omega_0)(x)^{-1}v) \leq \ell(x) + \ell(v) \leq \ell(x) + c_i.
    \end{equation*}
    Hence $d_i\leq c_i$. 

    Finally, if $i \in \mathcal T$, then $d_i \leq c_i^{\mathrm{max}} = m_i = c_i$ by \eqref{Eq:BoundsBoundaryTuple}. We have therefore proved that $v_{\mathbf d} \mathrel{\dot{\leq}} v_{\mathbf c}$. The converse is proved similarly by using Proposition \ref{Prop:CombinatorialProposition}.
\end{proof}

\begin{corol}\label{Corol:ClosureFineBTStratum}
    Let $\mathbf b\in\mathcal B_{\mathcal T}$ and write $\mathbf c = \mathbf c(\mathbf b)$. Inside the partial flag variety $\mathbf J_{\mathcal T,\mathbf m,\overline{\mathbb F_q}} / \mathbf P_{\mathcal K_{\mathcal T,\mathbf m}}$, we have
    \begin{equation*}
        \overline{ X_{\mathcal K_{\mathcal T,\mathbf m}} \{w_{\mathbf c}^{-1}\}} = \bigsqcup_{\substack{\mathbf d \\ v_{\mathbf d} \mathrel{\dot{\leq}} v_{\mathbf c}}} X_{\mathcal K_{\mathcal T,\mathbf m}} \{(w_{\mathbf d}^{\mathcal T,\mathbf m})^{-1}\}.
    \end{equation*}
\end{corol}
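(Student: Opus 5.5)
The argument combines three ingredients: the decomposition of the closed BT stratum from Lemma \ref{Lem:BoundaryDecompositionClosedBT}, the general closure relation for fine Deligne-Lusztig varieties in a partial flag variety, and the computation of the twisted order in Proposition \ref{prop:ComputationTwistedOrderBoundaryWords}.

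First, since $X_{\mathcal K_{\mathcal T,\mathbf m}}\{w_{\mathbf c}^{-1}\}$ lies in the BT stratum $\mathcal N^0_{\mathcal T,\mathbf\Lambda(\mathbf m)}$, its closure lies in $\overline{\mathcal N^0_{\mathcal T,\mathbf\Lambda(\mathbf m)}}$. By Lemma \ref{Lem:BoundaryDecompositionClosedBT}, this closed stratum is the closure of $X_{\mathcal K_{\mathcal T,\mathbf m}}\{w_{\mathbf c^{\mathrm{max}}}^{-1}\}$ inside $\mathbf J_{\mathcal T,\mathbf m,\overline{\mathbb F_q}}/\mathbf P_{\mathcal K_{\mathcal T,\mathbf m}}$. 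It is moreover the disjoint union of the varieties $X_{\mathcal K_{\mathcal T,\mathbf m}}\{(w_{\mathbf d}^{\mathcal T,\mathbf m})^{-1}\}$, where $\mathbf d$ runs over the tuples of \eqref{Eq:BoundsBoundaryTuple}. In particular $w_{\mathbf c}$ is itself one of the $w_{\mathbf d}^{\mathcal T,\mathbf m}$, namely for $\mathbf d=\mathbf c$. The closure of $X\{w_{\mathbf c}^{-1}\}$ is therefore a union of some of these pieces, and the task is to decide which pieces occur.

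For that I would invoke the closure relation for fine Deligne-Lusztig varieties. For right $\mathcal K$-reduced $w$, the closure of $X_{\mathcal K}\{w^{-1}\}$ is the union of the $X_{\mathcal K}\{w'^{-1}\}$ over right $\mathcal K$-reduced $w'$ with $w'\leq_{\mathcal K,F} w$. This is the twisted order \eqref{Eq:RightTwistedOrderFineDL}, which is the analogue for partial flag varieties of the Pink–Wedhorn–Ziegler / He closure result recalled for $G$-zips. I would cite this from the Deligne-Lusztig literature; if necessary, I would adapt the $G$-zip statement via the standard identification of fine Deligne-Lusztig varieties with $\mathbf E$-orbits. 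The right-hand side of the relation may a priori involve elements $w'$ that are not of the form $w_{\mathbf d}^{\mathcal T,\mathbf m}$. However, the whole closure sits inside $\overline{\mathcal N^0_{\mathcal T,\mathbf\Lambda(\mathbf m)}}$, whose pieces are exactly the $X\{(w_{\mathbf d}^{\mathcal T,\mathbf m})^{-1}\}$. Since fine Deligne-Lusztig varieties with distinct reduced parameters are disjoint, only the $w'=w_{\mathbf d}^{\mathcal T,\mathbf m}$ can contribute nonempty pieces. This uses non-emptiness of fine DL varieties together with the disjointness of pieces in the partial flag variety.

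It remains to translate the condition $w_{\mathbf d}^{\mathcal T,\mathbf m}\leq_{\mathcal K_{\mathcal T,\mathbf m},F} w_{\mathbf c}$. Proposition \ref{prop:ComputationTwistedOrderBoundaryWords} shows it is equivalent to $v_{\mathbf d}\mathrel{\dot{\leq}}v_{\mathbf c}$, which gives the stated decomposition. The main obstacle I expect is making the general closure theorem precise in this setting, with its order convention and with $F$ acting on $\mathbf W_{\mathcal T,\mathbf m}$ rather than trivially. In particular one must make sure that the lower-triangular Borel convention turns the twisted order exactly into \eqref{Eq:RightTwistedOrderFineDL}, with $F(x)$ on the left and $x^{-1}$ on the right, and not into its mirror image. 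To check this I would compare against the $\mathbf d=\mathbf c^{\mathrm{max}}$ decomposition of Lemma \ref{Lem:BoundaryDecompositionClosedBT}: every $\mathbf d$ satisfies $v_{\mathbf d}\mathrel{\dot{\leq}}v_{\mathbf c^{\mathrm{max}}}$, so the corollary must reproduce that lemma in this case.
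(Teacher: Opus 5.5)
Your proposal is correct and follows the paper's proof: cite He's closure theorem for fine Deligne--Lusztig varieties in terms of the twisted order $\leq_{\mathcal K_{\mathcal T,\mathbf m},F}$, then translate that order via Proposition \ref{prop:ComputationTwistedOrderBoundaryWords}. Your extra step uses Lemma \ref{Lem:BoundaryDecompositionClosedBT}, together with the non-emptiness and disjointness of fine Deligne--Lusztig varieties, to show that only parameters of the form $w_{\mathbf d}^{\mathcal T,\mathbf m}$ can occur in the closure; the paper leaves this point implicit, and you make it explicit.
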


\begin{proof}
    By \cite{he}, the closure of the fine Deligne-Lusztig variety is described by the twisted Bruhat order $\leq_{\mathcal K_{\mathcal T,\mathbf m},F}$. The proof then follows directly from Proposition \ref{prop:ComputationTwistedOrderBoundaryWords}.
\end{proof}

We also record the local finiteness needed to pass from one standard fine
Deligne--Lusztig variety to its $J(F)$-saturation.

\begin{lem}\label{Lem:ClosureXTmbIsUnion}
    For every $\mathbf b \in \mathcal B_{\mathcal T}$ together with $\mathbf c = \mathbf c(\mathbf b)$, we have
    \begin{equation*}
        \overline{X_{\mathcal T,\mathbf m,\mathbf b}} = \bigcup_{g \in J(F)/\mathrm{Stab}_{J(F)}(\mathbf\Lambda(\mathbf m))} \overline{ g \cdot X_{\mathcal K_{\mathcal T,\mathbf m}} \{w_{\mathbf c}^{-1}\}}.
    \end{equation*}
\end{lem}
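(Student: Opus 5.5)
The inclusion $\supseteq$ is immediate, since each $g\cdot X_{\mathcal K_{\mathcal T,\mathbf m}}\{w_{\mathbf c}^{-1}\}$ is contained in $X_{\mathcal T,\mathbf m,\mathbf b}$. For the reverse inclusion, set $Y \coloneq \bigcup_{g} \overline{g\cdot X_{\mathcal K_{\mathcal T,\mathbf m}}\{w_{\mathbf c}^{-1}\}}$. Then $Y$ contains $X_{\mathcal T,\mathbf m,\mathbf b}$, so it suffices to show that $Y$ is closed in $\mathcal N_{\mathrm{red}}^{I,\varepsilon}$. An arbitrary union of closed subsets is closed as soon as the family is locally finite. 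The strategy is therefore to show that every quasi-compact open subset of $\mathcal N_{\mathrm{red}}^{I,\varepsilon}$ meets only finitely many of the closed subsets $\overline{g\cdot X_{\mathcal K_{\mathcal T,\mathbf m}}\{w_{\mathbf c}^{-1}\}}$. Since $\mathcal N_{\mathrm{red}}^{I,\varepsilon}$ is locally of finite type and is covered by such opens, closedness can be tested locally.

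To obtain local finiteness, I would use the closed Bruhat-Tits strata. We have $g\cdot X_{\mathcal K_{\mathcal T,\mathbf m}}\{w_{\mathbf c}^{-1}\} \subseteq \mathcal N^0_{\mathcal T, g\mathbf\Lambda(\mathbf m)}$, so its closure lies in the closed BT stratum $\overline{\mathcal N^0_{\mathcal T,g\mathbf\Lambda(\mathbf m)}}$. This closed stratum is projective, as recalled from \cite{mullerBruhatTitsStratificationBasic2026}, and it is determined by the facet $g\mathbf\Lambda(\mathbf m)$. By the description of its points via $\mathrm{Type}(z)$ and $\mathbf\Lambda^z$, it consists of points $z$ whose lattices $Z_r$ are squeezed between explicit lattices built from the vertex lattices of $g\mathbf\Lambda(\mathbf m)$. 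In particular, the $Z_r$ lie between $\pi^{N}$-multiples of a fixed vertex lattice of the facet.

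Now fix a quasi-compact open $U$. I expect the main obstacle to be showing that the lattices $Z_r$ occurring for points of $U$ are bounded, meaning contained between $\pi^{a}\Lambda_{0,\mathcal O_{\breve F}}$ and $\pi^{-a}\Lambda_{0,\mathcal O_{\breve F}}$ for some $a$ depending on $U$. This is the standard boundedness of quasi-compact subsets of Rapoport-Zink spaces, which I would take from \cite{rapoportPeriodSpacesPdivisible1996}. If $\overline{\mathcal N^0_{\mathcal T,g\mathbf\Lambda(\mathbf m)}}$ meets $U$, choose a point $z$ in the intersection; its Bruhat-Tits index is at most $(\mathcal T,g\mathbf\Lambda(\mathbf m))$. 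The lattices $\Lambda^z_i$ are then contained in the corresponding lattices of $g\mathbf\Lambda(\mathbf m)$ for $r\in I^+$, and they contain them for $r\in I^-$. Using $T_c(Z)\subseteq\pi^{-a}\Lambda_0$ and the duality constraints defining vertex lattices, this forces the vertex lattices of $g\mathbf\Lambda(\mathbf m)$ to lie between $\pi^{a'}\Lambda_0$ and $\pi^{-a'}\Lambda_0$. Only finitely many $\mathcal O_K$-lattices in $\mathbb V_{K,0}$ satisfy this bound. Hence only finitely many facets $g\mathbf\Lambda(\mathbf m)$, and thus finitely many cosets $g\,\mathrm{Stab}_{J(F)}(\mathbf\Lambda(\mathbf m))$, contribute members of the family meeting $U$. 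Consequently $Y\cap U$ is a finite union of closed subsets and is closed in $U$, which proves that $Y$ is closed and yields the equality.
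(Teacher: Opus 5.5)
Your argument is correct and is essentially the paper's proof. Each translate $g\cdot X_{\mathcal K_{\mathcal T,\mathbf m}}\{w_{\mathbf c}^{-1}\}$ lies in the closed BT stratum $\overline{\mathcal N^0_{\mathcal T,g\cdot\mathbf\Lambda(\mathbf m)}}$, these closed strata form a locally finite family, and a locally finite union of closed sets is closed; the only difference is that the paper cites this local finiteness from \cite{mullerBruhatTitsStratificationBasic2026}, whereas you give a sound sketch of it by bounding the lattices on a quasi-compact open and squeezing the vertex lattices of the facet between $\Lambda^z_i$ and its dual.
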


\begin{proof}
    By \cite{mullerBruhatTitsStratificationBasic2026}, the closed Bruhat-Tits strata $\left(\overline{\mathcal N_{\mathcal T, g\cdot \mathbf{\Lambda}(\mathbf m)}^0}\right)_g$ for $g$ running through $J(F)/\mathrm{Stab}_{J(F)}(\mathbf{\Lambda}(\mathbf m))$ form a locally finite family, that is, every point has an open neighborhood meeting only finitely many members of this family. Each translate $g \cdot X_{\mathcal K_{\mathcal T,\mathbf m}}\{w_{\mathbf c}^{-1}\}$ is contained in the corresponding member $\overline{\mathcal N_{\mathcal T, g\cdot \mathbf{\Lambda}(\mathbf m)}^0}$ of this locally finite family. The result follows, since the closure of a locally finite union is the union of the closures.
\end{proof}

We can now prove the closure theorem.

\begin{theo}\label{Theo:ClosureFineBTAndEKORStrata}
    Let $\mathcal T \subseteq \overline I$ be a non-empty $\theta_I$-stable subset, let $\mathbf m \in \mathrm{Num}(\mathcal T)$, let $\mathbf b \in \mathcal B_{\mathcal T}$, and write $\mathbf c = \mathbf c(\mathbf b)$. Then
    \begin{equation*}
        \overline{\mathrm{EKOR}_{v_{\mathbf c}}} = \bigsqcup_{\substack{v_{\mathbf c'} \in {}^{I}\mathrm{Adm}(\{\mu'\})_0\\ v_{\mathbf c'} \mathrel{\dot{\leq}} v_{\mathbf c}}} \mathrm{EKOR}_{v_{\mathbf c'}}.
    \end{equation*}
    Equivalently,
    \begin{equation}\label{Eq:ClosureFineBTStratumTriples}
        \overline{X_{\mathcal T,\mathbf m,\mathbf b}} = \bigsqcup_{(\mathcal T', \mathbf m', \mathbf b')} X_{\mathcal T',\mathbf m',\mathbf b'},
    \end{equation}
    where $(\mathcal T',\mathbf m',\mathbf b')$ runs over all the triples such that $\mathcal T \subseteq \mathcal T'$, $m_i' \leq m_i$ for all $i \in \mathcal T$ and $b_i' \leq b_i$ for all $i \in \overline I \setminus \mathcal T'$.
\end{theo}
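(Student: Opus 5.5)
By Theorem \ref{Theo:ComparisonBTEKOR}, Proposition \ref{Prop:BijectionIndexSetsBTEKOR} and Lemma \ref{Lem:OrderOnTuplesC}, the two formulations of the statement are equivalent. I will prove the second, \eqref{Eq:ClosureFineBTStratumTriples}, working inside each closed BT stratum and then passing to the $J(F)$-saturation.

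\emph{Step 1: reduction to one translate.} By Lemma \ref{Lem:ClosureXTmbIsUnion},
\begin{equation*}
    \overline{X_{\mathcal T,\mathbf m,\mathbf b}}=\bigcup_g \overline{g\cdot X_{\mathcal K_{\mathcal T,\mathbf m}}\{w_{\mathbf c}^{-1}\}}.
\end{equation*}
Since both sides of the claimed formula are $J(F)$-stable, and the EKOR strata are $J(F)$-invariant, it suffices to compute $\overline{X_{\mathcal K_{\mathcal T,\mathbf m}}\{w_{\mathbf c}^{-1}\}}$ and to identify its pieces with EKOR strata. This closure is contained in the closed BT stratum $\overline{\mathcal N^0_{\mathcal T,\mathbf\Lambda(\mathbf m)}}$, which is closed in $\mathcal N^{I,\varepsilon}_{\mathrm{red}}$. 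So the closure taken in $\mathcal N_{\mathrm{red}}$ agrees with the closure taken in the partial flag variety through the isomorphism of Theorem \ref{Theo:IsomorphismBTStrataAndFineDLVarieties} and its extension to the closure, which is Lemma \ref{Lem:BoundaryDecompositionClosedBT}.

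\emph{Step 2: the closure inside one closed BT stratum.} Corollary \ref{Corol:ClosureFineBTStratum} gives
\begin{equation*}
    \overline{X_{\mathcal K_{\mathcal T,\mathbf m}}\{w_{\mathbf c}^{-1}\}}=\bigsqcup_{v_{\mathbf d}\mathrel{\dot{\leq}} v_{\mathbf c}} X_{\mathcal K_{\mathcal T,\mathbf m}}\{(w_{\mathbf d}^{\mathcal T,\mathbf m})^{-1}\},
\end{equation*}
where $\mathbf d$ runs over the tuples satisfying \eqref{Eq:BoundsBoundaryTuple}. By Lemma \ref{Lem:BoundaryDecompositionClosedBT}, each piece equals $\overline{\mathcal N^0_{\mathcal T,\mathbf\Lambda(\mathbf m)}}\cap\mathrm{EKOR}_{v_{\mathbf d}}$. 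Saturating under $J(F)$ therefore shows that $\overline{X_{\mathcal T,\mathbf m,\mathbf b}}$ is the union over such $\mathbf d$ of $\bigcup_g \overline{\mathcal N^0_{\mathcal T,g\mathbf\Lambda(\mathbf m)}}\cap \mathrm{EKOR}_{v_{\mathbf d}}$.

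\emph{Step 3: matching the index sets.} Two things remain to be checked.

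First, every $\mathbf c'$ with $v_{\mathbf c'}\mathrel{\dot{\leq}} v_{\mathbf c}$ arises as such a $\mathbf d$. By Lemma \ref{Lem:OrderOnTuplesC} we have $\mathcal T\subseteq\mathcal T'$ and $c'_i\le m_i$ on $\mathcal T$, and $c'_i\le\ell_i$ always holds. Hence $\mathbf c'$ satisfies \eqref{Eq:BoundsBoundaryTuple}.

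Second, the whole stratum $\mathrm{EKOR}_{v_{\mathbf c'}}=X_{\mathcal T',\mathbf m',\mathbf b'}$ lies in the closure, not just its intersection with the closed BT strata of type $\mathcal T$. I expect this to be the main obstacle. The plan is to show that every $J(F)$-translate of $X_{\mathcal K_{\mathcal T',\mathbf m'}}\{w_{\mathbf c'}^{-1}\}$ lies in some $\overline{\mathcal N^0_{\mathcal T,g\mathbf\Lambda(\mathbf m)}}$. By the BT closure relation, this reduces to showing that every BT index $(\mathcal T',\mathbf\Lambda')$ of numerical type $\mathbf m'$ lies below some BT index $(\mathcal T,g\mathbf\Lambda(\mathbf m))$. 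To do this, I take the subfacet $(\Lambda'_i)_{i\in\mathcal T}$ and enlarge or shrink the lattices $\Lambda'_{\overline r}$, for $\overline r\in\mathcal T$, to vertex lattices of the orbit types prescribed by $\mathbf m$. Since $m'_i\le m_i$, the required type changes all go in the allowed direction: larger lattices for $r\in I^+$, smaller ones for $r\in I^-$. The chain condition then has to be maintained, and this is a building-theoretic extension argument in the unitary building. Proposition \ref{Prop:OrbitBruhatTitsIndex} shows that the resulting index lies in the $J(F)$-orbit of $(\mathcal T,\mathbf\Lambda(\mathbf m))$.

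Given this, the union in Step 2 is exactly the union of the $\mathrm{EKOR}_{v_{\mathbf c'}}$ with $v_{\mathbf c'}\mathrel{\dot{\leq}} v_{\mathbf c}$. The reverse inclusion is automatic from Step 2, and disjointness follows because EKOR strata are disjoint.
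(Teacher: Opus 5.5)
Your Steps~1 and~2, and the first half of Step~3, follow the paper's proof. The reduction in the second half of Step~3 is also correct: it suffices to show that every BT index of type $(\mathcal T',\mathbf m')$ lies below some $J(F)$-translate of $(\mathcal T,\mathbf\Lambda(\mathbf m))$. The gap is that you then leave this as an unexecuted ``building-theoretic extension argument''. That argument would enlarge or shrink the lattices of an arbitrary facet and re-establish the chain condition, and nothing in the proposal shows this can be done. As written, the inclusion $\mathrm{EKOR}_{v_{\mathbf c'}}\subseteq\overline{\mathrm{EKOR}_{v_{\mathbf c}}}$ is therefore not proved.

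You never need to handle an arbitrary facet. By definition, $\mathrm{EKOR}_{v_{\mathbf c'}}=X_{\mathcal T',\mathbf m',\mathbf b'}$ is the union of the translates $g\cdot X_{\mathcal K_{\mathcal T',\mathbf m'}}\{w_{\mathbf c'}^{-1}\}$. So any point $z$ of it lies in $\mathcal N^0_{\mathcal T',g\cdot\mathbf\Lambda(\mathbf m')}$ for some $g\in J(F)$. Since $g$ preserves inclusions, it is enough to compare the two standard representatives, which both lie in the decreasing standard chain $\Lambda_\bullet$. Let $r\in I^+\sqcup I^-$ with $\overline r\in\mathcal T$. For $r\in I^+$ one has $\Lambda(\mathbf m')_{\overline r}=\Lambda_{-r-m'_{\overline r}-\underline{\varepsilon}}\subseteq\Lambda_{-r-m_{\overline r}-\underline{\varepsilon}}=\Lambda(\mathbf m)_{\overline r}$. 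For $r\in I^-$ one has $\Lambda(\mathbf m)_{\overline r}=\Lambda_{r+m_{\overline r}+1}\subseteq\Lambda_{r+m'_{\overline r}+1}=\Lambda(\mathbf m')_{\overline r}$. Both inclusions use $m'_i\le m_i$ from Lemma~\ref{Lem:OrderOnTuplesC}. Together with $\mathcal T\subseteq\mathcal T'$, this gives $(\mathcal T',g\cdot\mathbf\Lambda(\mathbf m'))\le(\mathcal T,g\cdot\mathbf\Lambda(\mathbf m))$. No chain condition has to be rebuilt, because $(\mathcal T,\mathbf\Lambda(\mathbf m))$ is already a BT index. The BT closure relation then puts $z$ in $\overline{\mathcal N^0_{\mathcal T,g\cdot\mathbf\Lambda(\mathbf m)}}$. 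Lemma~\ref{Lem:BoundaryDecompositionClosedBT} places it in $g\cdot X_{\mathcal K_{\mathcal T,\mathbf m}}\{(w^{\mathcal T,\mathbf m}_{\mathbf c'})^{-1}\}$, which lies in the closure by Corollary~\ref{Corol:ClosureFineBTStratum}. In effect, your ``extension in the building'' is done by using $g$ to move the facet into the standard chain, which is exactly the paper's argument.
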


\begin{proof}
    By Theorem \ref{Theo:ComparisonBTEKOR}, we have $X_{\mathcal T, \mathbf m, \mathbf b} = \mathrm{EKOR}_{v_{\mathbf c}}$. Combining Corollary \ref{Corol:ClosureFineBTStratum} with
    Lemma \ref{Lem:ClosureXTmbIsUnion} gives
    \begin{equation*}
        \overline{X_{\mathcal T, \mathbf m, \mathbf b}} = \bigcup_{g \in J(F)/\mathrm{Stab}_{J(F)}(\mathbf\Lambda(\mathbf m))} \bigcup_{\substack{\mathbf d \\ v_{\mathbf d} \mathrel{\dot{\leq}} v_{\mathbf c}}} g \cdot X_{\mathcal K_{\mathcal T,\mathbf m}} \{(w_{\mathbf d}^{\mathcal T,\mathbf m})^{-1}\}.
    \end{equation*}
    By Lemma \ref{Lem:BoundaryDecompositionClosedBT}, the variety in this inner union indexed by $\mathbf d$ is contained in $\mathrm{EKOR}_{v_{\mathbf d}}$. Since the EKOR strata are $J(F)$-invariant, we obtain
    \begin{equation*}
        \overline{\mathrm{EKOR}_{v_{\mathbf c}}} \subseteq \bigsqcup_{\substack{\mathbf d \\ v_{\mathbf d} \mathrel{\dot{\leq}} v_{\mathbf c}}} \mathrm{EKOR}_{v_{\mathbf d}}.
    \end{equation*}
    Conversely, let $v_{\mathbf c'} \in {}^{I}\mathrm{Adm}(\{\mu'\})_0$ such that $v_{\mathbf c'} \mathrel{\dot{\leq}} v_{\mathbf c}$. Let $(\mathcal T', \mathbf m', \mathbf b')$ be the triple corresponding to $\mathbf c'$ under Proposition \ref{Prop:BijectionIndexSetsBTEKOR}. By Lemma \ref{Lem:OrderOnTuplesC}, we have $\mathcal T \subseteq \mathcal T'$, $m_i' \leq m_i$ for all $i \in \mathcal T$ and $b_i' \leq b_i$ for all $i \in \overline I \setminus \mathcal T'$. Let $k$ be an algebraically closed field containing $\overline{\mathbb F_q}$ and let $z \in \mathrm{EKOR}_{v_{\mathbf c'}}(k)$. By Theorem \ref{Theo:ComparisonBTEKOR}, we have
    \begin{equation*}
        \mathrm{EKOR}_{v_{\mathbf c'}} = X_{\mathcal T', \mathbf m', \mathbf b'} = \bigsqcup_{g \in J(F)/\mathrm{Stab}_{J(F)}(\mathbf\Lambda(\mathbf m'))} g \cdot X_{\mathcal K_{\mathcal T',\mathbf m'}} \{w_{\mathbf c'}^{-1}\}.
    \end{equation*}
    Hence there exists $g \in J(F)$ such that $z \in \mathcal N_{\mathcal T', g \cdot \mathbf\Lambda(\mathbf m')}^{0}(k)$. We claim that the BT indices satisfy
    \begin{equation}\label{Eq:ComparisonBTIndicesClosureProof}
        (\mathcal T', g \cdot \mathbf\Lambda(\mathbf m')) \leq (\mathcal T, g \cdot \mathbf\Lambda(\mathbf m)).
    \end{equation}
    It is enough to check the assertion before applying $g$. Let $r\in I^+ \sqcup I^-$ satisfy $\overline r \in \mathcal T$. If $r \in I^+$, then the fact that the standard lattice chain is decreasing gives
    \begin{equation*}
        \Lambda(\mathbf m')_{\overline r} = \Lambda_{-r-m_{\overline r}'-\underline{\varepsilon}} \subseteq \Lambda_{-r-m_{\overline r}-\underline{\varepsilon}} = \Lambda(\mathbf m)_{\overline r}.
    \end{equation*}
    Likewise, if $r\in I^-$ then
    \begin{equation*}
        \Lambda(\mathbf m)_{\overline r} = \Lambda_{r+m_{\overline r}+1} \subseteq \Lambda_{r+m_{\overline r}'+1} = \Lambda(\mathbf m')_{\overline r}.
    \end{equation*}
    The claim follows, and the closure relations of the BT stratification give $z \in \overline{\mathcal N_{\mathcal T, g \cdot \mathbf\Lambda(\mathbf m)}^{0}}(k)$. By Lemma \ref{Lem:BoundaryDecompositionClosedBT}, we have 
    \begin{equation*}
        z \in \left( \overline{\mathcal N_{\mathcal T, g \cdot \mathbf\Lambda(\mathbf m)}^{0}} \cap \mathrm{EKOR}_{v_{\mathbf c'}}\right)(k) = g \cdot X_{\mathcal K_{\mathcal T,\mathbf m}} \{(w_{\mathbf c'}^{\mathcal T,\mathbf m})^{-1}\}(k).
    \end{equation*}
    Thus, $z \in \overline{X_{\mathcal T, \mathbf m, \mathbf b}}(k) = \overline{\mathrm{EKOR}_{v_{\mathbf c}}}(k)$, and this concludes the proof.
\end{proof}

We finish by recording the resulting closure of a KR stratum in $\mathcal N_{\mathrm{red}}^{I,\varepsilon}$. Given $v_{\mathbf c} \in {}^I\mathrm{Adm}(\{\mu'\})_0$, recall the notations 
\begin{equation*}
    J_{\mathbf c} = \{i \in \overline I \mid c_i = \ell_i \}, \qquad \mathcal T_{\mathbf c} = \{i \in \overline I \mid c_i + c_{\theta_I(i)} \leq \ell_i-1\}.
\end{equation*}

\begin{corol}\label{Corol:ClosureKRStrata}
    Let $J \in \mathcal P_{0,I}$. We have
    \begin{equation*}
        \overline{\mathrm{KR}_J} = \bigsqcup_{\substack{v_{\mathbf c} \in {}^I\mathrm{Adm}(\{\mu'\})_0\\ \mathcal T_{\mathbf c} \cap (\overline I \setminus (J \cup \theta_I(J))) \not = \emptyset\\ J_{\mathbf c} \subseteq J}}
        \mathrm{EKOR}_{v_{\mathbf c}}.
    \end{equation*}
\end{corol}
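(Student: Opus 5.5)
We obtain $\overline{\mathrm{KR}_J}$ from the closure relations of the EKOR strata established in Theorem \ref{Theo:ClosureFineBTAndEKORStrata}. Throughout, we use the decomposition $\mathrm{KR}_J=\bigsqcup_{\mathbf c\,:\,J_{\mathbf c}=J}\mathrm{EKOR}_{v_{\mathbf c}}$, where $v_{\mathbf c}$ runs over ${}^I\mathrm{Adm}(\{\mu'\})_0$. This follows from $\mathrm{EKOR}_{v_{\mathbf c}}\subseteq \mathrm{KR}_{J_{\mathbf c}}$ together with Corollary \ref{Corol:NonEmptyEKORStrata}. Since this union is finite, we get $\overline{\mathrm{KR}_J}=\bigcup_{J_{\mathbf c}=J}\overline{\mathrm{EKOR}_{v_{\mathbf c}}}$. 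The statement therefore reduces to a purely combinatorial claim: for $v_{\mathbf c'}\in{}^I\mathrm{Adm}(\{\mu'\})_0$, there exists $\mathbf c$ with $J_{\mathbf c}=J$ and $v_{\mathbf c'}\mathrel{\dot{\leq}} v_{\mathbf c}$ if and only if $J_{\mathbf c'}\subseteq J$ and $\mathcal T_{\mathbf c'}\cap(\overline I\setminus(J\cup\theta_I(J)))\neq\emptyset$.

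\emph{Necessity.} Suppose $v_{\mathbf c'}\mathrel{\dot{\leq}} v_{\mathbf c}$ with $J_{\mathbf c}=J$.
\begin{enumerate}
\item Let $i\in J_{\mathbf c'}$, so $c'_i=\ell_i$. Then $c'_i+c'_{\theta_I(i)}\geq \ell_i$, so the second alternative of Definition \ref{Defi:OrderOnTuplesC} fails. The first alternative gives $c_i\geq c'_i=\ell_i$, hence $i\in J$.
\item Since $v_{\mathbf c}\in{}^I\mathrm{Adm}(\{\mu'\})_0$, there is some $i$ with $c_i+c_{\theta_I(i)}\leq \ell_i-1$. In particular $c_i,c_{\theta_I(i)}<\ell_i$, so $i\notin J\cup\theta_I(J)$. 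For this $i$ the second alternative fails, so the first holds, and then $c'_i+c'_{\theta_I(i)}\leq c_i+c_{\theta_I(i)}\leq\ell_i-1$. Thus $i\in\mathcal T_{\mathbf c'}$.
\end{enumerate}

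\emph{Sufficiency.} Given $\mathbf c'$ satisfying the two conditions, fix $i_0\in\mathcal T_{\mathbf c'}\setminus(J\cup\theta_I(J))$. We build $\mathbf c$ orbit by orbit.
\begin{enumerate}
\item On the orbit $O_0$ of $i_0$, set $c_i=c'_i$. This keeps $\mathbf c$ in ${}^I\mathrm{Adm}(\{\mu'\})_0$, the first alternative holds, and $O_0\cap J=\emptyset$.
\item For $i\in J$, set $c_i=\ell_i$.
\item For $i\notin J$ on the remaining orbits, set $c_i=\ell_i-1$.
\end{enumerate}
By construction $J_{\mathbf c}=J$, using $J_{\mathbf c'}\subseteq J$ so that $c'_i\leq\ell_i-1$ off $J$. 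On each orbit other than $O_0$ we have $c'_i\leq c_i$ coordinatewise (case (i) of Definition \ref{Defi:OrderOnTuplesC}), because $c'_i\leq\ell_i-1$ whenever $i\notin J$. Hence $v_{\mathbf c'}\mathrel{\dot{\leq}} v_{\mathbf c}$.

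The only delicate point is the first step: checking that $\mathrm{KR}_J$ is exactly the union of the EKOR strata with $J_{\mathbf c}=J$, together with the disjointness of the resulting union. Both follow from the bijection \eqref{Eq:ImageOfEKORIndicesInBasicKR} and Corollary \ref{Corol:NonEmptyEKORStrata}. Everything after that is the elementary combinatorics above.
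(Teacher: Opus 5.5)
Your proof is correct and follows the paper's argument. You reduce, via the finite decomposition $\mathrm{KR}_J=\bigsqcup_{J_{\mathbf c}=J}\mathrm{EKOR}_{v_{\mathbf c}}$ and Theorem \ref{Theo:ClosureFineBTAndEKORStrata}, to the same combinatorial equivalence, and your necessity argument is the paper's. The only difference is the witness for sufficiency: the paper takes $c_i=\ell_i$ on $J$ and $c_i=c'_i$ off $J$, whereas your choice of $c_i=\ell_i-1$ off $J$ outside the orbit of $i_0$ works equally well.
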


\begin{proof}
    We already know that 
    \begin{equation*}
        \mathrm{KR}_J = \bigsqcup_{\substack{v_{\mathbf c} \in {}^I\mathrm{Adm}(\{\mu'\})_0\\ J_{\mathbf c}=J}} \mathrm{EKOR}_{v_{\mathbf c}}.
    \end{equation*}
    This is a finite disjoint union, so Theorem \ref{Theo:ClosureFineBTAndEKORStrata} shows that $\mathrm{EKOR}_{v_{\mathbf d}}$ is contained in $\overline{\mathrm{KR}_J}$ if and only if there exists a tuple $\mathbf c$ satisfying $J_{\mathbf c} = J$ and $v_{\mathbf d} \mathrel{\dot{\leq}} v_{\mathbf c}$. 
    
    Suppose first that such a tuple exists. If $d_i = \ell_i$, then $v_{\mathbf d} \mathrel{\dot{\leq}} v_{\mathbf c}$ forces $c_i = \ell_i$ as well. Hence $J_{\mathbf d} \subseteq J$. Since $v_{\mathbf c} \in{}^{I}\mathrm{Adm}(\{\mu'\})_0$, there exists $i \in \overline I$ such that $c_i + c_{\theta_I(i)} \leq \ell_i-1$. Thus $v_{\mathbf d} \mathrel{\dot{\leq}} v_{\mathbf c}$ again forces $d_i \leq c_i$ and $d_{\theta_I(i)} \leq c_{\theta_I(i)}$. It follows that $d_i + d_{\theta_I(i)} \leq c_i + c_{\theta_I(i)} \leq \ell_i-1$, thus $i \in \mathcal T_{\mathbf d} \cap(\overline I\setminus(J\cup\theta_I(J)))$.
    
    Conversely, assume that $\mathcal T_{\mathbf d} \cap (\overline I \setminus (J \cup \theta_I(J))) \not = \emptyset$ and that $J_{\mathbf d} \subseteq J$. Define a tuple $\mathbf c = (c_i)_{i\in \overline I}$ by
    \begin{equation*}
        c_i \coloneq \begin{cases}
            \ell_i & \text{if } i \in J,\\
            d_i & \text{if } i \in \overline I \setminus J.
        \end{cases}
    \end{equation*}
    The condition $J_{\mathbf d} \subseteq J$ gives $J_{\mathbf c} = J$. Choose $i \in \mathcal T_{\mathbf d} \cap(\overline I\setminus(J\cup\theta_I(J)))$. Then $c_i = d_i$ and $c_{\theta_I(i)} = d_{\theta_I(i)}$, so $c_i + c_{\theta_I(i)} \leq \ell_i-1$. Hence $v_{\mathbf c}\in{}^{I}\mathrm{Adm}(\{\mu'\})_0$. Finally, $d_i \leq c_i$ for every $i \in \overline I$, so $v_{\mathbf d} \mathrel{\dot{\leq}} v_{\mathbf c}$, and the proof is
    complete.
\end{proof}

\section*{Acknowledgements}

The author is thankful to Chia-Fu Yu for helpful discussions on the EKOR stratification.

\bibliographystyle{amsplain}
\bibliography{bibliobis}

\end{document}